\documentclass{article}

\def\version{October 9, 2012}

\nonstopmode

\usepackage{upgreek}
\usepackage{latexsym,bm,amssymb}

\usepackage{amsthm}
\usepackage{amsmath}
\usepackage{mathrsfs}
\usepackage{times}

\usepackage[colorlinks=false]{hyperref}
\hypersetup{urlcolor=black, citecolor=red}

\hyphenation{Shni-rel-man}
\hyphenation{quasi-mea-sure}
\hyphenation{Cuc-cagna}
\hyphenation{non-zero}

\textwidth 17cm
\textheight 23.5cm
\topmargin -2cm
\oddsidemargin -3mm
\evensidemargin -3mm

\newcommand{\notyet}[1]{}

\DeclareSymbolFont{AMSb}{U}{msb}{m}{n}
\DeclareSymbolFontAlphabet{\mathbb}{AMSb}

\newcommand{\qm}{q}

\newcommand{\e}{{\bm e}}
\newcommand{\llongrightarrow}{-\!\!\!\!-\!\!\!\!-\!\!\!\!\longrightarrow\,}
\newcommand{\longlongrightarrow}{\,-\!\!\!-\!\!\!-\!\!\!-\!\!\!-\!\!\!-\!\!\!-\!\!\!-\!\!\!\longrightarrow\,}
\newcommand{\E}{\mathscr{X}}
\newcommand{\bS}{\mathcal{S}}

\newcommand{\xxd}{X}
\newcommand{\yyd}{Y}
\providecommand{\ttd}{T}
\renewcommand{\ttd}{T}

\newcommand{\dist}{\,{\rm dist}\,}

\newcommand{\supp}{\mathop{\rm supp}}
\newcommand{\ch}{\mathop{\rm c.h.}\!}
\newcommand{\supppi}{\mathop{\rm supp}\limits\sb{\mathbb{\,\!\!\!\mod\!\pi}}}

\newcommand{\p}{\partial}
\newcommand{\at}[1]{\vert\sb{\sb{#1}}}
\newcommand{\At}[1]{\Big\vert\sb{\sb{#1}}}

\def\Re{{\rm Re\, }}
\def\Im{{\rm Im\,}}
\providecommand{\C}{\mathbb{C}}
\renewcommand{\C}{\mathbb{C}}
\newcommand{\R}{\mathbb{R}}
\newcommand{\N}{\mathbb{N}}
\newcommand{\Z}{\mathbb{Z}}
\newcommand{\torus}{\mathbb{T}}
\newcommand{\Abs}[1]{\Big\vert#1\Big\vert}
\newcommand{\abs}[1]{\vert #1 \vert}

\newcommand{\norm}[1]{\Vert #1 \Vert}

\newcommand{\sothat}{\;{\rm :}\ }

\newcommand{\dotT}{\stackrel{\circ}{\mathbb{T}}}

\providecommand{\ltor}[1]{
\ifnum #1=1{\it i}\else\ifnum #1=2{\it ii}\else\ifnum #1=3{\it iii}
\else\ifnum #1=4 {\it iv}\fi\fi\fi\fi
}

\DeclareMathSymbol{\varGamma}{\mathord}{letters}{"00}
\DeclareMathSymbol{\varDelta}{\mathord}{letters}{"01}
\DeclareMathSymbol{\varSigma}{\mathord}{letters}{"06}
\DeclareMathSymbol{\varPhi}{\mathord}{letters}{"08}
\DeclareMathSymbol{\varOmega}{\mathord}{letters}{"0A}




\theoremstyle{plain}
\newtheorem{theorem}{Theorem}[section]

\newtheorem{lemma}[theorem]{Lemma}
\newtheorem{corollary}[theorem]{Corollary}
\newtheorem{proposition}[theorem]{Proposition}
\theoremstyle{definition}
\newtheorem{definition}[theorem]{Definition}
\newtheorem{assumption}[theorem]{Assumption}

\theoremstyle{remark}
\newtheorem{remark}[theorem]{Remark}

\makeatletter\@addtoreset{equation}{section}
\makeatother



\begin{document}

\title{Weak attractor
of the Klein-Gordon field in discrete space-time
interacting with a nonlinear oscillator}

\author{
{\sc Andrew Comech}
\\
{\it\small
Texas A\&M University,
College Station, TX 77843, USA}
\\
{\it\small
Institute for Information Transmission Problems, Moscow 101447, Russia}
}

\date{\version}

\maketitle






\begin{abstract}
We consider the $\mathbf{U}(1)$-invariant
nonlinear Klein-Gordon equation
in discrete space and discrete time,
which is the discretization 
of the nonlinear continuous Klein-Gordon equation.
To obtain this equation, we use
the energy-conserving finite-difference scheme of Strauss-Vazquez.
We prove that
each finite energy solution converges as $\ttd\to\pm\infty$
to the finite-dimensional
set of all multifrequency solitary wave solutions
with one, two, and four frequencies.
The components of the solitary manifold
corresponding to the solitary waves
of the first two types are generically two-dimensional,
while the
component corresponding to the last type
is generically four-dimensional.
The attraction
to the set of solitary waves
is caused by the nonlinear energy transfer
from lower harmonics to the continuous spectrum
and subsequent radiation.
For the proof, we develop
the well-posedness for the nonlinear wave equation
in discrete space-time,
apply the technique of quasimeasures,
and also obtain the version of the Titchmarsh convolution theorem
for distributions on the circle.
\end{abstract}

\bigskip

\hfill{\it To the memory of Boris Fedosov and Mark Vishik}

\section{Introduction}

In this paper we study
the long-time asymptotics for dispersive Hamiltonian systems.
The first results in this direction were obtained
by Segal \cite{MR0153967,MR0152908},
Strauss \cite{MR0233062},
and Morawetz and Strauss \cite{MR0303097},
who considered
the nonlinear scattering and local convergence to zero
for finite energy solutions
to nonlinear wave equations.
Apparently, there can be no such convergence to zero
when there are localized standing wave solutions;
in the case of $\mathbf{U}(1)$-invariant systems,
these solutions are solitary waves
of the form
$\phi(x)e^{-i\omega t}$,
with $\omega\in\R$ and $\phi$ decaying at infinity
(one could say, ``nonlinear Schr\"odinger eigenstates'').
In this case, one expects that generically
any finite energy solution
breaks into a superposition of
outgoing solitary waves and radiation;
the statement known as the Soliton Resolution Conjecture
(see \cite{MR2275691,MR2304091}).
The Soliton Resolution Conjecture
implies that any finite energy solution
locally
converges either to zero or to a solitary wave.
Thus, for a $\mathbf{U}(1)$-invariant
dispersive Hamiltonian system,
one expects that the weak attractor is formed
by the set of all solitary waves.
For a translation invariant system,
this implies that
the convergence to solitary waves
is to take place -- locally --
in any inertial reference frame.

Existence
of finite-dimensional attractors
(formed by static stationary states)
is extensively studied for \emph{dissipative systems},
such as
the Ginzburg-Landau,
the Kuramoto-Siva\-shin\-sky,
and the 2D forced Navier-Stokes equations,
where
the diffusive part of the equation damps higher frequencies
and in some cases leads to existence
of a finite-dimensional attractor \cite{MR1156492,MR1441312,MR1868930}.
Existence of attractors for finite difference approximations
of such dissipative systems,
as well as the relation between the attractors of continuous systems
and their approximations,
was considered in \cite{MR1049907,MR1092888,MR1124326}.

We are interested in extending these results
to the Hamiltonian systems,
where the convergence to a certain attracting set
(for both large positive and negative times)
takes place not because of the dissipation,
but instead due to the dispersion,
and thus takes place ``weakly'', in the weighted norms,
with perturbations dispersing
because of the local energy decay.
In
\cite{ubk-arma},
we considered a weak attractor
of the $\mathbf{U}(1)$-invariant nonlinear Klein-Gordon equation
in one dimension,
coupled to a nonlinear oscillator located at $x=0$:
\begin{equation}\label{nlw-0}
\p\sb t^2\psi(x,t)=\Delta\psi(x,t)
-m^2\psi(x,t)
-\delta(x)p(\abs{\psi(x,t)}^2)\psi(x,t),
\qquad
x\in\R,
\end{equation}
where 
$\psi(x,t)\in\C$
and $p(\cdot)$ is a potential with real coefficients,
with positive coefficient at the leading order term.
We proved
in \cite{ubk-arma}
that the attractor
of all finite energy solutions
is formed by the set of all solitary waves,
$\phi\sb\omega(x) e^{-i\omega t}$,
with
$\omega\in\R$ and $\phi\sb\omega\in H\sp 1(\R)$.
The general strategy
of the proof has been to consider
the omega-limit trajectories
and then to prove that each omega-limit
trajectory has a point spectrum,
and thus is a solitary wave.

In this paper,
we extend this result
to the finite difference approximation of the
$n$-dimensional Klein-Gordon equation
interacting with a nonlinear oscillator.
Our intention was to show that in the discrete
case, just as in the continuous one,
the attractor is formed by the set of solitary waves.
This turned out to be true, except that
in the discrete case,
besides usual one-frequency solitary waves,
the set of solitary wave solutions may contain
the two- and four-frequency components.
This is in agreement with our version
of the Titchmarsh convolution theorem for distributions
on the circle,
which we needed to develop to complete the argument.
These multifrequency solitary waves
disappear in the continuous limit.
To our knowledge,
this is the first result
on the weak attraction
for the Hamiltonian model
on discrete space-time.

The discretized models are widely studied
in applied mathematics
and in theoretical physics,
in part
due to atoms in a crystal forming a lattice,
in part due to some of these models
(such as the Ising model)
being exactly solvable.
Moreover, it is the discretized model
that is used in numerical simulations of the continuous 
Klein-Gordon equation.
The ground for considering the
energy-conserving difference schemes
for the nonlinear Klein-Gordon equations
and nonlinear wave equations
was set by Strauss and Vazquez in \cite{MR0503140}.
The importance of having
conserved quantities in the numerical scheme
was illustrated
by noticing that instability occurs
for the finite-difference schemes
which do not conserve the energy \cite{MR1047100}.
Let us mention that our approach is also applicable
to other energy-conserving finite-difference schemes
so long as there are a priori bounds
on the norm of the solution.
Such schemes have been constructed in \cite{MR1360462,MR1852556,SubCMAME}.


Our approach relies on
the well-posedness results and the a priori estimates
for the Strauss-Vazquez finite-difference scheme
which we developed in \cite{xt-discrete-rjmp}.
While the discrete energy for the Strauss-Vazquez scheme
given in \cite{MR0503140}
contained quadratic terms which in general
are not positive-definite,
we have shown \cite{xt-discrete-rjmp}
that the conserved discrete energy is positive-definite
under the condition
\begin{equation}\label{one-over-root-n}
\frac{\tau}{\varepsilon}
\le \frac{1}{\sqrt{n}}
\end{equation}
on the grid ratio,
where
$\varepsilon$ is the space step
with respect to each component of $x\in\R^n$
and $\tau$ is the time step;
the Strauss-Vazquez finite-difference scheme
with the grid ratio $\tau/\varepsilon=1/\sqrt{n}$
also preserves the discrete charge.
The positive-definiteness
of the conserved energy
provides one with the a priori energy estimates
and results in the stability of the finite-difference scheme.
(The relation
\eqref{one-over-root-n}
agrees with the stability criterion
in \cite{virieux-1986-889}.)
While the charge conservation
does not seem to be particularly important on its own,
it could be considered as an indication that
the $\mathbf{U}(1)$-invariance of the continuous equation
is in a certain sense compatible with the chosen discretization procedure.
See the discussion in \cite[Section 1]{MR1360462}.
We reproduce our results on the well-posedness
for the Strauss-Vazquez finite-difference scheme
in Appendix~\ref{sect-wp}.

There is another important feature
of our approach to the finite difference equation,
compared to the approach
which we developed in \cite{ubk-arma,ukr-hp,ukk-jmpa}
for the continuous case.
In the discrete case,
\emph{the spectral gap},
where the frequencies of the solitons are located
and where, as it turns out,
the spectrum of the omega-limit trajectory could be located,
consists of \emph{two} open neighborhoods
of the circle.
This does not allow us to apply the Titchmarsh convolution theorem
in a direct form 
as in \cite{ubk-arma,ukr-hp,ukk-jmpa}.
To circumvent this problem,
we derive a version of the Titchmarsh convolution theorem
for distributions on the circle;
see Appendix~\ref{sect-titchmarsh-circle}.
This version of the Titchmarsh convolution theorem
does not allow one to reduce the spectrum
of omega-limit trajectories to a single point;
we end up with the spectrum
consisting of one, two, and four frequencies.
Indeed such omega-limit trajectories exist;
we explicitly construct
solitary waves with one, two, and four frequencies.


\medskip

Here is the plan of the paper.
In Section~\ref{sect-disc},
we describe the model and state the main results.
In Section~\ref{sect-omega-limit},
we introduce the omega-limit trajectories
and describe the proof of the main result:
the convergence of any finite energy solution
to the set of solitary waves.
The main idea is that
such a convergence
is equivalent to showing that each omega-limit trajectory
itself is a solitary wave.
In Section~\ref{sect-dispersive},
we separate the dispersive part of the solution,
and consider the regularity
of the remaining part in Section \ref{sect-regularity}.
In Section~\ref{sect-spectral-relation}
we obtain the spectral relation
satisfied by the omega-limit trajectory.
For this, we use the technique
of quasimeasures, which we borrow from \cite{ubk-arma}.
In Section~\ref{sect-nsa}
we apply
to the spectral relation
our version of the Titchmarsh convolution theorem
on the circle,
proving that the spectrum
of any omega-limit trajectory
consists of finitely many frequencies.
This completes
the proof that each omega-limit trajectory is a (multifrequency) solitary wave.
We give an explicit construction of multifrequency solitary waves
in Section~\ref{sect-solitary-waves}.
Appendix~\ref{sect-wp}
gives
the well-posedness for the finite difference scheme approximation.
The versions of the Titchmarsh convolution theorem
for distributions on the circle are stated and proved
in Appendix~\ref{sect-titchmarsh-circle}.

\bigskip

\noindent
ACKNOWLEDGMENTS.
The author is grateful
to Alexander Komech for the suggestion
to consider
the discrete analog of the Klein-Gordon equation,
to Evgeny Gorin for interesting discussions,
to Juliette Chabassier and Patrick Joly
for the references and the preprint
of their paper \cite{SubCMAME}.
Special thanks to the anonymous referee for
pointing out the misprints.

\section{Definitions and main results}

\label{sect-disc}

In
\cite{ubk-arma},
we considered the weak attractor
of the $\mathbf{U}(1)$-invariant nonlinear Klein-Gordon equation
in one dimension,
coupled to a nonlinear oscillator located at $x=0$:
\begin{equation}\label{nlw}
\p\sb t^2\psi(x,t)=\Delta\psi(x,t)
-m^2\psi(x,t)
-\delta(x)W'(\abs{\psi(x,t)}^2)\psi(x,t),
\qquad
x\in\R,
\end{equation}
where 
$\psi(x,t)\in\C$
and $W(\cdot)$ is a real-valued
polynomial
which represents the potential energy of the oscillator:
\[
W(\abs{\psi}^2)
=C\sb 0\abs{\psi}^2
+C\sb 1\abs{\psi}^4
+\dots+C\sb p\abs{\psi}^{2(p+1)},
\qquad
p\ge 1,\quad C\sb p>0.
\]
Equation \eqref{nlw}
is a Hamiltonian system
with the Hamiltonian
\begin{equation}\label{def-energy-kg}
\mathscr{E}(\psi,\dot\psi)
=\frac{1}{2}\int\sb{\R}\big(
\abs{\dot\psi}^2
+\abs{\psi'}^2
+m^2\abs{\psi}^2
\big)\,dx
+\frac 1 2 W(\abs{\psi(0,t)}^2).
\end{equation}
In this paper, we will consider the discrete version of
equation \eqref{nlw}.
We pick $\varepsilon>0$ and $\tau>0$
and substitute the continuous variables
$(x,t)\in\R^n\times\R$
by
\[
x=\varepsilon \xxd,
\qquad
t=\tau \ttd,
\qquad
\mbox{where}
\quad
(\xxd,\,\ttd)\in\Z^n\times\Z.
\]

\begin{remark}
Note that
we may couple a nonlinear oscillator
to the Klein-Gordon field on the space-time lattice
in any dimension $n\ge 1$.
In the continuous case \cite{ubk-arma},
one can only consider the dimension $n=1$,
when the Sobolev estimates
(which we have due to the energy conservation)
ensure that the solution is continuous
as a function of $x$,
so that the nonlinear term
in \eqref{nlw}
is well-defined.
\end{remark}

From now on, we assume that
$(\xxd,\ttd)\in\Z^n\times\Z$
is a point on the space-time integer lattice.
Let $\psi\in l(\Z^n\times\Z,\C)$
be a complex-valued function
defined on this lattice.
We will indicate dependence
on the lattice points $\Z^n\times\Z$
by superscripts for the temporal dependence
and by subscripts for the spatial dependence,
so that
$\psi\sb{\xxd}\sp{\ttd}$
is the value of $\psi\in l(\Z^n\times\Z,\C)$
at the point $\xxd\in\Z^n$ at the moment $\ttd\in\Z$.

The Strauss-Vazquez finite-difference scheme
\cite{MR0503140}
applied to equation \eqref{nlw} takes the form
\begin{equation}\label{dkg-c}
\frac{1}{\tau^2}D\sb{\ttd}^2\psi\sb{\xxd}\sp{\ttd}
=
\frac{1}{\varepsilon^2}\bm{D}\sb{\xxd}^2\psi\sb{\xxd}\sp{\ttd}
-
m^2
\frac{\psi\sb{\xxd}\sp{\ttd+1}+\psi\sb{\xxd}\sp{\ttd-1}}{2}
+
\delta\sb{\xxd,0}
f\sp{\ttd},
\qquad
\xxd\in\Z^n,\quad\ttd\in\Z,
\end{equation}
where the nonlinear term is given by
\begin{equation}\label{def-f}
f\sp{\ttd}:=
\left\{
\begin{array}{l}
-\frac{W(\abs{\psi\sb{0}\sp{\ttd+1}}^2)-W(\abs{\psi\sb{0}\sp{\ttd-1}}^2)}
{\abs{\psi\sb{0}\sp{\ttd+1}}^2-\abs{\psi\sb{0}\sp{\ttd-1}}^2}
\cdot \frac{\psi\sb{0}\sp{\ttd+1}+\psi\sb{0}\sp{\ttd-1}}{2}
\qquad
\mbox{if}
\quad
\abs{\psi\sb{0}\sp{\ttd+1}}\ne\abs{\psi\sb{0}\sp{\ttd-1}};
\\
-W'(\abs{\psi\sb{0}\sp{\ttd+1}}^2)
\cdot \frac{\psi\sb{0}\sp{\ttd+1}+\psi\sb{0}\sp{\ttd-1}}{2}
\qquad
\mbox{if}
\quad
\abs{\psi\sb{0}\sp{\ttd+1}}=\abs{\psi\sb{0}\sp{\ttd-1}}.
\end{array}
\right.
\end{equation}
In \eqref{dkg-c},
we used the notations
\begin{equation}\label{def-ddot-delta}
D\sb{\ttd}^2\psi\sb{\xxd}\sp{\ttd}
=\psi\sb{\xxd}\sp{\ttd+1}-2\psi\sb{\xxd}\sp{\ttd}+\psi\sb{\xxd}\sp{\ttd-1},
\qquad
\bm{D}\sb{\xxd}^2\psi\sb{\xxd}\sp{\ttd}
=
\sum\sb{j=1}\sp{n}
\big(\psi\sb{\xxd+\e\sb j}\sp{\ttd}-2\psi\sb{\xxd}\sp{\ttd}+\psi\sb{\xxd-\e\sb j}\sp{\ttd}\big),
\end{equation}
with
\begin{equation}\label{def-ej}
\e\sb 1=(1,0,0,0,\dots)\in\Z^n,
\qquad
\e\sb 2=(0,1,0,0,\dots)\in\Z^n,
\qquad
\mbox{etc.}
\end{equation}

\begin{remark}
By the little B\'ezout theorem,
\[
\frac{W(\abs{\psi\sb{0}\sp{\ttd+1}}^2)-W(\abs{\psi\sb{0}\sp{\ttd-1}}^2)}
{\abs{\psi\sb{0}\sp{\ttd+1}}^2-\abs{\psi\sb{0}\sp{\ttd-1}}^2}
\]
is a polynomial of
$\abs{\psi\sb{0}\sp{\ttd\pm 1}}^2$.
This polynomial
coincides with the second line in \eqref{def-f}
when
$\abs{\psi\sb{0}\sp{\ttd-1}}
=\abs{\psi\sb{0}\sp{\ttd+1}}$.
\end{remark}

\begin{assumption}\label{ass-alpha-n}
$
\displaystyle
\frac{\tau}{\varepsilon}
=\frac{1}{\sqrt{n}}.
$
\end{assumption}

\begin{assumption}\label{ass-wp}
$
W(\lambda)=\sum\sb{q=0}\sp{p}C\sb{q}\lambda^{q+1},
$
where
$
\ p\in\N$,
$
\ C\sb{q}\in\R
$
for
$0\le q\le p$,
and
$
\ C\sb{p}>0.
$
\end{assumption}

We introduce the phase space
\begin{equation}
\E=l^2(\Z^n)\times l^2(\Z^n),
\qquad
\norm{(u,v)}\sb{\E}^2
=\norm{u}\sb{l^2}^2+\norm{v}\sb{l^2}^2,
\end{equation}
where
\[
\norm{u}\sb{l^2}^2
=\sum\sb{\xxd\in\Z^n}\abs{u\sb{\xxd}}^2,
\qquad
u\in l^2(\Z^n).
\]

We will denote by $l(\Z,\E)$
the space of functions of $\ttd\in\Z$ with values in $\E$.

\begin{definition}[Discrete energy]
The energy of the function 
$\psi\in l(\Z,\E)$
at the moment $\ttd\in\Z$
is
\begin{eqnarray}\label{def-energy-t}
&
E\sp{\ttd}
=
\sum\sb{\xxd\in\Z^n}
\varepsilon^n\Big[
\sum\sb{j=1}\sp{n}
\frac{\abs{\psi\sb{\xxd}\sp{\ttd+1}-\psi\sb{\xxd-\e\sb j}\sp{\ttd}}^2
+\abs{\psi\sb{\xxd}\sp{\ttd+1}-\psi\sb{\xxd+\e\sb j}\sp{\ttd}}^2
}{4n\tau^2}
+
m^2\frac{\abs{\psi\sb{\xxd}\sp{\ttd+1}}^2+\abs{\psi\sb{\xxd}\sp{\ttd}}^2}{4}
\Big]
\nonumber
\\
&
+
\frac{W(\abs{\psi\sb{0}\sp{\ttd+1}}^2)+W(\abs{\psi\sb{0}\sp{\ttd}}^2)}{4}.
\end{eqnarray}
\end{definition}

\begin{remark}
In the case $n=1$
the continuous limit
of the energy $E$ in \eqref{def-energy-t}
coincides with the classical
energy functional of the Klein-Gordon equation
interacting with an oscillator
described by the potential $W$; see \eqref{def-energy-kg}.
\end{remark}

We consider the Cauchy problem
\begin{equation}\label{dkg-cp}
\left\{
\begin{array}{l}
D\sb{\ttd}^2\psi
-
\frac{1}{n}\bm{D}\sb{\xxd}^2\psi
+
\tau^2 m^2
\frac{\psi\sb{0}\sp{\ttd+1}+\psi\sb{0}\sp{\ttd-1}}{2}
=\tau^2\delta\sb{\xxd,0}f\sp{\ttd},
\qquad
\xxd\in\Z^n,
\quad
\ttd\ge 1,
\\
\\
(\psi\sp{\ttd},\psi\sp{\ttd+1})\at{\ttd=0}=(u\sp{0},u\sp{1})\in\E=l^2(\Z^n)\times l^2(\Z^n),
\end{array}
\right.
\end{equation}
where $f\sp{\ttd}$ is defined by \eqref{def-f}.

\begin{theorem}[Well-posedness]
\label{theorem-gwp-dkg}
Let $n\in\N$.
\begin{enumerate}
\item
There is $\tau\sb 0>0$
such that
for any $\tau\in(0,\tau\sb 0)$
and for all $(u\sp{0},u\sp{1})\in\E$
the Cauchy problem \eqref{dkg-cp}
has a unique solution
$\psi\in l\sp\infty(\Z,l^2(\Z^n))$.

\item
The value of the energy functional 
is conserved:
$
E\sp{\ttd}=E\sp{0},
$
$
\ttd\in\Z.
$
\item
$\psi\sb{\xxd}\sp{\ttd}$
satisfies the \emph{a priori} estimate
\begin{equation}\label{l2-bound}
\varepsilon^n\norm{\psi\sp{\ttd}}\sb{l^2}^2
\le\frac{4}{m^2}
\big[E^0-\inf\sb{\lambda\ge 0}W(\lambda)\big],
\qquad
\ttd\in\Z.
\end{equation}

\item
For each $\ttd\in\Z$,
the map
$U(\ttd):(u\sp{0},u\sp{1})\mapsto(\psi\sp{\ttd},\psi\sp{\ttd+1})$
is continuous
in $\E$.
\item
For each $\ttd\in\Z$,
the map
$U(\ttd):(u\sp{0},u\sp{1})\mapsto(\psi\sp{\ttd},\psi\sp{\ttd+1})$
is weakly continuous in $\E$.
\end{enumerate}
\end{theorem}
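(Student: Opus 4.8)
The plan is to read \eqref{dkg-cp} as an explicit time-stepping recursion with a single implicit node. Solving for the top layer and using the notation of \eqref{def-ddot-delta}, at every site $\xxd\ne0$ the scheme reduces to
\[
\Big(1+\tfrac{\tau^2 m^2}{2}\Big)\psi\sb{\xxd}\sp{\ttd+1}
=2\psi\sb{\xxd}\sp{\ttd}-\Big(1+\tfrac{\tau^2 m^2}{2}\Big)\psi\sb{\xxd}\sp{\ttd-1}
+\tfrac1n\bm D\sb{\xxd}^2\psi\sb{\xxd}\sp{\ttd},
\]
whose right-hand side involves only the two lower layers; since the discrete Laplacian $\bm D\sb{\xxd}^2$ is bounded on $l^2(\Z^n)$, this determines $\psi\sp{\ttd+1}$ away from the origin and preserves membership in $l^2(\Z^n)$. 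At $\xxd=0$ the term $\tau^2 f\sp{\ttd}$ of \eqref{def-f} enters, and because $f\sp{\ttd}$ depends on the unknown $a:=\psi\sb0\sp{\ttd+1}$, the single value $a$ must be found from a scalar equation $\Phi(a)=c$ in $\C$, where $c$ collects the known layers and $\Phi(a):=(1+\tfrac{\tau^2 m^2}{2})a+\tau^2\,\tfrac{W(\abs{a}^2)-W(\abs{b}^2)}{\abs{a}^2-\abs{b}^2}\,\tfrac{a+b}{2}$ with $b:=\psi\sb0\sp{\ttd-1}$ known (and the second line of \eqref{def-f} used when $\abs{a}=\abs{b}$). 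Thus (i) reduces to showing that $\Phi\colon\C\to\C$ is a homeomorphism; granting this, the one-step map is well defined, and iterating it in both time directions produces the unique solution, which stays in $l^2(\Z^n)$ at each step by the remark above, hence in $l\sp\infty(\Z,l^2(\Z^n))$ once the bound of (iii) is established.

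To prove that $\Phi$ is a bijection I would use monotonicity and coercivity. By the little B\'ezout theorem (as noted after \eqref{def-f}) the secant quotient is a polynomial in $\abs{a}^2,\abs{b}^2$, so $\Phi$ is smooth, and its behavior as $\abs{a}\to\infty$ is governed by $\tau^2 C\sb p\abs{a}^{2p}\tfrac{a}{2}$ with $C\sb p>0$ (Assumption~\ref{ass-wp}); hence $\abs{\Phi(a)}\to\infty$ and $\Phi$ is proper. One then wants $\Re\big[(\Phi(a)-\Phi(a'))\overline{(a-a')}\big]\ge\kappa\abs{a-a'}^2$ for all $a,a'$, which gives injectivity and, together with properness, surjectivity and continuity of $\Phi^{-1}$; alternatively one may combine coercivity with local invertibility via Hadamard's global inverse theorem. \textbf{The main obstacle is precisely this scalar solvability with a threshold $\tau\sb0$ that is uniform in the data.} The point to verify is that the set on which the symmetrized Jacobian of the nonlinear correction fails to be positive is confined to a bounded region controlled by the coefficients of $W$ alone (outside it the leading monotone term dominates, while a large $\abs{b}$ only strengthens monotonicity); there the prefactor $\tau^2$ makes the correction harmless for $\tau<\tau\sb0$, yielding strict monotonicity with $\tau\sb0$ independent of $(u^0,u^1)$.

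For (ii) I would test \eqref{dkg-cp} against $\overline{\psi\sb{\xxd}\sp{\ttd+1}-\psi\sb{\xxd}\sp{\ttd-1}}$, sum over $\xxd\in\Z^n$, and take real parts. Each $\psi\sp{\ttd}\in l^2(\Z^n)$ by construction, so Abel summation is justified and the discrete d'Alembertian and mass terms telescope into $E\sp{\ttd}-E\sp{\ttd-1}$ in the form \eqref{def-energy-t}. The nonlinear term telescopes as well: a direct computation using $\Re\big[(\bar a+\bar b)(a-b)\big]=\abs{a}^2-\abs{b}^2$ gives the identity $\Re\big[\overline{f\sp{\ttd}}\,(\psi\sb0\sp{\ttd+1}-\psi\sb0\sp{\ttd-1})\big]=-\tfrac12\big(W(\abs{\psi\sb0\sp{\ttd+1}}^2)-W(\abs{\psi\sb0\sp{\ttd-1}}^2)\big)$, which matches the increment of the potential term in \eqref{def-energy-t} (the case $\abs{\psi\sb0\sp{\ttd+1}}=\abs{\psi\sb0\sp{\ttd-1}}$ following by continuity); this proves $E\sp{\ttd}=E^0$. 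For (iii), in \eqref{def-energy-t} the gradient–kinetic sum is a sum of squared moduli, hence $\ge0$ — this is where Assumption~\ref{ass-alpha-n} is used, the grid ratio $\tau/\varepsilon=1/\sqrt n$ being what recasts the conserved energy in this manifestly nonnegative form — while the potential is bounded below by $\inf\sb{\lambda\ge0}W(\lambda)$, which is $\le W(0)=0$. Dropping the nonnegative gradient–kinetic part together with the $\psi\sp{\ttd+1}$ mass term and estimating the potential from below leaves $\tfrac{m^2}{4}\varepsilon^n\norm{\psi\sp{\ttd}}\sb{l^2}^2\le E^0-\inf\sb{\lambda\ge0}W(\lambda)$, which is \eqref{l2-bound}.

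Finally, (iv) and (v) follow from the structure of the one-step map. For (iv), on a bounded set of data the bound \eqref{l2-bound} (whose right-hand side is bounded there, since $E^0$ is controlled by $\norm{(u^0,u^1)}\sb{\E}$) confines the whole trajectory to a fixed ball of $l^2(\Z^n)$; on that ball the polynomial nonlinearity and $\Phi^{-1}$ are Lipschitz, so each one-step map, and hence the finite composition $U(\ttd)$, is locally Lipschitz and in particular continuous. For (v), I would use that in $l^2(\Z^n)$ weak convergence amounts to boundedness together with componentwise convergence. If $(u\sb k^0,u\sb k^1)\rightharpoonup(u^0,u^1)$ in $\E$, then the sequence is bounded, so by \eqref{l2-bound} the solutions are bounded in $l^2(\Z^n)$ uniformly in $k$ and $\ttd$; and since the one-step map is local — the explicit update couples only nearest neighbors and the implicit correction only the site $\xxd=0$, with the solution of $\Phi(a)=c$ depending continuously on $(b,c)$ — componentwise convergence of the data propagates to componentwise convergence of $U(\ttd)(u\sb k^0,u\sb k^1)$. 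Boundedness and componentwise convergence give the claimed weak convergence, and iterating over the finitely many steps defining $U(\ttd)$ finishes (v).
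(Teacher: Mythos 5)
Your overall architecture mirrors the paper's Appendix~\ref{sect-wp}: time-stepping with a single implicit node at $\xxd=0$, the energy identity obtained by pairing the equation with $\overline{\psi\sb\xxd\sp{\ttd+1}-\psi\sb\xxd\sp{\ttd-1}}$ and using $\abs{u}^2-\abs{v}^2=\Re\big[(\bar u-\bar v)(u+v)\big]$, positivity of the energy under the grid-ratio assumption for \eqref{l2-bound}, and locality plus uniform bounds for weak continuity; your parts (ii), (iii), (v) are essentially the paper's arguments. The genuine gap is in part (i), exactly at what you call ``the main obstacle.'' Your claim — that the region where the symmetrized Jacobian of the nonlinear correction fails to be positive is bounded in terms of the coefficients of $W$ alone, so that the prefactor $\tau^2$ neutralizes it with $\tau\sb 0$ uniform in the data — is \emph{false} under Assumption~\ref{ass-wp}. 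Write $B(\lambda,\mu)$ for the secant quotient of $W$, $\lambda=\abs{a}^2$, $\mu=\abs{b}^2$, $c=a+b$. In real $2\times2$ notation the symmetrized Jacobian of $a\mapsto\frac{\tau^2}{2}B(\abs{a}^2,\mu)(a+b)$ is $\frac{\tau^2}{2}\big[B\,I+\p\sb\lambda B\,(ac^{\top}+ca^{\top})\big]$, whose smallest eigenvalue is $\frac{\tau^2}{2}\big[B+\p\sb\lambda B\big(\Re(\bar a(a+b))-\abs{a}\abs{a+b}\big)\big]$. Now take $W(\lambda)=\lambda^{p+1}$ and the explicit configuration $a=e^{2\pi i/3}b$, so that $\lambda=\mu$, $\Re(\bar a(a+b))=\mu/2$ and $\abs{a}\abs{a+b}=\mu$; this eigenvalue equals
\[
\frac{\tau^2}{2}\Big[B(\mu,\mu)-\frac{\mu}{2}\,\p\sb\lambda B(\mu,\mu)\Big]
=\frac{\tau^2}{2}\Big[(p+1)\mu^p-\frac{\mu}{2}\cdot\frac{p(p+1)}{2}\mu^{p-1}\Big]
=\frac{\tau^2}{2}(p+1)\Big(1-\frac{p}{4}\Big)\mu^p,
\]
which is negative and unbounded below as $\mu\to\infty$ whenever $p\ge 5$. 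Since $b=\psi\sb 0\sp{\ttd-1}$ ranges over all of $\C$ as the data range over $\E$, for every fixed $\tau>0$ the monotonicity inequality $\Re\big[(\varPhi(a)-\varPhi(a'))\overline{(a-a')}\big]\ge\kappa\abs{a-a'}^2$ fails for large $\abs{a}=\abs{b}$; no smallness of $\tau$ and no data-independent $\tau\sb 0$ can repair this, and large $\abs{b}$ does \emph{not} ``strengthen monotonicity.'' Your intuition is correct only for $p\le 4$ — which is precisely why the paper's Lemma~\ref{lemma-four} is stated only for powers $q\le4$, and why the simple criterion of Lemma~\ref{lemma-u-0} is applied in Theorem~\ref{theorem-pol} only to quartic potentials with nonnegative coefficients.

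What is missing is the structural observation on which the paper's proof (Theorems~\ref{theorem-e}, \ref{theorem-u}, \ref{theorem-pol}) rests: since the coefficient $1+\tau^2B$ is \emph{real}, any solution of $(a+b)\big(1+\tau^2B(\abs{a}^2,\abs{b}^2)\big)=\xi$ satisfies $a+b=s\xi$ with $s\in\R$, so existence and uniqueness reduce to a one-variable real equation $f(s)=1$. Two distinct solutions could then differ only along the direction $h\parallel a+b$, so one needs positivity of the Jacobian quadratic form only in that single direction, namely of $1+\tau^2\big[B+2\p\sb\lambda B\,\Re(\bar a(a+b))\big]$ with $\Re(\bar a(a+b))\in[\lambda-\sqrt{\lambda\mu},\,\lambda+\sqrt{\lambda\mu}]$ — the dangerous directions exhibited above never arise. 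The boundedness below of $K^\pm=B+2\p\sb\lambda B(\lambda\pm\sqrt{\lambda\mu})$ for an \emph{arbitrary} confining polynomial is the actual content of the paper's Lemma~\ref{lemma-31}, a homogeneity computation for which your proposal has no substitute. Your fallback via properness plus Hadamard's global inverse theorem is viable in principle, but it requires nonvanishing of the full (non-symmetrized) Jacobian, whose determinant factors as $\alpha\big[\alpha+\tau^2\p\sb\lambda B\,\Re(\bar a(a+b))\big]$ with $\alpha=1+\frac{\tau^2m^2}{2}+\frac{\tau^2}{2}B$; controlling the second factor is again exactly Lemma~\ref{lemma-31}. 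Since your parts (i), (iv) and (v) all rely on solvability of the implicit step and Lipschitz dependence of its solution on $(b,c)$, this gap propagates through the remainder of the proof.
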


The main part of this theorem
(all the statements but the last one)
is proved in \cite{xt-discrete-rjmp};
we reproduce this proof in Appendix~\ref{sect-wp}
(the existence, uniqueness,
and continuous dependence on continuous data
are proved in Theorems~\ref{theorem-e}, \ref{theorem-u},
and~\ref{theorem-pol},
and the a priori estimates are proved in Theorem~\ref{theorem-a-priori}).
Let us mention that the bound \eqref{l2-bound}
follows from the energy conservation
since the first term in \eqref{def-energy-t} is nonnegative,
while $\inf\sb{\lambda\ge 0}W(\lambda)>-\infty$
due to Assumption~\ref{ass-wp}.

Let us sketch the proof of the weak continuity of $U(T)$,
which is the last statement of the theorem.
Let $\Psi\sb j\in\E$,
$j\in\N$,
be a sequence in $\E$
weakly convergent to some $\Psi\in\E$.
By the Banach-Steinhaus theorem,
$\Psi\sb j$, $j\in\N$, are uniformly bounded in $\E$;
so are $U(\ttd)(\Psi\sb j)$.
Now one can see that the weak convergence
of
$U(\ttd)(\Psi\sb j)$
to $U(\ttd)(\Psi)$
in $\E$
follows from the continuity of $U(\ttd)$ in $\E$,
from the finite speed of propagation
(the value
$\psi\sb{\xxd}\sp{\ttd}$
only depends on the initial data
$(u\sb{\yyd}\sp{0},u\sb{\yyd}\sp{1})$
in the ball
$\abs{\yyd}\le \abs{\xxd}+\abs{\ttd}$),
and from the convergence
$\Psi\sb j\to \Psi$
in the topology of $l^2(B)$
for any bounded set $B\subset\Z^n$.

We will use the standard notation
\[
\torus:=\R\mod 2\pi.
\]

\begin{definition}[Solitary waves]
\label{def-solitary-waves}
\begin{enumerate}
\item
The solitary waves of equation \eqref{dkg-c}
are solutions of the form
\begin{equation}\label{solitary-waves}
\psi\sp{\ttd}
=\sum\sb{k=1}\sp{N}
\phi\sb{k} e^{-i\omega\sb{k} \ttd},
\qquad
\ttd\in\Z,
\qquad
{\rm where}\quad
\phi\sb{k}\in l^2(\Z^n),
\quad
\omega\sb{k}\in\torus.
\end{equation}
\item
The solitary manifold is the set
\begin{equation}\label{solitary-manifold}
\bS
=
\left\{
\big(\psi\sp{\ttd},\psi\sp{\ttd+1})\at{\ttd=0}\right\}\subset
\E=l^2(\Z^n)\times l^2(\Z^n),
\end{equation}
where $\psi\sp{\ttd}$
are solitary wave solutions to \eqref{dkg-c}
of the form
\eqref{solitary-waves}.
\end{enumerate}
\end{definition}
The set $\bS$ is nonempty since
$\phi e^{-i\omega\ttd}$
with
$\phi\equiv 0$ is formally a solitary wave
corresponding to any $\omega\in\torus$. 

Define
\begin{equation}\label{def-omega-m}
\omega\sb m:=\arccos\Big(1+\frac{\tau^2 m^2}{2}\Big)^{-1}.
\end{equation}

\begin{assumption}\label{ass-m-is-small}
$\omega\sb m<\frac{\pi}{4(p+1)}$,
where $p\in\N$ is defined in 
Assumption~\ref{ass-wp}.
\end{assumption}

This assumption is needed so that
the Titchmarsh convolution theorem
for distributions on the circle
(see Theorem~\ref{theorem-titchmarsh-circle} below)
will be applicable for the analysis of
omega-limit trajectories.
One can see from \eqref{def-omega-m}
that
for any fixed $m>0$
Assumption~\ref{ass-m-is-small} is satisfied
as long as the time step $\tau>0$ is sufficiently small.

\medskip

Our main result is that the weak attractor
of all finite energy solutions to \eqref{dkg-c} 
coincides with the solitary manifold $\bS$.

\begin{theorem}[Solitary manifold as the weak attractor]
\label{main-theorem-dkg}
Assume that $\tau\in(0,\tau\sb 0)$,
with $\tau\sb 0>0$ as in Theorem~\ref{theorem-gwp-dkg}.
Let Assumptions~\ref{ass-wp} and~\ref{ass-m-is-small} be satisfied.
Then:
\begin{enumerate}
\item
For any initial data
$(u\sp{0},u\sp{1})\in\E$
the solution to the Cauchy problem
\eqref{dkg-cp}
weakly converges to $\bS$
as $\ttd\to\pm\infty$.
\item
The frequencies of the solitary waves \eqref{solitary-waves}
satisfy
$\omega\in\varOmega\sb 0\cup\varOmega\sb\pi$
($\omega\sb k\in\overline{\varOmega\sb 0}\cup\overline{\varOmega\sb\pi}$
if $n\ge 5$),
where the spectral gaps
$\varOmega\sb 0$ and $\varOmega\sb\pi$
are defined by
\begin{equation}\label{def-spectral-gaps}
\varOmega\sb 0=(-\omega\sb m,\omega\sb m)\subset\torus,
\qquad
\varOmega\sb\pi=(\pi-\omega\sb m,\pi+\omega\sb m)\subset\torus,
\end{equation}
with $\omega\sb m=\arccos\big(1+\frac{\tau^2 m^2}{2}\big)^{-1}$.
\item
The set of all solitary wave solutions
consists of solutions of the following three types:
\begin{enumerate}
\item
One-frequency solitary waves of the form
\begin{equation}
\psi\sb\xxd\sp\ttd
=
\phi\sb\xxd
e^{-i\omega\ttd},
\qquad
\xxd\in\Z^n,
\quad
\ttd\in\Z,
\end{equation}
with
$\phi\in l^2(\Z^n)$.
The corresponding component of the solitary manifold
is generically two-dimensional.
\item
Two-frequency solitary waves of the form
\begin{equation}
\psi\sb\xxd\sp\ttd
=
\big(1+(-1)^{\ttd+\Lambda\cdot\xxd}\sigma\big)\phi\sb\xxd
e^{-i\omega\ttd},
\qquad
\xxd\in\Z^n,
\quad
\ttd\in\Z,
\end{equation}
with
$\sigma\in\{\pm 1\}$,
$\Lambda=(1,\dots,1)\in\Z^n$,
and
$\phi\in l^2(\Z^n)$.
The corresponding component of the solitary manifold
is generically two-dimensional.
\item
Four-frequency solitary waves of the form
\begin{equation}
\psi\sb\xxd\sp\ttd
=
\big(1+(-1)^{\ttd+\Lambda\cdot\xxd}\big)\phi\sb\xxd
e^{-i\omega\ttd}
+
\big(1-(-1)^{\ttd+\Lambda\cdot\xxd}\big)\theta\sb\xxd
e^{-i\omega'\ttd},
\qquad
\xxd\in\Z^n,
\quad
\ttd\in\Z,
\end{equation}
with
$\Lambda=(1,\dots,1)\in\Z^n$
and
$\phi,\,\theta\in l^2(\Z^n)$.
The corresponding component of the solitary manifold
is generically four-dimensional.
\end{enumerate}
\end{enumerate}
\end{theorem}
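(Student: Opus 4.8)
The plan is to follow the omega-limit strategy, reducing the attraction statement (i) to a spectral statement about individual limiting trajectories. First I would fix a finite energy solution $\psi$ and a sequence $T_j\to+\infty$ (the case $T_j\to-\infty$ being symmetric) and consider the time-shifted solutions $T\mapsto\psi^{\ttd+T_j}$. By the a priori bound \eqref{l2-bound} these are uniformly bounded in $l^\infty(\Z,l^2(\Z^n))$, so after passing to a subsequence they converge weakly, slice by slice in $\ttd$, to a limiting trajectory $\beta\in l^\infty(\Z,l^2(\Z^n))$, the omega-limit trajectory. The weak continuity of the evolution $U(\ttd)$ (the last statement of Theorem~\ref{theorem-gwp-dkg}) guarantees that $\beta$ is itself a finite energy solution of \eqref{dkg-c}. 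Statement (i) is then equivalent to showing that every such $\beta$ is a solitary wave in the sense of \eqref{solitary-waves}; this is the heart of the argument, carried out in Sections~\ref{sect-omega-limit}--\ref{sect-nsa}.

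To locate the spectrum I would first separate the dispersive component. Inserting $\psi_\xxd^\ttd=e^{i(k\cdot\xxd-\omega\ttd)}$ into the homogeneous part of \eqref{dkg-c} yields the dispersion relation
\[
\big(2+\tau^2 m^2\big)\cos\omega=\frac{2}{n}\sum_{j=1}^n\cos k_j,
\]
so that real wave numbers $k$ exist precisely when $\abs{\cos\omega}\le\big(1+\tfrac{\tau^2 m^2}{2}\big)^{-1}=\cos\omega_m$. The complement of this continuous spectrum on the circle is exactly the union $\varOmega_0\cup\varOmega_\pi$ of \eqref{def-spectral-gaps}. I would then show, using local energy decay for the discrete equation (Section~\ref{sect-dispersive}) together with the regularity analysis of Section~\ref{sect-regularity}, that any spectral content of $\beta$ lying in the continuous spectrum radiates to spatial infinity and cannot survive in the weak limit; hence the spectrum of $\beta$, and in particular of the boundary value $\beta_0^\ttd$, is confined to $\overline{\varOmega_0}\cup\overline{\varOmega_\pi}$. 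The distinction between the open gaps (for $n\le 4$) and their closures (for $n\ge 5$) reflects the strength of the dispersive decay near the gap edges $\omega=\pm\omega_m,\ \pi\pm\omega_m$, where the group velocity vanishes: in dimensions $n\ge 5$ the available estimates are only strong enough to exclude the open gaps. This gives statement (ii).

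For statement (iii) the key structural observation is that \eqref{dkg-c} is invariant under the $\Z_2$ transformation $\psi_\xxd^\ttd\mapsto(-1)^{\ttd+\Lambda\cdot\xxd}\psi_\xxd^\ttd$ with $\Lambda=(1,\dots,1)$: a direct computation, using $\Lambda\cdot\e_j=1$, shows that every term of the free operator and of the nonlinearity acquires the common factor $-(-1)^{\ttd+\Lambda\cdot\xxd}$ (the contributions $4\psi$ and $\tfrac1n\cdot 4n\,\psi$ cancelling), so solutions are permuted among themselves. In frequency this sends $\omega\mapsto\omega+\pi$ and interchanges the two gaps $\varOmega_0$ and $\varOmega_\pi$. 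I would next derive, via the quasimeasure technique of Section~\ref{sect-spectral-relation}, the spectral relation for $\beta_0$: viewed as a function of $\ttd$, the nonlinear term $f^\ttd$ has spectrum contained in the signed combinations of $2r+1$ frequencies from $\supp\beta_0$ with $0\le r\le p$, while the equation forces this spectrum back into $\overline{\varOmega_0}\cup\overline{\varOmega_\pi}$. Applying the Titchmarsh convolution theorem on the circle (Theorem~\ref{theorem-titchmarsh-circle}) — which is legitimate precisely because Assumption~\ref{ass-m-is-small}, $\omega_m<\frac{\pi}{4(p+1)}$, keeps every such combination inside an arc of length below $\pi/2$ — I would conclude that $\supp\beta_0$ is finite and, tracking the $\Z_2$ symmetry, that it can only be a single point of $\varOmega_0$, a pair $\{\omega,\omega+\pi\}$ split between the two gaps, or two such pairs. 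Matching these against the explicit forms and solving the resulting stationary equations produces the one-, two-, and four-frequency solitary waves of Section~\ref{sect-solitary-waves}.

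The main obstacle, and the reason the discrete problem is genuinely harder than its continuous counterpart, is that the spectral gap is \emph{disconnected}: it is the union of two arcs around $0$ and $\pi$ rather than a single interval. The classical Titchmarsh convolution theorem relies on the order structure of $\R$ and on convexity of supports, neither of which is available on $\torus$, so the reduction of the spectrum to a single point used in \cite{ubk-arma} simply fails here. Overcoming this requires the circle version developed in Appendix~\ref{sect-titchmarsh-circle}, together with the delicate bookkeeping — controlled by Assumption~\ref{ass-m-is-small} — ensuring that the frequency combinations produced by the degree-$2p+1$ nonlinearity never wrap around the circle. This is exactly the mechanism that forces the appearance of the two- and four-frequency solitary waves, which have no continuous analogue.
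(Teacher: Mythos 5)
Your overall route is essentially the paper's: reduce part (i) to the claim that every omega-limit trajectory is a solitary wave (this is Proposition~\ref{proposition-sw}), confine the spectrum of the limiting trajectory to the gaps by separating the dispersive component and doing the regularity analysis on the continuous spectrum, and then use the circle version of the Titchmarsh convolution theorem on the spectral relation to cut the support of $\tilde\zeta_0$ down to at most four points. Your $\Z_2$ observation --- invariance of \eqref{dkg-c} under $\psi_\xxd^\ttd\mapsto(-1)^{\ttd+\Lambda\cdot\xxd}\psi_\xxd^\ttd$ --- is correct (I checked the cancellation you describe), and it is exactly what the paper encodes in the relation $\mathcal{G}_\xxd(\omega+\pi)=-(-1)^{\Lambda\cdot\xxd}\mathcal{G}_\xxd(\omega)$ of Lemma~\ref{lemma-lap}.

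There is, however, a genuine gap in your treatment of parts (ii) and (iii). These are statements about \emph{all} elements of $\bS$, which by Definition~\ref{def-solitary-waves} includes solutions $\sum_{k=1}^{N}\phi_k e^{-i\omega_k\ttd}$ with an arbitrary number $N$ of frequencies. Your argument classifies omega-limit trajectories of finite energy solutions, but nothing in your proposal shows that an arbitrary solitary wave --- say a hypothetical seven-frequency one --- must arise as an omega-limit trajectory, so nothing rules it out of $\bS$. The paper closes this loop with Lemma~\ref{lemma-35}: every solitary wave is its \emph{own} omega-limit trajectory. This needs a separate, non-obvious recurrence argument: one constructs $\ttd_j\to+\infty$ with $\omega_k\ttd_j\to 0$ in $\torus$ simultaneously for all $k$, by iterating the dichotomy that $\{\omega_k\ttd_j\}$ is either dense in $\torus$ (pass to a subsequence) or contained in a finite subgroup $\{k\pi/q\}$ (replace $\ttd_j$ by a suitable multiple). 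Only with this lemma do (ii) and (iii) follow from the classification of omega-limit trajectories. Two smaller points: the $n\le 4$ versus $n\ge 5$ dichotomy is not a matter of the ``strength of the dispersive decay'' but of whether $1/a(\xi,\omega)$ is square-integrable at the gap edges --- near $\xi=0$ one has $a(\xi,\omega_b)=\tfrac1n\xi^2+o(\xi^2)$, and $\abs{\xi}^{-2}\in L^2(\torus^n)$ exactly when $n\ge 5$ (Lemma~\ref{lemma-finite-norm}) --- i.e.\ whether an $l^2$ profile with an edge frequency can exist at all; and the ``generically two-/four-dimensional'' claims in (iii) still require the explicit solvability analysis of Section~\ref{sect-solitary-waves} (Lemmas~\ref{lemma-one-frequency}, \ref{lemma-two-frequencies}, \ref{lemma-four-frequencies}), which your sketch points to but does not supply.
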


Definition \ref{def-solitary-waves}
and Theorem~\ref{main-theorem-dkg}
show that the set $\bS$
satisfies the following two properties:
\begin{enumerate}
\item
It is invariant under the evolution
described by equation \eqref{dkg-c}.
\item
It is the smallest set
to which all finite energy solutions converge weakly.
\end{enumerate}

It follows that $\bS$ is the weak attractor
of all finite energy solutions to \eqref{dkg-c}.


\begin{remark}
The convergence
of any finite energy solution
to $\bS$,
stated in Theorem~\ref{main-theorem-dkg},
also holds in certain normed spaces.
For example, let $s>0$;
for $u\in l^2(\Z^n)$,
denote
$\norm{u}\sb{l^2\sb{-s}}
=\big(\sum\sb{\xxd\in\Z^n}\abs{u\sb\xxd}^2(1+\xxd^2)^{-s}\big)^{1/2}$,
and
for $(u,v)\in\E$,
denote
$\norm{(u,v)}\sb{\E\sb{-s}}
=(\norm{u}\sb{l^2\sb{-s}}+\norm{v}\sb{l^2\sb{-s}})^{1/2}$.
Then,
for any finite energy solution, one has
\begin{equation}\label{attraction}
\dist\sb{\E\sb{-s}}
\big(
(\psi\sp{\ttd},\psi\sp{\ttd+1}),\bS
\big)
\mathop{\longrightarrow}\limits\sb{\ttd\to\pm\infty}0,
\end{equation}
where
$\dist\sb{\E\sb{-s}}(\Psi,\bS)=\inf\sb{\Phi\in\bS}
\dist\sb{\E\sb{-s}}(\Psi,\Phi)$.
This follows from the fact that
the weak convergence in $\E$
implies the strong convergence in $\E\sb{-s}$,
for any $s>0$.
\end{remark}

\section{Omega-limit trajectories}
\label{sect-omega-limit}

Here we explain our approach to the proof of Theorem 
\ref{main-theorem-dkg}.
Since the equation is time-reversible,
it suffices to prove the theorem
for $\ttd\to+\infty$.
The following notion of  omega-limit trajectory
plays the crucial role in our approach.

\begin{definition}
[Omega-limit points and omega-limit trajectories]
\label{def-omega-pt}
\ \begin{enumerate}
\item
$(z\sp 0, z\sp 1)\in\E$ is an omega-limit point
of $\psi\in l(\Z,l^2(\Z^n))$
if there is a sequence $\ttd\sb j\to+\infty$
such that
$(\psi\sp{\ttd\sb j},\psi\sp{\ttd\sb j+1})$
converges to
$(z\sp 0, z\sp 1)$
weakly in $\E$.
We denote the set of all omega-limit points
of $\psi\in l(\Z,l^2(\Z^n))$ by $\upomega(\psi)$.
%
\item
An omega-limit trajectory of the solution $\psi\sp{\ttd}\sb{\xxd}$
to \eqref{dkg-cp}
is a solution $\zeta\sb{\xxd}\sp{\ttd}$
to the discrete nonlinear Klein-Gordon equation
\eqref{dkg-c},
\begin{equation}\label{kgz}
D\sb{\ttd}^2\zeta\sp{\ttd}\sb{\xxd}
-\frac{1}{n}\bm{D}\sb{\xxd}^2\zeta\sp{\ttd}\sb{\xxd}
+\tau^2 m^2\frac{\zeta\sb{\xxd}\sp{\ttd+1}+\zeta\sb{\xxd}\sp{\ttd-1}}{2}
=\tau^2\delta\sb{\xxd,0}g\sp{\ttd},\qquad
\xxd\in \Z^n,\quad \ttd\in\Z,
\end{equation}
where
(Cf. \eqref{def-f})
\begin{equation}\label{def-g}
g\sp{\ttd}
=
\left\{
\begin{array}{l}
-
\frac{W(\abs{\zeta\sb{0}\sp{\ttd+1}}^2)-W(\abs{\zeta\sb{0}\sp{\ttd-1}}^2)}
{\abs{\zeta\sb{0}\sp{\ttd+1}}^2-\abs{\zeta\sb{0}\sp{\ttd-1}}^2}
\frac{\zeta\sb{0}\sp{\ttd+1}+\zeta\sb{0}\sp{\ttd-1}}{2}
\quad
\mbox{if}
\quad
\abs{\zeta\sb{0}\sp{\ttd+1}}^2\ne \abs{\zeta\sb{0}\sp{\ttd-1}}^2,
\\
-
W'(\abs{\zeta\sb{0}\sp{\ttd+1}}^2)
\frac{\zeta\sb{0}\sp{\ttd+1}+\zeta\sb{0}\sp{\ttd-1}}{2}
\quad
\mbox{if}
\quad
\abs{\zeta\sb{0}\sp{\ttd+1}}^2=\abs{\zeta\sb{0}\sp{\ttd-1}}^2,
\end{array}
\right.
\qquad \ttd\in\Z,
\end{equation}
with the initial data
at an omega-limit point
of $\psi\sp{\ttd}\sb{\xxd}$:
\[
(\zeta\sp\ttd,\zeta\sp{\ttd+1})\at{\ttd=0}
=(z\sp 0, z\sp 1)\in\upomega(\psi).
\]
\end{enumerate}
\end{definition}

\begin{lemma}
If
$(z\sp 0, z\sp 1)=\mathop{\mbox{\rm w-lim}}\limits\sb{\ttd\sb j\to\infty}
(\psi\sp{\ttd\sb j},\psi\sp{\ttd\sb j+1})$
(weakly in $\E$)
is an omega-limit point of $\psi\in l(\Z,l^2(\Z^n))$
and if
$\zeta\in l(\Z,l^2(\Z^n))$
is the omega-limit trajectory
with
\[
(\zeta\sp{\ttd},\zeta\sp{\ttd+1})\at{\ttd=0}=(z\sp 0, z\sp 1),
\]
then $\psi\sp{\ttd\sb j+\ttd}\to\zeta\sp{\ttd}$,
weakly in $l^2(\Z^n)$,
and in particular there is the convergence
\begin{equation}\label{psij-to-zeta}
\zeta\sp{\ttd}\sb{\xxd}
=\lim\sb{\ttd\sb j\to\infty}\psi\sp{\ttd\sb j+\ttd}\sb{\xxd},
\qquad \xxd\in\Z^n,
\quad
\ttd\in\Z.
\end{equation}
\end{lemma}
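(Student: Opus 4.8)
The plan is to deduce the statement entirely from the weak continuity of the evolution operator $U(\ttd)$, established as the last item of Theorem~\ref{theorem-gwp-dkg}. First I would exploit the fact that the scheme \eqref{dkg-c} is autonomous. For each fixed $j$, the time-shifted sequence $\ttd\mapsto\psi\sp{\ttd\sb j+\ttd}$ again solves \eqref{dkg-c}, and at $\ttd=0$ it carries the data $(\psi\sp{\ttd\sb j},\psi\sp{\ttd\sb j+1})$; by the uniqueness part of Theorem~\ref{theorem-gwp-dkg} it must coincide with the solution produced by $U$. This yields the intertwining identity
\[
U(\ttd)\big(\psi\sp{\ttd\sb j},\psi\sp{\ttd\sb j+1}\big)
=\big(\psi\sp{\ttd\sb j+\ttd},\psi\sp{\ttd\sb j+\ttd+1}\big),
\qquad \ttd\in\Z,\quad j\in\N.
\]
By the same token, since $\zeta$ solves \eqref{dkg-c} with data $(z\sp 0,z\sp 1)$ at $\ttd=0$, one has $U(\ttd)(z\sp 0,z\sp 1)=(\zeta\sp{\ttd},\zeta\sp{\ttd+1})$.

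Next I would fix $\ttd\in\Z$ and pass to the limit $\ttd\sb j\to\infty$. The hypothesis is that $(\psi\sp{\ttd\sb j},\psi\sp{\ttd\sb j+1})$ converges weakly in $\E$ to $(z\sp 0,z\sp 1)$, so applying the weak continuity of $U(\ttd)$ to this sequence gives
\[
\big(\psi\sp{\ttd\sb j+\ttd},\psi\sp{\ttd\sb j+\ttd+1}\big)
=U(\ttd)\big(\psi\sp{\ttd\sb j},\psi\sp{\ttd\sb j+1}\big)
\;\longrightarrow\;
U(\ttd)(z\sp 0,z\sp 1)
=(\zeta\sp{\ttd},\zeta\sp{\ttd+1})
\]
weakly in $\E$. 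Reading off the first component in $\E=l^2(\Z^n)\times l^2(\Z^n)$ gives $\psi\sp{\ttd\sb j+\ttd}\to\zeta\sp{\ttd}$ weakly in $l^2(\Z^n)$ for every $\ttd$. Finally, for each fixed $\xxd\in\Z^n$ the evaluation $u\mapsto u\sb{\xxd}=\langle u,\delta\sb{\xxd}\rangle$ is a bounded linear functional on $l^2(\Z^n)$, so the weak convergence specializes to the pointwise convergence $\psi\sp{\ttd\sb j+\ttd}\sb{\xxd}\to\zeta\sp{\ttd}\sb{\xxd}$, which is exactly \eqref{psij-to-zeta}.

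The only substantial ingredient here is the weak continuity of $U(\ttd)$, and that has already been supplied (and sketched) within Theorem~\ref{theorem-gwp-dkg}; the a priori bound \eqref{l2-bound} moreover ensures that the family $(\psi\sp{\ttd\sb j},\psi\sp{\ttd\sb j+1})$ is bounded in $\E$, so that weak limit points such as $(z\sp 0,z\sp 1)$ exist in the first place. Consequently I do not expect a serious obstacle: the one point requiring care is the autonomy-and-uniqueness step, which must be invoked correctly so that the weak limit of the time-translated solutions is identified with the omega-limit trajectory $\zeta$, rather than with some unrelated solution of \eqref{dkg-c}. Once the intertwining identity and the weak continuity are in place, no further estimates are needed.
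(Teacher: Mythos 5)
Your proposal is correct and follows exactly the route the paper takes: the paper's own proof is the one-line observation that the lemma ``immediately follows from the weak continuity of $U(\ttd)$'' stated in Theorem~\ref{theorem-gwp-dkg}, and your write-up simply makes explicit the routine details (the autonomy/uniqueness intertwining identity and the passage from weak $l^2$ convergence to pointwise convergence via the functionals $u\mapsto u\sb{\xxd}$) that the paper leaves implicit.
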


\begin{proof}
This immediately follows from the weak continuity
of $U(\ttd)$ stated in Theorem~\ref{theorem-gwp-dkg}.
\end{proof}


We will deduce Theorem~\ref{main-theorem-dkg}
from the following proposition.

\begin{proposition}\label{proposition-sw}
Under the conditions of  Theorem~\ref{main-theorem-dkg},
any omega-limit trajectory $\zeta\sp{\ttd}\sb{\xxd}$
of some finite energy solution
is a
solitary wave
of one of the following types:
\begin{enumerate}
\item
$\zeta\sp\ttd\sb\xxd=\phi e^{-i\omega\ttd}$,
with
$\phi\in l^2(\Z^n)$
and
$\omega\in\varOmega\sb 0\cup\varOmega\sb\pi$
($\omega\in\overline{\varOmega\sb 0}\cup\overline{\varOmega\sb\pi}$
if $n\ge 5$);
\item
$\zeta\sb\xxd\sp\ttd=
(1+(-1)\sp{\ttd+\Lambda\cdot\xxd}\sigma)
\phi\sb\xxd e^{-i\omega\ttd}$,
with
$\sigma\in\{\pm 1\}$,
$\phi\in l^2(\Z^n)$,
and
$\omega\in\varOmega\sb 0$
($\omega\in\overline{\varOmega\sb 0}$
if $n\ge 5$);
\item
$\zeta\sp\ttd\sb\xxd
=
(1+(-1)\sp{\ttd+\Lambda\cdot\xxd}))\phi\sb\xxd e^{-i\omega\ttd}
+
(1-(-1)\sp{\ttd+\Lambda\cdot\xxd}))\theta\sb\xxd e^{-i\omega'\ttd}$,
with
$\phi,\,\theta\in l^2(\Z^n)$
and
$\omega,\,\omega'\in\varOmega\sb 0$
($\omega,\,\omega'\in\overline{\varOmega\sb 0}$
if $n\ge 5$).
\end{enumerate}
\end{proposition}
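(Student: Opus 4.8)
The plan is to study the omega-limit trajectory $\zeta$ through its Fourier transform in the discrete time $\ttd$. Because $\zeta\in l^\infty(\Z,l^2(\Z^n))$ is a bounded global solution, its temporal Fourier transform $\hat\zeta$ is a distribution on the circle $\torus$, and \eqref{kgz} becomes, for each frequency $\omega$, a stationary problem on the spatial lattice: the operators $D^2_\ttd$ and $\tfrac{\zeta^{\ttd+1}+\zeta^{\ttd-1}}{2}$ act as the multipliers $2\cos\omega-2$ and $\cos\omega$, so the temporal symbol is $\Phi(\omega)=(2+\tau^2m^2)\cos\omega-2$. Since $-\tfrac1n\bm{D}^2_\xxd$ has spectrum $[0,4]$ on $l^2(\Z^n)$, the spatial operator $\Phi(\omega)-\tfrac1n\bm{D}^2_\xxd$ is invertible exactly when $\Phi(\omega)\notin[-4,0]$; solving $\Phi(\omega)=0$ and $\Phi(\omega)=-4$ returns $\omega=\pm\omega_m$ and $\omega=\pi\pm\omega_m$ with $\omega_m$ as in \eqref{def-omega-m}, so the two spectral gaps in which a spatially $l^2$ bound state can live are precisely $\varOmega_0$ and $\varOmega_\pi$.

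First I would confine the temporal spectrum of $\zeta$ to $\varOmega_0\cup\varOmega_\pi$. Following \cite{ubk-arma} (Sections~\ref{sect-dispersive} and~\ref{sect-regularity}), I split $\zeta$ into a dispersive component solving the free discrete equation and a remainder driven by the nonlinearity at $\xxd=0$. The free component satisfies local energy decay, so it carries the continuous-spectrum (radiative) part off to spatial infinity; as $\zeta$ is obtained as a \emph{weak} limit of time-translates of a finite-energy solution, this radiation has already dispersed and does not survive in the limit. The upshot is that $\hat\zeta$ has no continuous spectrum, and after the dispersive subtraction $\hat\zeta$ is a quasimeasure supported in $\varOmega_0\cup\varOmega_\pi$; the quasimeasure calculus borrowed from \cite{ubk-arma} is what makes the ensuing nonlinear products legitimate.

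Next, in the spirit of Section~\ref{sect-spectral-relation}, I would invert the spatial resolvent on the gaps to obtain the self-consistency relation for the trace,
\begin{equation}\label{green-trace}
\hat\zeta_0(\omega)=\tau^2\,G(\omega)\,\hat g(\omega),
\qquad
G(\omega)=\frac1{(2\pi)^n}\int_{\torus^n}
\frac{d\theta}{\Phi(\omega)+\frac2n\sum_{j=1}^n(1-\cos\theta_j)},
\end{equation}
where $G$ is the diagonal Green's function, real-analytic and nonvanishing on each gap because the integrand keeps a fixed sign there (in high dimension $G$ remains finite up to the band edges, which is the source of the closed-gap refinement for $n\ge5$). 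By the little B\'ezout remark after \eqref{def-f}, $g^\ttd$ is a $\mathbf{U}(1)$-equivariant polynomial of degree at most $2p+1$ in $\zeta^{\ttd\pm1}_0$ and $\bar\zeta^{\ttd\pm1}_0$, so $\hat g$ is a finite sum of convolution monomials built from $\hat\zeta_0$ and its reflection (whose support is $-\supp\hat\zeta_0$). Since $G\ne0$, \eqref{green-trace} gives $\supp\hat\zeta_0=\supp\hat g$ inside the gaps, the fixed-point identity that drives everything.

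The hard part, and the main obstacle, is the final reduction (Section~\ref{sect-nsa}): turning this identity into the statement that $\supp\hat\zeta_0$ is finite with the listed structure. Here the Titchmarsh convolution theorem on the circle, Theorem~\ref{theorem-titchmarsh-circle}, replaces its classical line counterpart, which cannot be used directly because the spectrum occupies \emph{two antipodal arcs}. I would fold the circle by $\omega\mapsto\omega\bmod\pi$, carrying both $\varOmega_0$ and $\varOmega_\pi$ onto the single small arc $(-\omega_m,\omega_m)$; Assumption~\ref{ass-m-is-small}, $\omega_m<\tfrac{\pi}{4(p+1)}$, guarantees that every signed $(2p+1)$-fold sum of folded frequencies stays inside an arc of length $<\pi$, so the folded problem is a genuine line-Titchmarsh problem. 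The leading term $\abs{\zeta_0}^{2p}\zeta_0$ then pushes the extremal folded frequency by $p\cdot\mathrm{diam}$, and matching against \eqref{green-trace} forces $\mathrm{diam}=0$ \emph{unless} that extremal contribution vanishes. This dichotomy yields exactly the three cases: generically the folded support is a single point, giving either a lone frequency in one gap (type~(i)) or a $\pi$-locked pair (type~(ii)) via the $(-1)^{\ttd+\Lambda\cdot\xxd}=e^{i\pi(\ttd+\Lambda\cdot\xxd)}$ modulation that maps $\varOmega_0$ onto $\varOmega_\pi$; while in the degenerate branch the spreading fails, which happens precisely when $\abs{\zeta^{\ttd+1}_0}=\abs{\zeta^{\ttd-1}_0}$ for all $\ttd$, so the even and odd time-sublattices decouple, each carrying its own frequency and turning the nonlinearity into a constant multiplier — this is the four-frequency profile of type~(iii). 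The delicate bookkeeping that separates these regimes is exactly what the circle version of Titchmarsh is built to handle.
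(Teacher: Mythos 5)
Your proposal follows the paper's own architecture quite closely (dispersive splitting, Green's-function trace identity $\tilde\zeta_0=\tau^2\mathcal{G}_0\tilde g$, then a Titchmarsh argument under Assumption~\ref{ass-m-is-small}), but the step you rely on to finish — folding $\omega\mapsto\omega\bmod\pi$ and treating the result as ``a genuine line-Titchmarsh problem'' — has a genuine gap. Folding is indeed compatible with convolution, but it has a kernel: every $\pi$-antiperiodic distribution folds to zero. This is exactly the zero-divisor phenomenon $(f+S_{\pi}f)\ast(g-S_{\pi}g)=0$ of \eqref{nt}. Consequently, the classical Titchmarsh theorem applied to folded objects controls only the supports of the foldings, which can be strictly smaller than $\supppi$ of the unfolded distributions; it says nothing about the antiperiodic components — and those are precisely where the two- and four-frequency waves live (the profiles $A(\delta_a+\delta_{\pi+a})+B(\delta_b-\delta_{\pi+b})$ of Lemma~\ref{lemma-ff} are invisible to, or partially cancelled by, a single folding). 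What the argument actually needs is the refined statement, Theorem~\ref{theorem-titchmarsh-circle}, whose conclusion is not a support identity but the (anti)symmetry relations $f=\pm S_{\pi}f$ near the edges of the support (the classes $\bm{L}^{\pm}_{\kappa}$, $\bm{R}^{\pm}_{\kappa}$); equivalently, one must run the Titchmarsh argument in \emph{both} sectors, with the combinations $f_0+\sigma f_1$, $\sigma=\pm 1$, as in the proof of Theorem~\ref{theorem-titchmarsh-circle-1}, not just in the ``$+$'' sector that folding retains. It is these symmetry conclusions, fed through the combination $h_1^{\sharp}\ast h_1-h_2^{\sharp}\ast h_2-h_3^{\sharp}\ast h_3$ of Lemma~\ref{lemma-zz} and the powers statement Corollary~\ref{corollary-titchmarsh-powers}, that first give $\supp\widetilde{\abs{\zeta_0^{\ttd}}^2}\subset\{0;\pi\}$ (Lemma~\ref{lemma-0pi}) and then pin down $\tilde\zeta_0$ (Lemma~\ref{lemma-ff}).

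A related error: your proposed dichotomy does not separate the cases. The condition you use to characterize the ``degenerate'' four-frequency branch, $\abs{\zeta_0^{\ttd+1}}=\abs{\zeta_0^{\ttd-1}}$ for all $\ttd$, is equivalent to $\supp\widetilde{\abs{\zeta_0^{\ttd}}^2}\subset\{0;\pi\}$, and by Lemma~\ref{lemma-0pi} this holds for \emph{every} omega-limit trajectory — constant in $\ttd$ for type (i), parity-alternating for types (ii) and (iii) — so it distinguishes nothing; the actual case split is whether $\supppi\tilde\zeta_0$ consists of one point or two. Two smaller issues: the confinement of the spectrum to $\overline{\varOmega_0}\cup\overline{\varOmega_\pi}$ is not a consequence of the dispersive decay of the free component alone, since the driven component also carries continuous spectrum; killing it requires the regularity Lemma~\ref{lemma-ac} (via the lower bound \eqref{nv-large} and Plancherel) followed by the Riemann--Lebesgue argument of Lemma~\ref{lemma-supp-g}. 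And nonvanishing of $\mathcal{G}_0$ on the open gaps is not enough once the supports may touch the band edges: there one needs $1/\mathcal{G}_0(\omega+i0)$ to act as a multiplier on quasimeasures, which is the content of the dyadic-decomposition estimate in Lemma~\ref{lemma-r-qm-multiplier}.
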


\medskip

Proposition~\ref{proposition-sw}
can be used to complete the proof of Theorem~\ref{main-theorem-dkg},
as follows.

\begin{proof}[Proof of Theorem~\ref{main-theorem-dkg}]
Let $\ttd\sb j\in\N$, $j\in\N$, be a sequence
such that $\ttd\sb j\to+\infty$.
By the Banach-Alaoglu theorem,
a priori bounds on $\norm{(\psi\sp{\ttd},\psi\sp{\ttd+1})}\sb{\E}$
stated in Theorem~\ref{theorem-gwp-dkg}
allow us to choose a subsequence
$\{\ttd\sb{j\sb r}\sothat r\in\N\}$
such that
\begin{equation}\label{gte}
(\psi\sp{\ttd\sb{j\sb r}},\psi\sp{\ttd\sb{j\sb r}+1})
\mathop{\llongrightarrow}\limits\sb{r\to\infty}
(z\sp 0, z\sp 1)\in\E,
\qquad
\mbox{weakly in}\ \E.
\end{equation}
Let $\zeta\in l(\Z,\E)$
be the corresponding \emph{omega-limit trajectory},
that is,
the solution to the Cauchy problem \eqref{dkg-cp}
with the initial data
$(\zeta\sp{\ttd},\zeta\sp{\ttd+1})\at{\ttd=0}=(z\sp 0, z\sp 1)$.
By Proposition~\ref{proposition-sw},
$\zeta\sp{\ttd}$ is a solitary wave.
It follows that
$(z\sp 0, z\sp 1)=(\zeta\sp{\ttd},\zeta\sp{\ttd+1})\at{\ttd=0}\in\bS$.
Thus, the first two statements of Theorem~\ref{main-theorem-dkg}
follow from Proposition~\ref{proposition-sw}.

\medskip

Let us prove the
last statement of Theorem~\ref{main-theorem-dkg},
namely, that
the set of all solitary waves only consists
of one-, two-, and four-frequency solitary waves.
It will follow from Proposition~\ref{proposition-sw}
if we can show that each solitary wave solution
is itself (its own) omega-limit trajectory,
has to be of the form specified by
Proposition~\ref{proposition-sw}.

\begin{lemma}\label{lemma-35}
Let $\psi\sp{\ttd}=\sum\sb{k=1}\sp{N}\phi\sb k e^{-i\omega\sb k\ttd}$,
with $\phi\sb k\in l^2(\Z^n)$, $\omega\sb k\in\torus$.
Then
\[
(\psi\sp\ttd,\psi\sp{\ttd+1})\at{\ttd=0}\in\upomega(\psi),
\]
so that
$\psi\sp\ttd$
is the omega-limit trajectory of itself.
\end{lemma}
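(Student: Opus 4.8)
The plan is to show directly that the initial data $(\psi^0,\psi^1)$ is a weak limit in $\E$ of the shifted data $(\psi^{\ttd_j},\psi^{\ttd_j+1})$ along some sequence $\ttd_j\to+\infty$; by the definition of $\upomega(\psi)$ this is precisely the assertion. First I would note that since $\psi^{\ttd}=\sum_{k=1}^N\phi_k e^{-i\omega_k\ttd}$ is a finite trigonometric sum with fixed coefficients $\phi_k\in l^2(\Z^n)$, for any sequence of times one has
\[
\psi^{\ttd_j}-\psi^0=\sum_{k=1}^N(e^{-i\omega_k\ttd_j}-1)\phi_k,
\qquad
\psi^{\ttd_j+1}-\psi^1=\sum_{k=1}^N(e^{-i\omega_k\ttd_j}-1)e^{-i\omega_k}\phi_k,
\]
so both differences are bounded in $l^2(\Z^n)$ by $\sum_k|e^{-i\omega_k\ttd_j}-1|\,\norm{\phi_k}_{l^2}$. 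Hence it suffices to produce integers $\ttd_j\to+\infty$ for which $e^{-i\omega_k\ttd_j}\to 1$ simultaneously for every $k=1,\dots,N$; this in fact yields \emph{strong} convergence $\psi^{\ttd_j}\to\psi^0$ and $\psi^{\ttd_j+1}\to\psi^1$ in $l^2(\Z^n)$, which is more than the weak convergence in $\E$ that the definition demands.

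The remaining point is therefore a statement about simultaneous approximation of the frequencies $\omega_1,\dots,\omega_N$. I would consider the orbit $\theta_\ttd:=(\omega_1\ttd,\dots,\omega_N\ttd)\in\torus^N$ of the rotation generated by $(\omega_1,\dots,\omega_N)$, observing that $\theta_0=0$ and $\theta_{\ttd+\ttd'}=\theta_\ttd+\theta_{\ttd'}$. By compactness of $\torus^N$ the sequence $\{\theta_\ttd\}_{\ttd\ge 1}$ has a convergent subsequence, and a standard recurrence argument—subtracting two late, nearly coincident orbit points and using the additive structure—produces, for every $\epsilon>0$ and every $N_0$, an integer $\ttd>N_0$ with $\theta_\ttd$ within $\epsilon$ of $0$ in $\torus^N$. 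Taking $\epsilon=1/j$ and $N_0=j$ gives the desired $\ttd_j\to+\infty$ with $\max_k\norm{\omega_k\ttd_j}_{\torus}\to 0$, hence $e^{-i\omega_k\ttd_j}\to 1$ for each $k$. Equivalently one may argue by pigeonhole: partitioning $\torus^N$ into $M^N$ cubes and comparing the $M^N+1$ points $\theta_0,\dots,\theta_{M^N}$ forces two into a common cube, and their difference is a return time.

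Combining the two steps, $(\psi^{\ttd_j},\psi^{\ttd_j+1})\to(\psi^0,\psi^1)$ strongly, hence weakly, in $\E$ along $\ttd_j\to+\infty$, so $(\psi^\ttd,\psi^{\ttd+1})\at{\ttd=0}\in\upomega(\psi)$. Since $\psi$ is itself a solution of \eqref{dkg-c} of the form \eqref{solitary-waves}, the omega-limit trajectory issuing from this point is, by uniqueness of solutions in Theorem~\ref{theorem-gwp-dkg}, none other than $\psi$ itself, which is the asserted conclusion. There is no serious analytic obstacle here: the only point requiring genuine care is ensuring that the return times $\ttd_j$ actually tend to $+\infty$ rather than merely supplying good approximations at bounded indices, and this is exactly what the recurrence (or iterated pigeonhole) argument secures. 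The strong $l^2$-convergence of the finite sums is then automatic, because each $\phi_k$ is a single fixed $l^2$ vector and only the scalar factors $e^{-i\omega_k\ttd_j}$ vary.
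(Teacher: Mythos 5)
Your proof is correct. Its skeleton is the same as the paper's: produce times $\ttd_j\to+\infty$ along which every phase $\omega_k\ttd_j\to 0$ in $\torus$, deduce strong (hence weak) convergence of $(\psi^{\ttd_j},\psi^{\ttd_j+1})$ to $(\psi^0,\psi^1)$ in $\E$, and conclude by Definition~\ref{def-omega-pt}. But the key recurrence step is argued by a genuinely different device. The paper proceeds frequency by frequency: having arranged $\omega_1\ttd_j\to 0$, it distinguishes whether $\{\omega_2\ttd_j\}$ is dense in $\torus$ (pass to a subsequence) or contained in a finite set $\{k\pi/q\}$ (replace $\ttd_j$ by $q\ttd_j$, which preserves $\omega_1 q\ttd_j\to 0$), and iterates over $k\le N$. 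You instead treat the full vector $\theta_\ttd=(\omega_1\ttd,\dots,\omega_N\ttd)$ as the orbit of a translation on the compact group $\torus^N$ and obtain arbitrarily late near-returns to $0$ by compactness/pigeonhole, using the group law $\theta_{\ttd_b}-\theta_{\ttd_a}=\theta_{\ttd_b-\ttd_a}$. This simultaneous-recurrence argument is standard and arguably cleaner: it avoids the paper's dense-or-rational dichotomy, which as literally stated needs care (for an arbitrary initial choice of $\ttd_j$, the closure of $\{\omega_2\ttd_j\}$ need be neither all of $\torus$ nor a finite rational set, so one must pass to a convergent subsequence anyway --- which is exactly what your argument does from the outset). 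What the paper's iteration buys in exchange is exact vanishing of the rational phases, which the lemma does not need. Your attention to making the return times tend to $+\infty$ (separating the indices of the two nearly coincident orbit points by any prescribed bound) addresses the one genuine pitfall of the pigeonhole route, and your closing appeal to uniqueness of solutions (Theorem~\ref{theorem-gwp-dkg}) to identify the trajectory issuing from $(\psi^0,\psi^1)$ with $\psi$ itself matches the paper's conclusion.
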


\begin{proof}
Pick any sequence
$\ttd\sb j\in\N$,
$\ttd\sb j\to\infty$
such that
$\omega\sb 1\ttd\sb j\to 0\in\torus$
as $j\to\infty$.
Then either $\{\omega\sb 2\ttd\sb j\sothat j\in\N\}$
is dense in $\torus$,
or it is a subset of
$\{\frac{k\pi}{q}\in\torus\sothat k\in\Z\sb{2q}\}$, for some $q\in\N$.
In the former case,
we take a subsequence $\ttd\sb j'$ of $\ttd\sb j$
such that
$\omega\sb 2\ttd\sb j'\to 0$ in $\torus$;
In the latter case,
we consider a new sequence, $\ttd\sb j'=q\ttd\sb j$,
so that $\omega\sb 2\ttd\sb j'=0$
(and we still have $\omega\sb 1\ttd\sb j'
=q\omega\sb 1\ttd\sb j\to 0\in\torus$).


Repeating this process, we
end up with a sequence such that
$\omega\sb k \ttd\sb j\to 0\in\torus$
as $j\to\infty$
for all $1\le k\le N$.
It follows that
$(\psi\sp{\ttd\sb j},\psi\sp{\ttd\sb j+1})
\mathop{\longrightarrow}\limits\sp{\E}\sb{j\to\infty}
(\psi\sp 0,\psi\sp 1)$.
By Definition~\ref{def-omega-pt},
$(\psi\sp 0,\psi\sp 1)$
is the omega-limit point of $\psi\sp\ttd$;
that is, $(\psi\sp 0,\psi\sp 1)\in\upomega(\psi)$.
Hence,
$\psi\sp\ttd$ itself is an omega-limit trajectory
of a finite energy solution,
and has to be of one of the three types mentioned
in Proposition~\ref{proposition-sw}.
\end{proof}

The dimension of the components of the solitary manifold
corresponding to one-, two-, and four-frequency solitary waves 
are computed in
Lemma~\ref{lemma-one-frequency}, Lemma~\ref{lemma-two-frequencies},
and Lemma~\ref{lemma-four-frequencies} below.

This finishes the proof of Theorem~\ref{main-theorem-dkg}.
\end{proof}

It remains to prove Proposition~\ref{proposition-sw},
which is the contents of in the remaining part of the paper.
We will prove it
analyzing the spectrum of omega-limit trajectories.
Everywhere below, we suppose that the conditions
of Proposition~\ref{proposition-sw}
(that is, conditions of Theorem~\ref{main-theorem-dkg}) hold.

\section{Separation of the dispersive component}

\label{sect-dispersive}

We rewrite \eqref{dkg-cp}
as a linear nonhomogeneous equation
\begin{equation}\label{leq}
\begin{array}{l}
(\bm{A}\psi)\sb{\xxd}\sp{\ttd}:=  D\sb{\ttd}^2\psi\sb{\xxd}\sp{\ttd}
-\frac{1}{n}\bm{D}\sb{\xxd}^2\psi\sb{\xxd}\sp{\ttd}
+\tau^2 m^2\frac{\psi\sb{\xxd}\sp{\ttd+1}+\psi\sb{\xxd}\sp{\ttd-1}}{2}
=\tau^2\delta\sb{\xxd,0}f\sp\ttd,
\quad
(\xxd,\ttd)\in\Z^n\times\Z,
\\ \\
(\psi\sp{\ttd},\psi\sp{\ttd+1})\at{\ttd=0}=(u\sp 0,u\sp 1)\in\E
=(l^2(\Z^n)\times l^2(\Z^n)),
\end{array}
\end{equation}
where $f\sp{\ttd}$ is given by \eqref{def-f}.

Let $a(\xi,\omega)$ be the symbol of the operator
$\bm{A}$ in the left-hand side of \eqref{leq}:
\begin{equation}\label{sym}
a(\xi,\omega):=
(2+\tau^2 m^2)\cos\omega-\frac{2}{n}\sum\sb{j=1}\sp{n}\cos\xi\sb j,
\qquad
\xi\in\torus^n,
\quad
\omega\in\torus.
\end{equation}
For a fixed value of $\omega\in\torus$,
the dispersion relation
\begin{equation}\label{Ar0}
a(\xi,\omega)=0,
\qquad
\xi\in\torus^n,\quad\omega\in\torus,
\end{equation}
admits a solution $\xi\in\torus^n$ if and only if 
$\abs{\cos\omega}\le \big(1+\frac{\tau^2 m^2}{2}\big)^{-1}$,
or, equivalently,
when $\omega$ belongs to the continuous spectrum
$\varOmega_c$
of the linear discrete equation \eqref{leq},
which is given by
\begin{equation}\label{cs}
\varOmega_c
=
\torus\setminus(\varOmega\sb 0\cup\varOmega\sb\pi);
\end{equation}
the spectral gaps
$\varOmega\sb 0$ and $\varOmega\sb\pi$
have been defined in \eqref{def-spectral-gaps}.

Note that due to the factor $\frac{1}{n}$
in \eqref{leq}
the continuous spectrum
does not depend on the dimension $n$.


\begin{lemma}\label{lemma-finite-norm}
If $n\le 4$,
the expression
$\frac{1}{a(\xi,\omega)}$
is of finite $L^2$-norm in $\xi\in\torus^n$
if and only if
$\omega\in\varOmega\sb 0\cup\varOmega\sb\pi$.
If $n\ge 5$,
$\frac{1}{a(\xi,\omega)}$
is of finite $L^2$-norm in $\xi\in\torus^n$
if and only if
$\omega\in\overline{\varOmega\sb 0}\cup\overline{\varOmega\sb\pi}$.
\end{lemma}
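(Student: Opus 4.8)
The plan is to study finiteness of $\int_{\torus^n}\abs{a(\xi,\omega)}^{-2}\,d\xi$ for fixed $\omega$ by rewriting the symbol as $a(\xi,\omega)=b(\omega)-g(\xi)$, where $b(\omega):=(2+\tau^2 m^2)\cos\omega$ and $g(\xi):=\frac{2}{n}\sum\sb{j=1}\sp{n}\cos\xi\sb j$, and then locating and classifying the zero set of $a$ in $\xi$. The function $g$ is a smooth real function on $\torus^n$ with range $[-2,2]$, attaining its maximum $2$ only at $\xi=0$ and its minimum $-2$ only at $\xi=(\pi,\dots,\pi)$; its gradient $\nabla\sb\xi g=\frac{2}{n}(\sin\xi\sb 1,\dots,\sin\xi\sb n)$ vanishes exactly at the $2^n$ points with every $\xi\sb j\in\{0,\pi\}$. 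Comparing $b(\omega)$ with the range of $g$ and recalling $\cos\omega\sb m=(1+\frac{\tau^2 m^2}{2})^{-1}$, so that $(2+\tau^2 m^2)\cos\omega\sb m=2$, I would split into three regimes: $\abs{b(\omega)}>2$ precisely when $\omega\in\varOmega\sb 0\cup\varOmega\sb\pi$; $b(\omega)=\pm 2$ precisely when $\omega\in\{\pm\omega\sb m,\pi\pm\omega\sb m\}=\partial\varOmega\sb 0\cup\partial\varOmega\sb\pi$; and $b(\omega)\in(-2,2)$ exactly for $\omega$ in the interior of the continuous spectrum $\varOmega\sb c$.

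In the first regime $a(\xi,\omega)=b(\omega)-g(\xi)$ is bounded away from zero uniformly in $\xi$, so $1/a$ is bounded and hence of finite $L^2$-norm on the compact torus; this gives finiteness for $\omega\in\varOmega\sb 0\cup\varOmega\sb\pi$ in every dimension. For the interior of the continuous spectrum, $b(\omega)\in(-2,2)$, so by the intermediate value theorem the level set $Z=\{g=b(\omega)\}$ is nonempty, and since $g$ is a nonconstant trigonometric polynomial it cannot coincide with its finite critical set; hence $Z$ contains a point $\xi\sb\ast$ with $\nabla\sb\xi g(\xi\sb\ast)\ne 0$, near which $Z$ is a smooth hypersurface and $a$ vanishes transversally, $a(\xi)\asymp\dist(\xi,Z)$. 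Introducing a local coordinate equal to $a$ (equivalently, applying the coarea formula) reduces the local part of $\int\abs{1/a}^2$ near $\xi\sb\ast$ to $\int\sb{-\epsilon}\sp{\epsilon}\frac{ds}{s^2}=+\infty$, so $1/a\notin L^2$ throughout the interior of $\varOmega\sb c$ for every $n$.

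The decisive case is the boundary, $b(\omega)=\pm 2$, where the zero set degenerates to the single point $\xi=0$ (if $b=2$) or $\xi=(\pi,\dots,\pi)$ (if $b=-2$), at which $g$ has a nondegenerate extremum. There $1-\cos\xi\sb j=\frac12\xi\sb j^2+O(\xi\sb j^4)$ yields $a(\xi,\omega)=\frac1n\abs{\xi}^2(1+o(1))$ near $\xi=0$ (and the analogous quadratic behavior near $(\pi,\dots,\pi)$), so $\abs{1/a}^2\asymp n^2\abs{\xi}^{-4}$. Passing to polar coordinates, the local contribution to $\norm{1/a}\sb{L^2}^2$ is comparable to $\int\sb 0\sp\delta r^{n-1}r^{-4}\,dr=\int\sb 0\sp\delta r^{n-5}\,dr$, which converges if and only if $n-5>-1$, i.e. if and only if $n\ge 5$. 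This is exactly the dividing line in the statement: for $n\le 4$ the boundary frequencies are excluded and finiteness holds only on the open gaps $\varOmega\sb 0\cup\varOmega\sb\pi$, whereas for $n\ge 5$ the boundary frequencies are included and finiteness holds on the closed gaps $\overline{\varOmega\sb 0}\cup\overline{\varOmega\sb\pi}$.

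The main obstacle is the interior continuous-spectrum case: one must rule out the pathological possibility that $\{g=b(\omega)\}$ consists entirely of critical points of $g$, since the transversal-vanishing argument requires a regular point on the level set. This is handled by the observation that $g$ has only finitely many critical points while $\{g=b(\omega)\}$ is an $(n-1)$-dimensional set for $b(\omega)\in(-2,2)$; thus a regular point $\xi\sb\ast$ always exists and forces the $L^2$-norm to diverge. The remaining steps (the uniform lower bound in the gaps, the quadratic expansion at the extrema, and the polar-coordinate integration) are routine.
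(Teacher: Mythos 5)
Your proof is correct and takes essentially the same route as the paper's: a three-way split according to whether $\abs{(2+\tau^2 m^2)\cos\omega}$ exceeds, equals, or falls below $2$, with boundedness of $1/a$ in the open gaps, divergence from transversal vanishing on the level hypersurface inside the continuous spectrum, and the quadratic expansion $a\approx\frac{1}{n}\abs{\xi}^2$ at the edge giving the dichotomy via $\int_0^\delta r^{n-5}\,dr$, convergent exactly for $n\ge 5$. Your write-up is in fact somewhat more careful than the paper's brief proof, notably in securing a regular point on the level set $\{g=b(\omega)\}$ and in observing that each boundary frequency yields a single degenerate zero (either $\xi=0$ or $\xi=(\pi,\dots,\pi)$, not both, as the paper loosely states).
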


\begin{proof}
When $\omega\in\varOmega_c\setminus\p\varOmega_c$,
$a(\xi,\omega)$ vanishes on the (singular) hypersurface in $\torus^n$,
then
$\frac{1}{a(\xi,\omega)}\not\in L^2(\torus^n)$.
If $\omega\in\varOmega\sb 0\cup\varOmega\sb\pi$,
then
$a(\xi,\omega)$
does not vanish for any $\xi\in\torus^n$,
hence
$\frac{1}{a(\xi,\omega)}$
is of finite $L^2$-norm in $\xi\in\torus^n$.

For $\omega\sb b\in\p\varOmega_c=\{\pm\omega\sb m,\pi\pm\omega\sb m\}$,
$a(\xi,\omega\sb b)$ vanishes at the points
$\xi=(0,\dots,0)\in\torus^n$ and $\xi=(\pi,\dots,\pi)\in\torus^n$.
The consideration is the same in the neighborhoods of
both of these points:
near $\xi=(0,\dots,0)\in\torus^n$,
one has
$\frac{1}{a(\xi,\omega\sb b)}=\frac{1}{\frac 1 n\xi^2+o(\xi^2)}$;
therefore,
for $n\le 4$,
one has
$\frac{1}{a(\xi,\omega\sb b)}\not\in L^2(\torus^n)$;
for $n\ge 5$,
$\frac{1}{a(\xi,\omega\sb b)}\in L^2(\torus^n)$.
\end{proof}

For $\ttd\ge 0$,
we decompose
the solution to \eqref{leq}
into
$\psi\sb{\xxd}\sp{\ttd}=\chi\sb{\xxd}\sp{\ttd}+\varphi\sb{\xxd}\sp{\ttd}$,
where $\chi$, $\varphi$
satisfy the equations
\begin{equation}\label{eq1}
(\bm{A}\chi)\sp\ttd\sb\xxd=0
\qquad\qquad
\xxd\in\Z^n,
\qquad
\ttd\ge 1;
\qquad
(\chi\sp{\ttd},\chi\sp{\ttd+1})\at{\ttd=0}
=(\psi\sp{0},\psi\sp{1}),
\end{equation}
\begin{equation}\label{eq2}
(\bm{A}\varphi)\sp\ttd\sb\xxd
=
\tau^2\delta\sb{\xxd,0}f\sp{\ttd},
\quad \xxd\in\Z^n,
\qquad
\ttd\ge 1;
\qquad
(\varphi\sp{\ttd},\varphi\sp{\ttd+1})\at{\ttd=0}=(0,0),
\end{equation}
with $f\sp{\ttd}$ from \eqref{def-f}.
Note that both
$\chi$ and $\varphi$
are only defined for $\ttd\ge 0$.
Due to the energy conservation
(Theorem~\ref{theorem-gwp-dkg}),
which certainly also takes place for the linear equation,
the component $\chi$ is bounded in time:
\begin{equation}\label{chib}
\sup\sb{\ttd\in\Z\sb{+}}\norm{\chi\sp{\ttd}}\sb{l^2(\Z^n)}<\infty.
\end{equation}
Moreover, $\chi$ is purely dispersive in the following sense.

\begin{lemma}\label{lemma-pd}
For any bounded $B\subset\Z^n$,
\begin{equation}\label{chi0}
\norm{\chi\sp{\ttd}}\sb{l^2(B)}\to 0,
\qquad
{\abs{\ttd}\to\infty}.
\end{equation}
\end{lemma}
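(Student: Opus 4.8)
The plan is to pass to the spatial Fourier transform and reduce the claim to the dispersive decay of an oscillatory integral. Writing $\hat\chi^{\ttd}(\xi)$ for the Fourier series of $\chi^{\ttd}_{\cdot}\in l^2(\Z^n)$, the free equation \eqref{eq1} becomes the constant-coefficient recurrence
\begin{equation*}
\big(1+\tfrac{\tau^2 m^2}{2}\big)\big(\hat\chi^{\ttd+1}(\xi)+\hat\chi^{\ttd-1}(\xi)\big)
=\tfrac{2}{n}\textstyle\sum_{j=1}^{n}\cos\xi_j\;\hat\chi^{\ttd}(\xi),
\end{equation*}
whose characteristic roots are $e^{\pm i\omega(\xi)}$ with $\cos\omega(\xi)=\big(\tfrac2n\sum_j\cos\xi_j\big)/(2+\tau^2 m^2)$. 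Since $\big|\tfrac2n\sum_j\cos\xi_j\big|\le 2<2+\tau^2 m^2$, one has $\abs{\cos\omega(\xi)}\le\frac{2}{2+\tau^2 m^2}=\cos\omega_m<1$ for every $\xi\in\torus^n$; hence $\omega(\xi)=\arccos(\cdots)$ is real-analytic on all of $\torus^n$ and $\abs{\sin\omega(\xi)}\ge\sin\omega_m>0$ is bounded away from zero. Solving for the two amplitudes from the data $(\psi^0,\psi^1)$ and dividing by $2i\sin\omega(\xi)$ (harmless by this lower bound) yields
\begin{equation*}
\hat\chi^{\ttd}(\xi)=A(\xi)e^{-i\omega(\xi)\ttd}+B(\xi)e^{i\omega(\xi)\ttd},
\qquad A,B\in L^2(\torus^n),
\end{equation*}
with $\norm{A}_{L^2}+\norm{B}_{L^2}$ controlled by $\norm{(\psi^0,\psi^1)}_{\E}$.

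Inverting the transform, $\chi^{\ttd}_{\xxd}=\int_{\torus^n}A(\xi)e^{i\xi\cdot\xxd-i\omega(\xi)\ttd}\,d\xi+\int_{\torus^n}B(\xi)e^{i\xi\cdot\xxd+i\omega(\xi)\ttd}\,d\xi$. Because $B$ is finite, $\norm{\chi^{\ttd}}_{l^2(B)}^2=\sum_{\xxd\in B}\abs{\chi^{\ttd}_{\xxd}}^2$, so it suffices to show $\chi^{\ttd}_{\xxd}\to0$ as $\abs{\ttd}\to\infty$ for each fixed $\xxd$. For fixed $\xxd$ the integrand is $g(\xi)e^{\mp i\omega(\xi)\ttd}$ with $g(\xi)=A(\xi)e^{i\xi\cdot\xxd}$ (respectively with $B$), and $g\in L^2(\torus^n)\subset L^1(\torus^n)$ since $\torus^n$ has finite measure. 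Thus everything reduces to the claim
\begin{equation*}
I(\ttd):=\int_{\torus^n}g(\xi)e^{-i\omega(\xi)\ttd}\,d\xi\longrightarrow 0
\qquad\text{as }\ttd\to\pm\infty,\qquad g\in L^1(\torus^n),
\end{equation*}
and symmetrically for the $+$ branch.

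To prove this I would push the measure $g(\xi)\,d\xi$ forward by $\omega$ and invoke the Riemann–Lebesgue lemma. Set $\mu_g:=\omega_*(g\,d\xi)$, a finite complex measure on the range of $\omega\subset\R$; then $I(\ttd)=\int e^{-i\lambda\ttd}\,d\mu_g(\lambda)$, which tends to $0$ as $\ttd\to\pm\infty$ once $\mu_g$ is shown to be absolutely continuous, $\mu_g\ll d\lambda$. Here the analytic structure of $\omega$ is decisive: from $\nabla\omega=-\frac{1}{\sin\omega}\cdot\frac{2}{n(2+\tau^2 m^2)}(-\sin\xi_1,\dots,-\sin\xi_n)$ and $\sin\omega\ne0$, the critical set of $\omega$ is exactly the $2^n$ corner points $\xi_j\in\{0,\pi\}$, so $\omega$ takes only finitely many critical values and is a submersion elsewhere. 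Away from the critical set $\omega$ is locally a coordinate, so the preimage of a Lebesgue-null set is null; the critical set itself is null, and each critical level set $\{\sum_j\cos\xi_j=\text{const}\}$ is a proper real-analytic subvariety, hence null. Consequently $\mu_g$ charges no null set and Riemann–Lebesgue applies, giving $I(\ttd)\to0$; the $+$ branch is handled identically, completing the proof. The main obstacle is precisely this last point — the absolute continuity of the pushforward across the finitely many critical (van Hove) values, where the spectral density of $\omega$ develops integrable singularities and naive non-stationary phase breaks down. If a self-contained estimate is preferred over the measure-theoretic argument, one may instead reduce $g$ to a smooth density by density of $C^\infty$ in $L^1$ (using $\abs{I(\ttd)}\le\norm{g}_{L^1}$ uniformly in $\ttd$) and then apply van der Corput/stationary-phase bounds, localizing near the corner points where $\nabla\omega=0$.
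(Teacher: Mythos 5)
Your proof is correct, but it takes a genuinely different route from the paper's. The paper argues quantitatively: it splits the $L^2$ amplitudes $\mathcal{P}_\pm$ into a small $L^2$ remainder $\mathcal{R}_\pm$ (whose contribution to $\chi^{\ttd}$ is at most $2\epsilon/3$ in $l^2(\Z^n)$, uniformly in $\ttd$, by Plancherel) plus smooth pieces $\mathcal{S}_\pm$ supported away from the $2^n$ corner points $\{0;\pi\}^n$ where $\nabla\omega$ vanishes, and then kills the smooth pieces by non-stationary phase: for $\xxd$ in a bounded set and $\abs{\ttd}$ large one has $\abs{\nabla_\xi(\xi\cdot\xxd\pm\omega(\xi)\ttd)}\ge c\abs{\ttd}$ on $\supp\mathcal{S}_\pm$, so integration by parts gives decay. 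You instead note that a bounded $B\subset\Z^n$ is finite, reduce to pointwise decay of $\chi^{\ttd}_{\xxd}$ at each fixed $\xxd$, and view the resulting oscillatory integral as the Fourier transform, at integer times, of the pushforward measure $\omega_*(g\,d\xi)$ with $g\in L^1(\torus^n)$; absolute continuity of this pushforward follows because $\omega$ is a submersion off the finite critical set $\{0;\pi\}^n$ (chartwise, preimages of Lebesgue-null sets are null by Fubini, and the critical set itself is null), after which the Riemann--Lebesgue lemma finishes. Your argument is softer: no $\epsilon/3$-splitting, no integration by parts, no quantitative lower bound on the phase gradient --- only the fact that the critical set is Lebesgue-null. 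What it gives up is any rate of decay and the uniform-in-time global $l^2(\Z^n)$ control that the paper's decomposition provides; what it buys is brevity and robustness (it would work verbatim for any non-constant real-analytic dispersion relation). One remark: the ``main obstacle'' you flag at the end --- absolute continuity across the van Hove critical values --- is already fully resolved by the submersion argument you gave, since absolute continuity of the pushforward requires only that preimages of null sets be null, with no control whatsoever of the spectral density near those values; your fallback via smooth approximation and van der Corput is unnecessary, and would essentially reproduce the paper's proof.
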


\begin{proof}
The solution $\chi\sp{\ttd}$
to \eqref{eq1}
is given by
\begin{equation}\label{solchi}
\chi\sb{\xxd}\sp{\ttd}
=
\int\sb{\torus^n}
\Big(
e^{i(\xi\cdot \xxd+\omega(\xi)\ttd)}\mathcal{P}\sb{+}(\xi)
+e^{i(\xi\cdot \xxd-\omega(\xi)\ttd)}\mathcal{P}\sb{-}(\xi)
\Big)
\,\frac{d\xi}{(2\pi)^n},
\end{equation}
where
$\omega(\xi)$ is the unique solution to the dispersion relation
\eqref{Ar0}
which satisfies
$\omega(\xi)\in[\omega\sb m,\pi-\omega\sb m]\subset(0,\pi)$
(Cf. \eqref{cs}),
while functions $\mathcal{P}\sb{\pm}(\xi)$ 
are determined from the initial data $(u\sp 0,u\sp 1)$
in \eqref{dkg-cp}
by
\[
\hat u\sp 0(\xi)=\mathcal{P}\sb{+}(\xi)+\mathcal{P}\sb{-}(\xi),
\qquad
\hat u\sp 1(\xi)=e^{i\omega(\xi)}\mathcal{P}\sb{+}(\xi)+
e^{-i\omega(\xi)}\mathcal{P}\sb{-}(\xi).
\]
Since
$
\det
\left[
\begin{array}{cc}
1&1\\e^{i\omega(\xi)}&e^{-i\omega(\xi)}
\end{array}
\right]
=-2i\sin\omega(\xi)
$,
with
$
\inf\sb{\xi\in\torus^n}\abs{\sin\omega(\xi)}=\sin\omega\sb m>0,
$
one can see that
$\mathcal{P}\sb{\pm}\in L^2(\torus^n)$,
and moreover there is $C_{\tau,m}<\infty$
independent on $u\sp 0,\,u\sp 1$
such that
\begin{equation}\label{pl2}
\norm{\mathcal{P}\sb{\pm}}\sb{L^2(\torus^n)}
\le
C_{\tau,m}
\big(\norm{u\sp{0}}\sb{l^2(\Z^n)}
+\norm{u\sp{1}}\sb{l^2(\Z^n)}\big).
\end{equation}
Further,
\[
\abs{\nabla\sb\xi\big(\xi\cdot \xxd\pm \omega(\xi)\ttd\big)}
=
\abs{\xxd\pm \nabla\omega(\xi)\ttd},
\]
where $\nabla\omega(\xi)$
can be determined by differentiating the dispersion relation
\eqref{Ar0}:
\[
\nabla\omega(\xi)
=\frac{1}{\big(1+\frac{\tau^2 m^2}{2}\big)
\sin\omega(\xi)}(\sin\xi\sb 1,\,\dots,\,\sin\xi\sb n),
\qquad
\xi\in\torus^n,
\]
where $\sin\omega(\xi)\ge\sin\omega\sb m>0$.
Thus, $\nabla\omega(\xi)$
vanishes only at the discrete set of points
$\xi\in\{0;\pi\}^n\subset\torus^n$.
Hence,
for any $\delta>0$ we can choose
a $\delta$-neighborhood $U\sb\delta$ of   
the set
$\{0;\pi\}^n\subset\torus^n$ such that for any
bounded $B\subset\Z^n$
there is $\ttd\sb{\delta,B}>0$ and $c\sb{\delta,B}>0$
such that
\begin{equation}\label{derpf}
\abs{\nabla\sb{\xi}(\xi\cdot \xxd\pm\omega(\xi)\ttd)}
\ge c\sb{\delta,B}\abs{\ttd},
\qquad
\xi\not\in U\sb\delta,
\qquad
\xxd\in B,
\quad
\abs{\ttd}\ge \ttd\sb{\delta,B}.
\end{equation}

Let us fix a bounded set $B\subset\Z^n$.

Pick an arbitrary $\epsilon>0$.
We choose $\delta>0$ sufficiently small
and split the initial data $\mathcal{P}\sb{\pm}$ into
$\mathcal{P}\sb{\pm}(\xi)=\mathcal{R}\sb{\pm}(\xi)+\mathcal{S}\sb{\pm}(\xi)$,
so that
$\norm{\mathcal{R}\sb{\pm}}\sb{L^2(\torus^n)}<\epsilon/3$
while
$\mathcal{S}\sb{\pm}(\xi)$ are smooth and supported outside
a $\delta$-neighborhood
$U_\delta$ of $\{0;\pi\}^n\subset\torus^n$.
Substituting the splitting
$\mathcal{P}\sb{\pm}=\mathcal{R}\sb{\pm}+\mathcal{S}\sb{\pm}$
into \eqref{solchi}, we have
$\chi\sb{\xxd}\sp{\ttd}=\rho\sb{\xxd}\sp{\ttd}+\sigma\sb{\xxd}\sp{\ttd}$,
where
\begin{equation}
\rho\sb{\xxd}\sp{\ttd}
=
\int\sb{\torus^n}
\Big(
e^{i(\xi\cdot \xxd+\omega(\xi)\ttd)}\mathcal{R}\sb{+}(\xi)
+e^{i(\xi\cdot \xxd-\omega(\xi)\ttd)}\mathcal{R}\sb{-}(\xi)
\Big)
\,\frac{d\xi}{(2\pi)^n},
\end{equation}
\begin{equation}\label{solsigma}
\sigma\sb{\xxd}\sp{\ttd}
=
\int\sb{\torus^n}
\Big(
e^{i(\xi\cdot \xxd+\omega(\xi)\ttd)}\mathcal{S}\sb{+}(\xi)
+e^{i(\xi\cdot \xxd-\omega(\xi)\ttd)}\mathcal{S}\sb{-}(\xi)
\Big)
\,\frac{d\xi}{(2\pi)^n}.
\end{equation}
Due to our choice of $\mathcal{R}\sb\pm$, one has
\[
\norm{\rho\sp{\ttd}}\sb{l^2(\Z^n)}
\le
\norm{\mathcal{R}\sb{-}}\sb{L^2(\torus^n)}
+\norm{\mathcal{R}\sb{+}}\sb{L^2(\torus^n)}
\le 2\epsilon/3,
\qquad
\ttd\in\Z.
\]
Using \eqref{derpf}
to integrate by parts in \eqref{solsigma},
one proves that
$\lim\sb{\abs{\ttd}\to\infty}\norm{\sigma\sp{\ttd}}\sb{l^2(B)}=0$,
so that
$\norm{\sigma\sp{\ttd}}\sb{l^2(B)}<\epsilon/3$
for sufficiently large $\abs{\ttd}$.
It follows that
\[
\norm{\chi\sp{\ttd}}\sb{l^2(B)}
\le
\norm{\rho\sp{\ttd}}\sb{l^2}
+
\norm{\sigma\sp{\ttd}}\sb{l^2(B)}<\epsilon
\]
for sufficiently large $\abs{\ttd}$.
Since $\epsilon>0$ is arbitrary,
we conclude that
\[
\lim\sb{\abs{\ttd}\to\infty}\norm{\chi\sp{\ttd}}\sb{l^2(B)}=0.
\]
\end{proof}

\section{Regularity on the continuous spectrum}
\label{sect-regularity}

Now we consider the equation on $\varphi\sp\ttd$;
see \eqref{eq2}.
Let us recall that
$f\sp{\ttd}$ is defined by \eqref{def-f} for $\ttd\in\Z$,
but is only considered in \eqref{eq2}
for $\ttd\ge 1$.
The function $\varphi\sb{\xxd}\sp{\ttd}$ is defined by \eqref{eq2}
for $\ttd\ge 0$
(with $\varphi\sb{\xxd}\sp 0=\varphi\sb{\xxd}\sp 1=0$).
We extend $f\sp{\ttd}$ and $\varphi\sp{\ttd}$
by zeros for $\ttd\le 0$,
so that
\begin{equation}\label{f-phi-extended}
f\sp{\ttd}=0
\quad\mbox{for}\quad \ttd\le 0,
\qquad
\varphi\sp{\ttd}=0
\quad\mbox{for}\quad \ttd\le 1.
\end{equation}
Then equation \eqref{eq2}
is satisfied for all $\ttd\in\Z$:
\begin{equation}\label{eq22}
(\bm{A}\varphi)\sp\ttd\sb\xxd
:=
D\sb{\ttd}^2\varphi\sb{\xxd}\sp{\ttd}
-\frac{1}{n}\bm{D}\sb{\xxd}^2\varphi\sb{\xxd}\sp{\ttd}
+\tau^2 m^2\frac{\varphi\sb{\xxd}\sp{\ttd+1}+\varphi\sb{\xxd}\sp{\ttd-1}}{2}
=\tau^2\delta\sb{\xxd,0}f\sp{\ttd},
\quad
\xxd\in\Z^n,
\quad
\ttd\in\Z.
\end{equation}
We introduce the Fourier transforms
\begin{eqnarray}\label{def-phi-tilde}
{\tilde\varphi}\sb{\xxd}(\omega)
&:=&
\sum\sb{\ttd\in\N}e^{i\omega \ttd}\varphi\sb{\xxd}\sp{\ttd},
\qquad\qquad\qquad
\quad
\xxd\in\Z^n,
\qquad
\omega\in\torus;
\\
\label{def-phi-tilde-hat}
\hat{\tilde\varphi}(\xi,\omega)
&:=&
\sum\sb{\xxd\in\Z^n, \ttd\in\N}
e^{-i\xi\cdot \xxd+i\omega \ttd}\varphi\sb{\xxd}\sp{\ttd},
\qquad
\xi\in\torus^n,
\qquad\omega\in\torus;
\\
\label{def-f-tilde}
\tilde f(\omega)
&:=&
\sum\sb{\ttd\in\N}e^{i\omega \ttd}f\sp{\ttd},
\qquad\qquad\qquad
\omega\in\torus.
\end{eqnarray}
Since in the above relations
the summation is over $\ttd\in\N$,
we can extend
\eqref{def-phi-tilde}--\eqref{def-f-tilde}
to the upper half-plane
as analytic functions of $\omega\in\C\sp{+}$.
Then equation \eqref{eq2} yields
\begin{equation}\label{leqF}
a(\xi,\omega)\hat{\tilde\varphi}(\xi,\omega)
=\tau^2\tilde f(\omega),
\qquad
\xi\in\torus^n,
\qquad
\omega\in\C\sp{+}\!\!\!\!\mod 2\pi,
\end{equation}
where $a(\xi,\omega)$ is defined by extending
\eqref{sym} to $\omega\in\C$:
\begin{equation}\label{sym-c}
a(\xi,\omega):=
(2+\tau^2 m^2)\cos\omega-\frac{2}{n}\sum\sb{j=1}\sp{n}\cos\xi\sb j,
\qquad
\xi\in\torus^n,
\quad
\omega\in\C.
\end{equation}

\begin{lemma}\label{lemma-nonzero}
For 
$\xi\in\torus^n$, $\omega\in\C\setminus\varOmega_c$,
one has
$a(\xi,\omega)\ne 0$.
\end{lemma}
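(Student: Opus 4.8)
The plan is to solve the equation $a(\xi,\omega)=0$ for $\omega$ explicitly and to show that all of its roots are real and lie in $\varOmega\sb c$; the lemma then follows by contraposition. Throughout I would use that $a(\xi,\omega)$ depends on $\omega$ only through $\cos\omega$, so it is $2\pi$-periodic in $\omega$ and we may regard $\omega$ as living on the cylinder $\C\!\!\mod 2\pi$, into which $\varOmega\sb c\subset\torus$ embeds.

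First I would rewrite $a(\xi,\omega)=0$, with $a$ as in \eqref{sym-c}, in the form $\cos\omega=r$, where $r:=\frac{2}{n(2+\tau^2 m^2)}\sum\sb{j=1}\sp n\cos\xi\sb j$. Since $\cos\xi\sb j\in[-1,1]$ for each $j$, the quantity $r$ is real and satisfies $\abs{r}\le\frac{2}{2+\tau^2 m^2}=\cos\omega\sb m$, where the last identity uses the definition \eqref{def-omega-m} of $\omega\sb m$. Thus every zero of $a(\xi,\cdot)$ is a complex solution of $\cos\omega=r$ with $r$ real and $\abs{r}\le\cos\omega\sb m<1$.

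Next I would prove that for real $r$ with $\abs{r}<1$ the equation $\cos\omega=r$ has only real solutions $\omega$. Writing $\omega=u+iv$ with $u,v\in\R$, one has $\cos\omega=\cos u\cosh v-i\sin u\sinh v$. Equating the imaginary part to zero gives $\sin u\sinh v=0$, so either $v=0$ or $\sin u=0$. In the second case $\cos u=\pm1$ and hence $\abs{\cos\omega}=\cosh v\ge 1>\abs{r}$, a contradiction. Therefore $v=0$ and $\omega$ is real.

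Finally, for such a real root the relation becomes $\abs{\cos\omega}=\abs{r}\le\cos\omega\sb m$. By the definition \eqref{def-spectral-gaps}, the gaps $\varOmega\sb 0$ and $\varOmega\sb\pi$ are exactly the set $\{\,\abs{\cos\omega}>\cos\omega\sb m\,\}\subset\torus$, so the inequality $\abs{\cos\omega}\le\cos\omega\sb m$ means precisely that $\omega\in\torus\setminus(\varOmega\sb 0\cup\varOmega\sb\pi)=\varOmega\sb c$. Hence every zero of $a(\xi,\cdot)$ lies in $\varOmega\sb c$, which is the contrapositive of the claim: if $\omega\in\C\setminus\varOmega\sb c$ then $a(\xi,\omega)\ne 0$. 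The only delicate point is the case distinction in the imaginary-part computation, which is what rules out genuinely complex roots; everything else is a direct estimate.
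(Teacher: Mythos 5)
Your proof is correct and follows essentially the same route as the paper: the paper also splits $a(\xi,\omega)$ into real and imaginary parts via $\cos(\Re\omega+i\Im\omega)=\cos\Re\omega\,\cosh\Im\omega-i\sin\Re\omega\,\sinh\Im\omega$, notes that the imaginary part can vanish only when $\sin\Re\omega=0$ or $\Im\omega=0$, and in the first case bounds the real part away from zero (there via $\abs{\Re a(\xi,\omega)}\ge\tau^2 m^2$, which is your observation $\cosh\Im\omega\ge 1>\abs{r}$ in disguise). Your repackaging of the equation as $\cos\omega=r$ with $\abs{r}\le\cos\omega\sb m$ is a harmless cosmetic difference.
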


\begin{proof}
Recall that
$\varOmega_c$ was defined by \eqref{cs}
so that $a(\xi,\omega)\ne 0$
for $\xi\in\torus^n$, $\omega\in\torus\setminus\varOmega_c$.
For $\omega\in\C\setminus\R$,
one has
\[
a(\xi,\omega)
=
(2+\tau^2 m^2)
\big(\cos\Re\omega\,\cosh\Im\omega
-i\sin\Re\omega\,\sinh\Im\omega\big)
-\frac{2}{n}\sum\sb{j=1}\sp{n}\cos\xi_j.
\]
Now it suffices to notice that
if $\Im\omega\ne 0$ and $\Re\omega\notin\{0;\pi\}$,
then $\Im a(\xi,\omega)\ne 0$,
while for $\Re\omega\in\{0;\pi\}$
one has $\abs{\Re a(\xi,\omega)}\ge\tau^2 m^2$.
\end{proof}

By Lemma~\ref{lemma-nonzero},
for $\omega$ away from $\varOmega_c$,
equation \eqref{leqF} yields
\begin{equation}\label{varphi-f-a}
\hat{\tilde\varphi}(\xi,\omega)=\frac
{\tau^2\tilde f(\omega)}{a(\xi,\omega)},
\qquad
\xi\in\torus^n,
\qquad
\omega\in(\C\sp{+}\!\!\!\!\mod 2\pi)\setminus\varOmega_c.
\end{equation}
For $\omega\in(\C\sp{+}\!\!\!\!\mod 2\pi)\setminus\varOmega_c$,
since
$\inf\sb{\xi\in\torus^n}\abs{a(\xi,\omega)}
=\min\sb{\xi\in\torus^n}\abs{a(\xi,\omega)}>0$,
the operator of multiplication by
$1/a(\xi,\omega)$
is a bounded linear operator from $L^2(\torus^n)$ to $L^2(\torus^n)$.
In the coordinate representation,
\eqref{varphi-f-a} can be written as
\begin{equation}\label{varphi-f-a-xi}
\tilde\varphi\sb\xxd(\omega)
=
\big(\bm{R}(\omega)\big[\tau^2\delta\sb{\yyd,0}\tilde f(\omega)\big]\big)\sb\xxd,
\qquad
\omega\in(\C\sp{+}\!\!\!\!\mod 2\pi)\setminus\varOmega_c,
\qquad
\xxd,\,\yyd\in\Z^n,
\end{equation}
where
$\bm{R}(\omega)$
is a bounded linear operator
\begin{equation}\label{def-resolvent}
\bm{R}(\omega)
=\mathscr{F}^{-1}
\circ
\frac{1}{a(\xi,\omega)}
\circ
\mathscr{F}
:\;l^2(\Z^n)\to l^2(\Z^n),
\quad
\omega\in(\C\sp{+}\!\!\!\!\mod 2\pi)\setminus\varOmega_c,
\end{equation}
with the Fourier transform
$\mathscr{F}:\,l^2(\Z^n)\to L^2(\torus^n)$
and its inverse
given by
\[
(\mathscr{F} u)(\xi)
=\sum\sb{\xxd\in\Z^n}e^{-i\xi\cdot\xxd}u\sb\xxd,
\qquad
(\mathscr{F}^{-1} v)\sb\xxd
=
\int\sb{\torus^n}e^{i\xi\cdot\xxd}v(\xi)\,
\frac{d\xi}{(2\pi)^n}.
\]
The expression in the right-hand side of \eqref{varphi-f-a-xi}
can be written as
\begin{equation}\label{leqFsc}
\big(\bm{R}(\omega)\big[
\tau^2\delta\sb{\yyd,0}\tilde f(\omega)
\big]\big)\sb{\xxd}
=
\sum\sb{\yyd\in\Z^n}\mathcal{G}\sb{\xxd-\yyd}(\omega)
\tau^2\delta\sb{\yyd,0}\tilde f(\omega)
=
\mathcal{G}\sb{\xxd}(\omega)
\tau^2\tilde f(\omega),
\end{equation}
where
$
\xxd\in\Z^n,
$
$
\omega\in(\C\sp{+}\!\!\!\!\mod 2\pi)\setminus\varOmega_c,
$
and $\mathcal{G}\sb{\xxd}(\omega)$
stands for the fundamental solution,
which is the inverse Fourier transform
of $1/a(\xi,\omega)$:
\begin{equation}\label{def-varsigma-c}
\mathcal{G}\sb{\xxd}(\omega)
=
\mathscr{F}^{-1}
\big[
\frac{1}{a(\xi,\omega)}
\big](\xxd)
=
\int\sb{\torus^n}
\frac{e^{i\xi\cdot \xxd}}
{
(2+\tau^2 m^2)\cos\omega
-\frac{2}{n}\sum\sb{j=1}\sp{n}\cos\xi\sb j
}
\,\frac{d\xi}{(2\pi)^n},
\end{equation}
where
$\omega\in(\C\sp{+}\!\!\!\!\mod 2\pi)\setminus\varOmega_c$.

Let us study properties of $\mathcal{G}\sb{\xxd}(\omega)$.
We start by introducing the set of the singular points
in the continuous spectrum $\varOmega_c$:
\begin{equation}\label{def-singular-points}
\Sigma
:=\left\{\pm\arccos
\frac{1-\frac{2l}{n}}{1+\frac{\tau^2 m^2}{2}}
\sothat\ l=0,\,1,\,\dots,\,n\right\}
\subset\varOmega_c
\subset\torus.
\end{equation}
These frequencies correspond to the
critical points of the symbol $a(\xi,\omega)$,
that is, for $\omega\in\Sigma$,
there is $\xi\in\torus^n$
such that
both $a(\xi,\omega)=0$ and $\nabla\sb\xi a(\xi,\omega)=0$.
(The relation $\nabla\sb\xi a(\xi,\omega)=0$ implies that
$\cos\xi\sb j=\pm 1$ for all $1\le j\le n$;
the value of $l$
in \eqref{def-singular-points}
is the number of cosines
in the denominator of \eqref{def-varsigma-c}
which are equal to $-1$.)
Note that in the one-dimensional case
one has $\Sigma=\p\varOmega_c=\{\pm\omega\sb m;\,\pi\pm\omega\sb m\}$,
with $\omega\sb m=\arccos\big(1+\frac{\tau^2m^2}{2}\big)^{-1}$.

\begin{lemma}\label{lemma-lap}
\begin{enumerate}
\item
For each fixed value of $\xxd\in\Z^n$, the function $\mathcal{G}\sb{\xxd}(\omega)$
is analytic in $\omega\in(\C\sp{+}\!\!\!\!\mod 2\pi)\setminus\varOmega_c$
and admits the trace
\begin{equation}\label{lap}
\mathcal{G}\sb{\xxd}(\omega):=
\mathcal{G}\sb{\xxd}(\omega+i 0),
\qquad
\omega\in\torus\setminus\Sigma,
\end{equation}
which is a smooth function of $\omega\in\torus\setminus\Sigma$.
\item
For
$\xxd\in\Z^n$,
$\omega\in\varOmega_0\cup\varOmega_\pi$
(for
$\omega\in\overline{\varOmega_0}\cup\overline{\varOmega_\pi}$
if $n\ge 5$),
there are relations
\begin{equation}\label{ggg}
\mathcal{G}\sb\xxd(-\omega)=\mathcal{G}\sb\xxd(\omega),
\qquad
\mathcal{G}\sb\xxd(\omega+\pi)
=-(-1)^{\Lambda\cdot\xxd}\mathcal{G}\sb\xxd(\omega),
\end{equation}
where $\Lambda=(1,\dots,1)\in\Z^n$.
\item
\begin{enumerate}
\item
If $n\le 4$,
$\mathcal{G}\sb 0(\omega)$
is a monotonically increasing function
on the interval $0\le\omega<\omega_m$,
where it is strictly positive,
and on the interval
$\pi-\omega_m<\omega\le\pi$
where it is strictly negative.
\item
If $n\ge 5$,
$\mathcal{G}\sb 0(\omega)$
is a monotonically increasing function
on the interval $0\le\omega\le\omega_m$,
where it is strictly positive,
and on the interval
$\pi-\omega_m\le\omega\le\pi$,
where it is strictly negative.
\end{enumerate}
\item
For every $\xxd\in\Z^n$, the boundary trace $\mathcal{G}\sb{\xxd}(\omega+i0)$
is a multiplier in
$\mathscr{D}'(\torus\setminus\Sigma)$,
the space of distributions on $\torus\setminus\Sigma$.
\item
For any
closed set
$I\subset\torus\setminus(\overline{\varOmega\sb 0}\cup\overline{\varOmega\sb\pi})$,
there is
$c\sb I>0$
such that
\begin{equation}\label{nv-large}
\norm{\mathcal{G}(\omega+i\epsilon)}\sb{l^2(\Z^n)}^2
\ge
\frac{c\sb I}{\epsilon},
\qquad
\mbox{for all}\quad\epsilon\in(0,1),
\quad
\omega\in I.
\end{equation}
\end{enumerate}
\end{lemma}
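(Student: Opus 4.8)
The plan is to pass to the Fourier side and reduce the claim to a lower bound on the volume of a thin tube around the zero set of the real symbol $a(\xi,\omega)$. For $\epsilon>0$ the point $\omega+i\epsilon$ has nonzero imaginary part, so $a(\xi,\omega+i\epsilon)\ne 0$ by Lemma~\ref{lemma-nonzero} and $1/a(\cdot,\omega+i\epsilon)\in L^2(\torus^n)$; hence by Plancherel for $\mathscr{F}:l^2(\Z^n)\to L^2(\torus^n)$ and the definition \eqref{def-varsigma-c} of $\mathcal{G}\sb\xxd$ as $\mathscr{F}^{-1}[1/a(\xi,\omega)]$,
\[
\norm{\mathcal{G}(\omega+i\epsilon)}\sb{l^2(\Z^n)}^2
=\int\sb{\torus^n}\frac{1}{\abs{a(\xi,\omega+i\epsilon)}^2}\frac{d\xi}{(2\pi)^n}.
\]
First I would record an elementary pointwise bound: writing $\cos(\omega+i\epsilon)=\cos\omega\cosh\epsilon-i\sin\omega\sinh\epsilon$ and using $\cosh\epsilon-1=O(\epsilon^2)$, $\sinh\epsilon=O(\epsilon)$ for $\epsilon\in(0,1)$, together with $\abs{\cos\omega}\le 1$, $\abs{\sin\omega}\le 1$, gives a constant $C$ (independent of $\omega,\epsilon$) with
\[
\abs{a(\xi,\omega+i\epsilon)}^2\le 2\,a(\xi,\omega)^2+C\epsilon^2,
\qquad
\xi\in\torus^n,\ \epsilon\in(0,1).
\]
Restricting the integral to the tube $T\sb\epsilon(\omega):=\{\xi\in\torus^n:\abs{a(\xi,\omega)}\le\epsilon\}$, on which the denominator is at most $(2+C)\epsilon^2$, I then obtain
\[
\norm{\mathcal{G}(\omega+i\epsilon)}\sb{l^2(\Z^n)}^2\ge\frac{1}{(2\pi)^n(2+C)}\cdot\frac{\abs{T\sb\epsilon(\omega)}}{\epsilon^2},
\]
so everything reduces to the uniform estimate $\abs{T\sb\epsilon(\omega)}\ge\kappa\sb I\,\epsilon$ for $\omega\in I$.

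The heart of the argument is this last lower bound. Write $a(\xi,\omega)=\gamma-h(\xi)$ with $\gamma:=(2+\tau^2 m^2)\cos\omega$ and $h(\xi):=\frac{2}{n}\sum\sb{j=1}^n\cos\xi\sb j$, so that $T\sb\epsilon(\omega)=\{\xi:\abs{h(\xi)-\gamma}\le\epsilon\}$ and $h$ has range $[-2,2]$. Since $I$ is a compact subset of the open set $\torus\setminus(\overline{\varOmega\sb 0}\cup\overline{\varOmega\sb\pi})$, it has positive distance to $\p\varOmega_c=\{\pm\omega\sb m,\pi\pm\omega\sb m\}$; because $\abs{\cos\omega}=(1+\tau^2 m^2/2)^{-1}$ exactly on $\p\varOmega_c$ (so that $\abs\gamma=2$ there), this forces $\abs\gamma\le 2-\eta\sb I$ for some $\eta\sb I>0$, uniformly in $\omega\in I$. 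The key geometric input is that the level set $\{h=\gamma\}$ always contains a point where $\nabla h$ is uniformly bounded away from zero: the diagonal point $\xi^\ast=(\theta,\dots,\theta)$ with $\cos\theta=\gamma/2$ solves $h(\xi^\ast)=\gamma$, and $\abs{\nabla h(\xi^\ast)}=\frac{2}{\sqrt n}\abs{\sin\theta}$ with $\sin^2\theta=1-\gamma^2/4\ge\eta\sb I/2$, whence $\abs{\nabla h(\xi^\ast)}\ge c(n,\eta\sb I)>0$. A tubular/implicit-function-theorem argument in a fixed ball $B(\xi^\ast,r)$, with $r$ uniform (using that each $\p\sb{\xi\sb j}h$ stays comparable to its value at $\xi^\ast$ on $B$), then shows that $T\sb\epsilon(\omega)\cap B(\xi^\ast,r)$ contains a slab of volume at least $\kappa\sb I\,\epsilon$ for all $\epsilon$ below some $\epsilon\sb 0=\epsilon\sb 0(I)\in(0,1)$; by monotonicity of $\epsilon\mapsto\abs{T\sb\epsilon(\omega)}$ this upgrades to $\abs{T\sb\epsilon(\omega)}\ge\kappa\sb I\epsilon\sb 0\,\epsilon$ for all $\epsilon\in(0,1)$.

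Combining the displays yields $\norm{\mathcal{G}(\omega+i\epsilon)}\sb{l^2}^2\ge c\sb I/\epsilon$ uniformly on $I$, with $c\sb I=\kappa\sb I\epsilon\sb 0/[(2\pi)^n(2+C)]$, as claimed. I expect the main obstacle to be precisely the \emph{uniform-in-$\omega$} construction of a regular point of the level set: near the singular frequencies $\omega\in\Sigma$ the hypersurface $\{a(\cdot,\omega)=0\}$ passes through the corner critical points of $h$ (where $\nabla h$ degenerates), so one must ensure the chosen regular point and the resulting lower bounds on $\abs{\nabla h}$ and on $r$ do not deteriorate as $\omega$ ranges over $I$. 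The explicit diagonal point $\xi^\ast(\gamma)$ circumvents this, since its gradient is controlled solely by the uniform separation $\abs\gamma\le 2-\eta\sb I$ from the edge of the spectrum and never by proximity to the interior singular values in $\Sigma$. Everything else --- Plancherel, the pointwise symbol bound, and the slab volume estimate --- is routine.
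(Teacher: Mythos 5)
Your proposal addresses only item (\emph{v}) of the lemma: everything you wrote concerns the lower bound \eqref{nv-large}. Items (\emph{i})--(\emph{iv}) --- the analyticity of $\mathcal{G}_{\xxd}$ on $(\C^{+}\!\!\mod 2\pi)\setminus\varOmega_c$ together with the existence and smoothness of the boundary trace on $\torus\setminus\Sigma$ (which the paper obtains from the Sokhotsky--Plemelj formula after taking $a(\xi,\omega)$ as a coordinate near the hypersurface $\Gamma_\omega$), the symmetries \eqref{ggg} (a change of variables $\xi\mapsto\xi+\pi\Lambda$ in \eqref{def-varsigma-c}), the monotonicity and sign of $\mathcal{G}_0$ on the spectral gaps, and the multiplier property in $\mathscr{D}'(\torus\setminus\Sigma)$ --- are not treated at all. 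These are not decorative: \eqref{ggg} is what later forces the $(-1)^{\Lambda\cdot\xxd}$ structure of the two- and four-frequency waves, and the multiplier property is exactly what justifies passing to the limit in Lemma~\ref{lemma-b-sg}. So, as a proof of the lemma as stated, the proposal has a genuine gap of coverage, even though each missing item is far easier than item (\emph{v}).

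On item (\emph{v}) itself your argument is correct, and it follows the same skeleton as the paper's: Plancherel reduces the claim to bounding $\int_{\torus^n}\abs{a(\xi,\omega+i\epsilon)}^{-2}\,d\xi$ from below, which in turn reduces to showing that a tube of width of order $\epsilon$ about the zero set of $a(\cdot,\omega)$ has volume at least $\kappa_I\epsilon$, uniformly in $\omega\in I$. The difference lies in how that volume bound is obtained. The paper works with the $\epsilon$-neighborhood of the level hypersurface $\Gamma_\omega$ and asserts $\abs{U_\epsilon(\Gamma_\omega)}\ge v_I\epsilon$ because $\Gamma_\omega$ has uniformly positive area; the uniformity in $\omega$ --- in particular for $\omega$ near the singular frequencies in $\Sigma\cap I$, where $\Gamma_\omega$ passes through critical points of the symbol --- is left implicit. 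Your construction of the explicit diagonal point $\xi^{*}=(\theta,\dots,\theta)$ with $\cos\theta=\gamma/2$, whose gradient is bounded below solely in terms of the distance from $I$ to $\p\varOmega_c$, together with the implicit-function-theorem slab and the monotonicity upgrade in $\epsilon$, supplies exactly this uniformity; your pointwise bound $\abs{a(\xi,\omega+i\epsilon)}^2\le 2\,a(\xi,\omega)^2+C\epsilon^2$ also checks out, since $\cosh\epsilon-1=O(\epsilon^2)$ and $\sinh\epsilon=O(\epsilon)$ on $(0,1)$. In short: on item (\emph{v}) your route is a sharper, self-contained version of the paper's estimate; to prove the lemma you must still supply items (\emph{i})--(\emph{iv}).
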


\begin{proof}
{\it 1.} The analyticity follows directly from 
\eqref{def-varsigma-c} and the definition \eqref{cs}
of $\varOmega_c$.
The continuity of the traces for $\omega\in\torus\setminus\Sigma$ 
follows by the Sokhotsky-Plemelj formula
after taking the symbol $a(\xi,\omega)$ in
\eqref{def-varsigma-c} as a new coordinate function 
in a neighborhood 
of the hypersurface $\Gamma_\omega:=\{\xi\in\torus^n: a(\xi,\omega)=0\}$;
see e.g. \cite{MR0211027,MR1914696}.
The smoothness follows similarly;
see e.g. \cite[Proposition 2.2]{MR2604564}.

\smallskip

\noindent
{\it 2.}
For
$\xxd\in\Z^n$,
$\omega\in\varOmega_0\cup\varOmega_\pi$,
we have:
\begin{eqnarray}
\mathcal{G}\sb\xxd(\omega+\pi)
&=&
\int\sb{\torus^n}
\frac{e^{i(\xi+\pi\Lambda)\cdot\xxd}}{a(\xi+\pi\Lambda,\omega+\pi)}
\frac{d\xi}{(2\pi)^n}
\nonumber
\\
&=&
-(-1)^{\Lambda\cdot\xxd}\int\sb{\torus^n}
\frac{e^{i\xi\cdot\xxd}}{a(\xi,\omega)}
\frac{d\xi}{(2\pi)^n}
=-(-1)^{\Lambda\cdot\xxd}\mathcal{G}\sb\xxd(\omega).
\nonumber
\end{eqnarray}

\smallskip

\noindent
{\it 3.}
The monotonicity
of $\mathcal{G}\sb 0(\omega)$
for $\omega\in\torus\setminus\varOmega_c$
immediately follows from the definition \eqref{def-varsigma-c}.
For $n\ge 5$,
one notices that
$\mathcal{G}\sb 0(\omega)$
remains finite
at $\omega=\pm\omega\sb m$
and $\omega=\pi\pm\omega\sb m$



\smallskip

\noindent
{\it 4.}
To prove that $\mathcal{G}\sb\xxd(\omega+i0)$
is a multiplier
in the space of distributions,
it suffices to notice that
for each $\xxd\in\Z^n$
the trace
$\mathcal{G}\sb{\xxd}(\omega+i 0)$
is a smooth function of
$\omega\in\torus\setminus\Sigma$.

\smallskip

\noindent
{\it 5.}
Using
\eqref{def-varsigma-c}
and the Plancherel theorem,
we compute:
\begin{eqnarray}\label{denomi}
&&
\norm{\mathcal{G}(\omega+i\epsilon)}\sb{l^2(\Z^n)}^2
=
\sum\sb{\xxd\in\Z^n}
\abs{\mathcal{G}\sb{\xxd}(\omega+i\epsilon)}^2
=
\int\sb{\torus^n}
\frac{1}{\abs{a(\xi,\omega+i\epsilon)}^2}
\frac{d\xi}{(2\pi)^n}
\\
&&
\hskip -6pt
\ge
\int\sb{\torus^n}
\frac{1}{\Abs{
(2+\tau^2 m^2)(\cos\omega\cosh\epsilon-i\sin\omega\sinh\epsilon)
-\frac{2}{n}\sum\limits\sb{j=1}\sp{n}\cos\xi\sb j
}^2}
\frac{d\xi}{(2\pi)^n}
\nonumber
\\
&&
\hskip -6pt
\ge
\!\!\!\!\!\!
\int\limits\sb{U\sb{\epsilon}(\Gamma_\omega)}
\frac{1
}
{
\Abs{
(2+\tau^2 m^2)\cos\omega\cosh\epsilon
-\frac{2}{n}\!\!\sum\limits\sb{j=1}\sp{n}\cos\xi\sb j}^2
+\abs{(2+\tau^2 m^2)\sin\omega\sinh\epsilon
}^2}
\frac{d\xi}{(2\pi)^n}
,
\nonumber
\end{eqnarray}
where
$U_\epsilon(\Gamma_\omega)\subset\torus^n$
is the $\epsilon$-neighborhood of
the 
level set 
$\Gamma\sb{\omega}\subset\torus^n$
which is defined by the dispersion relation
\[
\Gamma\sb{\omega}
=\Big\{\xi\in\torus^n\sothat
\ 
a(\xi,\omega)=0
\Big\}.
\]
Note that since
$\omega\in I\subset
\torus\setminus(\overline{\varOmega\sb 0}\cup\overline{\varOmega\sb\pi})$,
one has $\big(1+\frac{\tau^2 m^2}{2}\big)\abs{\cos\omega}<1$,
therefore $\Gamma_\omega$ is a nonempty submanifold of $\torus^n$
of codimension one,
piecewise smooth away from the discrete set $\{0;\pi\}^n\subset\torus^n$
(on this set,
one could have
simultaneously $a(\xi,\omega)=0$, $\nabla\sb\xi a(\xi,\omega)=0$;
in fact, $\Gamma\sb{\omega}$
does not contain these points for $\omega\in\torus\setminus \Sigma$).
It follows that
$\abs{U_\epsilon(\Gamma_\omega)}=O(\epsilon)$.
Moreover,
since the hypersurface $\Gamma_\omega$
has strictly positive area
for $\omega\in I$,
there is $v\sb I>0$
(dependent on $I$ but not on $\omega$)
so that
\[
\abs{U_\epsilon(\Gamma_\omega)}
\ge v\sb I\epsilon,
\qquad
\forall\omega\in I.
\]
One can see that
for all
$\xi\in U_\epsilon(\Gamma_\omega)$
and all $\epsilon\in(0,1)$
the denominator
in the integral in the right-hand side of \eqref{denomi}
is bounded from above by
$k\sb{I}\epsilon^2$,
for some $k\sb{I}>0$
which depends on $I$ but not on $\omega\in I$.
Thus,
there is $c\sb I>0$ such that
\[
\norm{\mathcal{G}(\omega+i\epsilon)}\sb{l^2(\Z^n)}^2
\ge\frac{c\sb I}{\epsilon},
\qquad
\epsilon\in(0,1).
\]
\end{proof}


\begin{lemma}\label{lemma-ac}
$\tilde f\at{\torus\setminus(\overline{\varOmega\sb 0}\cup\overline{\varOmega\sb\pi})}
\in L^2\big(\torus\setminus(\overline{\varOmega\sb 0}\cup\overline{\varOmega\sb\pi})\big)$.
\end{lemma}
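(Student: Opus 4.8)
The plan is to read off the $L^2$-regularity of $\tilde f$ on the continuous spectrum from the uniform-in-time boundedness of the nonlinear component $\varphi$, transported to the frequency variable by Parseval's identity and then ``divided'' by the fundamental solution using the lower bound \eqref{nv-large}. The first input is that $\varphi$ is bounded in $l^2(\Z^n)$ uniformly in time. Indeed, for $\ttd\ge 0$ we have $\psi=\chi+\varphi$; the a priori estimate \eqref{l2-bound} controls $\sup_\ttd\norm{\psi^\ttd}_{l^2(\Z^n)}$, while \eqref{chib} controls $\sup_\ttd\norm{\chi^\ttd}_{l^2(\Z^n)}$, so there is $M<\infty$ with $\norm{\varphi^\ttd}_{l^2(\Z^n)}\le M$ for all $\ttd$. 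Recall that $\varphi$ and $f$ have been extended by zero for $\ttd\le 0$, so the time-Fourier transforms \eqref{def-phi-tilde}--\eqref{def-f-tilde} are analytic in $\omega\in\C^+$.

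Next, fix $\epsilon\in(0,1)$ and apply Parseval's identity in $\ttd$ to the exponentially weighted sequence $\varphi_\xxd^\ttd e^{-\epsilon\ttd}$, then sum over $\xxd\in\Z^n$:
\[
\int_\torus\norm{\tilde\varphi(\cdot,\omega+i\epsilon)}_{l^2(\Z^n)}^2\,\frac{d\omega}{2\pi}
=\sum_{\ttd\ge 1}e^{-2\epsilon\ttd}\norm{\varphi^\ttd}_{l^2(\Z^n)}^2
\le M^2\sum_{\ttd\ge 1}e^{-2\epsilon\ttd}
\le\frac{CM^2}{\epsilon}.
\]
By Lemma~\ref{lemma-nonzero}, $a(\xi,\omega+i\epsilon)\ne 0$ for every $\xi$ once $\epsilon>0$, so \eqref{varphi-f-a-xi}--\eqref{leqFsc} give $\tilde\varphi_\xxd(\omega+i\epsilon)=\mathcal{G}_\xxd(\omega+i\epsilon)\,\tau^2\tilde f(\omega+i\epsilon)$. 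Since $\tilde f(\omega+i\epsilon)$ is independent of $\xxd$, this factorizes as
\[
\norm{\tilde\varphi(\cdot,\omega+i\epsilon)}_{l^2(\Z^n)}^2
=\tau^4\,\abs{\tilde f(\omega+i\epsilon)}^2\,\norm{\mathcal{G}(\omega+i\epsilon)}_{l^2(\Z^n)}^2.
\]

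Now fix a closed set $I\subset\torus\setminus(\overline{\varOmega_0}\cup\overline{\varOmega_\pi})$ and invoke the lower bound \eqref{nv-large}, $\norm{\mathcal{G}(\omega+i\epsilon)}_{l^2(\Z^n)}^2\ge c_I/\epsilon$. Combining the two displays and cancelling the common factor $1/\epsilon$ produces a bound on $\tilde f(\cdot+i\epsilon)$ that is \emph{uniform} in $\epsilon$:
\[
\int_I\abs{\tilde f(\omega+i\epsilon)}^2\,\frac{d\omega}{2\pi}
\le\frac{1}{\tau^4 c_I}\int_\torus\norm{\tilde\varphi(\cdot,\omega+i\epsilon)}_{l^2(\Z^n)}^2\,\frac{d\omega}{2\pi}
\le\frac{CM^2}{\tau^4 c_I}.
\]
Thus $\{\tilde f(\cdot+i\epsilon)\at{I}\}_{\epsilon\in(0,1)}$ is a bounded family in the Hilbert space $L^2(I)$. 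By Banach--Alaoglu I would select a sequence $\epsilon_j\downarrow 0$ along which $\tilde f(\cdot+i\epsilon_j)\at{I}$ converges weakly in $L^2(I)$; since the boundary trace $\tilde f(\omega+i0)$ already exists as a distribution on $\torus\setminus\Sigma$ (the quasimeasure generated by the bounded sequence $f^\ttd$), the weak limit must coincide with $\tilde f\at{I}$, whence $\tilde f\at{I}\in L^2(I)$ with $\norm{\tilde f}_{L^2(I)}^2\le CM^2/(\tau^4 c_I)$. Exhausting the open set $\torus\setminus(\overline{\varOmega_0}\cup\overline{\varOmega_\pi})$ by such closed sets $I$ then yields the claim.

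The hard part is the exhaustion step. The constant $c_I$ in \eqref{nv-large} degenerates as $I$ approaches the boundary frequencies $\p\varOmega_c=\{\pm\omega_m,\pi\pm\omega_m\}$, where the level set $\Gamma_\omega$ collapses onto the degenerate critical points $\{0;\pi\}^n$ and its surface area tends to zero; for $n\ge 3$ one checks that $\norm{\mathcal{G}(\omega+i\epsilon)}_{l^2(\Z^n)}^2$ no longer grows like $1/\epsilon$ near these points, so the cancellation above loses its force. Consequently the argument as stated gives only $\tilde f\in L^2_{\rm loc}$ of the open continuous spectrum, and upgrading to a genuine global $L^2$ bound requires separate control in shrinking neighborhoods of the four boundary frequencies --- e.g.\ by quantifying, as a function of $n$, the exact rate at which $\norm{\mathcal{G}(\omega+i\epsilon)}_{l^2(\Z^n)}^2$ degenerates there and verifying that the resulting weighted estimate still forces $\int\abs{\tilde f}^2$ to converge. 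A secondary technical point, also concentrated near $\p\varOmega_c$, is the rigorous identification of the weak $L^2$ limit with the distributional boundary value $\tilde f(\cdot+i0)$.
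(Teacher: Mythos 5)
Your proposal is correct and follows essentially the same route as the paper's own proof: the uniform-in-$\ttd$ bound on $\norm{\varphi\sp{\ttd}}\sb{l^2(\Z^n)}$, the exponentially weighted Plancherel identity (the paper's \eqref{le-ce}), the factorization $\tilde\varphi\sb\xxd=\tau^2\mathcal{G}\sb\xxd\tilde f$, the lower bound \eqref{nv-large}, and Banach--Alaoglu together with the distributional convergence \eqref{trr} to identify the boundary trace. The caveat you raise at the end is not a gap relative to the paper: the paper's proof establishes exactly the same locally uniform bounds $C\sb I$ on closed sets $I\subset\torus\setminus(\overline{\varOmega\sb 0}\cup\overline{\varOmega\sb\pi})$ and stops there, which suffices because the lemma is only ever invoked through cutoffs $\varrho\in C\sp\infty\sb 0$ compactly supported in that open set (see Lemma~\ref{lemma-supp-g}), i.e., only the local $L^2$ property is used.
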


\begin{proof}
We will prove this lemma
considering
$\tilde f(\omega)$ for $\omega\in\C\sp{+}$
and then taking the limit $\Im\omega\to 0$.
Since $f\sp{\ttd}=0$ for $\ttd\le 0$
(Cf. \eqref{f-phi-extended})
and
$f\sp{\ttd}$ is bounded due to \eqref{l2-bound},
the Fourier transform
\eqref{def-f-tilde}
extended to $\omega\in\C\sp{+}$,
\begin{equation}\label{distf+}
\tilde f(\omega)=\sum\sb{\ttd\in\Z}e^{i\omega \ttd}f\sp{\ttd}
=\sum\sb{\ttd\in\N}e^{i\omega \ttd}f\sp{\ttd},
\qquad
\omega\in\C\sp{+}\!\!\!\!\mod 2\pi,
\end{equation}
defines an analytic function of $\omega\in\C\sp{+}$,
which satisfies
$\abs{\tilde f(\omega)}\le\frac{C}{1-e^{-\abs{\Im\omega}}}$,
$\omega\in\C\sp{+}$,
with some $C<\infty$.
Since for each $\ttd\in\N$
there is a convergence
\[
f\sp{\ttd} e^{-\epsilon \ttd}\to f\sp{\ttd},
\qquad
\epsilon\to 0+,
\]
$\tilde f$ satisfies
\begin{equation}\label{trr}
\tilde f(\omega+ i\epsilon)\to\tilde f(\omega),
\qquad
\omega\in\C\sp{+}\!\!\!\!\mod 2\pi,
\quad
\epsilon\to 0+,
\end{equation}
with the convergence in the sense of distributions.
We need to show
that for any closed subset
$I\subset\torus\setminus(\overline{\varOmega\sb 0}\cup\overline{\varOmega\sb\pi})
$
there is 
$C\sb I<\infty$
such that
\begin{equation}\label{iif}
\int\sb{I}\abs{\tilde f(\omega+i\epsilon)}^2\,d\omega<C\sb I,
\qquad
\epsilon\in(0,1).
\end{equation}
If this were the case,
then, taking into account the convergence \eqref{trr},
we would conclude that
\[
\int\sb{I}\abs{\tilde f(\omega)}^2 d\omega
\le C\sb I
\]
by the Banach-Alaoglu theorem on weak compactness,
finishing the proof.

It remains to prove \eqref{iif}.
Similarly to \eqref{distf+}, we define
\begin{equation}\label{tilde-varphi}
\tilde\varphi\sb{\xxd}(\omega)
=\sum\sb{\ttd\in\N}e^{i\omega \ttd}\varphi\sb{\xxd}\sp{\ttd},
\qquad
\xxd\in\Z^n,
\quad
\omega\in\C\sp{+}\!\!\!\!\mod 2\pi,
\end{equation}
which is an analytic function valued in $l^2(\Z^n)$.
Its limit as $\Im\omega\to 0+$
exists as an element of $\mathscr{D}'(\torus,l^2(\Z^n))$.
Due to equation \eqref{eq22},
the complex Fourier transforms
of $\varphi\sp{\ttd}\sb{\xxd}$ and of $f\sp{\ttd}$
are related by
\begin{equation}\label{bvafc}
\widetilde{\varphi}\sb{\xxd}(\omega)=\tau^2
\mathcal{G}\sb{\xxd}(\omega)\tilde f(\omega),
\qquad
\omega\in\C\sp{+}\!\!\!\!\mod 2\pi.
\end{equation}
Using \eqref{bvafc}
and the Plancherel theorem,
we see that
\begin{equation}\label{le-ce}
\tau^4
\int\sb{\torus}
\norm{\mathcal{G}(\omega+i\epsilon)}\sb{l^2(\Z^n)}^2
\abs{\tilde f(\omega+i\epsilon)}^2
\,\frac{d\omega}{2\pi}
=
\sum\sb{\ttd\in\N}\norm{\varphi\sp{\ttd}}\sb{l^2(\Z^n)}^2
e^{-2\epsilon \ttd}
\le\frac{\sup\limits\sb{\ttd\in\Z\sb{+}}\norm{\varphi\sp{\ttd}}\sb{l^2(\Z^n)}^2}
{1-e^{-2\epsilon}},
\end{equation}
where
\begin{equation}\label{phib}
\sup\sb{\ttd\in\Z\sb{+}}\norm{\varphi\sp{\ttd}}\sb{l^2(\Z^n)}<\infty
\end{equation}
due to the a priori estimates
on $\psi$ and $\chi$
(see \eqref{l2-bound} and \eqref{chib}).



Combining the bound
\eqref{le-ce}
with the bound
on
$\norm{\mathcal{G}(\omega+i\epsilon)}\sb{l^2(\Z^n)}$
obtained in Lemma~\ref{lemma-lap} (Cf. \eqref{nv-large}),
we conclude
that there is $C\sb I<\infty$ such that
\eqref{iif} holds.
This completes the proof of the lemma.
\end{proof}

\section{Spectral relation}
\label{sect-spectral-relation}

Let $\zeta\in l(\Z,l^2(\Z^n))$
be the omega-limit trajectory of the solution
$\psi\in l(\Z,l^2(\Z^n))$
to the Cauchy problem \eqref{dkg-cp},
in the sense of Definition~\ref{def-omega-pt}.
That is, we assume that
$\zeta$ is a solution to \eqref{kgz}
and that there is a sequence
$\ttd\sb j\in\N$, $j\in\N$,
such that
$(\psi\sp{\ttd\sb j},\psi\sp{\ttd\sb j+1})$
converge weakly to
$(\zeta\sp{\ttd},\zeta\sp{\ttd+1})\at{\ttd=0}$.

Let us express $\tilde\zeta\sb{\xxd}(\omega)$
in terms of $\tilde g(\omega)$;
this representation
will allow us to express
$\zeta\sb{\xxd}\sp{\ttd}$ in equation \eqref{kgz}
via $\zeta\sb 0\sp{\ttd}$.
By the definition of omega-limit trajectory
$\tilde\zeta\sb{\xxd}(\omega)$
(see Definition~\ref{def-omega-pt}),
its Fourier transform in $\ttd$
satisfies the stationary Helmholtz-type equation
\begin{equation}\label{eqn-tilde-zeta}
\Big(
(2+\tau^2 m^2)\cos\omega-2
-\frac{1}{n}\bm{D}\sb{\xxd}^2
\Big)
\tilde\zeta\sb{\xxd}(\omega)
=\tau^2\delta\sb{\xxd,0}\tilde g(\omega),
\qquad
\xxd\in\Z^n,
\quad
\omega\in\torus.
\end{equation}

\begin{lemma}\label{lemma-b-sg}
The solution to the stationary problem
\eqref{eqn-tilde-zeta}
satisfies
\begin{equation}\label{bsg}
\tilde\zeta\sb{\xxd}(\omega)=\tau^2\mathcal{G}\sb{\xxd}(\omega)\tilde g(\omega),
\qquad
\omega\in\torus\setminus\Sigma,
\qquad
\xxd\in\Z^n,
\end{equation}
where
$\Sigma$
is the set of singular points
defined in \eqref{def-singular-points}.
\end{lemma}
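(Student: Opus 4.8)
The plan is to reduce the stationary problem \eqref{eqn-tilde-zeta} to an algebraic relation via the spatial Fourier transform $\mathscr{F}:l^2(\Z^n)\to L^2(\torus^n)$. Under $\mathscr{F}$ the finite-difference Laplacian acts as $\bm{D}\sb\xxd^2\mapsto 2\sum\sb{j=1}\sp n\cos\xi\sb j-2n$, so that the symbol of the operator acting on $\tilde\zeta\sb\xxd(\omega)$ in the left-hand side of \eqref{eqn-tilde-zeta} is exactly $a(\xi,\omega)=(2+\tau^2 m^2)\cos\omega-\frac 2n\sum\sb{j=1}\sp n\cos\xi\sb j$ from \eqref{sym}. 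Thus \eqref{eqn-tilde-zeta} is equivalent to the pointwise identity $a(\xi,\omega)\,\hat{\tilde\zeta}(\xi,\omega)=\tau^2\tilde g(\omega)$, read as a relation between distributions in $\omega\in\torus\setminus\Sigma$. I would then split $\torus\setminus\Sigma$ into the two spectral gaps $\varOmega\sb 0\cup\varOmega\sb\pi$ and the punctured continuous spectrum $\varOmega\sb c\setminus\Sigma$, handled by entirely different arguments, and interpret $\mathcal{G}\sb\xxd(\omega)$ in \eqref{bsg} as the boundary trace $\mathcal{G}\sb\xxd(\omega+i0)$ of \eqref{lap}.

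On the gaps $\varOmega\sb 0\cup\varOmega\sb\pi$ (and on their closures when $n\ge 5$) the symbol does not vanish and $\inf\sb{\xi\in\torus^n}\abs{a(\xi,\omega)}>0$ by Lemma~\ref{lemma-nonzero}, so division is legitimate: $1/a(\cdot,\omega)$ is a bounded multiplier on $L^2(\torus^n)$, indeed of finite $L^2$-norm by Lemma~\ref{lemma-finite-norm}. One immediately gets $\hat{\tilde\zeta}(\xi,\omega)=\tau^2\tilde g(\omega)/a(\xi,\omega)$, hence $\tilde\zeta\sb\xxd(\omega)=\tau^2\mathcal{G}\sb\xxd(\omega)\tilde g(\omega)$ after applying $\mathscr{F}^{-1}$ and recalling \eqref{def-varsigma-c}. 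Since $\mathcal{G}\sb\xxd$ is analytic across the gaps, its product with the distribution $\tilde g$ is unambiguous, and \eqref{bsg} holds there using nothing beyond the equation itself, precisely because $a\ne 0$ forces a trivial kernel.

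The delicate region is $\varOmega\sb c\setminus\Sigma$, where $a(\cdot,\omega)$ vanishes on the smooth hypersurface $\Gamma\sb\omega\subset\torus^n$. There the relation $a\,\hat{\tilde\zeta}=\tau^2\tilde g$ pins $\hat{\tilde\zeta}$ down only modulo a distribution supported on $\Gamma\sb\omega$ (the densities against $\delta(a)$ corresponding to the non-decaying plane waves spanning the kernel), so \eqref{bsg} cannot follow from \eqref{eqn-tilde-zeta} alone and I must use that $\zeta$ is an omega-limit trajectory. Here I would exploit the decomposition $\psi=\chi+\varphi$ of the generating finite-energy solution, the local dispersive decay $\norm{\chi\sp\ttd}\sb{l^2(B)}\to 0$ of Lemma~\ref{lemma-pd}, and the causal (one-sided) representation \eqref{bvafc}, namely $\tilde\varphi\sb\xxd(\omega)=\tau^2\mathcal{G}\sb\xxd(\omega)\tilde f(\omega)$ for $\omega\in\C\sp{+}$. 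Passing to the boundary trace $\Im\omega\to 0{+}$ is justified because $\mathcal{G}\sb\xxd(\omega+i0)$ is a multiplier in $\mathscr{D}'(\torus\setminus\Sigma)$ by Lemma~\ref{lemma-lap} and $\tilde f$ has the $L^2$ regularity off $\overline{\varOmega\sb 0}\cup\overline{\varOmega\sb\pi}$ of Lemma~\ref{lemma-ac}; crucially this selects the outgoing (retarded) value $\mathcal{G}\sb\xxd(\omega+i0)$ and no other solution on $\Gamma\sb\omega$. I would then transport this identity to the limit along $\ttd\sb j\to+\infty$ by the quasimeasure technique: the multiplier $\mathcal{G}\sb\xxd(\omega+i0)$ is independent of $j$, the dispersive part drops out by Lemma~\ref{lemma-pd}, and the localized nonlinearity $f\sp{\ttd\sb j+\ttd}$ converges to $g\sp\ttd$ since point values at $\xxd=0$ converge under weak $l^2$-convergence.

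The main obstacle is exactly this passage on $\varOmega\sb c\setminus\Sigma$: one must show that no singular contribution supported on the dispersion surfaces $\Gamma\sb\omega$ survives in the weak limit, i.e. that the omega-limit trajectory inherits the limiting-absorption (outgoing radiation) selection from the dispersive dynamics rather than acquiring a free plane-wave component. Controlling the oscillatory time-shift factors $e^{-i\omega\ttd\sb j}$ inside the distributional products — which is what the quasimeasure framework is designed for — is the technical heart of the argument, and the $L^2$ bounds of Lemmas~\ref{lemma-ac} and~\ref{lemma-lap} are precisely what keep every product meaningful away from the critical set $\Sigma$.
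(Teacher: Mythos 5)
Your proposal is correct and follows essentially the same route as the paper: the relation \eqref{bsg} is inherited from the causal representation \eqref{bvafc} by taking boundary values $\Im\omega\to 0+$ (using that $\mathcal{G}_{\xxd}(\omega+i0)$ is a multiplier in $\mathscr{D}'(\torus\setminus\Sigma)$, Lemma~\ref{lemma-lap}), and then passing to the limit along $\ttd_j\to\infty$ via the convergences $\tilde f(\omega)e^{-i\omega \ttd_j}\to\tilde g(\omega)$ and $\tilde\varphi_{\xxd}(\omega)e^{-i\omega \ttd_j}\to\tilde\zeta_{\xxd}(\omega)$, with the dispersive component removed by Lemma~\ref{lemma-pd}; this is exactly how the paper rules out any extra term supported on the surfaces $\Gamma_\omega$. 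Your separate division argument on the spectral gaps is fine but redundant (the paper's argument covers $\torus\setminus\Sigma$ uniformly), and for this lemma plain $\mathscr{D}'$-convergence suffices --- quasimeasures and Lemma~\ref{lemma-ac} only become essential later, when the relation is extended across $\p\varOmega_c$ in Lemma~\ref{lemma-b-sg-zero}.
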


\begin{proof}
By \eqref{eqn-tilde-zeta},
the Fourier transform of $\zeta$ in $\xxd$ and $\ttd$,
\[
\hat{\tilde\zeta}(\xi,\omega)
=\sum\sb{\xxd\in\Z^n,\,\ttd\in\Z}
\zeta\sb{\xxd}\sp{\ttd}
e^{-i\xi\cdot \xxd}
e^{i\omega \ttd},
\qquad
\xi\in\torus^n,
\quad
\omega\in\torus,
\]
satisfies
\begin{equation}\label{eqn-tilde-zeta-xo}
\Big(
(2+\tau^2 m^2)\cos\omega
-\frac{2}{n}\sum\sb{j=1}\sp{n}\cos\xi\sb j
\Big)
\hat{\tilde\zeta}(\xi,\omega)
=a(\xi,\omega)
\hat{\tilde\zeta}(\xi,\omega)
=\tau^2\tilde g(\omega),
\end{equation}
where
$\xi\in\torus^n$
and
$\omega\in\torus$.
By \eqref{def-f}, \eqref{def-g},
and \eqref{psij-to-zeta},
\begin{equation}\label{f-to-g}
f_j\sp{\ttd}
:=
f\sp{\ttd+\ttd\sb j}\mathop{\longrightarrow}\limits\sb{j\to\infty} g\sp{\ttd},
\qquad
\forall \ttd\in\Z,
\end{equation}
and moreover the functions $\{f_j\}$
are uniformly bounded
in $l^\infty(\Z)$
due to \eqref{l2-bound}.
This is enough to conclude that
$f\sb j$ converge to $g$
in the sense of tempered distributions on
$\Z$ (dual to $\mathscr{S}(\Z)$,
which is
the vector space of sequences decaying faster than
any power of $n$),
hence,
due to
continuity of the Fourier transform
in the space of tempered distributions
and due to
$\mathscr{S}'(\torus)=\mathscr{D}'(\torus)$,
\begin{equation}\label{fj-to-g}
\tilde f(\omega)e^{-i\omega \ttd\sb j}
\mathop{\longlongrightarrow}\limits\sb{j\to\infty}
\sp{\mathscr{D}'(\torus)} 
\tilde g(\omega).
\end{equation}
Due to \eqref{psij-to-zeta} and Lemma~\ref{lemma-pd},
\begin{equation}\label{phij-to-zeta}
\zeta\sp{\ttd}\sb{\xxd}
=\lim\sb{\ttd\sb j\to\infty}\psi\sp{\ttd\sb j+\ttd}\sb{\xxd}
=\lim\sb{\ttd\sb j\to\infty}\varphi\sp{\ttd\sb j+\ttd}\sb{\xxd},
\qquad \xxd\in\Z^n,
\quad
\ttd\in\Z.
\end{equation}
Moreover, by Theorem~\ref{theorem-gwp-dkg},
the solutions
$\psi\sb\xxd\sp\ttd$,
$\chi\sb\xxd\sp\ttd$
(Cf.~\eqref{eq1}),
and hence
their difference
$\varphi\sb\xxd\sp\ttd=\psi\sb\xxd\sp\ttd-\chi\sb\xxd\sp\ttd$
are bounded uniformly in $\ttd$ and $\xxd$.
Therefore, similarly to how we arrived at \eqref{fj-to-g},
\begin{equation}\label{tilde-psij-to-zeta}
\tilde\varphi\sb{\xxd}(\omega)e^{-i\omega \ttd\sb j}
\mathop{\longlongrightarrow}\limits\sb{j\to\infty}
\sp{\mathscr{D}'(\torus,l^2(\Z^n))} 
\tilde\zeta\sb{\xxd}(\omega).
\end{equation}
Now the proof follows from
taking the limit $\Im\omega\to 0+$
in \eqref{bvafc}
and using
\eqref{fj-to-g}
and
\eqref{tilde-psij-to-zeta},
and also taking into account that
for each 
$\xxd\in\Z^n$,
$\mathcal{G}\sb{\xxd}(\omega)$
is a multiplier in $\mathscr{D}'(\torus\setminus\Sigma)$
by Lemma~\ref{lemma-lap}.
\end{proof}

\bigskip

Let us show that
the spectra of $\zeta\sb 0\sp{\ttd}$ and $g\sp{\ttd}$
are located inside
the closures of
the spectral gaps.

\begin{lemma}\label{lemma-supp-g}
$\supp\tilde g
\subset\overline{\varOmega\sb 0}\cup\overline{\varOmega\sb\pi}
$.
\end{lemma}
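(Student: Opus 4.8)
The plan is to show that $\tilde g$ vanishes on the open set $\torus\setminus(\overline{\varOmega\sb 0}\cup\overline{\varOmega\sb\pi})$, which is precisely the interior $\varOmega_c\setminus\p\varOmega_c$ of the continuous spectrum; vanishing on this open set is exactly the assertion $\supp\tilde g\subset\overline{\varOmega\sb 0}\cup\overline{\varOmega\sb\pi}$. The two ingredients are already in hand. First, the distributional convergence \eqref{fj-to-g} exhibits $\tilde g$ as the limit $\tilde f(\omega)e^{-i\omega\ttd\sb j}\to\tilde g(\omega)$ in $\mathscr{D}'(\torus)$ along the sequence $\ttd\sb j\to\infty$ defining the omega-limit trajectory. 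Second, Lemma~\ref{lemma-ac} guarantees that the boundary trace $\tilde f$ is genuinely an $L^2$ (hence, on a bounded set, $L^1$) function on every closed subset of this open set. The mechanism that annihilates $\tilde g$ there is the rapid oscillation of the phase $e^{-i\omega\ttd\sb j}$ integrated against an $L^1$ density, i.e.\ the Riemann-Lebesgue lemma.

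Concretely, I would fix a test function $\phi\in\mathscr{D}(\torus)$ whose compact support $K$ lies in $\torus\setminus(\overline{\varOmega\sb 0}\cup\overline{\varOmega\sb\pi})$ and choose a closed set $I$ with $K\subset I\subset\torus\setminus(\overline{\varOmega\sb 0}\cup\overline{\varOmega\sb\pi})$. By Lemma~\ref{lemma-ac}, $\tilde f\at{I}\in L^2(I)$, so the product $\tilde f\phi$ (extended by zero outside $K$) belongs to $L^1(\torus)$. Pairing the oscillatory family with $\phi$ gives
\[
\big\langle \tilde f(\omega)e^{-i\omega\ttd\sb j},\,\phi\big\rangle
=\int\sb\torus \tilde f(\omega)\phi(\omega)\,e^{-i\omega\ttd\sb j}\,d\omega
=\widehat{\tilde f\phi}(\ttd\sb j),
\]
the $\ttd\sb j$-th Fourier coefficient of the $L^1$ function $\tilde f\phi$. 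Since $\ttd\sb j\to\infty$, the Riemann-Lebesgue lemma yields $\widehat{\tilde f\phi}(\ttd\sb j)\to 0$.

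On the other hand, by \eqref{fj-to-g} the same pairing converges to $\langle\tilde g,\phi\rangle$. Uniqueness of the distributional limit forces $\langle\tilde g,\phi\rangle=0$ for every such $\phi$; as $\phi$ ranges over all test functions supported away from $\overline{\varOmega\sb 0}\cup\overline{\varOmega\sb\pi}$, this is exactly the statement that $\tilde g$ vanishes on the open set $\torus\setminus(\overline{\varOmega\sb 0}\cup\overline{\varOmega\sb\pi})$, i.e.\ $\supp\tilde g\subset\overline{\varOmega\sb 0}\cup\overline{\varOmega\sb\pi}$. The one genuine subtlety -- and the reason Lemma~\ref{lemma-ac} had to be established first -- is that the convergence \eqref{fj-to-g} is only a priori valid on all of $\torus$ as distributions, where $\tilde f$ need not be a function and the oscillation argument would fail. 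The proof goes through solely because on the complement of the gaps $\tilde f$ upgrades to an honest $L^1$ density, against which Riemann-Lebesgue applies; inside the closed gaps no such regularity is available, and there $\tilde g$ is indeed permitted to carry mass.
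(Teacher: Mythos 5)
Your proof is correct and is essentially the paper's own argument: both rely on Lemma~\ref{lemma-ac} to upgrade $\tilde f$ to an $L^1$ density against test functions supported away from $\overline{\varOmega\sb 0}\cup\overline{\varOmega\sb\pi}$, then apply the Riemann--Lebesgue lemma to kill the pairing of $\tilde f(\omega)e^{-i\omega\ttd\sb j}$ with such test functions, and invoke the convergence \eqref{fj-to-g} plus uniqueness of distributional limits to conclude $\tilde g=0$ there. The only difference is presentational: you write out the pairing as a Fourier coefficient of $\tilde f\phi$ explicitly, whereas the paper states the same conclusion as convergence of $\varrho\tilde f e^{-i\omega\ttd\sb j}$ to zero in $\mathscr{D}'(\torus)$.
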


\begin{proof}
By Lemma~\ref{lemma-ac},
for any $\varrho\in C\sp\infty\sb 0(\torus)$ 
with
$\supp\varrho\in\torus\setminus(\overline{\varOmega\sb 0}\cup\overline{\varOmega\sb\pi})$,
one has
$\varrho(\omega)\tilde f(\omega)\in L^1(\torus)$.
Hence, by the Riemann-Lebesgue lemma,
\[
\varrho(\omega)\tilde f(\omega) e^{-i\omega \ttd\sb j}
\mathop{\longrightarrow}\limits\sb{j\to\infty}0
\]
in the sense of distributions on $\torus$.
Due to \eqref{fj-to-g},
we conclude that $\tilde g=0$
on
$\torus\setminus
(\overline{\varOmega\sb 0}\cup\overline{\varOmega\sb\pi})$.
\end{proof}

\begin{lemma}\label{lemma-nii}
A point $\omega\in\varOmega_c$
can not be
an isolated point of the support of $\tilde\zeta\sb 0$.
\end{lemma}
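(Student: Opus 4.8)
The plan is to argue by contradiction, converting an isolated support point $\omega_*\in\varOmega_c$ of $\tilde\zeta_0$ into a genuine $l^2$-oscillation $v_\xxd e^{-i\omega_*\ttd}$ of the omega-limit trajectory at the single frequency $\omega_*$, and then to show via the dispersion relation that no such profile can live on the continuous spectrum. The main obstacle, which I flag in advance, is the behaviour at the band edges $\partial\varOmega_c=\{\pm\omega_m,\pi\pm\omega_m\}$, where the argument becomes dimension-sensitive and produces the threshold $n\le 4$ versus $n\ge 5$.

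First I would localize and reduce to a point mass. Suppose $\omega_*\in\varOmega_c$ is isolated in $\supp\tilde\zeta_0$, and choose an open arc $V\ni\omega_*$ with $\supp\tilde\zeta_0\cap V=\{\omega_*\}$ and $V\cap\Sigma=\{\omega_*\}$. If $\omega_*$ is interior to $\varOmega_c$ then $\omega_*\notin\overline{\varOmega_0}\cup\overline{\varOmega_\pi}\supset\supp\tilde g$ by Lemma~\ref{lemma-supp-g}, so $\tilde g=0$ on $V$; since each $\mathcal{G}_\xxd$ is a smooth multiplier on $\torus\setminus\Sigma$ by Lemma~\ref{lemma-lap}, the relation \eqref{bsg} gives $\tilde\zeta_\xxd=\tau^2\mathcal{G}_\xxd\tilde g=0$ on $V\setminus\{\omega_*\}$ for every $\xxd$. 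Hence the $l^2(\Z^n)$-valued distribution $\tilde\zeta(\omega)$ is, on $V$, supported at the single point $\omega_*$. Because $\zeta$ is bounded, $\sup_\ttd\norm{\zeta^\ttd}_{l^2(\Z^n)}<\infty$ by \eqref{l2-bound}, its Fourier coefficients are bounded and $\tilde\zeta$ has finite order, so on $V$ it equals $\sum_{k=0}^{K}V_k\,\delta^{(k)}(\omega-\omega_*)$ with $V_k\in l^2(\Z^n)$. Pairing with $\varrho(\omega)e^{-i\omega\ttd}$, $\varrho\in C_0^\infty(V)$ with $\varrho\equiv1$ near $\omega_*$, yields on one side an $l^2(\Z^n)$-valued polynomial of degree $K$ in $\ttd$ and on the other side a sequence bounded in $l^2(\Z^n)$; hence $V_k=0$ for $k\ge1$ and
\[
\tilde\zeta_\xxd(\omega)=2\pi\,v_\xxd\,\delta(\omega-\omega_*)\quad\text{on }V,\qquad v\in l^2(\Z^n),\qquad v_0=\tfrac{1}{2\pi}\langle\tilde\zeta_0,\varrho\rangle\neq0 .
\]

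Next I would read off the equation for $v$. Inserting this into the stationary relation \eqref{eqn-tilde-zeta-xo}, $a(\xi,\omega)\hat{\tilde\zeta}(\xi,\omega)=\tau^2\tilde g(\omega)$, and matching the coefficient of $\delta(\omega-\omega_*)$ using $\tilde g=0$ near the interior point $\omega_*$, I obtain
\[
a(\xi,\omega_*)\,\hat v(\xi)=0,\qquad \xi\in\torus^n .
\]
Thus $\hat v\in L^2(\torus^n)$ is supported on the level set $\Gamma_{\omega_*}=\{\xi:a(\xi,\omega_*)=0\}$, which for $\omega_*$ in the open continuous spectrum is a hypersurface of measure zero; an $L^2$ function supported on a null set vanishes, so $v=0$, contradicting $v_0\neq0$. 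This settles the lemma at every interior point of $\varOmega_c$, and notably requires no assumption on $n$, since the interior level set is always a genuine hypersurface.

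The delicate case, and the step I expect to require the most care, is the endpoints $\omega_*\in\partial\varOmega_c$, where $\omega_*\in\overline{\varOmega_0}\cup\overline{\varOmega_\pi}$ so that $\tilde g$ may carry a Dirac mass $g_*\delta(\omega-\omega_*)$ and $\Gamma_{\omega_*}$ degenerates to the critical set $\{0;\pi\}^n$. Here the matched relation becomes $a(\xi,\omega_*)\hat v(\xi)=\mathrm{const}\cdot g_*$, i.e. $\hat v=\tau^2 g_*/a(\cdot,\omega_*)$, whose $L^2(\torus^n)$-norm is $\tau^2|g_*|$ times the $L^2$-norm of $1/a(\cdot,\omega_*)$. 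By Lemma~\ref{lemma-finite-norm} this norm is \emph{infinite} for $n\le4$, forcing $g_*=0$ and hence $v=0$ as before, whereas for $n\ge5$ it is finite and a one-frequency solitary wave seated at a band edge is admissible. This is exactly where the dimensional threshold $n\le4$ versus $n\ge5$ in the main theorem originates, and the whole argument hinges on the precise $L^2$-behaviour of the fundamental solution $\mathcal{G}$ at the critical frequencies encoded in Lemma~\ref{lemma-lap} and Lemma~\ref{lemma-finite-norm}.
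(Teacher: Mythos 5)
Your core argument is correct and is essentially the paper's own proof: localize at the isolated point, use Lemma~\ref{lemma-supp-g} together with the representation \eqref{bsg} to see that $\tilde\zeta_\xxd$ is supported at the single point for \emph{every} $\xxd\in\Z^n$, reduce to a pure point mass $M\,\delta(\omega-\omega_*)$ with $M\in l^2(\Z^n)$, and then conclude from \eqref{eqn-tilde-zeta} that $a(\xi,\omega_*)\hat M(\xi)=0$, so that $\hat M\in L^2(\torus^n)$ is carried by the null hypersurface $\Gamma_{\omega_*}$ and hence vanishes, a contradiction. Your write-up is in fact more explicit than the paper's at one point: the exclusion of the terms $\delta^{(k)}(\omega-\omega_*)$, $k\ge 1$, via the polynomial-growth pairing against $\varrho(\omega)e^{-i\omega\ttd}$, which the paper merely asserts ``due to the a priori bounds.''

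The one genuine divergence is your treatment of the band edges, and it is worth comparing carefully. The paper's proof never touches them: it opens with ``Assume that $\omega\sb 1\in\varOmega_c\setminus\p\varOmega_c$ is an isolated point'' and proves the claim only on $\varOmega_c\setminus\p\varOmega_c$; this suffices for the paper because the lemma is invoked (in Lemma~\ref{lemma-supp-zeta}) only to kill the interior singular points $\Sigma\setminus\p\varOmega_c$, the edge points being harmless since $\p\varOmega_c\subset\overline{\varOmega\sb 0}\cup\overline{\varOmega\sb\pi}$. Your edge analysis is therefore an addition rather than a different route, and it reaches the right conclusion: for $n\le 4$ the nonsquare-integrability of $1/a(\cdot,\omega\sb b)$ (Lemma~\ref{lemma-finite-norm}) forces the Dirac mass of $\tilde g$ at the edge to vanish, and the contradiction still goes through; for $n\ge 5$ the argument must break down, since the paper's main theorem explicitly admits one-frequency solitary waves with band-edge frequencies, and such a wave is its own omega-limit trajectory with $\supp\tilde\zeta\sb 0$ a single point of $\p\varOmega_c\subset\varOmega_c$. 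So, read literally (all $\omega\in\varOmega_c$, all $n$), the lemma's statement is slightly too strong, and your proposal correctly delimits where it can fail. One small bookkeeping remark: in the paper this $n\le 4$ versus $n\ge 5$ dichotomy is implemented not in this lemma but later, in Lemma~\ref{lemma-mf}, where the requirement $\zeta\sp\ttd\in l^2(\Z^n)$ forces $\omega\sb j\in\varOmega\sb 0\cup\varOmega\sb\pi$ when $n\le 4$; mathematically it is the same phenomenon you isolate, namely the $L^2$ behaviour of $1/a(\cdot,\omega)$ at $\p\varOmega_c$.
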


\begin{proof}
Assume that $\omega\sb 1\in\varOmega_c\setminus\p\varOmega_c$
is an isolated point of the support of $\tilde\zeta\sb 0$.
By Lemma~\ref{lemma-supp-g}
we know that $\omega\sb 1\notin\supp\tilde g$.
Then
there is $\varrho\subset C\sp\infty\sb 0(\torus)$
such that
$\varrho(\omega\sb 1)=1$ and
$\supp\varrho\cap\supp\tilde g=\emptyset$.
Due to the spectral representation \eqref{bsg},
we see that
for any $\xxd\in\Z^n$ one has
$\supp\varrho\cap\supp\tilde\zeta\sb{\xxd}\subset\{\omega\sb 1\}$.
Therefore,
$\varrho(\omega)\tilde\zeta\sb{\xxd}(\omega)=\delta(\omega-\omega\sb 1)M\sb{\xxd}$,
where $M\in l^2(\Z^n)$ and $M\not\equiv 0$.
(The terms of the form
$N\sb X\delta^{(k)}(\omega-\omega\sb 1)$,
$N\in l\sp 2(\Z^n)$,
with $k\ge 1$
do not appear
due to the a priori bounds on $\psi\sb\xxd\sp\ttd$;
see Theorem~\ref{theorem-gwp-dkg}.)
The relation \eqref{eqn-tilde-zeta} implies that
$M$ satisfies the equation
\begin{equation}\label{eqM}
-2(1-\cos\omega\sb 1)
M\sb{\xxd}
-\frac{1}{n}\bm{D}\sb{\xxd}^2 M\sb{\xxd}+\tau^2 m^2 M\sb{\xxd}\cos\omega\sb 1
=0,
\end{equation}
hence its Fourier transform satisfies
\begin{equation}\label{eqM-hat}
\Big[
(2+\tau^2 m^2)\cos\omega\sb 1
-\frac{2}{n}\sum\sb{j=1}\sp{n}\cos\xi_j
\Big]
\hat M(\xi)
=0.
\end{equation}
It follows that $\hat M$ is supported
on the hypersurface
$\Gamma\sb{\omega\sb 1}=\{\xi\in\torus^n\sothat a(\xi,\omega\sb 1)=0\}
\subset\torus^{n}$.
(This hypersurface has singular points
if $\omega\sb 1\in\Sigma$;
see \eqref{def-singular-points}.)
Since there is no nonzero
$\hat M\in l^2(\torus^n)$
supported on such a hypersurface,
we arrive at a contradiction;
hence,
$\omega\sb 1\in\varOmega_c\setminus\p\varOmega_c$
can not be an isolated point
of the support of $\tilde\zeta\sb 0$.
\end{proof}

\begin{lemma}\label{lemma-supp-zeta}
$\supp\tilde\zeta\sb 0
\subset\overline{\varOmega\sb 0}\cup\overline{\varOmega\sb\pi}
$.
\end{lemma}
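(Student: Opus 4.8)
The plan is to prove the vanishing of $\tilde\zeta\sb 0$ on the open continuous spectrum, which is exactly the complement of the target set: since $\varOmega_c=\torus\setminus(\varOmega\sb 0\cup\varOmega\sb\pi)$ is closed and $\p\varOmega_c=\{\pm\omega\sb m,\pi\pm\omega\sb m\}\subset\Sigma$, one has the identity $\torus\setminus(\overline{\varOmega\sb 0}\cup\overline{\varOmega\sb\pi})=\varOmega_c\setminus\p\varOmega_c$. So it suffices to show $\supp\tilde\zeta\sb 0\cap(\varOmega_c\setminus\p\varOmega_c)=\emptyset$. The argument combines the spectral representation of Lemma~\ref{lemma-b-sg} with the support localization of $\tilde g$ from Lemma~\ref{lemma-supp-g} to handle all frequencies away from $\Sigma$, and then closes the remaining gap at the finitely many interior singular frequencies using Lemma~\ref{lemma-nii}.

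First I would argue away from the singular set $\Sigma$. By Lemma~\ref{lemma-b-sg}, $\tilde\zeta\sb\xxd(\omega)=\tau^2\mathcal{G}\sb\xxd(\omega)\tilde g(\omega)$ on $\torus\setminus\Sigma$, and by Lemma~\ref{lemma-lap} (the multiplier property) each trace $\mathcal{G}\sb\xxd(\omega+i0)$ is a multiplier in $\mathscr{D}'(\torus\setminus\Sigma)$. Multiplication by a smooth multiplier cannot enlarge supports, so $\supp\tilde\zeta\sb\xxd\cap(\torus\setminus\Sigma)\subset\supp\tilde g$ for every $\xxd\in\Z^n$. By Lemma~\ref{lemma-supp-g}, $\supp\tilde g\subset\overline{\varOmega\sb 0}\cup\overline{\varOmega\sb\pi}$, and therefore $\tilde\zeta\sb 0$ vanishes on $(\varOmega_c\setminus\p\varOmega_c)\setminus\Sigma$, i.e.\ on the interior of the continuous spectrum with the finitely many points of $\Sigma$ deleted.

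The main obstacle is precisely these interior singular frequencies $\Sigma\setminus\p\varOmega_c$ (corresponding to $l=1,\dots,n-1$ in \eqref{def-singular-points}), where both the spectral relation and the multiplier property degenerate, so the previous step gives no information. This is where Lemma~\ref{lemma-nii} enters. By the previous paragraph, $\tilde\zeta\sb 0$ already vanishes on $\varOmega_c\setminus(\p\varOmega_c\cup\Sigma)$, which contains a punctured neighborhood (within $\varOmega_c$) of each point of $\Sigma\setminus\p\varOmega_c$. Consequently, any such point lying in $\supp\tilde\zeta\sb 0$ would be an \emph{isolated} point of $\supp\tilde\zeta\sb 0$ belonging to $\varOmega_c\setminus\p\varOmega_c$ — exactly what Lemma~\ref{lemma-nii} forbids. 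Hence no point of $\Sigma\setminus\p\varOmega_c$ lies in $\supp\tilde\zeta\sb 0$. Combining the two steps yields $\supp\tilde\zeta\sb 0\cap(\varOmega_c\setminus\p\varOmega_c)=\emptyset$, that is, $\supp\tilde\zeta\sb 0\subset\overline{\varOmega\sb 0}\cup\overline{\varOmega\sb\pi}$, completing the proof.
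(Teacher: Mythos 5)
Your proof is correct and takes essentially the same route as the paper: the paper's one-line proof cites exactly the three ingredients you use — Lemma~\ref{lemma-supp-g}, the spectral representation \eqref{bsg}, and Lemma~\ref{lemma-nii}. Your write-up merely makes explicit how they combine, namely the multiplier property of $\mathcal{G}\sb\xxd(\omega+i0)$ transfers the support bound from $\tilde g$ to $\tilde\zeta\sb 0$ away from $\Sigma$, and Lemma~\ref{lemma-nii} rules out the finitely many interior singular points $\Sigma\setminus\p\varOmega_c$ as isolated points of the support.
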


\begin{proof}
This inclusion follows from Lemma~\ref{lemma-supp-g},
the spectral representation \eqref{bsg},
and Lemma~\ref{lemma-nii}.
\end{proof}


We will use the construction of quasimeasures \cite{ubk-arma}.
Denote by $\check\qm$ the inverse Fourier transform
of $\qm\in\mathscr{D}'(\torus)$:
\[
\check\qm(\ttd)=\mathscr{F}^{-1}[\qm(\omega)](\ttd)
=\int\sb{\torus}e^{-i\omega \ttd}\qm(\omega)\,\frac{d\omega}{2\pi},
\qquad
\ttd\in\Z.
\]




\begin{definition}[Quasimeasures]
\label{def-qm}
\begin{enumerate}
\item
The space $l\sp\infty\sb{F}(\Z)$
is the vector space $l\sp\infty(\Z)$
endowed with the following convergence:
\[
f\sb\epsilon(\ttd)
\mathop{\llongrightarrow}\limits\sp{l\sp\infty\sb{F}}
\sb{\epsilon\to 0+}
f(\ttd)
\]
if
\[
\sup\limits\sb{\epsilon>0}\norm{f\sb\epsilon}\sb{l\sp\infty(\Z)}<\infty
\qquad
\mbox{and}
\qquad\forall \ttd\sb{1}\in\N,
\ \lim\limits\sb{\epsilon\to 0+}
\sup\limits\sb{\ttd\in\Z,\,\abs{\ttd}\le\ttd\sb{1}}
\abs{f\sb\epsilon(\ttd)-f(\ttd)}\to 0.
\]
\item
The space of \emph{quasimeasures}
is the vector space of distributions
with bounded Fourier transform,
\[
\mathscr{Q}(\torus)=\{\qm\in\mathscr{D}'(\torus)\sothat
\check \qm\in l\sp\infty(\Z)\},
\]
endowed
with the following convergence:
\[
\qm\sb\epsilon(\omega)\stackrel{\mathscr{Q}}
{\mathop{\llongrightarrow}
\limits\sb{\epsilon\to 0+}}
\qm(\omega)
\quad{\rm if
}\quad
\check\qm\sb\epsilon(\ttd)\stackrel{l\sp\infty\sb{F}}
{\mathop{\llongrightarrow}\limits\sb{\epsilon\to 0+}}
\check\qm(\ttd).
\]
\end{enumerate}
\end{definition}

For example,
any function from $L^1(\torus)$ is a quasimeasure,
and so is any finite Borel measure on $\torus$.


Let
$M\in C(\torus)$,
and let
$\bm{M}:\,C(\torus)\to C(\torus)$
be
the operator of multiplication
by $M$:
\[
\mbox{for\ }u\in C(\torus),
\quad
\bm{M}:\,u\mapsto M u\in C(\torus).
\]

\begin{lemma}[Multipliers in the space of quasimeasures]
\label{lemma-qm-multipliers}
\quad
\begin{enumerate}
\item
\label{lq1-i-0}
If
$M\in C(\torus)$
is such that
$\check M\in l^1(\Z)$,
then
\[
\bm{M}:\,C(\torus)\to C(\torus),
\qquad
\bm{M}:\,u\mapsto M u
\]
extends
to a bounded linear operator
$
\bm{M}:\,\mathscr{Q}(\torus)\to\mathscr{Q}(\torus).
$
\item
\label{lq1-i-0i}
Let
$\check M\sb\epsilon\in l^1(\Z)$
be bounded uniformly for $\epsilon>0$.
If
\begin{equation}\label{q-m}
\qm\sb\epsilon
\mathop{\llongrightarrow}\limits\sp{\mathscr{Q}(\torus)}\sb{\epsilon\to 0+}
\qm\,,
\qquad
\check M\sb\epsilon
\mathop{\llongrightarrow}\limits\sp{l^1(\Z)}\sb{\epsilon\to 0+}
\check M,
\end{equation}
then
$
M\sb\epsilon\qm\sb\epsilon
\mathop{\llongrightarrow}\limits\sp{\mathscr{Q}(\torus)}\sb{\epsilon\to 0+}
M\qm
$.
\end{enumerate}
\end{lemma}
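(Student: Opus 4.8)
The crux is that the inverse Fourier transform intertwines multiplication on $\torus$ with convolution on $\Z$: for $M\in C(\torus)$ and $\qm\in\mathscr{Q}(\torus)$ one has
\[
\mathscr{F}^{-1}[M\qm]=\check M\ast\check\qm,
\]
where $\ast$ is convolution on $\Z$. Thus the whole lemma reduces to elementary estimates for convolution operators on sequence spaces, the basic tool being Young's inequality $\norm{\check M\ast\check\qm}\sb{l^\infty(\Z)}\le\norm{\check M}\sb{l^1(\Z)}\norm{\check\qm}\sb{l^\infty(\Z)}$, valid because $\check M\in l^1(\Z)$ and $\check\qm\in l^\infty(\Z)$.

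For part \eqref{lq1-i-0} I would argue as follows. Since $\check M\in l^1(\Z)$ and $\check\qm\in l^\infty(\Z)$, Young's inequality shows $\check M\ast\check\qm\in l^\infty(\Z)$, so $M\qm$ is again a quasimeasure and $\bm{M}$ indeed maps $\mathscr{Q}(\torus)$ into itself. Linearity is clear, and the same inequality gives the operator bound $\norm{\mathscr{F}^{-1}[M\qm]}\sb{l^\infty(\Z)}\le\norm{\check M}\sb{l^1(\Z)}\norm{\check\qm}\sb{l^\infty(\Z)}$, which is exactly the statement that $\bm{M}$ is bounded, with operator norm at most $\norm{\check M}\sb{l^1(\Z)}$.

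For part \eqref{lq1-i-0i} I must verify the two requirements in the definition of $l\sp\infty\sb{F}$-convergence for the sequences $\check M\sb\epsilon\ast\check\qm\sb\epsilon$. Uniform boundedness is immediate from Young: $\norm{\check M\sb\epsilon\ast\check\qm\sb\epsilon}\sb{l^\infty}\le(\sup\sb\epsilon\norm{\check M\sb\epsilon}\sb{l^1})(\sup\sb\epsilon\norm{\check\qm\sb\epsilon}\sb{l^\infty})<\infty$, the first factor being finite by hypothesis and the second because $\qm\sb\epsilon\to\qm$ in $\mathscr{Q}(\torus)$ forces $\sup\sb\epsilon\norm{\check\qm\sb\epsilon}\sb{l^\infty}<\infty$. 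For the local uniform convergence I would use the telescoping decomposition
\[
\check M\sb\epsilon\ast\check\qm\sb\epsilon-\check M\ast\check\qm
=\check M\sb\epsilon\ast(\check\qm\sb\epsilon-\check\qm)
+(\check M\sb\epsilon-\check M)\ast\check\qm.
\]
The second term is uniformly small in $\ttd$, since $\norm{(\check M\sb\epsilon-\check M)\ast\check\qm}\sb{l^\infty}\le\norm{\check M\sb\epsilon-\check M}\sb{l^1}\norm{\check\qm}\sb{l^\infty}\to 0$ by the $l^1$-convergence of $\check M\sb\epsilon$. For the first term I fix $\ttd\sb 1$ and split the convolution sum at $\abs{s}=S$: the head $\sum\sb{\abs{s}\le S}$ only samples $\check\qm\sb\epsilon-\check\qm$ at arguments with $\abs{\ttd-s}\le\ttd\sb 1+S$, so it is bounded by $\norm{\check M\sb\epsilon}\sb{l^1}\,\sup\sb{\abs{\ttd'}\le\ttd\sb 1+S}\abs{\check\qm\sb\epsilon(\ttd')-\check\qm(\ttd')}$, which tends to $0$ by the $l\sp\infty\sb F$-convergence $\check\qm\sb\epsilon\to\check\qm$; the tail $\sum\sb{\abs{s}>S}$ is bounded by $\norm{\check\qm\sb\epsilon-\check\qm}\sb{l^\infty}\sum\sb{\abs{s}>S}\abs{\check M\sb\epsilon(s)}$.

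The main obstacle is this last tail estimate, and it is exactly where the hypotheses must be used with care: local uniform convergence of $\check\qm\sb\epsilon$ alone does not control the full convolution, and one genuinely needs the $l^1$-\emph{convergence} (not just $l^1$-boundedness) of $\check M\sb\epsilon$. I would handle it by writing $\sum\sb{\abs{s}>S}\abs{\check M\sb\epsilon(s)}\le\sum\sb{\abs{s}>S}\abs{\check M(s)}+\norm{\check M\sb\epsilon-\check M}\sb{l^1}$ and making this uniformly small for small $\epsilon$: first choose $S$ large so that the $\check M$-tail is $<\delta$ (using $\check M\in l^1$), then choose $\epsilon$ small so that $\norm{\check M\sb\epsilon-\check M}\sb{l^1}<\delta$. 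Combined with the uniform bound on $\norm{\check\qm\sb\epsilon-\check\qm}\sb{l^\infty}$, this yields uniform smallness of the tail as $\epsilon\to 0+$, completing the verification of $l\sp\infty\sb F$-convergence. Everything else is routine bookkeeping with Young's inequality and the definitions.
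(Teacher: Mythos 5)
Your proof is correct, and it follows the same overall strategy as the paper's: define the product on the Fourier side as $\check M\ast\check\qm$, get uniform boundedness from Young's inequality, and verify local uniform convergence by a telescoping decomposition plus a head/tail splitting of the convolution sum. The one genuine difference is the choice of telescoping. The paper writes
\[
\check M_\epsilon\ast\check\qm_\epsilon-\check M\ast\check\qm
=(\check M_\epsilon-\check M)\ast\check\qm_\epsilon
+\check M\ast(\check\qm_\epsilon-\check\qm),
\]
so its head/tail splitting is applied to a convolution whose kernel is the \emph{fixed} function $\check M\in l^1(\Z)$; the tail is then small simply because $\check M$ itself has small tails, and $\check\qm_\epsilon-\check\qm$ only ever needs a uniform $l^\infty$ bound. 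You instead pair $\check M_\epsilon$ with $(\check\qm_\epsilon-\check\qm)$ and $(\check M_\epsilon-\check M)$ with $\check\qm$, so your splitting is applied to a kernel that varies with $\epsilon$, and you must pay for this with an extra uniform-tightness step,
$\sum_{\abs{s}>S}\abs{\check M_\epsilon(s)}\le\sum_{\abs{s}>S}\abs{\check M(s)}+\norm{\check M_\epsilon-\check M}_{l^1(\Z)}$,
choosing $S$ first and then $\epsilon$. You correctly identify this as the delicate point and handle it, so nothing is lost; the paper's pairing just makes that step unnecessary, while yours makes explicit that $l^1$-convergence (not mere $l^1$-boundedness) of $\check M_\epsilon$ is what controls the tails. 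A further cosmetic difference: you prove statement (\ltor{1}) directly from Young's inequality, whereas the paper deduces it from statement (\ltor{2}) by taking $M_\epsilon=M$ and $\qm_\epsilon\in C(\torus)$; your route is equally valid, provided one notes --- as the paper does explicitly and you do implicitly --- that the convolution formula serves as the \emph{definition} of $M\qm$ for a general quasimeasure, consistent with ordinary multiplication when $\qm\in C(\torus)$.
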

\begin{proof}
We define
$M(\omega)\qm(\omega)
:=\mathscr{F}[\big(\check M\ast\check\qm\big)(\ttd)](\omega)$
that agrees with the case
$\qm\in C(\torus)$.
The statement
({\it 1}) follows from ({\it 2})
with
$M\sb\epsilon=M$
and $\qm\sb\epsilon\in C(\torus)$.
To prove  ({\it 2}), by Definition~\ref{def-qm},
we need to show that
\begin{equation}\label{lemma-second-statement-0}
\mathscr{F}^{-1}
[M\sb\epsilon(\omega)\qm\sb\epsilon(\omega)](\ttd)
=\big(\check M\sb\epsilon\ast\check\qm\sb\epsilon\big)(\ttd)
\mathop{\llongrightarrow}\limits\sp{l\sp\infty\sb{F}}\sb{\epsilon\to 0+}
\big(\check M\ast\check\qm\big)(\ttd).
\end{equation}
Define the functions
\[
f\sb\epsilon(\ttd):=\mathscr{F}^{-1}
[M\sb\epsilon(\omega)\qm\sb\epsilon(\omega)](\ttd)
=\big(\check M\sb\epsilon\ast\check\qm\sb\epsilon\big)(\ttd),
\phantom{\int}
\]
\[
f(\ttd):=\mathscr{F}^{-1}
[M(\omega)\qm(\omega)](\ttd)=
\big(\check M\ast\check\qm\big)(\ttd).
\]
By Definition~\ref{def-qm},
to prove the convergence \eqref{lemma-second-statement-0},
we need to show that
\begin{equation}\label{wnts1}
\sup\sb{\epsilon>0}\norm{f\sb\epsilon}\sb{l\sp\infty(\Z)}<\infty
\end{equation}
and that for any $\ttd\sb 1\in\N$
one has
\begin{equation}\label{wnts2}
\lim\sb{\epsilon\to 0+}
\sup\sb{\abs{\ttd}<\ttd\sb 1}\abs{f\sb\epsilon(\ttd)-f(\ttd)}
=0.
\end{equation}
To prove \eqref{wnts1}, we write:
\[
\norm{f\sb\epsilon}\sb{l\sp\infty(\Z)}
\le
\sum\sb{\ttd'\in\Z}
\abs{\check M\sb\epsilon(\ttd')\check\qm\sb\epsilon(\ttd-\ttd')}
\le
\norm{\check M\sb\epsilon}\sb{l^1(\Z)}
\norm{\check\qm\sb\epsilon}\sb{l\sp\infty(\Z)},
\]
which is bounded uniformly in $\epsilon>0$.

It remains to prove \eqref{wnts2}.
We need to show that,
given $\ttd\sb 1\in\N$,
for any $\delta>0$
there is $\epsilon\sb\delta>0$
such that for any $\epsilon\in(0,\epsilon\sb\delta)$
one has
$\sup\sb{\ttd}\abs{f\sb\epsilon(\ttd)-f(\ttd)}<\delta$.
We have:
\begin{equation}\label{two-terms-0}
f\sb\epsilon(\ttd)-f(\ttd)=
\big(\check M\sb\epsilon\ast\check\qm\sb\epsilon\big)(\ttd)
-\big(\check M\ast\check\qm\big)(\ttd)
=
\big(\!(\check M\sb\epsilon-\check M)\ast\check\qm\sb\epsilon\big)(\ttd)
+\big(\check M\ast(\check\qm\sb\epsilon-\check\qm)\!\big)(\ttd).
\end{equation}
The first term in the right-hand side
of \eqref{two-terms-0}
converges to zero uniformly in $\ttd\in\Z$
since
$\check M\sb\epsilon-\check M\to 0$ in $l^1(\Z)$ while
$\check\qm\sb\epsilon\in l\sp\infty(\Z)$ are bounded uniformly
for $\epsilon>0$.
If $\epsilon\sb\delta>0$ is small enough
and $\epsilon\in(0,\epsilon\sb\delta)$,
then the first term in the right-hand side
of \eqref{two-terms-0} is smaller than $\delta/3$.
We break the second term
in the right-hand side of \eqref{two-terms-0}
into 
\begin{equation}\label{lim-second-term-0}
\sum\sb{\abs{\ttd'}>\ttd\sb\delta}
\check M(\ttd')
\big(\check\qm\sb\epsilon(\ttd-\ttd')-\check\qm(\ttd-\ttd')\big)
+
\sum\sb{\abs{\ttd'}\le\ttd\sb\delta}
\check M(\ttd')
\big(\check\qm\sb\epsilon(\ttd-\ttd')-\check\qm(\ttd-\ttd')\big),
\end{equation}
where $\ttd\sb\delta\in\N$ is chosen as follows:
Since $\check M\in l^1(\Z)$,
while $\check\qm\sb\epsilon-\check\qm$ is bounded in $l\sp\infty(\Z)$
uniformly in $\epsilon>0$,
there exists $\ttd\sb\delta\in\N$ so that
\begin{equation}\label{lim-second-term-a0}
\sum\sb{\ttd'\in\Z,\,\abs{\ttd'}>\ttd\sb\delta}
\abs{\check M(\ttd')}
\abs{\check\qm\sb\epsilon(\ttd-\ttd')-\check\qm(\ttd-\ttd')}
<\delta/3.
\end{equation}
On the other hand,
since $\qm\sb\epsilon\to\qm$ in $\mathscr{Q}(\torus)$,
one has
\begin{equation}\label{lim-second-term-b0}
\lim\sb{\epsilon\to 0}
\sup\sb{\abs{\ttd}\le \ttd\sb 1}
\sup\sb{\abs{\ttd'}\le \ttd\sb\delta}
\abs{\check\qm\sb\epsilon(\ttd-\ttd')-\check\qm(\ttd-\ttd')}
\le
\lim\sb{\epsilon\to 0}
\sup\sb{\abs{\ttd}\le \ttd\sb 1+\ttd\sb\delta}
\abs{\check\qm\sb\epsilon(\ttd)-\check\qm(\ttd)}
=0,
\end{equation}
so that, choosing $\epsilon\sb\delta>0$ smaller than necessary,
we make sure that the second term in \eqref{lim-second-term-0}
is also smaller than $\delta/3$ for $\epsilon\in(0,\epsilon\sb\delta)$,
and therefore \eqref{two-terms-0} is bounded by $\delta$.

Thus, as $\epsilon\to 0+$,
\eqref{two-terms-0}
converges to zero uniformly in
$\abs{\ttd}\le \ttd\sb{1}$,
proving \eqref{wnts2}.
The convergence \eqref{lemma-second-statement-0} follows.
\end{proof}


Denote $\Sigma'=\Sigma\setminus\p\varOmega_c$,
where $\p\varOmega_c=\{\pm\omega\sb m,\pi\pm\omega\sb m\}$,
with
$\omega\sb m=\arccos\big(1+\frac{\tau^2m^2}{2}\big)^{-1}$.

\begin{lemma}\label{lemma-r-qm-multiplier}
For $n\ge 1$,
the function
\begin{equation}\label{def-r}
r(\omega)=\frac{1}{\mathcal{G}\sb{0}(\omega+i0)},
\qquad
\omega\in\torus,
\end{equation}
is continuous and real-valued for
$\omega\in\overline{\varOmega\sb 0}\cup\overline{\varOmega\sb\pi}$
and satisfies
\begin{equation}\label{as-cos}
r(\omega)=-r(\pi+\omega),
\qquad
r(\omega)=r(-\omega),
\qquad
\omega\in\overline{\varOmega\sb 0}\cup\overline{\varOmega\sb\pi};
\qquad
r\at{\varOmega\sb 0}>0,
\qquad
r\at{\varOmega\sb\pi}<0.
\end{equation}
It is a
multiplier in the space of quasimeasures
$\mathscr{Q}(\torus\setminus\Sigma')$,
and moreover
for any $\rho\in C\sp\infty(\torus)$
with support
away from $\Sigma'$
one has
\begin{equation}\label{csm}
\mathscr{F}^{-1}
\Big[
\frac{\rho(\omega)}{\mathcal{G}\sb{0}(\omega+i\epsilon)}
\Big](\ttd)
\mathop{\llongrightarrow}\limits\sp{l^1(\Z)}\sb{\epsilon\to 0+}
\mathscr{F}^{-1}
\Big[
\rho(\omega)r(\omega)
\Big](\ttd).
\end{equation}
\end{lemma}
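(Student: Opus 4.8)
\emph{Properties of $r$.} First I would pin down the pointwise behaviour of $r$. On the open gaps $\varOmega\sb 0\cup\varOmega\sb\pi$ the symbol $a(\xi,\omega)$ of \eqref{sym} is real and nowhere vanishing (by the definition \eqref{cs} of $\varOmega_c$; cf.\ Lemma~\ref{lemma-finite-norm}), so the integral \eqref{def-varsigma-c} defining $\mathcal{G}\sb 0(\omega+i0)$ is real; by Lemma~\ref{lemma-lap}(\emph{iii}) it is smooth, strictly positive on $\varOmega\sb 0$ and strictly negative on $\varOmega\sb\pi$. Hence $r=1/\mathcal{G}\sb 0$ is smooth and real on the open gaps, with $r\at{\varOmega\sb 0}>0$ and $r\at{\varOmega\sb\pi}<0$. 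The two identities in \eqref{as-cos} then follow at once from the symmetries \eqref{ggg} of $\mathcal{G}\sb\xxd$ evaluated at $\xxd=0$: from $\mathcal{G}\sb 0(-\omega)=\mathcal{G}\sb 0(\omega)$ we read off $r(-\omega)=r(\omega)$, and from $\mathcal{G}\sb 0(\omega+\pi)=-\mathcal{G}\sb 0(\omega)$ we read off $r(\pi+\omega)=-r(\omega)$. It remains to continue $r$ across the band edges $\p\varOmega_c=\{\pm\omega\sb m,\pi\pm\omega\sb m\}$; for this I would insert into \eqref{def-varsigma-c} the local model of the symbol at its degenerate critical point $\xi=0$ (and, in the same way, at $\xi=(\pi,\dots,\pi)$), where $a(\xi,\omega)\approx\frac{1}{n}\abs{\xi}^2+c\,(\omega\sb m-\omega)$. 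A direct evaluation of the resulting elementary integral shows that, as $\omega\to\omega\sb m$, $\mathcal{G}\sb 0(\omega)$ tends to a finite positive limit for $n\ge 3$ and diverges to $+\infty$ for $n\le 2$; in either case $r=1/\mathcal{G}\sb 0$ extends continuously to $\p\varOmega_c$ (to a finite positive value, respectively to $0$), which gives the first assertion.

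\emph{Reduction.} The convergence \eqref{csm} is exactly the statement that $\check M\sb\epsilon\to\check M$ in $l^1(\Z)$, where $M\sb\epsilon(\omega)=\rho(\omega)/\mathcal{G}\sb 0(\omega+i\epsilon)$ and $M(\omega)=\rho(\omega)r(\omega)$. Once this is established, together with $\check M\in l^1(\Z)$, the multiplier property of $r$ in $\mathscr{Q}(\torus\setminus\Sigma')$ follows from Lemma~\ref{lemma-qm-multipliers}(\emph{i}) applied to the compactly supported pieces $\rho r$ furnished by a partition of unity on $\torus\setminus\Sigma'$, while Lemma~\ref{lemma-qm-multipliers}(\emph{ii}) propagates the convergence to products. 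Since $\omega$ ranges over the one-dimensional circle $\torus$, I would extract both $l^1$-summability and $l^1$-convergence of the Fourier coefficients from control of a couple of derivatives: two integrations by parts give $\sum\sb\ttd\abs{\check g(\ttd)}\le C\big(\norm{g}\sb{L^1(\torus)}+\norm{g''}\sb{L^1(\torus)}\big)$, and the same device, applied with a single integration by parts near an isolated singular point, shows that sufficiently mild singularities of $g$ still leave $\check g\in l^1(\Z)$.

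\emph{The two regimes and the main difficulty.} On the part of $\supp\rho$ lying a fixed distance from $\p\varOmega_c$, the trace $\mathcal{G}\sb 0(\omega+i0)$ is smooth (Lemma~\ref{lemma-lap}(\emph{i})) and nonvanishing --- in the gaps by the sign information above, and on $\varOmega_c\setminus\Sigma$ because the Sokhotsky-Plemelj formula forces $\Im\mathcal{G}\sb 0(\omega+i0)\ne 0$ there --- and moreover $\mathcal{G}\sb 0(\omega+i\epsilon)\to\mathcal{G}\sb 0(\omega+i0)$ together with all $\omega$-derivatives; hence $\rho/\mathcal{G}\sb 0(\cdot+i\epsilon)\to\rho r$ in $C\sp\infty$, which comfortably yields the $l^1$-convergence of the coefficients. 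The genuine obstacle --- and the bulk of the work --- is a neighbourhood of the finite set $\p\varOmega_c$, which is \emph{not} excluded since $\Sigma'=\Sigma\setminus\p\varOmega_c$: there $\mathcal{G}\sb 0$ carries a Van Hove type band-edge singularity and $r=1/\mathcal{G}\sb 0$ is no longer smooth. The plan is to feed the local model $a\approx\frac{1}{n}\abs{\xi}^2+c\,(\omega\sb m-\omega)$, now keeping the regularization $\omega\mapsto\omega+i\epsilon$, into \eqref{def-varsigma-c}, to evaluate $\mathcal{G}\sb 0(\omega+i\epsilon)$ near the edge as an explicit integral, and to verify that $r$ acquires only a square-root zero ($n=1$), a logarithmic modulus of continuity ($n=2$), or a bounded cusp ($n\ge 3$). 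In each case one integration by parts converts the oscillation into Fourier coefficients decaying like $\abs{\ttd}^{-3/2}$, $\abs{\ttd}^{-1}(\ln\abs{\ttd})^{-2}$, or faster, all summable; the crucial and most delicate step is to carry out this band-edge estimate \emph{uniformly} in $\epsilon\in(0,1)$, so that the same bounds deliver $\check M\sb\epsilon\to\check M$ in $l^1(\Z)$ and hence \eqref{csm}.
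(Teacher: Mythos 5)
Your proposal is correct and follows essentially the same route as the paper's proof: reduce, via Lemma~\ref{lemma-qm-multipliers} and the smoothness of $\mathcal{G}\sb 0$ away from $\Sigma'$, to $\rho$ supported near the band edges $\p\varOmega_c$, compute $\mathcal{G}\sb 0(\omega+i\epsilon)$ explicitly there (square-root behavior for $n=1$, logarithmic for $n=2$, finite nonzero limit for $n\ge 3$), and establish $l^1$-bounds on the Fourier coefficients that are uniform in $\epsilon\in(0,1)$ before passing to the limit. The only cosmetic difference is in how the uniform $\abs{\ttd}^{-3/2}$ decay is obtained for $n=1$: the paper uses a dyadic decomposition around the singularity with two integrations by parts on each piece, whereas you invoke a single integration by parts together with the standard decay of Fourier transforms of homogeneous singularities, which amounts to the same estimate.
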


\begin{remark}
Due to Lemma~\ref{lemma-qm-multipliers},
one concludes that
Lemma~\ref{lemma-r-qm-multiplier} implies that,
as $\omega\to 0+$,
the ratio
$\frac{1}{\mathcal{G}\sb 0(\omega+i\epsilon)}$
converges to $r(\omega)$
in the space of multipliers
which act on quasimeasures
with support in
$\torus\setminus\Sigma'$.
\end{remark}

\begin{proof}
The relations \eqref{as-cos}
follow from
\eqref{def-varsigma-c}.

Since $\mathcal{G}\sb 0(\omega)$
is a smooth function
of $\omega\in(\C\sp{+}\!\!\!\!\mod 2\pi)\setminus\Sigma'$,
it is enough to check the convergence
\eqref{csm}
for the Fourier transform
\begin{equation}\label{fto}
\mathscr{F}\sp{-1}
\left[
\frac{\rho(\omega)}{\mathcal{G}_0(\omega+i\epsilon)}
\right](\ttd),
\qquad
\ttd\in\Z,
\quad
\epsilon\in(0,1),
\end{equation}
with
$\rho\in C\sp\infty\sb 0(\torus)$
supported in a small open neighborhood of
$\Sigma\setminus\Sigma'=\p\varOmega_c
=\{\pm\omega\sb m,\pi\pm\omega\sb m\}\subset\torus$,
such that
$\Sigma'\cap\supp\rho=\emptyset$.
We have:
\[
\mathcal{G}_0(\omega+i\epsilon)
=\frac{1}{2}
\int\sb{\torus^n}\frac{1}{\big(1+\frac{\tau^2 m^2}{2}\big)\cos(\omega+i\epsilon)
-\frac{1}{n}\sum\sb{j=1}\sp{n}
\cos\xi\sb j}\frac{d\xi}{(2\pi)^n}.
\]
In the case $n\ge 3$,
the convergence
of \eqref{fto} in $l^1$ as $\epsilon\to 0$
is straightforward
since
at the points $\omega\in\p\varOmega_c$
the function $\mathcal{G}_0(\omega+i\epsilon)$
has a nonzero limit
as $\epsilon\to 0$.

We leave the case $n=2$ to the reader
and consider the case $n=1$.
It suffices to consider the case when $\rho$ is supported
in a small neighborhood of $\omega=\omega\sb m$
(all other cases $\omega\in\p\varOmega\sb c$ are handled similarly).
Let $\epsilon\in(0,1)$.
One evaluates $\mathcal{G}_0(\omega+i\epsilon)$
explicitly, getting
\begin{eqnarray}
\mathcal{G}_0(\omega+i\epsilon)
&=&
\frac 1 2 
\int\sb{\torus}\frac{1}{\big(1+\frac{\tau^2 m^2}{2}\big)\cos(\omega+i\epsilon)
-\cos\xi}\frac{d\xi}{2\pi}
\nonumber
\\
&=&
\frac 1 2
\frac{1}{\sqrt{\big(1+\frac{\tau^2 m^2}{2}\big)^2\cos^2(\omega+i\epsilon)-1}},
\nonumber
\end{eqnarray}
hence
\begin{eqnarray}
\frac{\rho(\omega)}{\mathcal{G}\sb 0(\omega+i\epsilon)}
&=&(2+\tau^2 m^2)\rho(\omega)\sqrt{\cos^2(\omega+i\epsilon)-\cos^2\omega\sb m}
\nonumber
\\
&=&
(2+\tau^2 m^2)\rho(\omega)\sqrt{\sin^2\omega\sb m-\sin^2(\omega+i\epsilon)},
\nonumber
\end{eqnarray}
which can be written as
\[
\rho(\omega)f(\omega,\epsilon)\sqrt{\omega-\omega\sb m+i\epsilon},
\]
with $f(\omega,\epsilon)$ begin smooth
on the support of $\rho$,
with the two derivatives bounded
uniformly for $\epsilon\in(0,1)$. 

It suffices to show that
\[
F(\ttd,\epsilon)=\int\sb{\torus} e^{-i\omega\ttd}\rho(\omega)f(\omega,\epsilon)
\sqrt{\omega-\omega\sb m+i\epsilon}
\,d\omega,
\qquad
\ttd\in\Z,
\quad
\epsilon\in(0,1),
\]
decays as $\abs{\ttd}^{-3/2}$,
uniformly in $\epsilon\in(0,1)$.

We pick
$\alpha\in C\sp\infty(\R)$,
$\alpha\at{\abs{s}\ge 2}\equiv 1$,
$\alpha\at{\abs{s}\le 1}\equiv 0$,
and define
$\beta(s)=\alpha(s)-\alpha(s/2)$,
so that $\beta\in C\sp\infty([1,4])$.
Then there is the dyadic decomposition
\[
1=\alpha(s)
+\sum\sb{k=1}\sp{\infty}
\beta(2^{k}s),
\qquad
s\in\R.
\]
For $\ttd\in\Z$,
$\epsilon\in(0,1)$,
we define
\begin{equation}\label{def-f0}
F\sb 0(\ttd,\epsilon)
=\int\sb{\torus}
\alpha(\abs{\omega-\omega\sb m+i\epsilon})
e^{-i\omega\ttd}\rho(\omega)f(\omega,\epsilon)
\sqrt{\omega-\omega\sb m+i\epsilon}
\,d\omega,
\end{equation}
\begin{equation}\label{def-fk}
F\sb k(\ttd,\epsilon)
=\int\sb{\torus}
\beta(2^{k}\abs{\omega-\omega\sb m+i\epsilon})
e^{-i\omega\ttd}\rho(\omega)f(\omega,\epsilon)
\sqrt{\omega-\omega\sb m+i\epsilon}
\,d\omega,
\qquad
k\in\N.
\end{equation}
Since the expression under the integral
defining $F\sb 0$ is smooth in $\omega$
and in $\epsilon$,
there is $C_1<\infty$
independent on $\epsilon\in(0,1)$
such that
$\abs{F\sb 0(\ttd,\epsilon)}\le C_1\abs{\ttd}^{-3/2}$,
$\ttd\in\Z$.
To estimate $\abs{F\sb k(\ttd,\epsilon)}$ with $k\in\N$,
we first notice that
\begin{equation}\label{est-1}
\abs{F\sb k(\ttd,\epsilon)}
\le C_2 2^{-3k/2},
\end{equation}
with $C_2<\infty$ bounded uniformly for $\epsilon\in(0,1)$:
the factor $2^{-k}$
comes from the size of the support in $\omega$
and $2^{-k/2}$
from the magnitude of the square root.
We can also integrate
in \eqref{def-fk}
by parts two times in $\omega$
(with the aid of the operator
$L=i\ttd^{-1}\p\sb\omega$
which is the identity when applied to the exponential),
getting
\begin{equation}\label{est-2}
\abs{F\sb k(\ttd,\epsilon)}
\le C_3 2^{-3k/2}
\Big(\frac{2^{k}}{\abs{\ttd}}\Big)^2,
\end{equation}
where
$2^k/\abs{\ttd}$
is the contribution
from each integration by parts,
when $\p\sb\omega$
could fall onto either on $\beta$ or on the square root
(producing a factor of $2^k$)
or onto $\rho(\omega)f(\omega,\epsilon)$,
and $C_3<\infty$ does not depend on
$\epsilon\in(0,1)$ and $\ttd\in\Z$.
Thus,
taking into account
\eqref{est-1} and \eqref{est-2},
we get the estimate
\begin{eqnarray}
&&
\abs{F(\ttd,\epsilon)}
\le
\abs{F\sb 0(\ttd,\epsilon)}
+
\sum\sb{k\in\N}
\abs{F\sb k(\ttd,\epsilon)}
\nonumber
\\
&&
\le
C_1\abs{\ttd}^{-\frac 3 2}
+
\sum\sb{k\in\N:\;2^k>\abs{\ttd}}
C_2 2^{-\frac{3k}{2}}
+
\sum\sb{k\in\N:\;2^k\le\abs{\ttd}}
C_3 2^{\frac{k}{2}}\abs{\ttd}^{-2}
\le C\abs{\ttd}^{-\frac 3 2},
\nonumber
\end{eqnarray}
valid for all $\ttd\in\Z$;
above, $C<\infty$
does not depend on $\epsilon\in(0,1)$.
It follows that
$\big\vert\mathscr{F}\sp{-1}[
\frac{\rho(\omega)}{\mathcal{G}(\omega+i\epsilon)}](\ttd)\big\vert
\le C\abs{\ttd}^{-3/2}$,
for any $\epsilon\in(0,1)$, $\ttd\in\Z$.
By the dominated convergence theorem (albeit with $\ttd\in\Z$),
the convergence
\eqref{csm}
follows.
\end{proof}

Now we can extend
a version of Lemma~\ref{lemma-b-sg}
with $\xxd=0$
to the supports of $\tilde g$ and $\tilde\zeta\sb 0$,
which
by Lemmas~\ref{lemma-supp-g}
and ~\ref{lemma-supp-zeta}
could include endpoints of the continuous
spectrum, $\p\varOmega_c$.

\begin{lemma}\label{lemma-b-sg-zero}
The solution to the stationary problem
\eqref{eqn-tilde-zeta}
satisfies
\begin{equation}\label{bsg-zero}
r(\omega)\tilde\zeta\sb{0}(\omega)=\tau^2\tilde g(\omega),
\qquad
\omega\in\torus.
\end{equation}
\end{lemma}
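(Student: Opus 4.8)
The plan is to prove \eqref{bsg-zero} on all of $\torus$ by localization, treating the interior of the spectral gaps and their boundary $\p\varOmega_c=\{\pm\omega\sb m,\pi\pm\omega\sb m\}$ separately. First I note that it suffices to establish the identity on $\overline{\varOmega\sb 0}\cup\overline{\varOmega\sb\pi}$: both $\tilde g$ and $\tilde\zeta\sb 0$ are quasimeasures (their inverse transforms $g$ and $\zeta\sb 0$ are bounded), and by Lemmas~\ref{lemma-supp-g} and~\ref{lemma-supp-zeta} their supports lie in $\overline{\varOmega\sb 0}\cup\overline{\varOmega\sb\pi}$, which has positive distance from $\Sigma'=\Sigma\setminus\p\varOmega_c$. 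Since multiplication by $r$ does not enlarge supports, $\supp(r\tilde\zeta\sb 0)\subset\overline{\varOmega\sb 0}\cup\overline{\varOmega\sb\pi}$ too, so both sides of \eqref{bsg-zero} vanish off $\overline{\varOmega\sb 0}\cup\overline{\varOmega\sb\pi}$ (in particular near every point of $\Sigma'$, where $r$ is not even a multiplier). On the open gaps $\varOmega\sb 0\cup\varOmega\sb\pi$ the trace $\mathcal{G}\sb 0(\omega+i0)$ is smooth, real, and nonvanishing by Lemma~\ref{lemma-lap}, so $r=1/\mathcal{G}\sb 0$ is a smooth factor there, and multiplying the representation $\tilde\zeta\sb 0=\tau^2\mathcal{G}\sb 0\tilde g$ of Lemma~\ref{lemma-b-sg} by $r$ gives \eqref{bsg-zero} on $\varOmega\sb 0\cup\varOmega\sb\pi$ immediately.

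The substance of the argument is to push the identity through the four boundary frequencies $\omega\sb b\in\p\varOmega_c$, where for $n\le 4$ the trace $\mathcal{G}\sb 0(\omega+i0)$ may diverge and Lemma~\ref{lemma-b-sg} is unavailable. Here I would use limiting absorption together with the quasimeasure calculus. Fix $\omega\sb b\in\p\varOmega_c$ and choose $\rho\in C\sp\infty\sb 0(\torus)$ with $\rho\equiv 1$ near $\omega\sb b$ and $\supp\rho\cap\Sigma'=\emptyset$. Starting from the analytic identity \eqref{bvafc}, valid for $\omega\in\C\sp{+}\!\!\!\!\mod 2\pi$, I multiply by $\rho(\omega)/\mathcal{G}\sb 0(\omega+i\epsilon)$ and by the shift $e^{-i\omega\ttd\sb j}$ to obtain
\[
\frac{\rho(\omega)}{\mathcal{G}\sb 0(\omega+i\epsilon)}\,\tilde\varphi\sb 0(\omega+i\epsilon)\,e^{-i\omega\ttd\sb j}
=\tau^2\rho(\omega)\,\tilde f(\omega+i\epsilon)\,e^{-i\omega\ttd\sb j}.
\]
I then take the two limits in turn. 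Letting $\epsilon\to 0+$ for fixed $j$: the multipliers $\rho/\mathcal{G}\sb 0(\cdot+i\epsilon)$ converge to $\rho r$ in the sense that their inverse Fourier transforms converge in $l^1(\Z)$, by \eqref{csm} of Lemma~\ref{lemma-r-qm-multiplier}, while $\tilde\varphi\sb 0(\cdot+i\epsilon)e^{-i\omega\ttd\sb j}$ and $\tilde f(\cdot+i\epsilon)e^{-i\omega\ttd\sb j}$ converge in $\mathscr{Q}(\torus)$ (their inverse transforms $\varphi\sb 0\sp{\ttd+\ttd\sb j}e^{-\epsilon(\ttd+\ttd\sb j)}$ and $f\sp{\ttd+\ttd\sb j}e^{-\epsilon(\ttd+\ttd\sb j)}$ are uniformly bounded and converge pointwise). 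By the second statement of Lemma~\ref{lemma-qm-multipliers} the product converges, giving $\rho r\,\tilde\varphi\sb 0 e^{-i\omega\ttd\sb j}=\tau^2\rho\,\tilde f e^{-i\omega\ttd\sb j}$ in $\mathscr{Q}(\torus)$. Letting now $j\to\infty$: since $\rho r$ and $\rho$ are fixed multipliers with $l^1$ inverse transforms, and $\tilde\varphi\sb 0 e^{-i\omega\ttd\sb j}\to\tilde\zeta\sb 0$, $\tilde f e^{-i\omega\ttd\sb j}\to\tilde g$ in $\mathscr{Q}(\torus)$ by \eqref{f-to-g}, \eqref{fj-to-g}, \eqref{psij-to-zeta}, \eqref{tilde-psij-to-zeta}, the first statement of Lemma~\ref{lemma-qm-multipliers} yields $\rho r\,\tilde\zeta\sb 0=\tau^2\rho\,\tilde g$. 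As $\rho\equiv 1$ near $\omega\sb b$, this is precisely \eqref{bsg-zero} on a neighborhood of $\omega\sb b$.

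Running this at each of the four points of $\p\varOmega_c$ and combining with the identity already obtained on the open gaps and with the vanishing of both sides off $\overline{\varOmega\sb 0}\cup\overline{\varOmega\sb\pi}$, I conclude that $r(\omega)\tilde\zeta\sb 0(\omega)=\tau^2\tilde g(\omega)$ on all of $\torus$. The one genuinely delicate point is the boundary frequencies: there the naive division by $\mathcal{G}\sb 0$ is illegitimate because $\mathcal{G}\sb 0$ may blow up, and what rescues the argument is exactly that $r=1/\mathcal{G}\sb 0(\omega+i0)$ extends to a continuous quasimeasure multiplier across $\p\varOmega_c$ with the $l^1$ convergence \eqref{csm}. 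The whole difficulty is thus concentrated in the interchange of the $\epsilon\to 0+$ and $\ttd\sb j\to\infty$ limits, which is precisely what the quasimeasure framework of Lemmas~\ref{lemma-qm-multipliers} and~\ref{lemma-r-qm-multiplier} is built to license.
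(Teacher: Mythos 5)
Your proposal is correct and takes essentially the same route as the paper: both rest on the analytic identity \eqref{bvafc}, passage to the boundary $\Im\omega\to 0+$ combined with the shifts $e^{-i\omega \ttd\sb j}$, the multiplier convergence \eqref{csm} of Lemma~\ref{lemma-r-qm-multiplier}, and the quasimeasure calculus of Lemma~\ref{lemma-qm-multipliers}, with the support information of Lemmas~\ref{lemma-supp-g} and~\ref{lemma-supp-zeta} keeping everything away from $\Sigma'$. Your explicit three-region decomposition (the open gaps, the four points of $\p\varOmega_c$, and the complement of $\overline{\varOmega\sb 0}\cup\overline{\varOmega\sb\pi}$) and the careful ordering of the limits $\epsilon\to 0+$ first, then $j\to\infty$, merely unpack what the paper's terser global argument on $\mathscr{Q}(\torus\setminus\Sigma')$ leaves implicit.
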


\begin{proof}
By \eqref{bvafc},
\begin{equation}\label{bvafc-r}
\frac{1}{\mathcal{G}\sb{0}(\omega)}
\widetilde{\varphi}_0(\omega)=\tau^2
\tilde f(\omega),
\qquad
\omega\in\C\sp{+}\!\!\!\!\mod 2\pi.
\end{equation}
Now, similarly to Lemma~\ref{lemma-b-sg},
the proof follows from
taking the limit $\Im\omega\to 0+$
in \eqref{bvafc-r};
let us provide details.
Due to the convergence \eqref{fj-to-g},
the right hand-side
of \eqref{bvafc-r}
converges to $\tau^2\tilde g(\omega)$, $\omega\in\R$;
let us now consider the left-hand side of \eqref{bvafc-r}.
The convergence \eqref{tilde-psij-to-zeta}
also takes place in the space
of $l^2(\Z^n)$-valued quasimeasures, $\mathscr{Q}(\Z,l^2(\Z^n))$.
Besides, there is a convergence
$\frac{1}{\mathcal{G}\sb 0(\omega+i\epsilon)}\to r(\omega)$
as $\epsilon\to 0+$
in the space of multipliers in $\mathscr{Q}(\torus\setminus\Sigma')$
stated in Lemma~\ref{lemma-r-qm-multiplier}.
Noticing that,
by Lemma~\ref{lemma-supp-zeta},
$\supp\tilde\zeta\sb 0
\subset\overline{\varOmega\sb 0}\cup\overline{\varOmega\sb\pi}
\subset\torus\setminus\Sigma'$,
Lemma~\ref{lemma-qm-multipliers}
on multipliers in the space of quasimeasures
allows us to conclude that
the left-hand side of \eqref{bvafc-r}
converges to $\frac{1}{\mathcal{G}\sb 0(\omega)}\tilde\zeta\sb 0(\omega)$,
$\omega\in\torus$,
and the statement of the lemma follows.
\end{proof}

We define the ``sharp'' operation
$\sharp$
on $\mathscr{D}'(\torus)$
by
\begin{equation}\label{def-sharp}
f\sp\sharp(\omega):=\bar{f}(-\omega),
\qquad
\omega\in\torus.
\end{equation}

\begin{remark}
For $F\in l(\Z)$, one has
$(\tilde F)\sp\sharp=\tilde{\bar F}$.
\end{remark}


Now we will use the spectral representation
stated in Lemma~\ref{lemma-b-sg-zero}
to obtain the following fundamental relation
satisfied by $\zeta\sb 0$,
which we call \emph{the spectral relation}.
In the next section
we will apply our version of the Titchmarsh convolution theorem
to this relation, proving that
the $\omega$-support of $\tilde\zeta$ consists of
one, two, or four frequencies.

\begin{lemma}\label{lemma-zz-is-gg}
\begin{equation}\label{zz-is-gg}
\big(r\tilde\zeta\sb 0\big)
\ast\big(\tilde\zeta\sb{0}\sp\sharp i\sin\omega\big)
+
\big(r\tilde\zeta\sb 0\sp\sharp\big)
\ast\big(\tilde\zeta\sb{0} i\sin\omega\big)
=
-
\tau^2
\widetilde{W(\abs{\zeta\sb{0}\sp{\ttd}}^2)}i\sin\omega,
\qquad
\omega\in\torus.
\end{equation}
\end{lemma}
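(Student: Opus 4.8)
The plan is to prove \eqref{zz-is-gg} by passing to the time domain: both sides are quasimeasures on $\torus$, so it suffices to verify that their inverse Fourier transforms, which are bounded sequences on $\Z$, coincide pointwise. Two facts drive the computation. First, for quasimeasures, convolution on $\torus$ corresponds to the pointwise product of the inverse Fourier transforms (this is exactly the content of Lemma~\ref{lemma-qm-multipliers} together with Definition~\ref{def-qm}). Second, multiplication by the smooth factor $i\sin\omega$ corresponds to a symmetric difference, $\mathscr{F}^{-1}[i\sin\omega\,\tilde F](\ttd)=\tfrac12(F\sp{\ttd-1}-F\sp{\ttd+1})$. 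First I would check that the five factors in \eqref{zz-is-gg} are genuine quasimeasures: by Lemma~\ref{lemma-b-sg-zero}, $r\tilde\zeta\sb 0=\tau^2\tilde g$, whose inverse transform $\tau^2 g\sp\ttd$ is bounded by the a priori estimate \eqref{l2-bound}; applying $\sharp$ to \eqref{bsg-zero} and using $r\sp\sharp=r$ from \eqref{as-cos} gives $r\tilde\zeta\sb 0\sp\sharp=\tau^2\tilde{\bar g}$; and $\tilde\zeta\sb 0\, i\sin\omega$, $\tilde\zeta\sb 0\sp\sharp i\sin\omega$, and $\widetilde{W(\abs{\zeta\sb 0\sp\ttd}^2)}\,i\sin\omega$ all have bounded inverse transforms as well.

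With these identifications in hand, applying $\mathscr{F}^{-1}$ to \eqref{zz-is-gg} turns it into a pointwise relation on $\Z$. Writing $a=\zeta\sb 0\sp{\ttd+1}$ and $b=\zeta\sb 0\sp{\ttd-1}$, and using the Remark that $(\tilde F)\sp\sharp=\tilde{\bar F}$, I would reduce the left-hand side to $\tau^2 g\sp\ttd\cdot\tfrac12(\bar b-\bar a)+\tau^2\bar g\sp\ttd\cdot\tfrac12(b-a)=\tau^2\Re\big[g\sp\ttd(\bar b-\bar a)\big]$ and the right-hand side to $\tfrac{\tau^2}{2}\big(W(\abs a^2)-W(\abs b^2)\big)$. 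Thus the entire lemma reduces to the scalar identity $2\Re\big[g\sp\ttd(\bar b-\bar a)\big]=W(\abs a^2)-W(\abs b^2)$.

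It then remains to verify this last identity from the definition \eqref{def-g} of $g\sp\ttd$. In the generic case $\abs a^2\ne\abs b^2$ one has $g\sp\ttd=-\tfrac{W(\abs a^2)-W(\abs b^2)}{\abs a^2-\abs b^2}\cdot\tfrac{a+b}{2}$, and expanding $(a+b)(\bar b-\bar a)=2i\Im(a\bar b)-(\abs a^2-\abs b^2)$ shows that the imaginary part is killed by taking $\Re$ because the prefactor is real; the surviving real contribution is exactly $\tfrac12\big(W(\abs a^2)-W(\abs b^2)\big)$, giving the claim. In the degenerate case $\abs a^2=\abs b^2$ both sides vanish: $W(\abs a^2)-W(\abs b^2)=0$, while $g\sp\ttd=-W'(\abs a^2)\tfrac{a+b}{2}$ makes $g\sp\ttd(\bar b-\bar a)$ purely imaginary, so its real part is zero.

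The main obstacle is not this final algebra but the legitimacy of the time-domain reduction. The convolutions in \eqref{zz-is-gg} are convolutions of distributions on the circle, which are not defined in general; what makes them meaningful is precisely that each factor is a quasimeasure, so that Lemma~\ref{lemma-qm-multipliers} guarantees the convolution is again a quasimeasure whose inverse transform equals the product of the factors' inverse transforms. Additional care is needed because $r$ is only a multiplier on $\mathscr{Q}(\torus\setminus\Sigma')$ and is not globally smooth; this is handled by Lemma~\ref{lemma-b-sg-zero}, which lets me replace $r\tilde\zeta\sb 0$ and $r\tilde\zeta\sb 0\sp\sharp$ by the globally defined quasimeasures $\tau^2\tilde g$ and $\tau^2\tilde{\bar g}$ before taking any inverse transform, so that no ill-defined product survives in the computation.
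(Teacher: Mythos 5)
Your proof is correct, and it takes a genuinely different route from the paper's. The paper works entirely on the Fourier side: it encodes the nonlinearity in the auxiliary real sequence $b\sp\ttd$ of \eqref{def-b}, so that $\tilde g=-\tilde b\ast(\tilde\zeta\sb 0\cos\omega)$, convolves the two relations \eqref{two-rel-a} and \eqref{two-rel-b} with $\tilde\zeta\sb 0\sp\sharp i\sin\omega$ and $\tilde\zeta\sb 0 i\sin\omega$ respectively, collapses the resulting bracket via the identity $\sin(\omega-\sigma)\cos\sigma+\cos(\omega-\sigma)\sin\sigma=\sin\omega$ into $(\tilde\zeta\sb 0\sp\sharp\ast\tilde\zeta\sb 0)i\sin\omega$, and only at the end converts $\tilde b\ast(\cdot)$ into the difference of the $W$'s. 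You instead substitute $r\tilde\zeta\sb 0=\tau^2\tilde g$ and $r\tilde\zeta\sb 0\sp\sharp=\tau^2\tilde{\bar g}$ (both justified exactly as you say, by Lemma~\ref{lemma-b-sg-zero}, $r\sp\sharp=r$, and the remark after \eqref{def-sharp}) and pass to the time domain, where the whole lemma becomes the pointwise scalar identity $2\Re[g\sp\ttd(\bar b-\bar a)]=W(\abs{a}^2)-W(\abs{b}^2)$ with $a=\zeta\sb 0\sp{\ttd+1}$, $b=\zeta\sb 0\sp{\ttd-1}$, verified in two lines from \eqref{def-g}; I checked the algebra and it is right. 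Your route is more elementary --- no $\tilde b$, no trigonometric convolution identity --- at the price of the case distinction $\abs a\ne\abs b$ versus $\abs a=\abs b$, which the paper's $b\sp\ttd$ absorbs automatically via the little B\'ezout structure. Two small inaccuracies, neither affecting correctness: the duality ``convolution of quasimeasures $\leftrightarrow$ product of inverse transforms'' is not literally the content of Lemma~\ref{lemma-qm-multipliers} (that lemma is the dual statement, about multipliers $M$ with $\check M\in l^1$ acting by $\check M\ast\check\qm$ on the time side); what you need is the standard fact that Fourier coefficients multiply under convolution, which holds for all distributions on $\torus$ since the Fourier transform is injective on $\mathscr{D}'(\torus)$. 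For the same reason, your worry that the convolutions in \eqref{zz-is-gg} ``are not defined in general'' is unfounded: every distribution on the circle is compactly supported, so these convolutions exist a priori; what the quasimeasure framework and Lemma~\ref{lemma-b-sg-zero} really buy you is that the products $r\tilde\zeta\sb 0$ and $r\tilde\zeta\sb 0\sp\sharp$ are well defined despite $r$ failing to be smooth across $\Sigma$, and that all inverse transforms are bounded sequences, so the pointwise computation via \eqref{alpha-alpha} is legitimate.
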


\begin{proof}
Denote by
$b\sp\ttd$
the coefficient appearing in \eqref{def-g}:
\begin{equation}\label{def-b}
b\sp{\ttd}
=
\left\{\begin{array}{l}
\frac{W(\abs{\zeta\sb 0\sp{\ttd+1}}^2)-W(\abs{\zeta\sb 0\sp{\ttd-1}}^2)}
{\abs{\zeta\sb 0\sp{\ttd+1}}^2-\abs{\zeta\sb 0\sp{\ttd-1}}^2}
\quad
\mbox{if}
\quad
\abs{\zeta\sb 0\sp{\ttd+1}}^2\ne\abs{\zeta\sb 0\sp{\ttd-1}}^2,
\\
W'(\abs{\zeta\sb 0\sp{\ttd+1}}^2)
\quad
\mbox{if}
\quad
\abs{\zeta\sb 0\sp{\ttd+1}}^2=\abs{\zeta\sb 0\sp{\ttd-1}}^2.
\end{array}
\right.
\end{equation}
Then \eqref{def-g} takes the form
$
g\sp{\ttd}
=
-b\sp{\ttd}\frac{\zeta\sb{0}\sp{\ttd+1}+\zeta\sb{0}\sp{\ttd-1}}{2},
$
or, on the Fourier transform side,
\begin{equation}\label{g-tilde}
\tilde g(\omega)
=
-\tilde b\ast\big(\tilde\zeta\sb{0}(\omega)\cos\omega\big),
\qquad
\omega\in\torus.
\end{equation}
Here and below, we are using the identities
\begin{equation}\label{alpha-alpha}
\frac{\widetilde{f\sp{T+1}}(\omega)+\widetilde{f\sp{T-1}}(\omega)}{2}
=\tilde f(\omega)\cos\omega,
\qquad
\frac{\widetilde{f\sp{T+1}}(\omega)-\widetilde{f\sp{T-1}}(\omega)}{2}
=-i\tilde f(\omega)\sin\omega,
\end{equation}
valid for any $\tilde f\in \mathscr{D}'(\torus)$,
which follow from the definition of the Fourier transform
\eqref{def-f-tilde}.
Substituting
\eqref{g-tilde}
into \eqref{bsg-zero},
we obtain the relation
\begin{equation}\label{two-rel-a}
r(\omega)\tilde\zeta\sb{0}(\omega)
=-\tau^2\tilde b\ast\big(\tilde\zeta\sb{0}\cos\omega\big),
\qquad
\omega\in\torus.
\end{equation}
We note that
$r\sp\sharp(\omega):=\overline{r(-\omega)}=\overline{r(\omega)}=r(\omega)$
since $r(\omega)$ is even and real-valued
on the support of $\tilde\zeta\sb 0(\omega)$
by Lemma~\ref{lemma-r-qm-multiplier},
and similarly
$\tilde b\sp\sharp(\omega)=\overline{\tilde b(-\omega)}
=\widetilde{\overline{b}}(\omega)
=\tilde b(\omega)$
since $b\sp{\ttd}$ is real-valued by
\eqref{def-b}.
Hence, by
Lemma~\ref{lemma-b-sg-zero}
and \eqref{g-tilde},
we have:
\begin{equation}\label{two-rel-b}
r(\omega)\tilde\zeta\sb{0}\sp\sharp(\omega)
=-\tau^2\tilde b\ast\big(\tilde\zeta\sb{0}\sp\sharp\cos\omega\big),
\qquad
\omega\in\torus.
\end{equation}
We take the convolution of \eqref{two-rel-a}
with $\tilde\zeta\sb{0}\sp\sharp(\omega)i\sin\omega$,
the convolution of  \eqref{two-rel-b}
with $\tilde\zeta\sb{0}(\omega)i\sin\omega$,
and add them up:
\begin{eqnarray}\label{psr}
&&
\big(r\tilde\zeta\sb{0}\big)
\ast
\big(\tilde\zeta\sb{0}\sp\sharp i\sin\omega\big)
+
\big(r\tilde\zeta\sb{0}\sp\sharp\big)
\ast
\big(\tilde\zeta\sb{0} i\sin\omega\big)
\nonumber
\\
&&
\qquad
=
-\tau^2\tilde b
\ast
\big[
\big(\tilde\zeta\sb{0}\cos\omega\big)
\ast
\big(\tilde\zeta\sb{0}\sp\sharp i\sin\omega\big)
+
\big(\tilde\zeta\sb{0}\sp\sharp\cos\omega\big)
\ast
\big(\tilde\zeta\sb{0}i\sin\omega\big)
\big].
\end{eqnarray}
Using the identity
$\sin(\omega-\sigma)\cos\sigma
+\cos(\omega-\sigma)\sin\sigma=\sin\omega$,
we rewrite the expression in brackets
as $(\tilde\zeta\sb 0\sp\sharp\ast\tilde\zeta\sb 0)i\sin\omega$,
which,
due to the identities \eqref{alpha-alpha},
is the Fourier transform of
$-\frac 1 2\big(\abs{\zeta\sb 0\sp{\ttd+1}}^2-\abs{\zeta\sb 0\sp{\ttd-1}}^2\big)$.
This leads to
\[
\big(r\tilde\zeta\sb 0\big)
\ast\big(\tilde\zeta\sb 0\sp\sharp i\sin\omega\big)
+
\big(r\tilde\zeta\sb 0\sp\sharp\big)
\ast\big(\tilde\zeta\sb 0 i\sin\omega\big)
=\frac{\tau^2}{2}\tilde b
\ast
\big(
\widetilde{\abs{\zeta\sb 0\sp{\ttd+1}}^2}
-\widetilde{\abs{\zeta\sb 0\sp{\ttd-1}}^2}
\big).
\]
As follows from \eqref{def-b}, the right-hand side
of the above relation
is the Fourier transform
of 
$\frac{\tau^2}{2}
(W(\abs{\zeta\sb{0}\sp{\ttd+1}}^2)-W(\abs{\zeta\sb{0}\sp{\ttd-1}}^2))$.
Taking into account the identity
\[
\frac 1 2
\big(
\widetilde{W(\abs{\zeta\sb{0}\sp{\ttd+1}}^2)}
-\widetilde{W(\abs{\zeta\sb{0}\sp{\ttd-1}}^2)}
\big)
=
-\widetilde{W(\abs{\zeta\sb{0}\sp{\ttd}}^2)}i\sin\omega,
\]
we rewrite \eqref{psr} in the desired form
\[
\big(r\tilde\zeta\sb 0\big)
\ast\big(\tilde\zeta\sb{0}\sp\sharp i\sin\omega\big)
+
\big(r\tilde\zeta\sb 0\sp\sharp\big)
\ast\big(\tilde\zeta\sb{0}i\sin\omega\big)
=
-
\tau^2
\widetilde{W(\abs{\zeta\sb{0}\sp{\ttd}}^2)}i\sin\omega.
\]
\end{proof}

\section{Nonlinear spectral analysis of omega-limit trajectories}
\label{sect-nsa}

We are going to prove our main result,
reducing the spectrum of $\zeta\sb 0\sp{\ttd}$
to at most four points.
Denote
\begin{equation}\label{def-t-dot}
\dotT=\torus\setminus\big\{\pm\frac{\pi}{2}\big\},
\qquad
\mathscr{D}'(\dotT)
=
\{f\in\mathscr{D}'(\torus)\sothat
\supp f\subset\dotT\},
\end{equation}

\begin{definition}\label{def-cht-0}
For $f\in\mathscr{D}'(\dotT)$, define
\[
\supppi f
=\Big\{
p\in\big(-\frac{\pi}{2},\frac{\pi}{2}\big)\sothat
\mbox{ either }
p\in\supp f
\mbox{ or }
p+\pi\in\supp f
\Big\}.
\]
\end{definition}

\begin{remark}
For $f\in\mathscr{D}'(\dotT)$,
the convex hull
of $\supppi f$,
which is denoted by
$\ch\supppi f$,
is the smallest closed interval $I\subset(-\frac{\pi}{2},\frac{\pi}{2})$
such that
$\supp f\subset I\cup(\pi+I)$.
\end{remark}

For $y\in\torus$,
let $S\sb{y}$
be the shift operator acting on $\mathscr{D}'(\torus)$:
for $f\in\mathscr{D}'(\torus)$,
\begin{equation}\label{def-shift}
S\sb{y}f(\omega)=f(\omega+y),
\qquad
\omega\in\torus.
\end{equation}
Note that
$S\sb{\pi}:\mathscr{D}'(\dotT)\to \mathscr{D}'(\dotT)$.

\begin{definition}\label{def-l-r-0}
For $\kappa>0$, define
the following subsets of $\mathscr{D}'(\dotT)$:
\[
\bm{L}\sp{\pm}\sb\kappa
=\left\{
f\in\mathscr{D}'(\dotT)
\sothat
\ f\not\equiv 0,
\ f=\pm S\sb{\pi}f
\quad
\mbox{on}
\quad
\Big(-\frac{\pi}{2},\ \inf\supppi f+\kappa\Big)
\right\},
\]
\[
\bm{R}\sp{\pm}\sb\kappa
=\left\{
f\in\mathscr{D}'(\dotT)
\sothat
\ f\not\equiv 0,
\ f=\pm S\sb{\pi}f
\quad
\mbox{on}
\quad
\Big(\sup\supppi f-\kappa,\ \frac{\pi}{2}\Big)
\right\}.
\]
\end{definition}

We use the following
version of the Titchmarsh convolution theorem
on the circle.

\begin{theorem}[Titchmarsh theorem for distributions
on the circle]
\label{theorem-titchmarsh-circle}

Let $f,\,g\in\mathscr{D}'(\dotT)$
satisfy
$
\supppi f+\supppi g\subset(-\frac{\pi}{2},\frac{\pi}{2}).
$
Then
$f\ast g\in\mathscr{D}'(\dotT)$,
and
for each $\kappa>0$
the following statements
{\it ($A$)} and {\it ($B$)}
are equivalent:
\[
\begin{array}{l}
\mbox{
{\it ($A$)}
\ \ $\inf\supppi f\ast g
\ge\inf\supppi f+\inf\supppi g+\kappa$;}
\\ \\
\mbox{
{\it ($B$)}
\ \ Either\ 
\ $f\in\bm{L}\sp{-}\sb\kappa$,
$g\in\bm{L}\sp{+}\sb\kappa$,
 \ or\ 
\ $f\in\bm{L}\sp{+}\sb\kappa$,
$g\in\bm{L}\sp{-}\sb\kappa$.
}
\end{array}
\]

\smallskip

Similarly,
the following statements
{\it ($A'$)} and {\it ($B'$)}
are equivalent:
\[
\begin{array}{l}
\mbox{
{\it ($A'$)}
\ \ $\sup\supppi f\ast g
\le\sup\supppi f+\sup\supppi g-\kappa$;
}
\\ \\
\mbox{
{\it ($B'$)}
\ \ Either
\ $f\in\bm{R}\sp{-}\sb\kappa$,
$g\in\bm{R}\sp{+}\sb\kappa$,
\ or
\ $f\in\bm{R}\sp{+}\sb\kappa$,
$g\in\bm{R}\sp{-}\sb\kappa$.
}
\end{array}
\]
\end{theorem}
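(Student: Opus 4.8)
The plan is to reduce the statement to the classical Titchmarsh convolution theorem on $\R$ by exploiting the $\pi$-shift symmetry that is built into $\supppi$ and into the sets $\bm{L}^{\pm}_\kappa$. Throughout I would write $\Pi_{\pm}h=\frac12(h\pm S_\pi h)$ for the even and odd parts of $h\in\mathscr{D}'(\dotT)$ under the shift $S_\pi$, so that $S_\pi\Pi_{\pm}h=\pm\Pi_{\pm}h$; note $S_\pi$ maps $\dotT$ to itself since $\{\pm\frac\pi2\}$ is invariant under the shift by $\pi$. Writing $\check h$ for the inverse Fourier coefficients of $h$, the shift $S_\pi$ acts by $\check h(\ttd)\mapsto(-1)^\ttd\check h(\ttd)$, so $\Pi_+h$ is carried by even $\ttd$ and $\Pi_-h$ by odd $\ttd$. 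Since convolution in $\omega$ corresponds to the pointwise product of the sequences $\check p,\check q$, the crucial algebraic fact is immediate: an even part and an odd part have Fourier supports on disjoint parities, whence $\Pi_+f\ast\Pi_-g=0$ and $\Pi_-f\ast\Pi_+g=0$ identically. Consequently $f\ast g=\Pi_+f\ast\Pi_+g+\Pi_-f\ast\Pi_-g$, where the two summands are respectively the even and the odd part of $f\ast g$.

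Next I would set up the \emph{doubling correspondence} $\theta=2\omega$. An $S_\pi$-periodic distribution equals $G(2\omega)$ for a unique $G\in\mathscr{D}'(\torus)$ in the doubled variable $\theta$, and an $S_\pi$-antiperiodic one equals $e^{i\omega}G(2\omega)$; in either case $\supp G=2\supppi(\,\cdot\,)\subset(-\pi,\pi)$. A short Fourier-coefficient computation shows that this correspondence intertwines same-parity convolution in $\omega$ with convolution in $\theta$. The hypothesis $\supppi f+\supppi g\subset(-\frac\pi2,\frac\pi2)$ becomes, after doubling, the non-wrapping condition that the compact sum of the $\theta$-supports stays inside $(-\pi,\pi)$; cutting the doubled circle at the omitted point then realizes the two factors as compactly supported distributions on $\R$ whose convolution on the line coincides with the circle convolution. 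The classical Titchmarsh theorem on $\R$ applies and yields, back on $\torus$, the additivity $\inf\supppi(\Pi_\pm f\ast\Pi_\pm g)=\inf\supppi\Pi_\pm f+\inf\supppi\Pi_\pm g$ (and the analogous identity for $\sup$). In particular $\supp(f\ast g)$ avoids $\pm\frac\pi2$, so $f\ast g\in\mathscr{D}'(\dotT)$.

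With these tools the equivalence is bookkeeping. Writing $a=\inf\supppi f$, $b=\inf\supppi g$ and $a_\pm=\inf\supppi\Pi_\pm f$, $b_\pm=\inf\supppi\Pi_\pm g$, the non-cancellation between even and odd parts gives $\supppi f=\supppi\Pi_+f\cup\supppi\Pi_-f$, hence $a=\min(a_+,a_-)$, $b=\min(b_+,b_-)$, and $\inf\supppi(f\ast g)=\min(a_++b_+,\,a_-+b_-)$. I would also record that, by antiperiodicity of $\Pi_-f$, membership $f\in\bm{L}^{+}_\kappa$ is equivalent to $\Pi_-f\equiv0$ on $(-\frac\pi2,a+\kappa)$, i.e. to $a_-\ge a+\kappa$ (and then $a_+=a$), while $f\in\bm{L}^{-}_\kappa\iff a_+\ge a+\kappa$. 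Statement $(A)$ now reads $\min(a_++b_+,a_-+b_-)\ge a+b+\kappa$. Since $a+b$ is attained by the same-parity combination of the minimizing edges, $(A)$ forces those edges to sit in opposite parities: if both left edges were even ($a_+=a$, $b_+=b$) or both odd, one of the two sums would equal $a+b<a+b+\kappa$. Reading off the two constraints $a_++b_+\ge a+b+\kappa$ and $a_-+b_-\ge a+b+\kappa$ then yields exactly $(a_-\ge a+\kappa,\ b_+\ge b+\kappa)$ or $(a_+\ge a+\kappa,\ b_-\ge b+\kappa)$, which is $(B)$; the converse runs the same identities backwards. Finally $(A')\Leftrightarrow(B')$ follows by applying the proven equivalence to the reflected distributions $f(-\,\cdot\,)$, $g(-\,\cdot\,)$, which interchange $\inf\supppi$ with $\sup\supppi$ and $\bm{L}^{\pm}_\kappa$ with $\bm{R}^{\pm}_\kappa$.

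The main obstacle is the reduction in the second paragraph: one must verify carefully that the doubling map $\theta=2\omega$ intertwines circle convolution in $\omega$ with circle convolution in $\theta$ on each parity sector, and that the hypothesis $\supppi f+\supppi g\subset(-\frac\pi2,\frac\pi2)$ is precisely what prevents wrap-around on the doubled circle, making the line version of Titchmarsh applicable. Once the non-wrapping is established, the parity cancellation $\Pi_+f\ast\Pi_-g=0$ does the real work, and everything else is the elementary arithmetic of the four edge values $a_\pm,b_\pm$.
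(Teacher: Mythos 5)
Your proof is correct, and it is essentially the argument the paper itself gives in Appendix~\ref{sect-titchmarsh-circle} (Theorem~\ref{theorem-titchmarsh-circle-1}): both diagonalize the $\Z_2$-action of the shift $S_{\pi}$ and reduce to the classical Titchmarsh convolution theorem on the line, followed by the same edge-value bookkeeping. Your projections $\Pi_{\pm}f=\frac{1}{2}(f\pm S_{\pi}f)$ are exactly the (anti)periodic extensions of the paper's combinations $f_0\pm f_1$ of the restrictions to a fundamental interval, your parity cancellation $\Pi_{+}f\ast\Pi_{-}g=0$ is the paper's zero-divisor identity \eqref{nt}, and your case analysis on the four numbers $a_{\pm},b_{\pm}$ plays the role of the paper's choice of the sign $\sigma$ in \eqref{sasa}. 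The only implementation difference is how line Titchmarsh is reached: the paper applies it directly to the restricted combinations, which are supported in short intervals $I$, $J$ with $\abs{I}+\abs{J}<\pi$ so that no wrap-around can occur, whereas you keep the full (anti)periodic components on the circle and unwrap via the doubling $\theta=2\omega$; this makes the orthogonality of the two parity sectors transparent (disjointness of Fourier supports) at the cost of the extra doubling/non-wrapping verification, which you correctly single out and which the hypothesis $\supppi f+\supppi g\subset(-\frac{\pi}{2},\frac{\pi}{2})$ indeed guarantees.
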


\begin{corollary}\label{corollary-titchmarsh-powers}
Let $f\in\mathscr{D}'(\torus)$
be such that
$\supp f\subset
(-\frac{\pi}{2N},\frac{\pi}{2N})
\cup
(\pi-\frac{\pi}{2N},\pi+\frac{\pi}{2N})$,
where $N\in\N$.
Then
\[
\ch\supppi
\underbrace{f\ast\dots\ast f}\sb{N}
=N\ch\supppi f.
\]
\end{corollary}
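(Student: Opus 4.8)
The plan is to prove the two opposite inclusions $\ch\supppi f^{\ast N}\subseteq N\ch\supppi f$ and $\ch\supppi f^{\ast N}\supseteq N\ch\supppi f$, where $f^{\ast N}$ denotes the $N$-fold convolution. The first is elementary: since $\supp(f\ast g)\subseteq\supp f+\supp g$ and $\supppi f\subseteq(-\frac{\pi}{2N},\frac{\pi}{2N})$ by hypothesis, every $N$-fold sum of representatives lands in $(-\frac{\pi}{2},\frac{\pi}{2})$, so reduction modulo $\pi$ gives $\supppi f^{\ast N}\subseteq N\supppi f$ and hence, passing to convex hulls, $\ch\supppi f^{\ast N}\subseteq N\ch\supppi f$. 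In particular every intermediate convolution stays inside $\dotT$, so Theorem~\ref{theorem-titchmarsh-circle} is applicable at each stage.

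For the reverse inclusion the difficulty is that a naive induction on $N$, applying Theorem~\ref{theorem-titchmarsh-circle} to the pair $f$ and $f^{\ast(N-1)}$, stalls: the ``frequency jump'' alternative $(B)$ (one factor in $\bm{L}\sp{+}\sb\kappa$, the other in $\bm{L}\sp{-}\sb\kappa$) is not obviously excluded for this pair, and ruling it out requires understanding how the parity relation $f=\pm S\sb\pi f$ interacts with convolution. The device that resolves this is a change of variables diagonalizing the $\mathbb{Z}\sb 2$-graded convolution. Writing $f=f\sb 0+f\sb\pi$ with $f\sb 0$ supported in $(-\frac{\pi}{2N},\frac{\pi}{2N})$ and $f\sb\pi$ supported near $\pi$, set $g\sb\pi=S\sb\pi f\sb\pi$ (again supported in $(-\frac{\pi}{2N},\frac{\pi}{2N})$) and define
\[
U=f\sb 0+g\sb\pi,\qquad D=f\sb 0-g\sb\pi,
\]
both supported in $(-\frac{\pi}{2N},\frac{\pi}{2N})$. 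One checks $\supppi f=\supp U\cup\supp D$, so $\ch\supppi f=[\min(\inf\supp U,\inf\supp D),\ \max(\sup\supp U,\sup\supp D)]$.

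The main computation is that whenever $\supppi f+\supppi f'\subseteq(-\frac{\pi}{2},\frac{\pi}{2})$ one has $U\sb{f\ast f'}=U\ast U'$ and $D\sb{f\ast f'}=D\ast D'$. This follows by expanding $(f\sb 0+f\sb\pi)\ast(f'\sb 0+f'\sb\pi)$, using that $S\sb\pi$ commutes with convolution and $S\sb{2\pi}=\mathrm{id}$ on $\torus$ (so $f\sb\pi\ast f'\sb\pi=g\sb\pi\ast g'\sb\pi$), and separating the $0$- and $\pi$-sectors, which under the support hypothesis do not overlap. Iterating (the support condition holds at each of the $N-1$ steps because the partial convolutions remain inside $(-\frac{\pi}{2},\frac{\pi}{2})$) yields $U\sb{f^{\ast N}}=U^{\ast N}$ and $D\sb{f^{\ast N}}=D^{\ast N}$. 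Thus the folded problem for $f$ splits into two ordinary convolution problems for $U$ and $D$, which live purely in the $0$-sector.

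Finally I would record that a nonzero distribution $u$ supported in $(-\frac{\pi}{2},\frac{\pi}{2})$ lies in no $\bm{L}\sp{\pm}\sb\kappa$ and no $\bm{R}\sp{\pm}\sb\kappa$: indeed $S\sb\pi u$ vanishes on $(-\frac{\pi}{2},\frac{\pi}{2})$ while $u$ does not vanish on any right neighborhood of $\inf\supp u$ (resp. left neighborhood of $\sup\supp u$), so the parity relations defining $\bm{L}\sp{\pm}\sb\kappa$ and $\bm{R}\sp{\pm}\sb\kappa$ fail for every $\kappa>0$. Consequently the alternatives $(B)$ and $(B')$ of Theorem~\ref{theorem-titchmarsh-circle} never hold for $U,U'$ (resp. $D,D'$), so $(A)$ and $(A')$ fail for all $\kappa$, giving $\inf\supp(u\ast u')=\inf\supp u+\inf\supp u'$ and $\sup\supp(u\ast u')=\sup\supp u+\sup\supp u'$; iterating yields $\ch\supp U^{\ast N}=N\ch\supp U$ and $\ch\supp D^{\ast N}=N\ch\supp D$ (the case $U\equiv 0$ or $D\equiv 0$ being trivial, and not both since $f\not\equiv 0$). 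Combining through $\supppi f^{\ast N}=\supp U^{\ast N}\cup\supp D^{\ast N}$ and taking convex hulls gives $\ch\supppi f^{\ast N}=N\,\ch\supppi f$. I expect the main obstacle to be setting up the diagonalization identities $U\sb{f\ast f'}=U\ast U'$, $D\sb{f\ast f'}=D\ast D'$ cleanly — in particular verifying that the $0$- and $\pi$-sectors stay separated throughout the $N$-fold convolution so that the folding is compatible with $\ast$ — since this is precisely the step that converts the circle statement into the benign situation where Theorem~\ref{theorem-titchmarsh-circle} applies without the parity obstruction.
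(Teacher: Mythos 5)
Your proposal is correct, and its central algebraic device is the same one the paper uses: the paper's proof (Theorem~\ref{theorem-titchmarsh-powers} in Appendix~\ref{sect-titchmarsh-circle}) introduces the sector components $f_j=(S_{\pi j}f)\at{I}$, $j\in\Z_2$, and works with precisely your combinations $f_0+\sigma f_1$, $\sigma=\pm1$ (your $U$ and $D$), through the identity $(f_0+\sigma f_1)^{\ast p}=h_0+\sigma h_1$, where $h_j$ are the sector components of $f^{\ast p}$ --- this is exactly your iterated diagonalization $U_{f^{\ast N}}=U^{\ast N}$, $D_{f^{\ast N}}=D^{\ast N}$. Where you genuinely diverge is the finishing step. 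The paper applies the \emph{classical} Titchmarsh convolution theorem on the line to the diagonalized pieces, and then needs a sign-selection argument: it picks $\sigma$ so that $\inf\supp(f_0+\sigma f_1)=\min_j\inf\supp f_j$ (possible because cancellation at an extreme endpoint cannot occur for both signs simultaneously) and compares against $\min_j\inf\supp h_j$. You instead stay entirely inside the circle framework: you re-invoke Theorem~\ref{theorem-titchmarsh-circle}, observing that a nonzero distribution supported in a single sector never lies in $\bm{L}^{\pm}_\kappa$ or $\bm{R}^{\pm}_\kappa$ (since $S_{\pi}u$ vanishes on a neighborhood of the endpoints of $\supp u$ where $u$ does not), so the alternatives ($B$), ($B'$) are vacuous and the endpoints of supports add exactly; and you avoid sign selection by retaining both $U$ and $D$ and using the exact identity $\supppi f=\supp U\cup\supp D$. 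Both finishes are sound, and yours buys something concrete: it sidesteps the obstruction the paper itself records after the statement of Theorem~\ref{theorem-titchmarsh-powers}, namely that naive pairwise induction with the circle theorem applied to the two-sector distributions $f$ and $f^{\ast k}$ only reaches powers $p=2^M$, or general $p$ under the stronger constraint $\abs{I}<\pi/2^M$; diagonalizing \emph{before} inducting kills the parity obstruction, so every pairwise step is lossless and the sharp constant $\pi/N$ survives. The mild cost is that your single-sector additivity step amounts to re-deriving, via Theorem~\ref{theorem-titchmarsh-circle}, the classical line theorem in the special case at hand, which the paper's appendix simply quotes directly.
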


These results in a slightly different formulation
are proved
in Appendix~\ref{sect-titchmarsh-circle}
(see Theorems~\ref{theorem-titchmarsh-circle-1},
~\ref{theorem-titchmarsh-powers}).


\begin{lemma}\label{lemma-zz}
Under conditions of Proposition~\ref{proposition-sw},
\begin{equation}\label{east}
\ch\supppi
\Big(\widetilde{W(\abs{\zeta\sb 0\sp{\ttd}}^2)}\sin\omega\Big)
\subset
\ch\supppi\Big(\widetilde{\abs{\zeta\sb{0}\sp{\ttd}}^{2}}
\Big).
\end{equation}
\end{lemma}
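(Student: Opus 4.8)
The plan is to deduce the inclusion from the spectral relation of Lemma~\ref{lemma-zz-is-gg}, not from a direct analysis of $\widetilde{W(|\zeta_0^\ttd|^2)}$: since $W$ has degree $p+1$ in $|\zeta_0|^2$, a naive estimate would only give $\ch\supppi\widetilde{W(|\zeta_0^\ttd|^2)}\subseteq(p+1)\ch\supppi\widetilde{|\zeta_0^\ttd|^2}$, which is far too large. First I would reduce to a one-sided statement. Because $|\zeta_0^\ttd|^2$ is a real nonnegative sequence, $\widetilde{|\zeta_0^\ttd|^2}$ is $\sharp$-invariant and its $\supppi$ is symmetric about $0$; and since $(\sin\omega)^\sharp=-\sin\omega$ while $\widetilde{W(|\zeta_0^\ttd|^2)}$ is $\sharp$-invariant, $\supppi(\widetilde{W(|\zeta_0^\ttd|^2)}\sin\omega)$ is symmetric about $0$ as well. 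Hence it suffices to prove $\sup\supppi(\widetilde{W(|\zeta_0^\ttd|^2)}\sin\omega)\le\sup\supppi\widetilde{|\zeta_0^\ttd|^2}$.

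Next, dividing the spectral relation of Lemma~\ref{lemma-zz-is-gg} by $-\tau^2 i$ and pulling the constant $i$ out of the convolutions, I would write
\[
\widetilde{W(|\zeta_0^\ttd|^2)}\sin\omega
=-\tau^{-2}\big[(r\tilde\zeta_0)\ast(\tilde\zeta_0^\sharp\sin\omega)+(r\tilde\zeta_0^\sharp)\ast(\tilde\zeta_0\sin\omega)\big].
\]
This exhibits the left-hand side as a sum of two convolutions whose factors are supported (mod $\pi$) inside $\supppi\tilde\zeta_0$ and $\supppi\tilde\zeta_0^\sharp$: by Lemma~\ref{lemma-r-qm-multiplier} the multiplier $r$ is real and does not enlarge the mod-$\pi$ support, and neither does $\sin\omega$. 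On the other side, the identity $\widetilde{|\zeta_0^\ttd|^2}=\tilde\zeta_0\ast\tilde\zeta_0^\sharp$ (already used in the proof of Lemma~\ref{lemma-zz-is-gg}) displays the right-hand side as the ``autocorrelation'' of the same data. By Lemma~\ref{lemma-supp-zeta}, $\supppi\tilde\zeta_0,\supppi\tilde\zeta_0^\sharp\subseteq[-\omega_m,\omega_m]$, and Assumption~\ref{ass-m-is-small} gives $2\omega_m<\pi/2$, so all mod-$\pi$ supports lie well inside $\dotT$ and the hypotheses of the circle Titchmarsh theorem (Theorem~\ref{theorem-titchmarsh-circle}) are met for each convolution above.

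I would then apply Theorem~\ref{theorem-titchmarsh-circle}. With $\beta=\sup\supppi\tilde\zeta_0$ and $-\alpha=\sup\supppi\tilde\zeta_0^\sharp$, the theorem yields $\sup\supppi(\tilde\zeta_0\ast\tilde\zeta_0^\sharp)=\beta-\alpha-\kappa_0$, where the deficit $\kappa_0\ge0$ is strictly positive exactly when $\tilde\zeta_0$ and $\tilde\zeta_0^\sharp$ fall into the exceptional classes $\bm{R}^{+}_{\kappa_0}$, $\bm{R}^{-}_{\kappa_0}$ of Definition~\ref{def-l-r-0} (one of each sign). For each convolution on the right-hand side of the displayed identity the same theorem bounds $\sup\supppi$ by $\beta-\alpha$ minus its own deficit. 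The decisive observation is that $r$ and $\sin\omega$ are $S_\pi$-antisymmetric---$r(\omega+\pi)=-r(\omega)$ by \eqref{as-cos} and $\sin(\omega+\pi)=-\sin\omega$---so multiplication by either flips the sign label of an $\bm{R}^{\pm}_\kappa$-symmetry while preserving its width $\kappa$. Thus $r\tilde\zeta_0$ and $\tilde\zeta_0^\sharp\sin\omega$ again form an admissible pair $\bm{R}^{-}_{\kappa_0}$, $\bm{R}^{+}_{\kappa_0}$, so each right-hand convolution inherits a deficit at least $\kappa_0$, whence $\sup\supppi(\widetilde{W(|\zeta_0^\ttd|^2)}\sin\omega)\le\beta-\alpha-\kappa_0=\sup\supppi\widetilde{|\zeta_0^\ttd|^2}$. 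The symmetric lower bound follows verbatim, proving \eqref{east}.

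The main obstacle is precisely this last bookkeeping. On the circle---unlike on the line---the convex hull of the support of a convolution may be strictly smaller than the sum of the convex hulls, and the shortfall is governed by the symmetry classes $\bm{L}^\pm_\kappa,\bm{R}^\pm_\kappa$. The heart of the argument is therefore to track these classes through the multipliers $r$ and $\sin\omega$ and to confirm that the deficit produced on the left by the spectral relation always dominates the deficit of the autocorrelation $\tilde\zeta_0\ast\tilde\zeta_0^\sharp$; the $S_\pi$-antisymmetry of both multipliers is exactly what makes the $\pm$ labels line up. One must also check the degenerate cases in which $r$ or $\sin\omega$ happens to vanish at the top of the relevant support (possible at the gap endpoints $\p\varOmega_c$, or at $\omega\in\{0,\pi\}$ for $\sin\omega$): there the left-hand extent only drops further, and I expect the inequality to hold a fortiori, though this requires a short separate verification.
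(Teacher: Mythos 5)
Your proposal is correct and rests on exactly the same two pillars as the paper's proof: the reduction of \eqref{east} to the spectral relation of Lemma~\ref{lemma-zz-is-gg}, and the circle Titchmarsh theorem (Theorem~\ref{theorem-titchmarsh-circle}) combined with the decisive observation that the multipliers $r$ and $\sin\omega$ are $\pi$-antiperiodic and therefore flip the sign labels of the classes $\bm{L}\sp{\pm}\sb\kappa$, $\bm{R}\sp{\pm}\sb\kappa$. Where you genuinely diverge is the decomposition. The paper inserts a polarization step: it sets $h\sb 1=r\tilde\zeta\sb 0+\tilde\zeta\sb 0\, i\sin\omega$, $h\sb 2=r\tilde\zeta\sb 0$, $h\sb 3=\tilde\zeta\sb 0\, i\sin\omega$ and writes the sum of the two cross-convolutions as $h\sb 1\sp\sharp\ast h\sb 1-h\sb 2\sp\sharp\ast h\sb 2-h\sb 3\sp\sharp\ast h\sb 3$, so that the Titchmarsh theorem is only ever applied to autocorrelations $h\sp\sharp\ast h$, for which the required opposite-sign pairing of the two factors is automatic from the duality $f\in\bm{R}\sp{\pm}\sb\kappa\Rightarrow f\sp\sharp\in\bm{L}\sp{\pm}\sb\kappa$. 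You skip the polarization and apply the theorem directly to the two cross-convolutions $(r\tilde\zeta\sb 0)\ast(\tilde\zeta\sb 0\sp\sharp\sin\omega)$ and $(r\tilde\zeta\sb 0\sp\sharp)\ast(\tilde\zeta\sb 0\sin\omega)$, checking by hand (via the antiperiodicity flip) that the two factors land in opposite-sign $\bm{R}$-classes of the same width; your preliminary reduction to a one-sided $\sup\supppi$ estimate through $\sharp$-symmetry plays the role of the paper's observation that $\ch\supppi(\tilde\zeta\sb 0\sp\sharp\ast\tilde\zeta\sb 0)$ is an interval symmetric about $0$. Your route is shorter, at the cost of that one extra pairing verification. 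One shared fine point: both arguments assert that an antiperiodic multiplier maps $\bm{R}\sp{\sigma}\sb\kappa$ into $\bm{R}\sp{-\sigma}\sb\kappa$, which is delicate only when the multiplier annihilates the top of the support (possible where $r$ or $\sin\omega$ vanish, i.e., at $\p\varOmega_c$ or at $0,\pi$), since the classes are defined relative to the distribution's own $\sup\supppi$. You flag this degenerate case, and your expectation is right: if the sup drops by $\delta$, one either retains a symmetry of width $\kappa-\delta$ relative to the new sup, or $\delta\ge\kappa$ and the crude support bound already suffices, so the final estimate survives. The paper passes over this point silently.
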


\begin{proof}
According to Lemma~\ref{lemma-zz-is-gg},
it suffices to prove that
\begin{equation}\label{stp}
\ch\supppi
\Big(
\big(r\tilde\zeta\sb 0\big)
\ast\big(\tilde\zeta\sb{0}\sp\sharp(\omega)i\sin\omega\big)
+
\big(r\tilde\zeta\sb 0\sp\sharp\big)
\ast\big(\tilde\zeta\sb{0}(\omega)i\sin\omega\big)
\Big)
\subset\ch\supppi\tilde\zeta\sb{0}\sp\sharp\ast\tilde\zeta\sb{0}.
\end{equation}
Set
\begin{equation}\label{def-hj}
h\sb 1(\omega)
=
r(\omega)\tilde\zeta\sb 0(\omega)
+
\tilde\zeta\sb{0}(\omega)i\sin\omega,
\qquad
h\sb 2(\omega)
=
r(\omega)\tilde\zeta\sb 0(\omega),
\qquad
h\sb 3(\omega)
=
\tilde\zeta\sb{0}(\omega)i\sin\omega.
\end{equation}
Note that
$\overline{i\sin(-\omega)}=i\sin\omega$
and $\overline{r(-\omega)}=r(\omega)$
for $\omega\in\overline{\varOmega\sb 0}\cup\overline{\varOmega\sb\pi}$
by \eqref{as-cos},
hence
\[
h\sb 1\sp\sharp(\omega)
=
r(\omega)\tilde\zeta\sb{0}\sp\sharp(\omega)
+
\tilde\zeta\sb{0}\sp\sharp(\omega)i\sin\omega,
\qquad
h\sb 2\sp\sharp(\omega)
=
r(\omega)\tilde\zeta\sb{0}\sp\sharp(\omega),
\qquad
h\sb 3\sp\sharp(\omega)
=
\tilde\zeta\sb{0}\sp\sharp(\omega)i\sin\omega.
\]
There is the identity
\[
(r\tilde\zeta\sb 0)
\ast(\tilde\zeta\sb{0}\sp\sharp i\sin\omega)
+
(r\tilde\zeta\sb 0\sp\sharp)
\ast(\tilde\zeta\sb{0} i\sin\omega)
=h\sb 1\sp\sharp\ast h\sb 1
-h\sb 2\sp\sharp\ast h\sb 2
-h\sb 3\sp\sharp\ast h\sb 3.
\]
Therefore,
to prove the inclusion \eqref{stp},
it suffices to prove that
\begin{equation}\label{efp}
\ch\supppi(h\sb j\sp\sharp\ast h\sb j)
\subset
\ch\supppi(\tilde\zeta\sb 0\sp\sharp\ast\tilde\zeta\sb 0),
\qquad
j=1,\,2,\,3.
\end{equation}
Set
$[a,b]=\ch\supppi\tilde\zeta\sb 0
\subset\overline{\varOmega\sb 0}\subset(-\frac{\pi}{2},\frac{\pi}{2})$,
with $a\le b$.

Obviously,
$\supp\tilde\zeta\sb 0\sp\sharp\ast\tilde\zeta\sb 0$
is symmetric (with respect to $\omega=0$),
and
$\supppi\tilde\zeta\sb 0\sp\sharp\ast\tilde\zeta\sb 0
\subset[-b+a,b-a]$.
We consider the following two cases.

\noindent
{\it Case 1:}
$\ch\supppi\tilde\zeta\sb 0\sp\sharp\ast\tilde\zeta\sb 0=[-b+a,b-a]$.
In this case,
the inclusion \eqref{stp} is immediate.
Indeed,
since $\ch\supppi\tilde\zeta\sb{0}=[a,b]$, one has:
\[
\supp h\sb j\subset[a,b]\cup[\pi+a,\pi+b],
\qquad
\supp h\sb j\sp\sharp\subset[-b,-a]\cup[\pi-b,\pi-a];
\qquad
1\le j\le 3.
\]
Therefore,
$\ \supp(h\sb j\sp\sharp\ast h\sb j)\subset[-b+a,b-a]\cup[\pi-b+a,\pi+b-a]$.
Since these two subsets of $\torus$ do not intersect,
one concludes that
$\ \ch\supppi(h\sb j\sp\sharp\ast h\sb j)\subset[-b+a,b-a]$.

\noindent
{\it Case 2:}
for some $\kappa>0$
\[
\ch\supppi\tilde\zeta\sb 0\sp\sharp\ast\tilde\zeta\sb 0
=[-b+a+\kappa,b-a-\kappa].
\]
By Theorem~\ref{theorem-titchmarsh-circle},
we have
$\tilde\zeta\sb 0\in\bm{L}\sp{\sigma}\sb\kappa$
and
$\tilde\zeta\sb 0\sp\sharp\in\bm{L}\sp{-\sigma}\sb\kappa$,
with some $\sigma\in\{\pm\}$,
and also
$\tilde\zeta\sb 0\in\bm{R}\sp{\rho}\sb\kappa$
and
$\tilde\zeta\sb 0\sp\sharp\in\bm{R}\sp{-\rho}\sb\kappa$,
with some $\rho\in\{\pm\}$.
However,
by \eqref{def-sharp},
$f\in\bm{R}\sp\pm\sb\kappa$
implies that
$f\sp\sharp\in\bm{L}\sp\pm\sb\kappa$;
therefore, $\sigma=-\rho$,
so that
\begin{equation}\label{pmpm}
\begin{array}{l}
\mbox{
either
\quad $\tilde\zeta\sb 0\in\bm{L}\sp{+}\sb\kappa\cap\bm{R}\sp{-}\sb\kappa$,
\quad
$\tilde\zeta\sb 0\sp\sharp\in\bm{L}\sp{-}\sb\kappa\cap\bm{R}\sp{+}\sb\kappa$,}
\\ \\
\mbox{
or
\quad $\tilde\zeta\sb 0\in\bm{L}\sp{-}\sb\kappa\cap\bm{R}\sp{+}\sb\kappa$,
\quad
$\tilde\zeta\sb 0\sp\sharp\in\bm{L}\sp{+}\sb\kappa\cap\bm{R}\sp{-}\sb\kappa$.
}
\end{array}
\end{equation}
In \eqref{def-hj}, the multipliers
$r(\omega)$ and $\sin\omega$
are $\pi$-antiperiodic
by \eqref{as-cos}.
Hence, the functions $h\sb j(\omega)$,
$j=1,\,2,\,3$,
defined in \eqref{def-hj},
satisfy the inclusion
\begin{equation}\label{hj-in-lr}
\mbox{either}
\quad
h\sb j\in\bm{L}\sp{-}\sb\kappa\cap\bm{R}\sp{+}\sb\kappa,
\quad
1\le j\le 3,
\qquad
\mbox{or}
\quad
h\sb j\in\bm{L}\sp{+}\sb\kappa\cap\bm{R}\sp{-}\sb\kappa,
\quad
1\le j\le 3,
\end{equation}
with the $\pm$ signs opposite to the signs in \eqref{pmpm}.
Also, since the multipliers
in \eqref{def-hj}
satisfy
$r\sp\sharp(\omega)=r(\omega)$, $(i\sin\omega)\sp\sharp=i\sin\omega$,
the functions $h\sb j\sp\sharp$,
$j=1,\,2,\,3$,
satisfy
\begin{equation}\label{hjs-in-lr}
\mbox{either}
\quad
h\sb j\sp\sharp\in\bm{L}\sp{+}\sb\kappa\cap\bm{R}\sp{-}\sb\kappa,
\quad 1\le j\le 3,
\qquad
\mbox{or}
\quad
h\sb j\sp\sharp\in\bm{L}\sp{-}\sb\kappa\cap\bm{R}\sp{+}\sb\kappa,
\quad 1\le j\le 3.
\end{equation}
Due to inclusions
\eqref{hj-in-lr}
and
\eqref{hjs-in-lr},
Theorem~\ref{theorem-titchmarsh-circle}
gives
\[
\ch\supppi(h\sb j\sp\sharp\ast h\sb j)\subset[-b+a+\kappa,b-a-\kappa].
\]
This yields \eqref{efp}, finishing the proof.
\end{proof}

\begin{lemma}\label{lemma-0pi}
$\supp\widetilde{\abs{\zeta\sb{0}\sp\ttd}^2}\subset\{0;\pi\}$.
\end{lemma}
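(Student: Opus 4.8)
The plan is to pit the \emph{expansion} of spectral support under the nonlinear power map (supplied by the Titchmarsh theorem on the circle, Corollary~\ref{corollary-titchmarsh-powers}) against the \emph{containment} already proved in Lemma~\ref{lemma-zz}, and to show that the two are compatible only when the spectrum of $\abs{\zeta_0^\ttd}^2$ collapses to $\{0;\pi\}$.

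First I would set $u^\ttd:=\abs{\zeta_0^\ttd}^2$, so that $\widetilde u=\tilde\zeta_0\ast\tilde\zeta_0^\sharp$ (up to a harmless constant); since $u^\ttd$ is real, $\widetilde u^\sharp=\widetilde u$ and $\supp\widetilde u$ is symmetric about $0$. By Lemma~\ref{lemma-supp-zeta} we have $\supp\tilde\zeta_0\subset\overline{\varOmega_0}\cup\overline{\varOmega_\pi}$, and computing the Minkowski sum of this set with itself on $\torus$ gives
\[
\supp\widetilde u\subset[-2\omega_m,2\omega_m]\cup[\pi-2\omega_m,\pi+2\omega_m].
\]
Thus $\ch\supppi\widetilde u=[-c,c]$ for some $0\le c\le 2\omega_m$ (symmetric because $\widetilde u^\sharp=\widetilde u$). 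If $\widetilde u\equiv 0$ the claim is trivial, so I assume $\widetilde u\not\equiv 0$ and argue by contradiction that $c>0$.

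Next I would expand $\widetilde{W(\abs{\zeta_0^\ttd}^2)}=\sum_{q=0}^p C_q\,\widetilde u^{\,\ast(q+1)}$ using Assumption~\ref{ass-wp}. Assumption~\ref{ass-m-is-small} gives $2\omega_m<\frac{\pi}{2(p+1)}$, so $\supp\widetilde u$ sits inside the neighborhoods of $\{0;\pi\}$ required by Corollary~\ref{corollary-titchmarsh-powers} with $N=p+1$; applied to the leading term it yields $\ch\supppi\widetilde u^{\,\ast(p+1)}=(p+1)[-c,c]$. Every lower term satisfies $\ch\supppi\widetilde u^{\,\ast(q+1)}\subset[-(q+1)c,(q+1)c]$ with $q<p$, so it cannot reach the endpoints $\pm(p+1)c$; since $C_p>0$ there is no cancellation there and $\ch\supppi\widetilde{W(\abs{\zeta_0^\ttd}^2)}=[-(p+1)c,(p+1)c]$. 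Because $0<(p+1)c<\frac{\pi}{2}$, these endpoints (and their $\pi$-shifts) lie away from $\{0;\pi\}$, the only zeros of $\sin\omega$, so multiplication by the smooth factor $\sin\omega$ preserves them:
\[
\ch\supppi\Big(\widetilde{W(\abs{\zeta_0^\ttd}^2)}\sin\omega\Big)=[-(p+1)c,(p+1)c].
\]
Lemma~\ref{lemma-zz} then forces $[-(p+1)c,(p+1)c]\subset[-c,c]$, i.e.\ $(p+1)c\le c$; as $p\ge 1$ this contradicts $c>0$. Hence $c=0$, which is exactly $\supppi\widetilde u=\{0\}$, i.e.\ $\supp\widetilde{\abs{\zeta_0^\ttd}^2}\subset\{0;\pi\}$.

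The hard part is the two no-cancellation steps that guarantee the genuine top endpoint $(p+1)c$ survives: it must not be killed by the subleading terms of $W$ (ensured by the strict inequality $(q+1)c<(p+1)c$ for $q<p$ together with $C_p>0$) nor by the factor $\sin\omega$ (ensured by $0<(p+1)c<\pi/2$). Both are controlled by Assumption~\ref{ass-m-is-small}, which keeps the entire relevant support within a single half-period and away from the zeros of $\sin\omega$; this is precisely what makes the Titchmarsh \emph{expansion} identity and the Lemma~\ref{lemma-zz} \emph{containment} bound incompatible unless $c=0$.
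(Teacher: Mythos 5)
Your proof is correct and follows essentially the same route as the paper: both pit Corollary~\ref{corollary-titchmarsh-powers} (giving $\ch\supppi\widetilde{u}^{\ast(p+1)}=(p+1)[-c,c]$) against the containment of Lemma~\ref{lemma-zz}, concluding $c=0$. The only cosmetic difference is bookkeeping — the paper moves the subleading terms of $W$ to the right-hand side to get $(p+1)c\le pc$, while you keep them on the left and argue no cancellation at the endpoint to get $(p+1)c\le c$ — which is the same scaling contradiction.
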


\begin{proof}
By Lemma~\ref{lemma-zz},
\begin{equation}\label{pq}
\ch\supppi\Big(\widetilde{\abs{\zeta\sb{0}\sp{\ttd}}^{2(p+1)}}\sin\omega\Big)
\subset
\bigcup\sb{1\le q\le p}
\ch\supppi\Big(\widetilde{\abs{\zeta\sb{0}\sp{\ttd}}^{2q}}
\Big),
\end{equation}
since $C\sb p>0$ (see Assumption~\ref{ass-wp}).
At the same time,
due to Lemma~\ref{lemma-supp-zeta},
one has
$\ch\supppi\tilde\zeta\sb{0}\subset[-\omega\sb m,\omega\sb m]$,
therefore
$\ch\supppi\widetilde{\abs{\zeta\sb{0}}^2}\subset[-2\omega\sb m,2\omega\sb m]$.
Since
$\omega\sb m<\frac{\pi}{4(p+1)}$ by Assumption~\ref{ass-m-is-small},
we see that Corollary~\ref{corollary-titchmarsh-powers} is applicable
to
\[
\underbrace{\widetilde{\abs{\zeta\sb{0}\sp\ttd}^2}
\ast\dots
\ast
\widetilde{\abs{\zeta\sb{0}\sp\ttd}^2}
}\sb{q}
=\widetilde{\abs{\zeta\sb 0\sp{\ttd}}^{2q}}\,,
\]
for each $q$ between $1$ and $p+1$;
thus, \eqref{pq} shows that
$\ch\supppi\widetilde{\abs{\zeta\sb{0}\sp\ttd}^2}\subset\{0\}$,
which is equivalent to
$\supp\widetilde{\abs{\zeta\sb{0}\sp\ttd}^2}
\subset\{0;\pi\}$.
\end{proof}

\begin{lemma}\label{lemma-ff}
One of the following possibilities
takes place:
\begin{enumerate}
\item
$\tilde\zeta\sb 0(\omega)=A\delta\sb a(\omega)$,
with $a\in\overline{\varOmega\sb 0}\cup\overline{\varOmega\sb\pi}$
and $A\in\C$;
\medskip
\item
$\tilde\zeta\sb 0(\omega)
=A\delta\sb a(\omega)+A'\delta\sb{\pi+a}(\omega)$,
with $a\in\overline{\varOmega\sb 0}\cup\overline{\varOmega\sb\pi}$
and $A,\,A'\in\C$;
\medskip
\item
$\tilde\zeta\sb 0(\omega)
=A(\delta\sb a(\omega)+\delta\sb{\pi+a}(\omega))
+B(\delta\sb b(\omega)-\delta\sb{\pi+b}(\omega))$,
with $a,\,b\in\overline{\varOmega\sb 0}\cup\overline{\varOmega\sb\pi}$
and $A,\,B\in\C$.
\end{enumerate}
\end{lemma}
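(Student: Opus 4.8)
The plan is to read off the spectral structure of $\tilde\zeta\sb 0$ from the two facts already in hand: $\supp\tilde\zeta\sb 0\subset\overline{\varOmega\sb 0}\cup\overline{\varOmega\sb\pi}$ (Lemma~\ref{lemma-supp-zeta}) and $\supp\widetilde{\abs{\zeta\sb 0\sp\ttd}^2}\subset\{0;\pi\}$ (Lemma~\ref{lemma-0pi}). Since $\abs{\zeta\sb 0\sp\ttd}^2=\zeta\sb 0\sp\ttd\overline{\zeta\sb 0\sp\ttd}$ and $\widetilde{\overline{\zeta\sb 0}}=\tilde\zeta\sb 0\sp\sharp$, the second fact reads $\ch\supppi(\tilde\zeta\sb 0\sp\sharp\ast\tilde\zeta\sb 0)=\{0\}$. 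Write $[a,b]=\ch\supppi\tilde\zeta\sb 0$; by Lemma~\ref{lemma-supp-zeta} we have $[a,b]\subset[-\omega\sb m,\omega\sb m]=\overline{\varOmega\sb 0}$, so $\supp\tilde\zeta\sb 0\subset[a,b]\cup(\pi+[a,b])$ and $\supp\tilde\zeta\sb 0\sp\sharp\subset[-b,-a]\cup(\pi+[-b,-a])$. Throughout I will use that $\zeta\sb 0\sp\ttd$ is bounded by the a priori estimate \eqref{l2-bound}, so $\tilde\zeta\sb 0\in\mathscr{Q}(\torus)$; consequently any constituent of $\tilde\zeta\sb 0$ supported at a single point must be a multiple of a Dirac delta with no derivative terms (the same mechanism used in the proof of Lemma~\ref{lemma-nii}). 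I would split according to whether $a=b$ or $a<b$.

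First I would dispose of the case $a=b$. Then $\supp\tilde\zeta\sb 0\subset\{a,\pi+a\}$ consists of at most two points, and the quasimeasure remark gives $\tilde\zeta\sb 0=A\delta\sb a+A'\delta\sb{\pi+a}$ with $A,A'\in\C$. This is possibility (ii) of the lemma, degenerating to (i) when $A'=0$; the trivial case $\tilde\zeta\sb 0\equiv 0$ is (i) with $A=0$.

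The substantive case is $a<b$. Set $\kappa=b-a>0$. Applying Theorem~\ref{theorem-titchmarsh-circle} to $f=\tilde\zeta\sb 0\sp\sharp$ and $g=\tilde\zeta\sb 0$ is legitimate since $\supppi f+\supppi g\subset[-2\omega\sb m,2\omega\sb m]\subset(-\frac{\pi}{2},\frac{\pi}{2})$ by Assumption~\ref{ass-m-is-small}. Because $\inf\supppi(\tilde\zeta\sb 0\sp\sharp\ast\tilde\zeta\sb 0)=0$ while $\inf\supppi\tilde\zeta\sb 0\sp\sharp+\inf\supppi\tilde\zeta\sb 0=-b+a=-\kappa$, statement~($A$) holds with this $\kappa$, so ($B$) yields $\tilde\zeta\sb 0\in\bm{L}\sp{\sigma}\sb\kappa$ and $\tilde\zeta\sb 0\sp\sharp\in\bm{L}\sp{-\sigma}\sb\kappa$ for some $\sigma\in\{+,-\}$. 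Invoking $f\in\bm{R}\sp\pm\sb\kappa\Leftrightarrow f\sp\sharp\in\bm{L}\sp\pm\sb\kappa$ (as in the proof of Lemma~\ref{lemma-zz}), the second membership is equivalent to $\tilde\zeta\sb 0\in\bm{R}\sp{-\sigma}\sb\kappa$. Thus $\tilde\zeta\sb 0\in\bm{L}\sp{\sigma}\sb\kappa\cap\bm{R}\sp{-\sigma}\sb\kappa$; note that the $\bm{L}$- and $\bm{R}$-signs are forced to be opposite, so the ``equal-sign'' alternative (which would only give $\pi$-periodicity, not a finite spectrum) never arises.

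Finally I would decode these two memberships. Writing $\tilde\zeta\sb 0=\mu+\nu$ with $\supp\mu\subset[a,b]$ and $\supp\nu\subset\pi+[a,b]$, the relation $\tilde\zeta\sb 0=\sigma S\sb\pi\tilde\zeta\sb 0$ on $(-\frac{\pi}{2},a+\kappa)=(-\frac{\pi}{2},b)$ reduces (only $\mu$ and $S\sb\pi\nu$ live there) to $\mu=\sigma S\sb\pi\nu$ on $[a,b)$, while $\tilde\zeta\sb 0=-\sigma S\sb\pi\tilde\zeta\sb 0$ on $(b-\kappa,\frac{\pi}{2})=(a,\frac{\pi}{2})$ reduces to $\mu=-\sigma S\sb\pi\nu$ on $(a,b]$; hence $\mu-\sigma S\sb\pi\nu$ is supported at $\{b\}$ and $\mu+\sigma S\sb\pi\nu$ at $\{a\}$, and by the quasimeasure remark each is a Dirac delta. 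Therefore $\supp\tilde\zeta\sb 0\subset\{a,b,\pi+a,\pi+b\}$, so $\tilde\zeta\sb 0=p\delta\sb a+q\delta\sb b+s\delta\sb{\pi+a}+t\delta\sb{\pi+b}$. Re-imposing $\tilde\zeta\sb 0=\sigma S\sb\pi\tilde\zeta\sb 0$ on $(-\frac{\pi}{2},b)$, which sees only $\delta\sb a$ (and $S\sb\pi\delta\sb{\pi+a}=\delta\sb a$), forces $p=\sigma s$, and $\tilde\zeta\sb 0=-\sigma S\sb\pi\tilde\zeta\sb 0$ on $(a,\frac{\pi}{2})$ forces $q=-\sigma t$; thus $\tilde\zeta\sb 0=p(\delta\sb a+\sigma\delta\sb{\pi+a})+q(\delta\sb b-\sigma\delta\sb{\pi+b})$, which after relabelling $a\leftrightarrow b$ when $\sigma=-$ is exactly possibility (iii), $\tilde\zeta\sb 0=A(\delta\sb a+\delta\sb{\pi+a})+B(\delta\sb b-\delta\sb{\pi+b})$. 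The main obstacle, and the only place requiring genuine care, is the bookkeeping of the open/closed endpoints of $(-\frac{\pi}{2},b)$ and $(a,\frac{\pi}{2})$ together with the two signs $\pm\sigma$: it is precisely this that separates the symmetric pair from the antisymmetric pair and produces the sign pattern of (iii).
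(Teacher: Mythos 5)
Your proposal is correct and follows essentially the same route as the paper: split on $a=b$ versus $a<b$, apply Theorem~\ref{theorem-titchmarsh-circle} to $\tilde\zeta\sb 0\sp\sharp\ast\tilde\zeta\sb 0$ with $\kappa=b-a$ (you use statement~({\it $A$}), the paper uses ({\it $A'$}) — equivalent by reflection), convert the resulting $\bm{L}\sp{\sigma}\sb\kappa\cap\bm{R}\sp{-\sigma}\sb\kappa$ memberships into vanishing of $\tilde\zeta\sb 0$ on $(a,b)\cup(\pi+a,\pi+b)$, and exclude derivatives of $\delta$ by the quasimeasure (boundedness) argument. Your explicit $\mu+\nu$ bookkeeping is just a more detailed rendering of the paper's passage from \eqref{case1} to \eqref{tz}.
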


\begin{remark}
The above lemma is similar to
Theorem~\ref{theorem-titchmarsh-weird}
in Appendix~\ref{sect-titchmarsh-circle}.
\end{remark}

\begin{proof}
By Lemma~\ref{lemma-supp-zeta},
$\supppi\tilde\zeta\sb{0}\subset\overline{\varOmega\sb 0}$;
we denote $[a,b]:=\ch\supppi\tilde\zeta\sb{0}\subset\overline{\varOmega\sb 0}$.

If $a=b$, so that
$\ch\supppi\tilde\zeta\sb{0}=\{a\}\subset\overline{\varOmega\sb 0}$,
thus $\supp\tilde\zeta\sb{0}\subset\{a;\pi+a\}$;
this is equivalent to the first or second possibilities
stated in the lemma.

Now let us consider the case when $a<b$.
By Lemma~\ref{lemma-0pi},
$\ch\supppi\tilde\zeta\sb{0}\sp\sharp\ast\tilde\zeta\sb{0}
\subset\{0\}$.
Theorem~\ref{theorem-titchmarsh-circle}
is applicable
to $\tilde\zeta\sb 0\sp\sharp\ast\tilde\zeta\sb 0$
since $\supppi\tilde\zeta\sb 0\subset\overline{\varOmega\sb 0}
=[-\omega\sb m,\omega\sb m]$,
with
$\omega\sb m<\frac{\pi}{4(p+1)}$, $p\ge 1$
(Cf. Assumption~\ref{ass-m-is-small}).
The inclusion
$\supppi\tilde \zeta\sb 0\sp\sharp\ast\tilde\zeta\sb 0
\subset\{0\}$
implies that
one can take $\kappa=b-a$ in the statement {\it ($A'$)}
in Theorem~\ref{theorem-titchmarsh-circle},
therefore
either
\begin{equation}\label{either1}
\mbox{
$\tilde\zeta\sb 0\sp\sharp\in\bm{R}\sp{+}\sb{b-a}$
\quad
and
\quad
$\tilde\zeta\sb 0\in\bm{R}\sp{-}\sb{b-a}$,
\qquad
hence
\quad
$\tilde\zeta\sb 0\in\bm{L}\sp{+}\sb{b-a}\cap\bm{R}\sp{-}\sb{b-a}$,
}
\end{equation}
or
\begin{equation}\label{either2}
\mbox{
$\tilde\zeta\sb 0\sp\sharp\in\bm{R}\sp{-}\sb{b-a}$
\quad
and
\quad
$\tilde\zeta\sb 0\in\bm{R}\sp{+}\sb{b-a}$,
\qquad
hence
\quad
$\tilde\zeta\sb 0\in\bm{L}\sp{-}\sb{b-a}\cap\bm{R}\sp{+}\sb{b-a}$.
}
\end{equation}
The cases \eqref{either1} and  \eqref{either2}
are considered in a similar manner.
If \eqref{either1} is satisfied,
then
\begin{equation}\label{case1}
\tilde\zeta\sb 0=-S\sb{\pi}\tilde\zeta\sb 0
\quad\mbox{on}\quad
(a,\pi/2)
\quad\mbox{and}\quad
\tilde\zeta\sb 0=S\sb{\pi}\tilde\zeta\sb 0
\quad\mbox{on}\quad
(-\pi/2,b),
\end{equation}
hence $\tilde\zeta\sb 0\at{(a,b)}=0$,
implying that
\begin{equation}\label{supp-tz}
\supp\tilde\zeta\sb 0\subset\{a;b;\pi+a;\pi+b\},
\end{equation}
and moreover
\begin{equation}\label{tz}
\tilde\zeta\sb 0
=A(\delta\sb a+\delta\sb{\pi+a})
+B(\delta\sb b-\delta\sb{\pi+b}),
\qquad
A,\,B\in\C\setminus 0.
\end{equation}
Note that,
due to the boundedness of $\zeta\sp\ttd\sb 0$
(which follows from
applying Theorem~\ref{theorem-gwp-dkg}
to $\psi$ and then
to its omega-limit trajectory $\zeta$),
its Fourier transform can not contain the
derivatives of $\delta$-functions.
We are thus in the framework
of the third possibility stated in the lemma.

If, instead, \eqref{either2} is satisfied,
we are led to the conclusion
\begin{equation}\label{supp-tz-b}
\supp\tilde\zeta\sb 0\subset\{a;b;\pi+a;\pi+b\},
\end{equation}
and moreover
\begin{equation}\label{tz-b}
\tilde\zeta\sb 0
=A(\delta\sb a-\delta\sb{\pi+a})
+B(\delta\sb b+\delta\sb{\pi+b}),
\qquad
A,\,B\in\C\setminus 0.
\end{equation}
This again puts us in the framework
of the third possibility stated in the lemma.
\end{proof}

\begin{lemma}\label{lemma-mf}
$\zeta\sp\ttd$
is a multifrequency solitary wave
with one, two, or four frequencies.

If $n\le 4$, then
one has
$\supp\tilde\zeta\subset
\Z^n\times(\varOmega\sb 0\cup\varOmega\sb\pi)$.
\end{lemma}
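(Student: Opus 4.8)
The plan is to upgrade the description of $\tilde\zeta\sb 0$ obtained in Lemma~\ref{lemma-ff} to a description of the whole trajectory $\zeta\sb\xxd\sp\ttd$, and then to push the frequencies into the \emph{open} gaps when $n\le 4$. First I would record that, being an omega-limit trajectory, $\zeta$ inherits the uniform bound $\sup\sb{\ttd}\norm{\zeta\sp\ttd}\sb{l^2(\Z^n)}<\infty$ from Theorem~\ref{theorem-gwp-dkg}. Writing $\zeta\sb\xxd\sp\ttd=\sum\sb k\phi\sp{(k)}\sb\xxd e^{-i\omega\sb k\ttd}$ for the finitely many frequencies $\omega\sb k\in\supp\tilde\zeta\sb 0\subset\overline{\varOmega\sb 0}\cup\overline{\varOmega\sb\pi}$ supplied by Lemma~\ref{lemma-ff}, this bound forces each Bohr coefficient $\phi\sp{(k)}$ to lie in $l^2(\Z^n)$. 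To see that no new frequencies appear at $\xxd\ne 0$, I would combine the spectral representations \eqref{bsg} and \eqref{bsg-zero} into $\tilde\zeta\sb\xxd(\omega)=\big(\mathcal{G}\sb\xxd(\omega)/\mathcal{G}\sb 0(\omega)\big)\tilde\zeta\sb 0(\omega)$ on $\torus\setminus\Sigma$, so that $\supp\tilde\zeta\sb\xxd\subset\supp\tilde\zeta\sb 0$ for every $\xxd$; thus $\zeta$ carries exactly the one, two, or four frequencies of Lemma~\ref{lemma-ff}.

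Next I would translate each of the three cases into the explicit shape of $\zeta$, the only ingredient being the reflection identity $\mathcal{G}\sb\xxd(\omega+\pi)=-(-1)\sp{\Lambda\cdot\xxd}\mathcal{G}\sb\xxd(\omega)$ from \eqref{ggg} (together with $\mathcal{G}\sb 0(\omega+\pi)=-\mathcal{G}\sb 0(\omega)$), which makes the spatial profile at $\pi+a$ equal to $(-1)\sp{\Lambda\cdot\xxd}$ times the one at $a$. For a single pole $\tilde\zeta\sb 0=A\delta\sb a$ this gives a one-frequency wave $\phi\sb\xxd e^{-ia\ttd}$ with $\phi\sb\xxd=A\mathcal{G}\sb\xxd(a)/\mathcal{G}\sb 0(a)$. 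For $\tilde\zeta\sb 0=A\delta\sb a+A'\delta\sb{\pi+a}$, using $e^{-i(\pi+a)\ttd}=(-1)\sp\ttd e^{-ia\ttd}$ the two terms combine into a two-frequency wave supported on $\{a,\pi+a\}$. For $\tilde\zeta\sb 0=A(\delta\sb a+\delta\sb{\pi+a})+B(\delta\sb b-\delta\sb{\pi+b})$ the same computation, applied to each pair, yields precisely
\[
\zeta\sb\xxd\sp\ttd=\big(1+(-1)\sp{\ttd+\Lambda\cdot\xxd}\big)\phi\sb\xxd e^{-ia\ttd}+\big(1-(-1)\sp{\ttd+\Lambda\cdot\xxd}\big)\theta\sb\xxd e^{-ib\ttd},
\]
the four-frequency form of Proposition~\ref{proposition-sw}. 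In all three cases $\zeta$ is a solitary wave with one, two, or four frequencies.

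It remains to confine the frequencies to $\varOmega\sb 0\cup\varOmega\sb\pi$ when $n\le 4$, and here I would argue directly on the profiles $\phi\sp{(k)}$. Since the source in \eqref{kgz} is supported at $\xxd=0$, each $\phi\sp{(k)}$ satisfies the homogeneous stationary equation off the origin, so on the spatial Fourier side $a(\xi,\omega\sb k)\hat\phi\sp{(k)}(\xi)=c\sb k$ for a constant $c\sb k$ (the value of the $\omega\sb k$-harmonic of the source at $\xxd=0$), with $a$ as in \eqref{sym}. If $c\sb k\ne 0$ then $\hat\phi\sp{(k)}=c\sb k/a(\cdot,\omega\sb k)$, and $\phi\sp{(k)}\in l^2(\Z^n)$ forces $1/a(\cdot,\omega\sb k)\in L^2(\torus^n)$, which by Lemma~\ref{lemma-finite-norm} is possible for $n\le 4$ only when $\omega\sb k\in\varOmega\sb 0\cup\varOmega\sb\pi$. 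If instead $c\sb k=0$, then $\hat\phi\sp{(k)}$ is supported on the null hypersurface $\{a(\cdot,\omega\sb k)=0\}$, hence $\phi\sp{(k)}=0$ and that frequency drops out. Either way every surviving frequency lies in the open gaps, giving $\supp\tilde\zeta\subset\Z^n\times(\varOmega\sb 0\cup\varOmega\sb\pi)$ for $n\le 4$.

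I expect the boundary analysis of the last paragraph to be the main obstacle: one must rule out a profile concentrated at an endpoint $\omega\sb k\in\p\varOmega\sb c$ when $n\le 4$, and this rests squarely on the sharp $L^2$-integrability dichotomy of Lemma~\ref{lemma-finite-norm} together with the elementary fact that an $l^2(\Z^n)$ function whose Fourier transform is concentrated on the zero set of $a(\cdot,\omega\sb k)$ must vanish.
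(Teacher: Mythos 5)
Your proposal is correct and follows essentially the same route as the paper: finitely many frequencies from Lemma~\ref{lemma-ff}, propagation to all $\xxd$ via $\tilde\zeta_{\xxd}=(\mathcal{G}_{\xxd}/\mathcal{G}_{0})\tilde\zeta_{0}$ (the paper's \eqref{otf-0}), and confinement to the open gaps for $n\le 4$ by substituting the finite-frequency Ansatz into the stationary equation \eqref{eqn-tilde-zeta} and invoking the $L^2$-dichotomy of Lemma~\ref{lemma-finite-norm} --- which is precisely the paper's step that each coefficient must equal $c_j\mathcal{G}(\omega_j)$, with the $c_j=0$ case killed by the measure-zero support argument. Your middle paragraph deriving the explicit two- and four-frequency shapes via \eqref{ggg} is harmless but not needed for this lemma; the paper defers that to Lemma~\ref{lemma-four-frequencies-0} and Section~\ref{sect-solitary-waves}.
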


\begin{proof}
By Lemma~\ref{lemma-b-sg-zero},
\begin{equation}
\tilde g(\omega)
=\frac{1}{\tau^2}r(\omega)\tilde\zeta\sb 0(\omega),
\qquad
\omega\in\torus.
\end{equation}
Therefore,
due to Lemma~\ref{lemma-ff},
$\supp\tilde g$
consists of one, two, or four points
inside $\overline{\varOmega\sb 0}\cup\overline{\varOmega\sb\pi}$.
By Lemma~\ref{lemma-b-sg},
\begin{equation}\label{otf-0}
\tilde\zeta\sb\xxd(\omega)
=
\frac{\mathcal{G}\sb\xxd(\omega)}{\mathcal{G}\sb 0(\omega)}
\tilde\zeta\sb 0(\omega),
\qquad
\xxd\in\Z^n,
\quad
\omega\in\varOmega\sb 0\cup\varOmega\sb\pi,
\end{equation}
where we took into account that,
by Lemma~\ref{lemma-lap},
$\mathcal{G}\sb 0(\omega)\ne 0$
for $\omega\in\varOmega\sb 0\cup\varOmega\sb\pi$.
This implies that
\begin{equation}\label{zeta-four}
\tilde\zeta(\omega)
=\sum\sb{j=1}\sp{4}
A\sb j\delta(\omega-\omega\sb j),
\qquad
\omega\sb j\in\overline{\varOmega\sb 0}\cup\overline{\varOmega\sb\pi},
\quad
A\sb j\in l^2(\Z^n),
\quad
1\le j\le 4.
\end{equation}
This implies that
$\zeta\sp\ttd$ is a multifrequency solitary wave
of the form \eqref{solitary-waves}.

By Lemma~\ref{lemma-b-sg},
for each $j$ such that $\omega\sb j\notin\p\varOmega_c$,
in \eqref{zeta-four}
one has
$A\sb j=\tau^2\mathcal{G}(\omega\sb j)$.
Moreover, substituting 
\eqref{zeta-four}
into \eqref{eqn-tilde-zeta},
we see that
even if $\omega\sb j\in\p\varOmega_c$,
then one needs to have
$A\sb j=c\sb j\mathcal{G}(\omega\sb j)$,
with some $c\sb j\in\C$.
At the same time, by Lemma~\ref{lemma-finite-norm},
if $n\le 4$, then
for $\omega\in\torus$
one has
$\mathcal{G}(\omega)\in l^2(\Z^n)$
if and only if
$\omega\in\varOmega\sb 0\cup\varOmega\sb\pi$.
Thus, in the case $n\le 4$,
the requirement that
$\zeta\sp\ttd\in l^2(\Z^n)$
leads to the inclusion
$\omega\sb j\in\varOmega\sb 0\cup\varOmega\sb\pi$,
$1\le j\le 4$.

Note that in the case $n\ge 5$,
when one could have
$\omega\sb j\in\p\varOmega_c$
for some $1\le j\le 4$,
the derivatives of $\delta(\omega-\omega\sb j)$
do not appear
in \eqref{zeta-four}
due to the
uniform $l^2(\Z^n)$-bounds on $\zeta\sp\ttd$.
\end{proof}

By Lemma~\ref{lemma-mf},
we know that the set of all omega-limit
trajectories consists of multifrequency waves.
In Section~\ref{sect-solitary-waves},
we will check that the two-
and four-frequency solitary waves
have the form specified in Proposition~\ref{proposition-sw};
see Lemma~\ref{lemma-two-frequencies}
and Lemma~\ref{lemma-four-frequencies} below.
This will finish the proof of Proposition~\ref{proposition-sw}.

\medskip

Let us complete this section
with a simple derivation of the form
of four-frequency solitary waves
in dimensions $n\le 4$.

\begin{lemma}\label{lemma-four-frequencies-0}
Let $n\le 4$.
Each four-frequency solitary wave
can be represented in the form
\begin{equation}\label{ffsw}
\psi\sb\xxd\sp\ttd
=
\big(1+(-1)^{\ttd+\Lambda\cdot\xxd}\big)\phi\sb\xxd
e^{-i\omega\ttd}
+
\big(1-(-1)^{\ttd+\Lambda\cdot\xxd}\big)\theta\sb\xxd
e^{-i\omega'\ttd},
\qquad
\xxd\in\Z^n,
\quad
\ttd\in\Z,
\end{equation}
with
$\phi,\,\theta\in l^2(\Z^n)$.
\end{lemma}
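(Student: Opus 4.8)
The plan is to read off the four-frequency profile directly from the spectral description established earlier. Since any solitary wave is its own omega-limit trajectory by Lemma~\ref{lemma-35}, I may identify the given four-frequency wave with an omega-limit trajectory $\zeta$ and invoke Lemma~\ref{lemma-ff}. A genuine four-frequency wave corresponds to the third possibility there with $a\ne b$; taking the alternative \eqref{either1} (the case \eqref{either2} being identical after interchanging the two frequencies), one has
\[
\tilde\zeta_0(\omega)
=A\big(\delta_a(\omega)+\delta_{\pi+a}(\omega)\big)
+B\big(\delta_b(\omega)-\delta_{\pi+b}(\omega)\big),
\qquad
A,\,B\in\C,
\]
where $n\le 4$ forces $a,\,b\in\varOmega_0\cup\varOmega_\pi$ by Lemma~\ref{lemma-mf}.

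Next I would propagate this to all $\xxd\in\Z^n$ through the spectral representation \eqref{otf-0}, namely $\tilde\zeta_\xxd(\omega)=\frac{\mathcal{G}_\xxd(\omega)}{\mathcal{G}_0(\omega)}\tilde\zeta_0(\omega)$, which is legitimate on $\varOmega_0\cup\varOmega_\pi$ because $\mathcal{G}_0$ does not vanish there by Lemma~\ref{lemma-lap}. The key input is the antiperiodicity relation in \eqref{ggg}: applying it for general $\xxd$ and for $\xxd=0$ and dividing yields
\[
\frac{\mathcal{G}_\xxd(\pi+\omega)}{\mathcal{G}_0(\pi+\omega)}
=(-1)^{\Lambda\cdot\xxd}\,\frac{\mathcal{G}_\xxd(\omega)}{\mathcal{G}_0(\omega)}.
\]
Setting $\phi_\xxd:=\frac{\mathcal{G}_\xxd(a)}{\mathcal{G}_0(a)}A$ and $\theta_\xxd:=\frac{\mathcal{G}_\xxd(b)}{\mathcal{G}_0(b)}B$, the spatial amplitudes at the four frequencies become $\phi_\xxd$ at $a$, $(-1)^{\Lambda\cdot\xxd}\phi_\xxd$ at $\pi+a$, $\theta_\xxd$ at $b$, and $-(-1)^{\Lambda\cdot\xxd}\theta_\xxd$ at $\pi+b$. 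The membership $\phi,\,\theta\in l^2(\Z^n)$ is immediate from Lemma~\ref{lemma-finite-norm}, since $a,\,b\in\varOmega_0\cup\varOmega_\pi$ guarantees $\mathcal{G}(a),\,\mathcal{G}(b)\in l^2(\Z^n)$.

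Finally I would reassemble $\zeta_\xxd^\ttd$ from these four modes and use $e^{-i(\pi+\omega)\ttd}=(-1)^\ttd e^{-i\omega\ttd}$, so that factoring $(-1)^\ttd$ out of the $\pi+a$ and $\pi+b$ contributions produces
\[
\zeta_\xxd^\ttd
=\big(1+(-1)^{\ttd+\Lambda\cdot\xxd}\big)\phi_\xxd e^{-ia\ttd}
+\big(1-(-1)^{\ttd+\Lambda\cdot\xxd}\big)\theta_\xxd e^{-ib\ttd},
\]
which is precisely \eqref{ffsw} with $\omega=a$ and $\omega'=b$. The argument is essentially bookkeeping; the only point demanding care is tracking the sign in the antiperiodicity relation \eqref{ggg} consistently across the two frequency pairs, and verifying that the opposite sign pattern arising in \eqref{either2} reproduces \eqref{ffsw} after merely interchanging $\phi\leftrightarrow\theta$ and $\omega\leftrightarrow\omega'$.
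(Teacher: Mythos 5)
Your proposal is correct and follows essentially the same route as the paper's own proof: reduce to an omega-limit trajectory via Lemma~\ref{lemma-35}, take the spectral form \eqref{tz} from Lemma~\ref{lemma-ff} (with frequencies in $\varOmega_0\cup\varOmega_\pi$ by Lemma~\ref{lemma-mf}), propagate to all $\xxd$ through \eqref{otf-0} using the antiperiodicity \eqref{ggg} of $\mathcal{G}_\xxd/\mathcal{G}_0$, and reassemble the four modes. The only cosmetic difference is that you cite Lemma~\ref{lemma-finite-norm} explicitly for the $l^2$ membership, which the paper leaves implicit.
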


\begin{proof}
Recall that, by Lemma~\ref{lemma-35},
each multifrequency solitary wave
is its own omega-limit trajectory;
therefore, it is enough to prove
that each four-frequency omega-limit trajectory
has the form \eqref{ffsw}.

Since $\supp\tilde\zeta\sb\xxd\subset\varOmega_0\cup\varOmega_\pi$
by Lemma~\ref{lemma-mf},
the relation \eqref{otf-0}
could be extended to $\omega\in\torus$.
To have a four-frequency omega-limit trajectory, $\tilde\zeta\sb 0$
is to be given by \eqref{tz}.
Then \eqref{otf-0} yields
\begin{eqnarray}
\tilde\zeta\sb\xxd
&=&
\frac{\mathcal{G}\sb\xxd(\omega)}{\mathcal{G}\sb 0(\omega)}
\big(A(\delta\sb a+\delta\sb{\pi+a})
+B(\delta\sb b-\delta\sb{\pi+b})\big)
\nonumber
\\
&=&
A\frac{\mathcal{G}\sb\xxd(a)}{\mathcal{G}\sb 0(a)}
(\delta\sb a+(-1)^{\Lambda\cdot\xxd}\delta\sb{\pi+a})
+B\frac{\mathcal{G}\sb\xxd(b)}{\mathcal{G}\sb 0(b)}
(\delta\sb b-(-1)^{\Lambda\cdot\xxd}\delta\sb{\pi+b}),
\end{eqnarray}
where we took into account that,
by Lemma~\ref{lemma-lap} (Cf. \eqref{ggg}),
\[
\frac{\mathcal{G}\sb\xxd(\omega+\pi)}{\mathcal{G}\sb 0(\omega+\pi)}
=(-1)^{\Lambda\cdot\xxd}
\frac{\mathcal{G}\sb\xxd(\omega)}{\mathcal{G}\sb 0(\omega)},
\qquad
\omega\in\varOmega\sb 0\cup\varOmega\sb\pi.
\]
It follows that
\[
\zeta\sb\xxd\sp\ttd
=
A\frac{\mathcal{G}\sb\xxd(a)}{\mathcal{G}\sb 0(a)}
(1+(-1)^{\ttd+\Lambda\cdot\xxd})
e^{-i a\ttd}
+B\frac{\mathcal{G}\sb\xxd(b)}{\mathcal{G}\sb 0(b)}
(1-(-1)^{\ttd+\Lambda\cdot\xxd})
e^{-i b\ttd},
\]
finishing the proof.
\end{proof}

\section{Analysis of solitary wave solutions}
\label{sect-solitary-waves}

Here we discuss in more detail
one-, two-, and four-frequency solitary waves,
prove that they have the form
specified in Proposition~\ref{proposition-sw},
and construct particular examples.

\subsection{One-frequency solitary waves}
\label{sect-one-frequency}

\begin{lemma}\label{lemma-one-frequency}
\begin{enumerate}
\item
If $n\le 4$,
there could only be nonzero solitary waves
$\phi e^{-i\omega\ttd}$
with
$\phi\in l^2(\Z^n)$
for $\omega\in\varOmega\sb 0\cup\varOmega\sb\pi$,
where
$\varOmega\sb 0$ and $\varOmega\sb\pi$
are defined in \eqref{def-spectral-gaps}.
If $n\ge 5$,
there could only be
solitary waves
for $\omega\in\overline{\varOmega\sb 0}\cup\overline{\varOmega\sb\pi}$.
\item
For a particular value
$\omega\in\varOmega\sb 0\cup\varOmega\sb\pi$
($\omega\in\overline{\varOmega\sb 0}\cup\overline{\varOmega\sb\pi}$
if $n\ge 5$),
there is a nonzero solitary wave if and only if
\begin{equation}\label{omega-soliton}
\frac{1}{\mathcal{G}\sb{0}(\omega)\cos\omega}
\in{\rm Range}(-\tau^2 W'(\lambda)\at{\lambda>0}).
\end{equation}
\item
The component of the solitary manifold
which corresponds to one-frequency solitary waves
is generically two-dimensional.
\item
In the case $n=1$,
the necessary and sufficient criterion
for the existence of nonzero solitary waves 
is
\begin{equation}\label{1-sigma-sup}
\left(0,\,2\sqrt{\Big(1+\frac{\tau^2 m^2}{2}\Big)^2-1}\right]
\cap
\mathop{\rm Range}(-\tau^2 W'(\lambda)\at{\lambda>0})
\ne\emptyset.
\end{equation}
\end{enumerate}
\end{lemma}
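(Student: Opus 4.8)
The plan is to substitute the one-frequency ansatz $\psi_\xxd^\ttd=\phi_\xxd e^{-i\omega\ttd}$ directly into \eqref{dkg-c} and reduce everything to a stationary equation. Along such a wave one has $|\psi_0^{\ttd+1}|=|\psi_0^{\ttd-1}|=|\phi_0|$, so the nonlinearity is given by the \emph{second} branch of \eqref{def-f}, i.e. $f^\ttd=-W'(|\phi_0|^2)\phi_0 e^{-i\omega\ttd}\cos\omega$. Factoring out $e^{-i\omega\ttd}$ and passing to the Fourier transform in $\xxd$ (where, as in \eqref{eqn-tilde-zeta-xo}, the operator $\bm A$ collapses to multiplication by the symbol $a(\xi,\omega)$ from \eqref{sym}), the equation becomes
\[
a(\xi,\omega)\,\hat\phi(\xi)=-\tau^2 W'(|\phi_0|^2)\,\phi_0\cos\omega=:c,\qquad\xi\in\torus^n,
\]
with $c\in\C$ a constant independent of $\xi$. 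This yields $\hat\phi(\xi)=c/a(\xi,\omega)$, equivalently $\phi_\xxd=c\,\mathcal{G}_\xxd(\omega)$ with the fundamental solution $\mathcal{G}$ of \eqref{def-varsigma-c}; this representation drives the whole proof.

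For statement (i) I would observe that a nonzero profile forces $c\neq0$: if $c=0$ then either $\omega$ lies in a gap, where $a(\cdot,\omega)\neq0$ and hence $\hat\phi\equiv0$, or $\omega$ lies in the interior of $\varOmega_c$, where $\hat\phi$ would be supported on the zero hypersurface of $a$ and hence vanishes in $L^2$. Thus $\phi\in l^2(\Z^n)$ with $\phi\not\equiv0$ is equivalent to $1/a(\cdot,\omega)\in L^2(\torus^n)$, which by Lemma~\ref{lemma-finite-norm} holds precisely for $\omega\in\varOmega_0\cup\varOmega_\pi$ when $n\le4$, and for $\omega\in\overline{\varOmega_0}\cup\overline{\varOmega_\pi}$ when $n\ge5$. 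For statement (ii) I would evaluate $\phi_\xxd=c\,\mathcal{G}_\xxd(\omega)$ at $\xxd=0$ and substitute the value of $c$, getting $\phi_0=-\tau^2 W'(|\phi_0|^2)\phi_0\cos\omega\,\mathcal{G}_0(\omega)$. A nonzero profile has $\phi_0\neq0$ (else $c=0$ and $\phi\equiv0$), and on the gaps $\cos\omega\neq0$ while $\mathcal{G}_0(\omega)\neq0$ by Lemma~\ref{lemma-lap}(3); dividing by $\phi_0$ therefore gives exactly the self-consistency condition \eqref{omega-soliton}.

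For statement (iii) I would parametrize the solutions. With $\omega$ fixed, \eqref{omega-soliton} pins down the admissible values of $\lambda=|\phi_0|^2>0$ as the preimage of the real number $1/(\mathcal{G}_0(\omega)\cos\omega)$ under $-\tau^2 W'$ (generically a nonempty discrete set), the $\mathbf{U}(1)$-phase of $\phi_0$ is free, and then $\phi_\xxd=(\mathcal{G}_\xxd(\omega)/\mathcal{G}_0(\omega))\phi_0$ is determined. The corresponding point of $\bS$ is $(\phi,\phi e^{-i\omega})$, so the free parameters are the phase together with $\omega$, the latter ranging over the open subset of the gaps on which \eqref{omega-soliton} is solvable; I would argue the resulting parametrizing map is generically an immersion, giving dimension two.

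Finally, for statement (iv) with $n=1$ I would use the explicit kernel already computed in the proof of Lemma~\ref{lemma-r-qm-multiplier}, namely $\mathcal{G}_0(\omega)=\sgn(\cos\omega)\big/\big(2\sqrt{(1+\frac{\tau^2m^2}{2})^2\cos^2\omega-1}\big)$ on the gaps, whence
\[
\frac{1}{\mathcal{G}_0(\omega)\cos\omega}=2\sqrt{\Big(1+\tfrac{\tau^2 m^2}{2}\Big)^2-\frac{1}{\cos^2\omega}}.
\]
As $\omega$ sweeps $\varOmega_0=(-\omega_m,\omega_m)$ this quantity decreases from $2\sqrt{(1+\frac{\tau^2m^2}{2})^2-1}$ at $\omega=0$ to $0$ as $\omega\to\pm\omega_m$ (where $\cos\omega\to(1+\frac{\tau^2m^2}{2})^{-1}$), so its range is exactly $(0,\,2\sqrt{(1+\frac{\tau^2m^2}{2})^2-1}\,]$; the antiperiodicity $\mathcal{G}_0(\omega+\pi)=-\mathcal{G}_0(\omega)$ of Lemma~\ref{lemma-lap}, together with $\cos(\omega+\pi)=-\cos\omega$, shows the range over $\varOmega_\pi$ is identical. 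Combining this with (ii) converts the solvability of \eqref{omega-soliton} into the intersection condition \eqref{1-sigma-sup}. The step I expect to be the main obstacle is (iii): making precise the word \emph{generically} and verifying that the $(\omega,\text{phase})$-parametrization is a nondegenerate immersion (which can fail for special potentials $W$), whereas (i), (ii) and (iv) are the direct substitution combined with Lemma~\ref{lemma-finite-norm}, Lemma~\ref{lemma-lap}, and the explicit one-dimensional kernel.
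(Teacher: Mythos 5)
Your proposal is correct and follows essentially the same route as the paper: substitute the ansatz so that the second branch of \eqref{def-f} applies, Fourier transform to get $\hat\phi(\xi)=c/a(\xi,\omega)$, i.e.\ $\phi\sb\xxd=c\,\mathcal{G}\sb\xxd(\omega)$, then invoke Lemma~\ref{lemma-finite-norm} for (i), the self-consistency relation $1=-\tau^2W'(|c\,\mathcal{G}\sb0(\omega)|^2)\mathcal{G}\sb0(\omega)\cos\omega$ for (ii), the (amplitude, phase)/(frequency, phase) parametrization for (iii), and the explicit one-dimensional kernel \eqref{varsigma-omega-0} for (iv). The only differences are cosmetic: you treat the $c=0$ case slightly more explicitly, and you parametrize by $(\omega,s)$ where the paper uses $(a,s)$ with $a$ discrete for fixed $\omega$ — the same two-dimensional count, with the same unavoidable looseness in the word ``generically.''
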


\begin{proof}
Let us substitute the Ansatz
$\psi\sb{\xxd}\sp{\ttd}=\phi\sb{\xxd}e^{-i\omega \ttd}$,
$\omega\in\torus$,
into \eqref{dkg-c}.
Using the relations
\[
\p\sb t^2\phi\sb{\xxd}e^{-i\omega \ttd}
=
(e^{-i\omega(t+1)}+e^{-i\omega(t-1)}-2e^{-i\omega})\phi\sb{\xxd}
=2(\cos\omega-1)\phi\sb{\xxd}e^{-i\omega \ttd},
\]
\[
f\sp{\ttd}
=-W'(\abs{\phi\sb{0}}^2)\phi\sb{0}
e^{-i\omega \ttd}\cos\omega
\]
(Cf. \eqref{def-f}),
we see that $\phi\sb{\xxd}$ satisfies
\[
2(\cos\omega-1)\phi\sb{\xxd}
=\frac{1}{n}\bm{D}\sb{\xxd}^2\phi\sb{\xxd}-\tau^2 m^2\phi\sb{\xxd}\cos\omega
-\delta\sb{\xxd,0}\tau^2 W'(\abs{\phi\sb{0}}^2)\phi\sb{0}\cos\omega.
\]
Equivalently, 
the Fourier transform 
$\hat\phi(\xi)=\sum\sb{\xxd\in\Z^n}\phi\sb{\xxd}e^{-i\xi\cdot \xxd}$
is to satisfy
\begin{equation}\label{Ax}
a(\xi,\omega)
\hat\phi(\xi)
=-\tau^2 W'(\abs{\phi\sb{0}}^2)\phi\sb{0}\cos\omega,
\qquad
\xi\in\torus^n,
\quad
\omega\in\torus.
\end{equation}
Thus, 
\begin{equation}\label{hat-phi}
\hat\phi(\xi)
=\frac{C}
{a(\xi,\omega)},
\qquad
\xi\in\torus^n,
\qquad\mbox{or, equivalently,}
\quad\phi\sb{\xxd}=C\mathcal{G}\sb{\xxd}(\omega),
\qquad
\xxd\in\Z^n,
\end{equation}
with $C\in\C$. 
By Lemma~\ref{lemma-finite-norm},
$\phi$ is of finite $l\sp 2$-norm
if and only if
$\omega\in\varOmega\sb 0\cup\varOmega\sb\pi$
for $n\le 4$,
and
$\omega\in\overline{\varOmega\sb 0}\cup\overline{\varOmega\sb\pi}$
for $n\ge 5$.
This proves the first statement of the lemma.

\medskip

Substituting \eqref{hat-phi} into
\eqref{Ax}, we see that
$C\ne 0$ is to satisfy the equation
\begin{equation}\label{cond-sw}
1=-\tau^2 W'(\abs{C\mathcal{G}\sb{0}(\omega)}^2)
\mathcal{G}\sb{0}(\omega)\cos\omega.
\end{equation}
Equation \eqref{cond-sw} admits a solution if and only if 
the condition
\eqref{omega-soliton} holds,
proving the second statement of the lemma.

For each $\omega\in\varOmega\sb 0\cup\varOmega\sb\pi$,
the set of solutions $C$
to equation \eqref{cond-sw}
(if it is nonempty)
admits the representation
$C=a e^{i s}$
with $a>0$ and $s\in\torus$;
for each particular $\omega$,
the set of values $a$ is discrete
(under the assumption that $W(\lambda)$
is a polynomial of degree larger than $1$).
The solitary manifold
can be locally parametrized by 
two parameters, $a>0$ and $s\in\torus$,
proving the third statement of the lemma.

Finally, in the case $n=1$, the computation yields
\begin{equation}\label{varsigma-omega-0}
\mathcal{G}\sb{0}(\omega)
=
\frac 1 2
\int\sb{\torus^1}
\frac{1}
{\big(1+\frac{\tau^2 m^2}{2}\big)\cos\omega-\cos\xi\sb 1}
\,\frac{d\xi\sb 1}{2\pi}
=
\frac 1 2
\frac{\mathop{\rm sign}{\cos\omega}}
{\sqrt{
\big(1+\frac{\tau^2 m^2}{2}\big)^2\cos^2\omega-1}}.
\end{equation}
It follows that
\[
\frac{1}{\mathcal{G}\sb{0}(\omega)\cos\omega}
=2\sqrt{\Big(1+\frac{\tau^2 m^2}{2}\Big)^2-\frac{1}{\cos^2\omega}},
\qquad
\omega\in\varOmega\sb 0\cup\varOmega\sb\pi,
\]
hence
\[
\mathop{\rm Range}\Big(\frac{1}{\mathcal{G}\sb{0}(\omega)\cos\omega}
\At{\omega\in\varOmega\sb 0\cup\varOmega\sb\pi}
\Big)
=
\left(0,\,\,2\sqrt{\Big(1+\frac{\tau^2 m^2}{2}\Big)^2-1}\right],
\]
showing that
\eqref{omega-soliton}
is equivalent to
\eqref{1-sigma-sup}.
\end{proof}



\subsection{Two-frequency solitary waves}
\label{sect-two-frequency}

Let us study two-frequency solitary wave solutions.
By Lemma~\ref{lemma-ff},
the two frequencies of a two-frequency
solitary wave differ by $\pi$,
hence we need to consider
solitary wave solutions of the form
\[
\psi\sp\ttd
=
p e^{-i\omega_1\ttd}
+
q e^{-i(\omega_1+\pi)\ttd},
\qquad
p,\,q\in l^2(\Z^n),
\]
with $p,\,q\in l^2(\Z^n)$ not identically zero.
We have:
\[
\psi\sb 0\sp\ttd=p\sb 0 e^{-i\omega_1\ttd}+q\sb 0 e^{-i(\omega_1+\pi)\ttd},
\qquad
\abs{\psi\sb 0\sp\ttd}^2
=\alpha+\beta e^{-i\pi\ttd},
\]
where
\begin{equation}\label{def-alpha-beta}
\alpha=\abs{p\sb 0}^2+\abs{q\sb 0}^2,
\qquad
\beta=2\Re(\bar p\sb 0 q\sb 0).
\end{equation}
We can write
\[
W'(\abs{\psi\sb 0\sp\ttd})=M+e^{-i\pi\ttd}N,
\]
with
\begin{equation}\label{mnwp}
M=\frac 1 2(W'(\alpha+\beta)+W'(\alpha-\beta)),
\qquad
N=\frac 1 2(W'(\alpha+\beta)-W'(\alpha-\beta)).
\end{equation}
Taking into account that
\[
\frac{1}{2}(\psi\sb 0\sp{\ttd+1}+\psi\sb 0\sp{\ttd-1})
=
p\sb 0\cos\omega\sb 1 e^{-i\omega\sb 1\ttd}
-
q\sb 0\cos\omega\sb 1 e^{-i(\omega\sb 1+\pi)\ttd},
\]
we see that
the Fourier transform of $\psi\sb\xxd\sp\ttd$
(with respect to both time and space variables)
satisfies
\[
a(\xi,\omega)
\big[
\hat p\delta\sb{\omega\sb 1}
+
\hat q\delta\sb{\omega\sb 1+\pi}
\big]
=-2\pi\tau^2
\big[
M\delta\sb 0+N\delta\sb\pi
\big]
\ast
\big[
(p\sb 0\delta\sb{\omega\sb 1}-q\sb 0\delta\sb{\omega\sb 1+\pi})\cos\omega\sb 1
\big].
\]
Collecting the coefficients at
$\delta\sb{\omega\sb 1}$
and
$\delta\sb{\omega\sb 1+\pi}$,
we get the equations
\[
\left\{
\begin{array}{l}
a(\xi,\omega\sb 1)\hat p(\xi)
=-2\pi\tau^2(M p\sb 0-N q\sb 0)
\cos\omega_1,
\\
a(\xi,\omega\sb 1+\pi)\hat q(\xi)
=-2\pi\tau^2(N p\sb 0-M q\sb 0)
\cos\omega_1.
\end{array}
\right.
\]
Dividing by $a(\xi,\omega)$
(at particular values of $\omega$)
and taking the inverse
Fourier transform with respect to $\xi$,
we have:
\begin{equation}\label{s1-2}
\left\{
\begin{array}{l}
p\sb\xxd=-2\pi\tau^2(M p\sb 0-N q\sb 0)
\mathcal{G}\sb\xxd(\omega_1)\cos\omega_1,
\\
q\sb\xxd
=-2\pi\tau^2(N p\sb 0-M q\sb 0)
\mathcal{G}\sb\xxd(\omega_1+\pi)\cos\omega_1.
\end{array}
\right.
\end{equation}
By Lemma~\ref{lemma-lap},
$\mathcal{G}\sb 0(\omega)\ne 0$
for $\omega\in\varOmega\sb 0\cup\varOmega\sb\pi$
($\omega\in\overline{\varOmega\sb 0}\cup\overline{\varOmega\sb\pi}$
if $n\ge 5$);
therefore,
if either $p\sb 0$ or $q\sb 0$ were zero,
\eqref{s1-2}
would yield that
either $M p\sb 0-N q\sb 0$ or $N p\sb 0-M q\sb 0$ is zero,
hence
either $p\sb\xxd$ or $q\sb\xxd$
would be identically zero.
Thus,
for two-frequency solitary waves,
we can assume that
both $p\sb 0$ and $q\sb 0$ are nonzero.
Then equations \eqref{s1-2}
lead to
\begin{equation}\label{s1-2-3}
1=-2\pi\tau^2(M-N \frac{q\sb 0}{p\sb 0})
\mathcal{G}\sb 0(\omega_1)\cos\omega_1,
\quad
1
=2\pi\tau^2(N \frac{p\sb 0}{q\sb 0}-M)
\mathcal{G}\sb 0(\omega_1)\cos\omega_1.
\end{equation}
We took into account that
$\mathcal{G}\sb 0(\omega_1+\pi)=-\mathcal{G}\sb 0(\omega_1)$
(Cf. Lemma~\ref{lemma-lap}).
Relations \eqref{s1-2-3} are consistent
if
$\sigma:=\frac{q\sb 0}{p\sb 0}=\pm 1$
and
\begin{equation}\label{eqnb-2}
1+2\pi\tau^2 (M-\sigma N)\mathcal{G}\sb 0(\omega_1)\cos\omega_1
=0.
\end{equation}

Now we can prove the following lemma.

\begin{lemma}\label{lemma-two-frequencies}
\begin{enumerate}
\item
The component of the solitary manifold
which corresponds to two-frequency solitary waves
is generically two-dimensional.
\item
Each two-frequency solitary wave
can be represented in the form
\begin{equation}
\psi\sb\xxd\sp\ttd
=
\big(1+(-1)^{\ttd+\Lambda\cdot\xxd}\sigma\big)\phi\sb\xxd
e^{-i\omega\ttd},
\qquad
\xxd\in\Z^n,
\quad
\ttd\in\Z,
\end{equation}
with
$\sigma\in\{\pm 1\}$,
$\Lambda=(1,\dots,1)\in\Z^n$,
and
$\phi\in l^2(\Z^n)$.
\end{enumerate}
\end{lemma}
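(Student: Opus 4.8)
The plan is to read off both assertions directly from the relations \eqref{s1-2}, \eqref{s1-2-3}, and \eqref{eqnb-2} already assembled above, using only the antiperiodicity of the fundamental solution recorded in Lemma~\ref{lemma-lap}. Throughout I work with a two-frequency solitary wave $\psi^\ttd = p\,e^{-i\omega_1\ttd} + q\,e^{-i(\omega_1+\pi)\ttd}$, for which it has already been shown that $p_0,q_0\ne 0$ and $\sigma:=q_0/p_0\in\{\pm 1\}$.

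I would prove statement (2) first, since its computation feeds into (1). Starting from the second equation in \eqref{s1-2} and substituting $q_0=\sigma p_0$, the coefficient becomes $Np_0-Mq_0=(N-\sigma M)p_0$, and the identity $\mathcal{G}_\xxd(\omega_1+\pi)=-(-1)^{\Lambda\cdot\xxd}\mathcal{G}_\xxd(\omega_1)$ from \eqref{ggg} pulls the sign and the factor $(-1)^{\Lambda\cdot\xxd}$ outside. Since $\sigma^2=1$ gives $N-\sigma M=-\sigma(M-\sigma N)$, comparison with the first equation in \eqref{s1-2} collapses everything to the pointwise identity $q_\xxd=\sigma(-1)^{\Lambda\cdot\xxd}p_\xxd$. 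Inserting this together with $e^{-i\pi\ttd}=(-1)^\ttd$ back into $\psi_\xxd^\ttd$ yields $\psi_\xxd^\ttd=\big(1+(-1)^{\ttd+\Lambda\cdot\xxd}\sigma\big)p_\xxd\,e^{-i\omega_1\ttd}$, which is exactly the claimed form with $\phi=p$ and $\omega=\omega_1$; membership $\phi\in l^2(\Z^n)$ is built into the definition of a two-frequency solitary wave.

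For statement (1), the crucial observation is that the consistency condition \eqref{eqnb-2} is \emph{amplitude-independent}. Indeed, $q_0=\sigma p_0$ forces $\alpha=2\abs{p_0}^2$ and $\beta=2\Re(\bar p_0 q_0)=\sigma\alpha$ in \eqref{def-alpha-beta}, so $\alpha-\beta=0$ when $\sigma=1$ and $\alpha+\beta=0$ when $\sigma=-1$; in either case the combination $M-\sigma N$ in \eqref{mnwp} collapses to $W'(0)=C_0$. Thus \eqref{eqnb-2} reduces to the single real equation $1+2\pi\tau^2 C_0\,\mathcal{G}_0(\omega_1)\cos\omega_1=0$ in the one real unknown $\omega_1\in\varOmega_0$ (where $\mathcal{G}_0$ and $\cos\omega_1$ are real and nonzero by Lemma~\ref{lemma-lap}). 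For generic $W$ the value $C_0$ is a regular level of the real-analytic function $\omega_1\mapsto\mathcal{G}_0(\omega_1)\cos\omega_1$, so its solution set is discrete; hence $\omega_1$ and $\sigma$ range over a discrete set while $p_0\in\C$ stays free. Because \eqref{s1-2} makes $p_\xxd$, and therefore the whole solution and its initial datum in $\bS$, depend linearly on $p_0$, the corresponding component of $\bS$ is the image of $\C$ under an injective linear map, i.e. generically two-dimensional.

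The only non-mechanical point is recognizing that \eqref{eqnb-2} loses all dependence on the amplitude: it is precisely the collapse $M-\sigma N=C_0$ that inverts the roles played in the one-frequency case of Lemma~\ref{lemma-one-frequency}, where the amplitude is quantized and the frequency varies continuously, whereas here the frequency is quantized and the amplitude is free. Once this is noted, the genericity claim is the standard statement that a regular level set of a real-analytic function is discrete, and the form in (2) follows from the sign bookkeeping above.
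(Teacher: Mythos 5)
Your proof is correct and follows essentially the same route as the paper's: the same substitution $q_0=\sigma p_0$ into \eqref{s1-2} combined with the antiperiodicity \eqref{ggg} to get $q_\xxd=\sigma(-1)^{\Lambda\cdot\xxd}p_\xxd$, and the same collapse $M-\sigma N=W'(0)$ turning \eqref{eqnb-2} into an amplitude-independent equation that quantizes $\omega_1$ while leaving $p_0\in\C$ free. The only differences are the order of the two statements and your (accurate) side remark contrasting the frequency/amplitude roles with the one-frequency case.
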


\begin{proof}
Let us choose $p\sb 0\in\C\setminus 0$
and $\sigma=\pm 1$,
and set $q\sb 0=\sigma p\sb 0$.
Then,
by \eqref{def-alpha-beta},
$\beta=\sigma\alpha$,
and the relations
\eqref{mnwp} give
$M-\sigma N=W'(\alpha-\sigma\beta)=W'(0)$.
The relation \eqref{eqnb-2}
takes the form
\[
1+2\pi\tau^2 W'(0)\mathcal{G}\sb 0(\omega_1)\cos\omega_1
=0,
\]
allowing us to determine $\omega_1$
(if $W'(0)\ge 0$, no such $\omega\sb 1$ exists).
Thus, the corresponding component
of the solitary manifold
is generically of dimension $2$.

To prove the second statement of the lemma,
we notice that, by \eqref{s1-2},
\[
p\sb\xxd=-2\pi\tau^2 p\sb 0(M-\sigma N)
\mathcal{G}\sb\xxd(\omega_1)\cos\omega_1,
\]
\[
q\sb\xxd
=-2\pi\tau^2 \sigma p\sb 0(\sigma N-M)
\mathcal{G}\sb\xxd(\omega_1+\pi)\cos\omega_1
=(-1)\sp{\Lambda\cdot\xxd}\sigma p\sb\xxd.
\]
In the last equality,
we took into account Lemma~\ref{lemma-lap}
(Cf. \eqref{ggg}).
This finishes the proof.
\end{proof}

\subsection{Four-frequency solitary waves}
\label{subsection-ff}

By Lemma~\ref{lemma-ff},
it is enough to consider four-frequency
solitary waves of the form
\begin{equation}\label{ff}
\psi\sp\ttd
=p e^{-i\omega\sb 1\ttd}+q e^{-i(\omega\sb 1+\pi)\ttd}
+r e^{-i\omega\sb 2\ttd}+s e^{-i(\omega\sb 2+\pi)\ttd},
\end{equation}
with $\omega\sb 1\ne\omega\sb 2\mod\pi$
and with
$p,\,q,\,r,\,s\in l^2(\Z^n)$.
By Lemma~\ref{lemma-ff},
we can also assume that
\begin{equation}\label{p0q0r0s0}
q\sb 0=p\sb 0,
\qquad
s\sb 0=-r\sb 0;
\end{equation}
then
\begin{equation}\label{psi-0-t}
\psi\sb 0\sp{\ttd}
=
p\sb 0(e^{-i\omega\sb 1 \ttd}+e^{-i(\omega\sb 1+\pi)\ttd})
+r\sb 0(e^{-i\omega\sb 2 \ttd}-e^{-i(\omega\sb 2+\pi)\ttd}),
\end{equation}

\begin{equation}\label{psi-0-t-2}
\abs{\psi_0\sp{\ttd}}^2
=
2(\abs{p\sb 0}^2+\abs{r\sb 0}^2)
+
2(\abs{p\sb 0}^2-\abs{r\sb 0}^2)
e^{-i\pi \ttd}
=
\alpha
+
\beta e^{-i\pi \ttd},
\end{equation}
where
\begin{equation}\label{def-a-b}
\alpha=2(\abs{p\sb 0}^2+\abs{r\sb 0}^2),
\qquad
\beta=2(\abs{p\sb 0}^2-\abs{r\sb 0}^2).
\end{equation}
Using \eqref{psi-0-t},
we derive
\begin{eqnarray}
&&
\frac 1 2
(\psi\sb 0\sp{\ttd+1}+\psi\sb 0\sp{\ttd-1})
\nonumber
\\
&&
\hskip -14pt
=
p\sb 0\cos\omega\sb 1 e^{-i\omega\sb 1 \ttd}
-p\sb 0\cos\omega\sb 1 e^{-i(\omega\sb 1+\pi)\ttd}
+
r_0\cos\omega\sb 2 e^{-i\omega\sb 2 \ttd}
+r_0\cos\omega\sb 2 e^{-i(\omega\sb 2+\pi)\ttd},
\nonumber
\end{eqnarray}
and for its Fourier transform we have
\begin{eqnarray}
&&
\mathscr{F}
\big[
\frac{1}{2}(\psi\sb 0\sp{\ttd+1}+\psi\sb 0\sp{\ttd-1})
\big](\omega)
\nonumber
\\
&&
\qquad
=
2\pi
p\sb 0
\big(\delta\sb{\omega_1}(\omega)-\delta\sb{\omega_1+\pi}(\omega)\big)
\cos\omega_1
+
2\pi
r\sb 0
\big(\delta\sb{\omega_2}(\omega)+\delta\sb{\omega_2+\pi}(\omega)\big)
\cos\omega_2.
\nonumber
\end{eqnarray}
We have
$W'(\abs{\psi\sp{\ttd}\sb 0}^2)=M+e^{-i\pi \ttd}N$,
$\ttd\in\Z$,
where $M$ and $N$ are given by
\begin{equation}\label{abAB}
M=\frac{1}{2}(W'(\alpha+\beta)+W'(\alpha-\beta)),
\qquad
N=\frac{1}{2}(W'(\alpha+\beta)-W'(\alpha-\beta)),
\end{equation}
hence
$
\mathscr{F}
\big[
W'(\abs{\psi\sp{\ttd}\sb 0}^2)
\big](\omega)
=2\pi(M\delta(\omega)+N\delta\sb\pi(\omega)).
$
Thus, the Fourier transform of $\psi\sb{\xxd}\sp{\ttd}$
(with respect to both time and space variables)
satisfies the following relation:
\begin{eqnarray}
&&
a(\xi,\omega)
\big[
\hat p\delta\sb{\omega\sb 1}+\hat q\delta\sb{\omega\sb 1+\pi}
+
\hat r\delta\sb{\omega\sb 2}+\hat s\delta\sb{\omega\sb 2+\pi}
\big]
\nonumber
\\
&&
\quad
=-2\pi\tau^2
\big[
M\delta_0+N\delta\sb{\pi}
\big]
\ast
\big[
p\sb 0(\delta\sb{\omega_1}-\delta\sb{\omega_1+\pi})\cos\omega_1
+
r\sb 0(\delta\sb{\omega_2}+\delta\sb{\omega_2+\pi})\cos\omega_2
\big].
\nonumber
\end{eqnarray}
In this equation,
$\delta$-functions are functions of $\omega\in\torus$;
the functions
$\hat p$, $\hat q$, $\hat r$, $\hat s$
(Fourier transforms of $p,\,q,\,r,\,s\in l^2(\Z^n)$)
depend on $\xi\in\torus^n$,
and the convolution
in the right-hand side is with respect to $\omega$.
Collecting the coefficients at
$\delta\sb{\omega_1}$, $\delta\sb{\omega_1+\pi}$,
$\delta\sb{\omega_2}$, and $\delta\sb{\omega_2+\pi}$,
we rewrite the above equation
as the following system:
\begin{equation}\label{r1}
\left\{\begin{array}{l}
a(\xi,\omega_1)\hat p(\xi)=-2\pi\tau^2 p\sb 0(M-N)\cos\omega_1,
\\
a(\xi,\omega_1+\pi)\hat q(\xi)=-2\pi\tau^2 p\sb 0(N-M)\cos\omega_1,
\\
a(\xi,\omega_2)\hat r(\xi)=-2\pi\tau^2 r\sb 0(M+N)\cos\omega_2,
\\
a(\xi,\omega_2+\pi)\hat s(\xi)=-2\pi\tau^2 r\sb 0(N+M)\cos\omega_2.
\end{array}
\right.
\end{equation}
Dividing each of these equations by $a(\xi,\omega)$
(taken at the appropriate value of $\omega$),
taking the inverse Fourier transform
with respect to $\xi$
and using
the relation
$\hat{\mathcal{G}}(\xi,\omega)=\frac{1}{a(\xi,\omega)}$
(Cf. \eqref{def-varsigma-c}),
we have:
\begin{equation}\label{rr1}
\left\{
\begin{array}{l}
p\sb\xxd=-2\pi\tau^2 p\sb 0(M-N)
\mathcal{G}\sb\xxd(\omega_1)\cos\omega_1,
\\
q\sb\xxd
=-2\pi\tau^2 p\sb 0(N-M)
\mathcal{G}\sb\xxd(\omega_1+\pi)\cos\omega_1,
\\
r\sb\xxd=-2\pi\tau^2 r\sb 0(M+N)
\mathcal{G}\sb\xxd(\omega_2)\cos\omega_2,
\\
s\sb\xxd
=-2\pi\tau^2 r\sb 0(N+M)
\mathcal{G}\sb\xxd(\omega\sb 2+\pi)\cos\omega_2.
\end{array}
\right.
\end{equation}
Taking into account that,
by Lemma~\ref{lemma-lap} (Cf. \eqref{ggg}),
one has
$\mathcal{G}\sb\xxd(\omega+\pi)
=-(-1)^{\Lambda\cdot\xxd}\mathcal{G}\sb\xxd(\omega)$
for $\omega\in\varOmega_0\cup\varOmega_\pi$,
we rewrite
\eqref{rr1} as
\begin{equation}\label{rr1a}
\left\{
\begin{array}{l}
p\sb\xxd=-2\pi\tau^2 p\sb 0(M-N)
\mathcal{G}\sb\xxd(\omega_1)\cos\omega_1,
\\
q\sb\xxd
=-2\pi\tau^2 p\sb 0(M-N)(-1)^{\Lambda\cdot\xxd}
\mathcal{G}\sb\xxd(\omega_1)\cos\omega_1,
\\
r\sb\xxd=-2\pi\tau^2 r\sb 0(M+N)
\mathcal{G}\sb\xxd(\omega_2)\cos\omega_2,
\\
s\sb\xxd
=2\pi\tau^2 r\sb 0(M+N)(-1)^{\Lambda\cdot\xxd}
\mathcal{G}\sb\xxd(\omega\sb 2)\cos\omega_2.
\end{array}
\right.
\end{equation}
To have $p\sb 0\ne 0$,
the first equation leads to the requirement
\begin{equation}\label{a-b}
1+2\pi\tau^2 (M-N)\mathcal{G}\sb 0(\omega\sb 1)\cos\omega\sb 1=0.
\end{equation}
Similarly,
to have $r\sb 0\ne 0$,
the third equation requires that
\begin{equation}\label{a+b}
1+2\pi\tau^2 (M+N)\mathcal{G}\sb 0(\omega\sb 2)\cos\omega\sb 2=0.
\end{equation}
Note that
the second and the fourth equations from
\eqref{rr1a}
together with \eqref{a-b} and \eqref{a+b}
lead to
$q\sb 0=p\sb 0$
and $s\sb 0=-r\sb 0$,
in consistency with \eqref{p0q0r0s0}.

To construct an example of four-frequency solitary waves,
one fixes $\omega\sb 1,\,\omega\sb 2\in\varOmega_0$
and determines $M$, $N$ from \eqref{a-b} and \eqref{a+b}.
Then one takes arbitrary nonzero
$p\sb 0$ and $r\sb 0$,
defines $\alpha$ and $\beta$ from \eqref{def-a-b},
and chooses a polynomial $W(s)$
such that \eqref{abAB} is satisfied.

\begin{lemma}\label{lemma-four-frequencies}
\begin{enumerate}
\item
The component of the solitary manifold
which corresponds to four-frequency solitary waves
is generically four-dimensional.
\item
Each four-frequency solitary wave
can be represented in the form
\begin{equation}
\psi\sb\xxd\sp\ttd
=
\big(1+(-1)^{\ttd+\Lambda\cdot\xxd}\big)\phi\sb\xxd
e^{-i\omega\ttd}
+
\big(1-(-1)^{\ttd+\Lambda\cdot\xxd}\big)\theta\sb\xxd
e^{-i\omega'\ttd},
\qquad
\xxd\in\Z^n,
\quad
\ttd\in\Z,
\end{equation}
with
$\phi,\,\theta\in l^2(\Z^n)$.
\end{enumerate}
\end{lemma}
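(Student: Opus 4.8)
The plan is to read off both statements directly from the system \eqref{rr1a} and the consistency conditions \eqref{a-b}, \eqref{a+b} already derived in Subsection~\ref{subsection-ff}, so that essentially no new computation is required.

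For the representation (the second statement), I would first observe that \eqref{rr1a} relates the spatial profiles pairwise: comparing its first two lines gives $q_\xxd=(-1)^{\Lambda\cdot\xxd}p_\xxd$, and comparing the last two gives $s_\xxd=-(-1)^{\Lambda\cdot\xxd}r_\xxd$, the antiperiodicity $\mathcal{G}_\xxd(\omega+\pi)=-(-1)^{\Lambda\cdot\xxd}\mathcal{G}_\xxd(\omega)$ of Lemma~\ref{lemma-lap} (cf.~\eqref{ggg}) already being built into \eqref{rr1a}. Substituting these into the Ansatz \eqref{ff} and using $e^{-i(\omega_j+\pi)\ttd}=(-1)^\ttd e^{-i\omega_j\ttd}$, the two terms carrying the frequency $\omega_1$ combine into $p_\xxd\big(1+(-1)^{\ttd+\Lambda\cdot\xxd}\big)e^{-i\omega_1\ttd}$, while the two terms carrying $\omega_2$ combine into $r_\xxd\big(1-(-1)^{\ttd+\Lambda\cdot\xxd}\big)e^{-i\omega_2\ttd}$. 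Setting $\phi_\xxd=p_\xxd$, $\theta_\xxd=r_\xxd$, $\omega=\omega_1$, and $\omega'=\omega_2$ then yields the claimed form, with $\phi,\theta\in l^2(\Z^n)$ since $p,r\in l^2(\Z^n)$.

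For the dimension count (the first statement), I would regard $p_0,r_0\in\C$ together with $\omega_1,\omega_2\in\varOmega_0$ as the six real parameters describing a four-frequency wave, all remaining data being fixed through \eqref{rr1a}, and I would exhibit \eqref{a-b}, \eqref{a+b} as the only constraints. The key simplification is that $\alpha+\beta=4|p_0|^2$ and $\alpha-\beta=4|r_0|^2$ by \eqref{def-a-b}, so that \eqref{abAB} gives $M+N=W'(4|p_0|^2)$ and $M-N=W'(4|r_0|^2)$; hence \eqref{a+b} becomes a single real relation between $|p_0|$ and $\omega_2$, and \eqref{a-b} a single real relation between $|r_0|$ and $\omega_1$. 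These two relations involve disjoint sets of variables, so they are functionally independent and cut the six-dimensional parameter space down to dimension four. Equivalently, one takes $p_0,r_0\in\C$ as free and solves (discretely) for $\omega_1,\omega_2$. This produces a generically four-dimensional component of $\bS$.

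I do not anticipate a genuine obstacle: once the algebraic reduction of Subsection~\ref{subsection-ff} is in place, both assertions are essentially immediate. The only point demanding care is the qualifier ``generically'' in the first statement. One must flag the non-generic degenerations --- parameter values for which \eqref{a-b} or \eqref{a+b} admits no admissible root, for which distinct discrete solution branches collide, or for which $\omega_1\equiv\omega_2\bmod\pi$ so that the wave collapses to a one- or two-frequency solution --- and note that away from this exceptional set the two implicit relations are regular, so the component is genuinely a four-dimensional manifold.
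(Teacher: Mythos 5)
Your proposal is correct and takes essentially the same route as the paper: the representation in the second statement is read off from \eqref{rr1a} exactly as in the paper's proof (which records $q\sb\xxd=(-1)^{\Lambda\cdot\xxd}p\sb\xxd$, $s\sb\xxd=-(-1)^{\Lambda\cdot\xxd}r\sb\xxd$ and substitutes into \eqref{ff}), and the dimension count rests on the same two constraints \eqref{a-b}, \eqref{a+b} --- the paper parametrizes by $(\omega\sb 1,\omega\sb 2)$ plus the two phases of $p\sb 0,r\sb 0$, determining the moduli through $M,N\to\alpha,\beta\to\abs{p\sb 0},\abs{r\sb 0}$, while you equivalently count six real parameters cut by two independent relations. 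Your observation that \eqref{def-a-b} and \eqref{abAB} give $M+N=W'(4\abs{p\sb 0}^2)$ and $M-N=W'(4\abs{r\sb 0}^2)$, so that the two constraints decouple into disjoint variable pairs $(\abs{p\sb 0},\omega\sb 2)$ and $(\abs{r\sb 0},\omega\sb 1)$, makes the functional independence explicit and is a nice sharpening of the paper's argument.
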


\begin{proof}
As follows from the above discussion,
once we have one solitary wave of this type,
we can vary $\omega\sb 1$ and $\omega\sb 2$.
Then the relations
\eqref{a-b}, \eqref{a+b}
determine $M$ and $N$.
The relations
\eqref{abAB}
determine $\alpha$ and $\beta$,
and the relations \eqref{def-a-b}
determine $\abs{p\sb 0}$ and $\abs{r_0}$;
then
$p\sb 0$ and $r\sb 0$ are known up to
(mutually independent) unitary factors.
Thus,
locally the component of the solitary
manifold
corresponding to four-frequency solitary waves
is parametrized by four variables.

The second statement of the lemma
follows from
\eqref{ff}
after noticing that,
by \eqref{rr1a},
one has
\[
q\sb\xxd=(-1)^{\Lambda\cdot\xxd}p\sb\xxd,
\qquad
s\sb\xxd=-(-1)^{\Lambda\cdot\xxd}r\sb\xxd.
\]
\end{proof}

We finished studying the structure of
multifrequency solitary wave solutions.
Now the proof of Proposition~\ref{proposition-sw}
is complete.

\appendix

\section{
Well-posedness
for nonlinear wave equation in discrete space-time}

\label{sect-wp}

\subsection{Continuous case}

Let us first consider the $\mathbf{U}(1)$-invariant nonlinear wave equation
\begin{equation}\label{nlw-xt}
\ddot\psi(x,t)=\Delta\psi(x,t)
-2\p\sb\lambda v(x,\abs{\psi(x,t)}^2)\psi(x,t),
\qquad
x\in\R^n,
\end{equation}
where 
$\psi(x,t)\in\C$
and
$v(x,\lambda)$
is such that $v\in C(\R^n\times\R)$
and $v(x,\cdot)\in C\sp{2}(\R)$ for each $x\in\R^n$.
Equation \eqref{nlw-xt}
can be written in the Hamiltonian form,
with the Hamiltonian
\begin{equation}\label{def-energy}
\mathscr{E}(\psi,\dot\psi)
=\int\sb{\R^n}
\Big[
\frac{\abs{\dot\psi}^2}{2}+\frac{\abs{\nabla\psi}^2}{2}+v(x,\abs{\psi(x,t)}^2)
\Big]\,dx.
\end{equation}
The value of the Hamiltonian
functional $\mathscr{E}$
and the value of the charge functional
\begin{equation}\label{def-charge}
\mathscr{Q}(\psi,\dot\psi)
=\frac{i}{2}\int\sb{\R^n}
\big(
\bar\psi\dot\psi-\dot{\bar\psi}\psi
\big)\,dx
\end{equation}
are formally conserved for solutions to \eqref{nlw-xt}.
A particular case of \eqref{nlw-xt}
is the nonlinear Klein-Gordon equation,
with $v(x,\lambda)=\frac{m^2}{2}\lambda+z(x,\lambda)$,
with $m>0$:
\begin{equation}\label{nlkg}
\ddot\psi=\Delta\psi-m^2\psi-2\p\sb\lambda z(x,\abs{\psi}^2)\psi,
\qquad
x\in\R^n,
\quad
t\in\R.
\end{equation}
If
$z(x,\lambda)\ge 0$ for all $x\in\R^n$, $\lambda\ge 0$,
then the conservation of the energy
\[
\int\sb{\R^n}
\Big[
\frac{\abs{\dot\psi}^2}{2}+\frac{\abs{\nabla\psi}^2}{2}+\frac{m^2\abs{\psi}^2}{2}
+z(x,\abs{\psi}^2)
\Big]\,dx
\]
yields an a priori estimate
on the norm of the solution:
\begin{equation}\label{apec}
\int\sb{\R^n}
\abs{\psi(x,t)}^2\,dx
\le \frac{2}{m^2}\mathscr{E}(\psi\at{t=0},\dot\psi\at{t=0}).
\end{equation}

\subsection{Finite difference approximation}

Let us now describe the discretized equation.
Let $(\xxd,\ttd)\in\Z^n\times\Z$
denote a point of the space-time lattice.
We will always indicate
the temporal dependence by superscripts 
and the spatial dependence by subscripts.
Fix $\varepsilon>0$, and
let $V\sb{\xxd}(\lambda)=v(\varepsilon \xxd,\lambda)$
be a function on $\Z^n\times\R$,
so that $V\sb{\xxd}\in C\sp{2}(\R)$ for each $\xxd\in\Z^n$.
For $\lambda,\,\mu\in\R$
and $\xxd\in\Z^n$,
we introduce
\begin{equation}\label{vmv}
B\sb{\xxd}(\lambda,\mu):=
\left\{
\begin{array}{l}
\frac{V\sb{\xxd}(\lambda)-V\sb{\xxd}(\mu)}{\lambda-\mu},
\qquad
\lambda\ne\mu,
\\
\p\sb\lambda V\sb{\xxd}(\lambda),
\qquad
\lambda=\mu.
\end{array}
\right.
\end{equation}
We consider the Vazquez-Strauss finite-difference scheme for \eqref{nlw-xt}
\cite{MR0503140}:
\begin{eqnarray}\label{dkg-c-xt}
&&
\frac{\psi\sb{\xxd}\sp{\ttd+1}-2\psi\sb{\xxd}\sp{\ttd}+\psi\sb{\xxd}\sp{\ttd-1}}{\tau^2}
=
\sum\sb{j=1}\sp{n}
\frac{\psi\sb{\xxd+\e\sb j}\sp{\ttd}-2\psi\sb{\xxd}\sp{\ttd}+\psi\sb{\xxd-\e\sb j}\sp{\ttd}}{\varepsilon^2}
\nonumber
\\
&&
\qquad\qquad\qquad\qquad
-
B\sb{\xxd}(\abs{\psi\sb{\xxd}\sp{\ttd+1}}^2,\abs{\psi\sb{\xxd}\sp{\ttd-1}}^2)
(\psi\sb{\xxd}\sp{\ttd+1}+\psi\sb{\xxd}\sp{\ttd-1}),
\end{eqnarray}
where
$\psi\sb{\xxd}\sp{\ttd}\in\C$
is
defined on the lattice $(\xxd,\ttd)\in\Z^n\times\Z$.
Above,
\begin{equation}\label{def-ej-xt}
\e\sb 1=(1,0,0,0,\dots)\in\Z^n,
\qquad
\e\sb 2=(0,1,0,0,\dots)\in\Z^n,
\qquad
\mbox{etc.}
\end{equation}

The continuous limit of
\eqref{dkg-c-xt}
is given by \eqref{nlw-xt},
with $\varepsilon \xxd$
corresponding to $x\in\R^n$
and $\tau \ttd$ corresponding to $t\in\R$.
Since
$\p\sb\lambda V\sb{\xxd}(\lambda)
=B\sb{\xxd}(\lambda,\lambda)$,
the continuous limit of the last term
in the right-hand side of \eqref{dkg-c-xt}
coincides with the right-hand side
in \eqref{nlw-xt}.

An advantage of the Strauss-Vazquez finite-difference scheme \eqref{dkg-c-xt}
over other energy-preserving schemes discussed in
\cite{MR1360462,MR1852556}
is that
it is \emph{explicit}:
at the moment $\ttd+1$
the relation
\eqref{dkg-c-xt} only involves the function $\psi$
at the point $\xxd$, allowing for a simple
realization of the solution algorithm
even in higher dimensional case.

\subsection{Well-posedness}

We will denote by $\psi\sp{\ttd}$
the function $\psi$
defined on the lattice $(\xxd,\ttd)\in\Z^n\times\Z$
at the moment $\ttd\in\Z$.

\begin{theorem}[Existence of solutions]
\label{theorem-e}
Assume that
\begin{equation}\label{def-k1}
k\sb 1:=\inf\sb{\xxd\in\Z^n,\lambda\ge 0}
\p\sb\lambda V\sb{\xxd}(\lambda)>-\infty.
\end{equation}
Define
\[
\tau\sb 1
=\left\{
\begin{array}{l}
\sqrt{-1/k\sb 1},\qquad k\sb 1<0;
\\
+\infty,\qquad\quad k\sb 1\ge 0.
\end{array}
\right.
\]
Then for any $\tau\in(0,\tau\sb{1})$
and any $\varepsilon>0$
there exists a global solution
$\psi\sp{\ttd}$,
$\ttd\in\Z$,
to the Cauchy problem for equation \eqref{dkg-c-xt}
with arbitrary initial data
$\psi\sp{0}$,
$\psi\sp{1}$
(which stand for $\psi\sp{\ttd}$ at $\ttd=0$ and $\ttd=1$).

Moreover,
if $(\psi\sp{0},\psi\sp{1})\in l^2(\Z^n)\times l^2(\Z^n)$,
one has
$\psi\sp{\ttd}\in l^2(\Z^n)$
for all $\ttd\in\Z$.
\end{theorem}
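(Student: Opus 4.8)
The plan is to reduce the Cauchy problem to the solvability of a single time step $(\psi^{\ttd-1},\psi^{\ttd})\mapsto\psi^{\ttd+1}$ and to exploit the fact that the scheme \eqref{dkg-c-xt} is \emph{explicit}: the only coupling to neighbouring spatial sites is through the discrete Laplacian, which is evaluated at the \emph{known} time level $\ttd$. Fixing $\xxd$ and writing $a:=\psi_{\xxd}^{\ttd+1}$ (unknown), $c:=\psi_{\xxd}^{\ttd-1}$ (known),
\[
L_{\xxd}:=\sum_{j=1}^{n}\frac{\psi_{\xxd+\e_j}^{\ttd}-2\psi_{\xxd}^{\ttd}+\psi_{\xxd-\e_j}^{\ttd}}{\varepsilon^2},
\qquad
R_{\xxd}:=2\psi_{\xxd}^{\ttd}-c+\tau^2 L_{\xxd},
\]
equation \eqref{dkg-c-xt} becomes, site by site, the scalar complex equation
\[
a+\tau^2 B_{\xxd}(\abs{a}^2,\abs{c}^2)\,(a+c)=R_{\xxd},\qquad a\in\C .
\]
First I would record the positivity estimate that drives everything: by \eqref{vmv} and the mean value theorem $B_{\xxd}(\lambda,\mu)=\p_\lambda V_{\xxd}(\xi)\ge k_1$ for some intermediate $\xi\ge 0$, so that
\[
1+\tau^2 B_{\xxd}(\abs{a}^2,\abs{c}^2)\ge 1+\tau^2 k_1=:\kappa>0,
\]
where $\kappa>0$ is precisely the content of the hypothesis $\tau<\tau_1$ (automatic if $k_1\ge 0$, and equivalent to $\tau^2<-1/k_1$ if $k_1<0$).

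Given this positivity I would solve the scalar equation by a shooting argument in the single real parameter $t=\abs{a}^2\ge 0$. Since $B_{\xxd}$ is real and depends on $a$ only through $t$, the equation forces
\[
a=\frac{R_{\xxd}-\tau^2 B_{\xxd}(t,\abs{c}^2)\,c}{1+\tau^2 B_{\xxd}(t,\abs{c}^2)}=:a(t),
\]
so a solution exists iff the continuous function (continuity follows since $V_{\xxd}\in C^2$, so the difference quotient $B_{\xxd}(\cdot,\abs{c}^2)$ extends continuously)
\[
\Theta(t):=\Abs{\frac{R_{\xxd}-\tau^2 B_{\xxd}(t,\abs{c}^2)\,c}{1+\tau^2 B_{\xxd}(t,\abs{c}^2)}}^2-t
\]
has a zero on $[0,\infty)$. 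One has $\Theta(0)\ge 0$ trivially, while a short computation using $B_{\xxd}\ge k_1$ and $1+\tau^2 B_{\xxd}\ge\kappa$ shows that $\abs{a(t)}$ is bounded uniformly in $t\ge 0$, whence $\Theta(t)\to-\infty$ as $t\to\infty$. The intermediate value theorem then yields $t_\ast\ge 0$ with $\Theta(t_\ast)=0$; setting $\psi_{\xxd}^{\ttd+1}=a(t_\ast)$ gives $\abs{\psi_{\xxd}^{\ttd+1}}^2=t_\ast$, so $B_{\xxd}(t_\ast,\abs{c}^2)=B_{\xxd}(\abs{\psi_{\xxd}^{\ttd+1}}^2,\abs{c}^2)$ and the original equation is satisfied. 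This step — turning the implicit scalar equation into a one-dimensional root problem and controlling $\abs{a(t)}$ as $t\to\infty$ — is the main obstacle, and everything there rests on the sign condition $\kappa>0$.

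For the global solution I would iterate this step for $\ttd\ge 1$; the backward direction is identical, since $B_{\xxd}(\abs{\psi^{\ttd+1}}^2,\abs{\psi^{\ttd-1}}^2)$ and the factor $(\psi^{\ttd+1}+\psi^{\ttd-1})$ are symmetric under interchange of the levels $\ttd\pm 1$, so solving for $\psi^{\ttd-1}$ is the very same scalar problem. Finally, for the $l^2$ claim I would use the rearranged update, obtained by collecting $\psi^{\ttd+1}$ and dividing by $1+\tau^2 B_{\xxd}\ge\kappa$,
\[
\psi_{\xxd}^{\ttd+1}=\frac{2\psi_{\xxd}^{\ttd}+\tau^2 L_{\xxd}}{1+\tau^2 B_{\xxd}}-\psi_{\xxd}^{\ttd-1},
\]
which yields the pointwise bound $\abs{\psi_{\xxd}^{\ttd+1}}\le\kappa^{-1}\abs{2\psi_{\xxd}^{\ttd}+\tau^2 L_{\xxd}}+\abs{\psi_{\xxd}^{\ttd-1}}$. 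Since $\psi^{\ttd}\mapsto(L_{\xxd})_{\xxd}$ is a bounded operator on $l^2(\Z^n)$ of norm at most $4n/\varepsilon^2$, summing the squares gives
\[
\norm{\psi^{\ttd+1}}_{l^2}\le\kappa^{-1}\big(2+4n\tau^2/\varepsilon^2\big)\norm{\psi^{\ttd}}_{l^2}+\norm{\psi^{\ttd-1}}_{l^2}<\infty,
\]
so that $\psi^{\ttd}\in l^2(\Z^n)$ for every $\ttd$ follows by induction from $(\psi^{0},\psi^{1})\in l^2(\Z^n)\times l^2(\Z^n)$ (the bound grows in $\ttd$, but only finiteness at each step is claimed here). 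Uniqueness is not needed for existence and is deferred to Theorem~\ref{theorem-u}.
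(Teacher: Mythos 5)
Your proposal is correct and follows essentially the same route as the paper's proof: both reduce each time step to a per-site scalar equation, exploit the positivity bound $1+\tau^2 B\sb{\xxd}\ge\kappa>0$ (which is exactly where $\tau<\tau\sb 1$ enters, via the mean-value identity $\inf B\sb\xxd=\inf\p\sb\lambda V\sb\xxd$), solve it by a one-dimensional intermediate-value argument, and get the $l^2$ statement from the same recursive norm estimate. The only cosmetic difference is the parametrization of the root problem: the paper seeks $s\in\R$ with $\psi\sb\xxd\sp{\ttd+1}+\psi\sb\xxd\sp{\ttd-1}=s\,\xi\sb\xxd\sp\ttd$ solving $f(s)=1$ (using $f(0)=0$ and $f(s)\to+\infty$), whereas you seek $t=\abs{\psi\sb\xxd\sp{\ttd+1}}^2$ solving $\Theta(t)=0$ (using $\Theta(0)\ge 0$ and $\Theta(t)\to-\infty$); the two reductions are equivalent.
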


Note that we do not claim
in this theorem
that $\norm{\psi\sp{\ttd}}\sb{l^2(\Z^n)}$
is uniformly bounded for all $\ttd\in\Z$.
For the a priori estimates on $\norm{\psi\sp{\ttd}}\sb{l^2(\Z^n)}$,
see Theorem~\ref{theorem-a-priori} below.

One can readily check that
any
$\xxd$-independent
polynomial potential of the form
\begin{equation}\label{poly}
V\sb{\xxd}(\lambda)
=V(\lambda)
=\sum\sb{q=0}\sp{p}C\sb{q}\lambda^{q+1},
\qquad
p\in\N,
\qquad
C\sb{q}\in\R
\quad
\mbox{for}\ 0\le q\le p,
\qquad
C\sb{p}>0
\end{equation}
satisfies
\eqref{def-k1}.
Note that since $\lim\sb{\lambda\to +\infty}V(\lambda)=+\infty$,
this potential is confining.

\begin{theorem}[Uniqueness and continuous dependence
on the initial data]
\label{theorem-u}
Assume that the functions
\[
K\sb{\xxd}\sp\pm(\lambda,\mu)
=
B\sb{\xxd}(\lambda,\mu)
+2\p\sb\lambda B\sb{\xxd}(\lambda,\mu)
(\lambda\pm\sqrt{\lambda\mu})
\]
are bounded from below:
\begin{equation}\label{def-k2}
k\sb 2:=
\inf\sb{\pm,\,\xxd\in\Z^n,\,
\lambda\ge 0,\,\mu\ge 0}
K\sb{\xxd}\sp\pm(\lambda,\mu)>-\infty.
\end{equation}
Define
\[
\tau\sb 2
=\left\{
\begin{array}{l}
\sqrt{-1/k\sb 2},\qquad k\sb 2<0;
\\
+\infty,\qquad\quad k\sb 2\ge 0.
\end{array}
\right.
\]
Let $\tau\in(0,\tau\sb{2})$ and $\varepsilon>0$.
\begin{enumerate}
\item
There exists a solution to the Cauchy problem
for equation \eqref{dkg-c-xt}
with arbitrary initial data $(\psi\sp{0},\psi\sp{1})$,
and this solution is unique.
\item
For any $\ttd>0$,
the map
\[
U(\ttd):\;(\psi\sp\ttd,\psi\sp{\ttd+1})\at{\ttd=0}
\mapsto (\psi\sp\ttd,\psi\sp{\ttd+1})
\]
is continuous
as a map from
$l\sp\infty(\Z^n)\times l\sp\infty(\Z^n)$
to
$l\sp\infty(\Z^n)\times l\sp\infty(\Z^n)$.
\end{enumerate}
\end{theorem}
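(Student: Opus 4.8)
The plan is to treat the scheme \eqref{dkg-c-xt} as an implicit one-step map and to exploit the fact that, for fixed $\psi\sp{\ttd-1}$ and $\psi\sp{\ttd}$, the relation \eqref{dkg-c-xt} determines $\psi\sp{\ttd+1}$ \emph{sitewise}: at each $\xxd\in\Z^n$ the new value $w:=\psi\sb{\xxd}\sp{\ttd+1}$ solves the scalar equation
\[
w+\tau^2 B\sb{\xxd}(\abs{w}^2,\abs{v}^2)(w+v)=F\sb{\xxd},\qquad v:=\psi\sb{\xxd}\sp{\ttd-1},
\]
where $F\sb{\xxd}=2\psi\sb{\xxd}\sp{\ttd}-v+\tau^2\sum\sb{j}(\psi\sb{\xxd+\e\sb j}\sp{\ttd}-2\psi\sb{\xxd}\sp{\ttd}+\psi\sb{\xxd-\e\sb j}\sp{\ttd})/\varepsilon^2$ depends only on the already-known data. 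Existence of such a $w$ is furnished by Theorem~\ref{theorem-e} (evaluating $K\sb\xxd\sp{-}$ on the diagonal gives $K\sb\xxd\sp{-}(\lambda,\lambda)=\p\sb\lambda V\sb\xxd(\lambda)$, so $k\sb 2\le k\sb 1$ and hence $\tau\sb 2\le\tau\sb 1$, placing $\tau\in(0,\tau\sb 2)$ in the existence range). Thus the genuinely new content is the \emph{injectivity} of the map $w\mapsto w+\tau^2 B\sb{\xxd}(\abs{w}^2,\abs{v}^2)(w+v)$ on $\C$ and its quantitative invertibility.

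First I would prove uniqueness. Since $V\sb\xxd$ is real-valued, $B\sb\xxd$ is real, and the per-site map is a genuine map $\R^2\to\R^2$ in which amplitude and phase are coupled through the factor $w+v$. I would subtract the equations for two candidate values $w\sb 1,w\sb 2$ (same $v$, same $F\sb\xxd$), set $d=w\sb 1-w\sb 2$, pair with $\bar d$, and take real parts, writing the difference $B\sb\xxd(\abs{w\sb 1}^2,\cdot)-B\sb\xxd(\abs{w\sb 2}^2,\cdot)$ as $\p\sb\lambda B\sb\xxd$ times $\abs{w\sb 1}^2-\abs{w\sb 2}^2=\Re\big(\bar d(w\sb 1+w\sb 2)\big)$. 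Bounding the resulting cross term by the Cauchy--Schwarz inequality---which is precisely where the products $\lambda=\abs{w}^2$ and $\sqrt{\lambda\mu}=\abs{w}\,\abs{v}$ (with $\mu=\abs{v}^2$) enter---reduces the sign of the nonlinear contribution to the positivity of $1+\tau^2 K\sb{\xxd}\sp{\pm}(\lambda,\mu)$, with $K\sb\xxd\sp\pm$ as in \eqref{def-k2}. The hypothesis $\tau<\tau\sb 2=\sqrt{-1/k\sb 2}$ is exactly $\tau^2 k\sb 2>-1$, which makes this quantity strictly positive and forces $d=0$.

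Continuous dependence I would obtain by iterating the same estimate. Given two solutions $\psi,\tilde\psi$ launched from data close in $l\sp\infty(\Z^n)\times l\sp\infty(\Z^n)$, subtracting their equations \eqref{dkg-c-xt} at a common site and invoking the monotonicity just established---now with the two solutions in place of $w\sb 1,w\sb 2$, and with the differences in both arguments of $B\sb\xxd$ as well as in $F$ contributing only terms linear in $\psi\sp{\ttd}-\tilde\psi\sp{\ttd}$ and $\psi\sp{\ttd-1}-\tilde\psi\sp{\ttd-1}$ (here one uses $V\sb\xxd\in C\sp 2$)---yields a recursive bound
\[
\norm{\psi\sp{\ttd+1}-\tilde\psi\sp{\ttd+1}}\sb{l\sp\infty}\le C\big(\norm{\psi\sp{\ttd}-\tilde\psi\sp{\ttd}}\sb{l\sp\infty}+\norm{\psi\sp{\ttd-1}-\tilde\psi\sp{\ttd-1}}\sb{l\sp\infty}\big),
\]
where $C$ depends only on $\tau,\varepsilon,n$ and on pointwise bounds for the two solutions on the finite time window $\{0,\dots,\ttd\}$ (the discrete Laplacian acts as a bounded operator on $l\sp\infty$, so no spatial difficulty arises). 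Composing this bound over the finitely many steps from $0$ to a fixed $\ttd$ gives continuity of $U(\ttd)$ as a map $l\sp\infty(\Z^n)\times l\sp\infty(\Z^n)\to l\sp\infty(\Z^n)\times l\sp\infty(\Z^n)$.

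The hardest part will be the algebraic heart of the first two steps: extracting precisely the combination $K\sb\xxd\sp\pm=B\sb\xxd+2\p\sb\lambda B\sb\xxd(\lambda\pm\sqrt{\lambda\mu})$ from the paired difference identity. Because the term $w+v$ couples modulus and phase, the cross term has no fixed sign a priori, so both signs $\pm$---and hence the infimum over both $K\sp{+}$ and $K\sp{-}$ defining $k\sb 2$---are genuinely needed; in addition one must handle the case split in the definition \eqref{vmv} of $B\sb\xxd$ (the diagonal $\lambda=\mu$ versus the divided-difference off-diagonal) and justify the mean-value manipulation of $B\sb\xxd(\lambda,\mu)$ in $\lambda$ via $V\sb\xxd\in C\sp 2$. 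Once the positivity of $1+\tau^2 K\sb\xxd\sp\pm$ is in hand, both uniqueness and the Lipschitz step follow from the same pairing, and the finite iteration is routine.
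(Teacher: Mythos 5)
Your frame (sitewise implicit equation, existence via Theorem~\ref{theorem-e} and $k_2\le k_1$, continuity by iterating a per-step Lipschitz bound) matches the paper's, but the step you yourself flag as the ``algebraic heart'' --- extracting $K_{\xxd}^{\pm}$ from the paired difference identity --- contains a genuine gap, and Cauchy--Schwarz cannot close it. Subtracting the equations for $w_1,w_2$ and pairing with $\bar d$, $d=w_1-w_2$, the nonlinear contribution is $\tau^2\big[B\,\abs{d}^2+\p_\lambda B\;\Re[\bar d(w_1+w_2)]\;\Re[\bar d(w_2+v)]\big]$ (arguments of $B$, $\p_\lambda B$ at suitable intermediate points; the obstruction already appears in the infinitesimal version where all arguments coincide). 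Writing $u_1=\Re(\bar w d)/\abs{d}$, $u_2=\Re(\bar v d)/\abs{d}$, the cross term per unit $\abs{d}^2$ is $2\p_\lambda B\,(u_1^2+u_1u_2)$, and over \emph{arbitrary} directions $d$ one only has $u_1^2+u_1u_2\ge-\mu/4$; this infimum genuinely drops below $\lambda-\sqrt{\lambda\mu}$ (e.g.\ for $w\perp v$, $\lambda=\mu$, the directional minimum is $\lambda(1-\sqrt{2})/2<0=\lambda-\sqrt{\lambda\mu}$). So when $\p_\lambda B>0$ your pairing yields at best the coercivity constant $1+\tau^2\inf\big(B-\p_\lambda B\,\mu/2\big)$, \emph{not} $1+\tau^2\min_{\pm}K_{\xxd}^{\pm}$, and hypothesis \eqref{def-k2} does not control it. Concretely, for $V(\lambda)=\lambda^{6}$ Lemma~\ref{lemma-31} and homogeneity give $K^{\pm}\ge0$, so $k_2=0$, $\tau_2=+\infty$, and Theorem~\ref{theorem-u} asserts uniqueness for every $\tau>0$; yet at $\lambda=\mu$ one computes $B-\p_\lambda B\,\mu/2=6\big(1-\tfrac54\big)\lambda^{5}\to-\infty$, so your estimate fails for every $\tau>0$. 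The paper itself records this distinction: the quantity your argument needs is exactly hypothesis \eqref{cond-on-b} of Lemma~\ref{lemma-u-0}, which is strictly stronger than \eqref{def-k2} and which, by Lemma~\ref{lemma-four}, holds for polynomial nonlinearities only up to degree four --- hence the degree restriction in the second part of Theorem~\ref{theorem-pol}.

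The missing idea is that the direction of $d$ is not arbitrary. Since $B_{\xxd}$ is real-valued and $1+\tau^2B_{\xxd}>0$ by \eqref{1e2b} (available because $k_2\le k_1$ gives $\tau<\tau_2\le\tau_1$), the sitewise equation \eqref{eqn-on-xi} forces $w+v=\xi_{\xxd}^{\ttd}/\big(1+\tau^2 B_{\xxd}\big)$ to be a \emph{real} multiple of $\xi_{\xxd}^{\ttd}$. Thus every solution is of the form $w=s\,\xi_{\xxd}^{\ttd}-v$ with $s\in\R$, any two solutions differ along this ray, and uniqueness reduces to injectivity of the scalar function $f(s)$ of \eqref{def-fs}. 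Along the ray the perturbation is parallel to $w+v$, which is precisely what confines the cross term: as in \eqref{def-fp-1}, $f'(s)=1+\tau^2\big[B+2\p_\lambda B\big(\abs{w+\tfrac v2}^2-\tfrac\mu4\big)\big]$ with $\abs{w+\tfrac v2}^2-\tfrac\mu4\in[\lambda-\sqrt{\lambda\mu},\,\lambda+\sqrt{\lambda\mu}]$, whence $f'(s)\ge1+\tau^2\min_{\pm}K_{\xxd}^{\pm}\ge1+\tau^2k_2>0$. That is the paper's proof; your differencing idea survives, but only after this reduction (two solutions give two roots $s_1,s_2$ of $f(s)=1$, and $f'\ge\kappa>0$ forces $s_1=s_2$). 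Your continuity step inherits the same defect, since it iterates the same coercivity estimate; with the reduction in place, the root $s$ depends Lipschitz-continuously on $\big(\psi_{\xxd}^{\ttd-1},\xi_{\xxd}^{\ttd}\big)$ uniformly in $\xxd$ (by $f'\ge\kappa$), and your finite iteration over time steps then goes through as described.
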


\begin{remark}\label{remark-k2-k3}
Note that since
\[
K\sb{\xxd}\sp{-}(\lambda,\lambda)=B\sb{\xxd}(\lambda,\lambda)
=\p\sb\lambda W\sb{\xxd}(\lambda),
\]
the values of $k\sb 1$ and $k\sb 2$ from
Theorem~\ref{theorem-e}
and
Theorem~\ref{theorem-u},
whether $k\sb 2>-\infty$,
are related by
$k\sb 2\le k\sb 1$,
and then
the values of $\tau\sb 1$ and $\tau\sb 2$
from these theorems
are related by
$\tau\sb 2\le \tau\sb 1$.
\end{remark}

\begin{theorem}[Existence and uniqueness for polynomial nonlinearities]
\label{theorem-pol}

\ \begin{enumerate}
\item
\label{theorem-pol-i}

The condition
\eqref{def-k2}
holds for any confining polynomial potential
\eqref{poly}.
\item
\label{theorem-pol-ii}
Assume that
\begin{equation}\label{w4-xt}
V\sb{\xxd}(\lambda)=\sum\sb{q=0}\sp{4}C\sb{\xxd,q}\lambda^{q+1},
\qquad
\xxd\in\Z^n,\quad\lambda\ge 0,
\end{equation}
where
$C\sb{\xxd,q}\ge 0$ for $\xxd\in\Z^n$ and $1\le q\le 4$,
and $C\sb{\xxd,0}$ are uniformly bounded from below:
\begin{equation}\label{def-k3}
k\sb 3:=
\inf\sb{\xxd\in\Z^n}
C\sb{\xxd,0}>-\infty.
\end{equation}
\[
\tau\sb 3
=\left\{
\begin{array}{l}
\sqrt{-1/k\sb 3},\qquad k\sb 3<0;
\\
+\infty,\qquad\quad k\sb 3\ge 0.
\end{array}
\right.
\]
Then for any $\tau\in(0,\tau\sb{3})$
and any $\varepsilon>0$
there exists a solution to the Cauchy problem
for equation \eqref{dkg-c-xt}
with arbitrary initial data $(\psi\sp{0},\psi\sp{1})$,
and this solution is unique.
\end{enumerate}
\end{theorem}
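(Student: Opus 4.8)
The plan is to treat both parts through a single computation, exploiting that $K^\pm_\xxd$ (defined in Theorem~\ref{theorem-u}) depends \emph{linearly} on the potential $V_\xxd$: both $B_\xxd$ (see \eqref{vmv}) and $\p_\lambda B_\xxd$ are linear in $V_\xxd$, and $K^\pm_\xxd=B_\xxd+2\p_\lambda B_\xxd\,(\lambda\pm\sqrt{\lambda\mu})$ is in turn linear in these. Hence it suffices to understand the contribution of a single monomial $V(\lambda)=\lambda^{q+1}$ and then to superpose with the coefficients $C_{\xxd,q}$.

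For such a monomial one has $B(\lambda,\mu)=\sum_{j=0}^q\lambda^j\mu^{q-j}$ and $\p_\lambda B(\lambda,\mu)=\sum_{j=1}^q j\,\lambda^{j-1}\mu^{q-j}$. Introducing $s=\sqrt\lambda\ge0$ and $t=\sqrt\mu\ge0$ removes the square root: setting $a_j=s^{2j}t^{2(q-j)}$, so that $\sqrt{a_{j-1}a_j}=s^{2j-1}t^{2q-2j+1}$, the contribution of this monomial becomes the expression, homogeneous of degree $2q$ in $(s,t)$,
\[
K^\pm_{(q)}=\sum_{j=0}^q(2j+1)a_j\pm2\sum_{j=1}^q j\sqrt{a_{j-1}a_j}.
\]
For the $+$ sign every summand is nonnegative and $K^+_{(q)}\ge a_0+a_q=s^{2q}+t^{2q}$. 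The $-$ sign is the delicate case: using $2\sqrt{a_{j-1}a_j}\le a_{j-1}+a_j$ and collecting coefficients of the $a_j$, all intermediate terms cancel and one is left with $K^-_{(q)}\ge(q+1)a_q=(q+1)\lambda^q\ge0$; moreover $K^-_{(q)}\big|_{s=0}=t^{2q}$. Thus $K^\pm_{(q)}$ is strictly positive on the closed first quadrant with the origin removed.

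To prove (i) I would superpose these homogeneous pieces for the confining potential \eqref{poly}. The top-degree ($q=p$) contribution to $K^\pm_\xxd$ is $C_p K^\pm_{(p)}$ with $C_p>0$, which by the previous paragraph is positive off the origin; being homogeneous and continuous it obeys $C_p K^\pm_{(p)}\ge c\,(\lambda+\mu)^p$ on the quadrant for some $c>0$. All remaining contributions have strictly lower degree in $(s,t)$, so they are dominated as $\lambda+\mu\to\infty$; hence $K^\pm_\xxd(\lambda,\mu)\to+\infty$, and since $K^\pm_\xxd$ is continuous on $[0,\infty)^2$ it is bounded below. This is precisely condition \eqref{def-k2}, which is the assertion of part~(i).

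For (ii) the sign conditions replace confinement. Writing $V_\xxd(\lambda)=C_{\xxd,0}\lambda+\sum_{q=1}^4 C_{\xxd,q}\lambda^{q+1}$, the linear term has $B=C_{\xxd,0}$ and $\p_\lambda B=0$, so it contributes exactly the constant $C_{\xxd,0}$ to $K^\pm_\xxd$, while the higher monomials contribute $\sum_{q=1}^4 C_{\xxd,q}K^\pm_{(q)}\ge0$ since each $C_{\xxd,q}\ge0$ and $K^\pm_{(q)}\ge0$. Therefore $K^\pm_\xxd(\lambda,\mu)\ge C_{\xxd,0}\ge k_3$ uniformly in $\xxd,\lambda,\mu$, whence $k_2\ge k_3>-\infty$ and $\tau_2\ge\tau_3$. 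Since then $(0,\tau_3)\subset(0,\tau_2)$ and \eqref{def-k2} holds, existence and uniqueness on $(0,\tau_3)$ follow directly from Theorem~\ref{theorem-u}. The one genuinely nontrivial estimate in all of this is the $-$ sign bound $K^-_{(q)}\ge0$: the AM--GM step must be paired with the boundary value $K^-_{(q)}\big|_{s=0}=t^{2q}$ to exclude a zero in the open quadrant, and this single inequality underlies both the coercivity in (i) and the uniform lower bound in (ii).
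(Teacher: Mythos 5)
Your proposal is correct, and it is worth recording how it relates to the paper's own proof, since the two parts fare differently. For part (i) your structure matches the paper's: reduce \eqref{def-k2} to strict positivity of the top-degree monomial's contribution to $K\sp\pm\sb\xxd$ on a compact slice of the quadrant (the content of the paper's Lemma~\ref{lemma-31}), then use homogeneity plus domination of the lower-order terms. You differ only in how that positivity is proved: the paper substitutes $z^2=\lambda/\mu$ and factors a one-variable polynomial, whereas you telescope with AM--GM, $2\sqrt{a_{j-1}a_j}\le a_{j-1}+a_j$, so that all intermediate coefficients cancel and $K\sp{-}\sb{(q)}\ge(q+1)\lambda^q$; both computations are valid (note only that the AM--GM bound already excludes zeros in the \emph{open} quadrant, while the boundary value $\mu^q$ at $\lambda=0$ is what excludes them on the edge). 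For part (ii), however, your route is genuinely different from, and stronger than, the paper's. The paper never verifies \eqref{def-k2} for the potentials \eqref{w4-xt}; instead it proves an auxiliary, cruder uniqueness criterion (Lemma~\ref{lemma-u-0}), obtained from \eqref{def-fp-1} by discarding the nonnegative term $2\p\sb\lambda B\sb\xxd\abs{\psi\sb\xxd\sp{\ttd+1}+\psi\sb\xxd\sp{\ttd-1}/2}^2$ under the sign hypothesis \eqref{pb-positive}, and then checks the remaining hypothesis \eqref{cond-on-b} via Lemma~\ref{lemma-four}, i.e., $b_q\ge\p\sb\lambda b_q\,\mu/2$ for $1\le q\le4$. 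That inequality is precisely where the degree restriction enters: at $\lambda=\mu$ it reads $q+1\ge q(q+1)/4$, which fails for $q\ge5$. You bypass all of this by using the linearity of $K\sp\pm\sb\xxd$ in $V\sb\xxd$ together with $K\sp\pm\sb{(q)}\ge0$ to get $K\sp\pm\sb\xxd\ge C\sb{\xxd,0}\ge k_3$, hence $k_2\ge k_3$ and $\tau_2\ge\tau_3$, so that Theorem~\ref{theorem-u} applies verbatim on $(0,\tau_3)$. What your approach buys: no auxiliary criterion, no Lemma~\ref{lemma-four}, and---since $K\sp\pm\sb{(q)}\ge0$ holds for every $q\in\N$---the restriction to degree five ($q\le 4$) in \eqref{w4-xt} is never used, so your argument actually proves the statement for nonnegative higher coefficients of arbitrary degree. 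What the paper's route buys is a criterion whose hypotheses involve only $B\sb\xxd$ and $\p\sb\lambda B\sb\xxd$, with no $\sqrt{\lambda\mu}$-terms, which may be more convenient to check for non-polynomial potentials; for the theorem as stated, your argument is shorter and more general.
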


Thus, even though the potential \eqref{poly}
satisfies conditions
\eqref{def-k1} and \eqref{def-k2}
in Theorem~\ref{theorem-e} and Theorem~\ref{theorem-u},
the corresponding values $\tau\sb 1$ and $\tau\sb 2$
could be hard to specify explicitly.
Yet,
the second part of Theorem~\ref{theorem-pol}
gives a simple description
of a class of $\xxd$-dependent polynomials
$V\sb{\xxd}(\lambda)$
for which
the range of admissible $\tau>0$
can be readily specified.

\medskip

We will prove
existence and uniqueness results
stated in
Theorems~\ref{theorem-e}, ~\ref{theorem-u}, and~\ref{theorem-pol}
in Appendix~\ref{sect-eu}.

\subsection{Energy conservation}

\begin{theorem}[Energy conservation]
\label{theorem-energy}
Let $\psi$
be a solution to equation \eqref{dkg-c-xt}
such that
$\psi\sp{\ttd}\in l^2(\Z^n)$ for all $\ttd\in\Z$.
Then the discrete energy
\begin{eqnarray}\label{def-energy-t-xt}
&&
E\sp{\ttd}
=
\sum\sb{\xxd\in\Z^n}
\varepsilon^n
\Big[
\big(
\frac{1}{\tau^2}-\frac{n}{\varepsilon^2}
\big)
\frac{\abs{\psi\sb{\xxd}\sp{\ttd+1}-\psi\sb{\xxd}\sp{\ttd}}^2}{2}
\nonumber
\\
&&
\qquad
+
\sum\sb{j=1}\sp{n}\sum\limits\sb{\pm}
\frac{\abs{\psi\sb{\xxd}\sp{\ttd+1}-\psi\sb{\xxd\pm\e\sb j}\sp{\ttd}}^2
}{4\varepsilon^2}
+
\frac{V\sb{\xxd}(\abs{\psi\sb{\xxd}\sp{\ttd+1}}^2)+V\sb{\xxd}(\abs{\psi\sb{\xxd}\sp{\ttd}}^2)}{2}
\Big]
\end{eqnarray}
is conserved.
\end{theorem}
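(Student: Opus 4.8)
The plan is to reduce everything to one master identity: for \emph{any} sequence $\psi$ with $\psi^\ttd\in l^2(\Z^n)$ for all $\ttd$, setting the scheme residual
\[
R_\xxd^\ttd:=\frac{1}{\tau^2}D_\ttd^2\psi_\xxd^\ttd-\frac{1}{\varepsilon^2}\bm{D}_\xxd^2\psi_\xxd^\ttd+B_\xxd(|\psi_\xxd^{\ttd+1}|^2,|\psi_\xxd^{\ttd-1}|^2)\,(\psi_\xxd^{\ttd+1}+\psi_\xxd^{\ttd-1}),
\]
I would establish the purely algebraic relation
\[
E^\ttd-E^{\ttd-1}=\frac{\varepsilon^n}{2}\,\Re\sum_{\xxd\in\Z^n}R_\xxd^\ttd\,\overline{\psi_\xxd^{\ttd+1}-\psi_\xxd^{\ttd-1}}.
\]
Since $R_\xxd^\ttd\equiv 0$ along solutions of \eqref{dkg-c-xt}, this gives $E^\ttd=E^{\ttd-1}$ for all $\ttd$, hence conservation. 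The hypothesis $\psi^\ttd\in l^2(\Z^n)$ guarantees that every sum converges absolutely (the kinetic and gradient terms are controlled by $l^2$ norms, while the confining polynomial $V_\xxd$ of \eqref{poly} vanishes at $0$ and the embedding $l^2\subset l^{2(q+1)}$ makes $V_\xxd(|\psi_\xxd^\ttd|^2)$ summable), which is also what legitimizes the index shifts and summation by parts used below.

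The crucial and least obvious step is to rewrite the ``gradient plus kinetic correction'' part of the energy,
\[
K^\ttd:=\sum_{\xxd\in\Z^n}\varepsilon^n\Big[-\frac{n}{\varepsilon^2}\frac{|\psi_\xxd^{\ttd+1}-\psi_\xxd^\ttd|^2}{2}+\sum_{j=1}^n\sum_{\pm}\frac{|\psi_\xxd^{\ttd+1}-\psi_{\xxd\pm\e_j}^\ttd|^2}{4\varepsilon^2}\Big],
\]
in the compact form $K^\ttd=-\dfrac{\varepsilon^n}{2\varepsilon^2}\,\Re\sum_{\xxd}(\bm{D}_\xxd^2\psi_\xxd^\ttd)\,\overline{\psi_\xxd^{\ttd+1}}$. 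To obtain this I would write $\psi_\xxd^{\ttd+1}-\psi_{\xxd\pm\e_j}^\ttd=(\psi_\xxd^{\ttd+1}-\psi_\xxd^\ttd)\mp(\psi_{\xxd\pm\e_j}^\ttd-\psi_\xxd^\ttd)$, expand the squares, and sum over $\xxd$. The diagonal contributions $|\psi_\xxd^{\ttd+1}-\psi_\xxd^\ttd|^2$ accumulate to $\tfrac{n}{2\varepsilon^2}$ per lattice point and are cancelled exactly by the correction term $-\tfrac{n}{\varepsilon^2}$ --- this is precisely the role of that term in \eqref{def-energy-t-xt}. Shift-invariance of the sum over $\Z^n$ collapses the remaining pure spatial-difference terms into the Dirichlet form $\sum_{\xxd,j}|\psi_{\xxd+\e_j}^\ttd-\psi_\xxd^\ttd|^2$, and discrete summation by parts (no boundary terms, by $l^2$ summability) turns it into $-\Re\sum_\xxd(\bm{D}_\xxd^2\psi_\xxd^\ttd)\overline{\psi_\xxd^\ttd}$; combining with the surviving cross terms and using $\psi_\xxd^\ttd+(\psi_\xxd^{\ttd+1}-\psi_\xxd^\ttd)=\psi_\xxd^{\ttd+1}$ yields the asserted compact form.

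Granting this, the master identity splits into three independent telescopings matching the three pieces of $R_\xxd^\ttd$. For the $\tfrac1{\tau^2}$ kinetic part one uses $\Re[(a-b)\overline{(a+b)}]=|a|^2-|b|^2$ with $a=\psi^{\ttd+1}-\psi^\ttd$, $b=\psi^\ttd-\psi^{\ttd-1}$. For $K^\ttd$ one invokes the compact form together with the self-adjointness (real symmetry) of $\bm{D}_\xxd^2$, which gives $\Re\sum_\xxd(\bm{D}_\xxd^2\psi_\xxd^{\ttd-1})\overline{\psi_\xxd^\ttd}=\Re\sum_\xxd(\bm{D}_\xxd^2\psi_\xxd^\ttd)\overline{\psi_\xxd^{\ttd-1}}$, so that $K^\ttd-K^{\ttd-1}=-\tfrac{\varepsilon^n}{2\varepsilon^2}\Re\sum_\xxd(\bm{D}_\xxd^2\psi_\xxd^\ttd)\overline{\psi_\xxd^{\ttd+1}-\psi_\xxd^{\ttd-1}}$, exactly the Laplacian piece of the residual. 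For the potential part the defining property of the divided difference \eqref{vmv}, namely $B_\xxd(\lambda,\mu)(\lambda-\mu)=V_\xxd(\lambda)-V_\xxd(\mu)$ (with the $\lambda=\mu$ case degenerating to $\partial_\lambda V_\xxd$), together with $\Re[(\psi^{\ttd+1}+\psi^{\ttd-1})\overline{(\psi^{\ttd+1}-\psi^{\ttd-1})}]=|\psi^{\ttd+1}|^2-|\psi^{\ttd-1}|^2$, produces $\tfrac12\big(V_\xxd(|\psi_\xxd^{\ttd+1}|^2)-V_\xxd(|\psi_\xxd^{\ttd-1}|^2)\big)$ per site, which is the difference of the potential term in \eqref{def-energy-t-xt}. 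Adding the three matched pieces proves the master identity, and the theorem follows. The main obstacle is the algebraic rewriting of $K^\ttd$; once the cancellation supplied by the $-n/\varepsilon^2$ correction and the single summation-by-parts step are identified, the three telescopings are routine.
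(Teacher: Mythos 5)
Your proof is correct, and it shares the paper's skeleton: both reduce the theorem to the master identity
\begin{equation*}
E\sp{\ttd}-E\sp{\ttd-1}
=\frac{\varepsilon^{n}}{2}\,
\Re\sum\sb{\xxd\in\Z^{n}}
\big(\bar\psi\sb{\xxd}\sp{\ttd+1}-\bar\psi\sb{\xxd}\sp{\ttd-1}\big)\,R\sb{\xxd}\sp{\ttd}
\end{equation*}
(with $R\sb{\xxd}\sp{\ttd}$ your scheme residual), which vanishes along solutions of \eqref{dkg-c-xt}; moreover your kinetic and potential telescopings are exactly the paper's identities \eqref{kkk} and \eqref{vmvmv}, both instances of the polarization identity $\abs{u}^{2}-\abs{v}^{2}=\Re[(\bar u-\bar v)(u+v)]$, the paper's \eqref{kk}. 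The genuine difference is in the spatial term. The paper never forms your compact expression $K\sp{\ttd}=-\frac{\varepsilon^{n}}{2\varepsilon^{2}}\Re\sum\sb{\xxd}(\bm{D}\sb{\xxd}^{2}\psi\sb{\xxd}\sp{\ttd})\,\overline{\psi\sb{\xxd}\sp{\ttd+1}}$: instead it applies \eqref{kk} once more, directly to the pairs $\abs{\psi\sb{\xxd}\sp{\ttd+1}-\psi\sb{\xxd\pm\e\sb j}\sp{\ttd}}^{2}$ and $\abs{\psi\sb{\xxd\pm\e\sb j}\sp{\ttd}-\psi\sb{\xxd}\sp{\ttd-1}}^{2}$ (the latter being the gradient terms of $E\sp{\ttd-1}$ after the index shift $\xxd\mapsto\xxd\pm\e\sb j$), which produces its identity \eqref{pmpm-xt} in one stroke, with no separate summation by parts and no appeal to the symmetry of $\bm{D}\sb{\xxd}^{2}$. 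Your route --- expand, cancel the diagonal terms against the $-n/\varepsilon^{2}$ correction, sum by parts, then use self-adjointness of the discrete Laplacian to move $\bm{D}\sb{\xxd}^{2}$ from $\psi\sp{\ttd-1}$ onto $\psi\sp{\ttd}$ --- buys conceptual clarity: it exhibits the spatial energy as a quadratic form in $(\psi\sp{\ttd},\psi\sp{\ttd+1})$, explains structurally why the $-n/\varepsilon^{2}$ correction must be present in \eqref{def-energy-t-xt}, and isolates exactly where the $l^{2}$ hypothesis enters (absolute convergence legitimizing the index shifts and the symmetric pairing). The paper's version buys brevity and uniformity --- every step is the same identity \eqref{kk} --- at the cost of heavier index bookkeeping; both arguments ultimately rest on the same two ingredients, shift invariance of sums over $\Z^{n}$ and polarization.
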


\begin{remark}
The discrete energy is positive-definite
if the grid ratio satisfies
\begin{equation}\label{grid-ratio-condition}
\frac{\tau}{\varepsilon}\le\frac{1}{\sqrt{n}}.
\end{equation}
\end{remark}

\begin{remark}
If $\psi\sp 0$ and $\psi\sp 1\in l^2(\Z^n)$,
then, by Theorem~\ref{theorem-e},
one also has $\psi\sp{\ttd}\in l^2(\Z^n)$
for all $\ttd\in\Z$
as long as
\[
\inf\sb{\xxd\in\Z^n,\,\lambda\ge 0}
\p\sb\lambda V\sb{\xxd}(\lambda)>-\infty.
\]
\end{remark}

\begin{proof}
For any $u$, $v\in\C$,
there is the identity
\begin{equation}\label{kk}
\abs{u}^2-\abs{v}^2
=
\Re
\left[(\bar u-\bar v)\cdot(u+v)\right].
\end{equation}
Applying \eqref{kk}, one has:
\begin{equation}\label{kkk}
\sum\sb{\xxd\in\Z^n}
\!\!
\big(
\abs{\psi\sb{\xxd}\sp{\ttd+1}-\psi\sb{\xxd}\sp{\ttd}}^2
-\abs{\psi\sb{\xxd}\sp{\ttd}-\psi\sb{\xxd}\sp{\ttd-1}}^2
\big)
=
\Re
\!\!
\sum\sb{\xxd\in\Z^n}
\!\!
\big(
\bar\psi\sb{\xxd}\sp{\ttd+1}-\bar\psi\sb{\xxd}\sp{\ttd-1}
\big)
\cdot
\big(
\psi\sb{\xxd}\sp{\ttd+1}-2\psi\sb{\xxd}\sp{\ttd}
+\psi\sb{\xxd}\sp{\ttd-1}
\big).
\end{equation}
Using \eqref{kk},
we also derive the following identity
for any function
$\psi\sb{\xxd}\sp{\ttd}\in\C$:
\begin{eqnarray}\label{pmpm-xt}
&&
\hskip -10pt
\sum\sb{\xxd\in\Z^n}
\sum\sb{j=1}\sp{n}
\Big[
\abs{\psi\sb{\xxd}\sp{\ttd+1}-\psi\sb{\xxd-\e\sb j}\sp{\ttd}}^2
-\abs{\psi\sb{\xxd-\e\sb j}\sp{\ttd}-\psi\sb{\xxd}\sp{\ttd-1}}^2
\nonumber
\\
&&
\qquad\qquad\qquad\qquad
+\abs{\psi\sb{\xxd}\sp{\ttd+1}-\psi\sb{\xxd+\e\sb j}\sp{\ttd}}^2
-\abs{\psi\sb{\xxd+\e\sb j}\sp{\ttd}-\psi\sb{\xxd}\sp{\ttd-1}}^2
\Big]
\nonumber
\\
&&
=\Re\sum\sb{\xxd\in\Z^n}
\sum\sb{j=1}\sp{n}
\Big[
(\bar\psi\sb{\xxd}\sp{\ttd+1}-\bar\psi\sb{\xxd}\sp{\ttd-1})
\cdot
(\psi\sb{\xxd}\sp{\ttd+1}-2\psi\sb{\xxd\pm\e\sb j}\sp{\ttd}+\psi\sb{\xxd}\sp{\ttd-1})
\nonumber
\\
&&
\qquad\qquad\qquad\qquad
+
(\bar\psi\sb{\xxd}\sp{\ttd+1}-\bar\psi\sb{\xxd}\sp{\ttd-1})
\cdot
(\psi\sb{\xxd}\sp{\ttd+1}-2\psi\sb{\xxd+\e\sb j}\sp{\ttd}+\psi\sb{\xxd}\sp{\ttd-1})
\Big]
\nonumber
\\
&&
=\Re\sum\sb{\xxd\in\Z^n}
(\bar\psi\sb{\xxd}\sp{\ttd+1}-\bar\psi\sb{\xxd}\sp{\ttd-1})
\cdot
\Big[
2n
\big(\psi\sb{\xxd}\sp{\ttd+1}
-2\psi\sb{\xxd}\sp{\ttd}
+\psi\sb{\xxd}\sp{\ttd-1}
\big)
\nonumber
\\
&&
\qquad\qquad\qquad\qquad\qquad\qquad\quad
-
2
\sum\sb{j=1}\sp{n}
\big(\psi\sb{\xxd+\e\sb j}\sp{\ttd}
-2\psi\sb{\xxd}\sp{\ttd}
+\psi\sb{\xxd-\e\sb j}\sp{\ttd}
\big)
\Big].
\end{eqnarray}
Further, \eqref{vmv}
together with \eqref{kk}
imply that
\begin{eqnarray}\label{vmvmv}
&&
V\sb{\xxd}(\abs{\psi\sb{\xxd}\sp{\ttd+1}}^2)-V\sb{\xxd}(\abs{\psi\sb{\xxd}\sp{\ttd-1}}^2)
\nonumber
\\
&&
\quad
=
\Re
\big[(\bar\psi\sb{\xxd}\sp{\ttd+1}-\bar\psi\sb{\xxd}\sp{\ttd-1})
\cdot(\psi\sb{\xxd}\sp{\ttd+1}+\psi\sb{\xxd}\sp{\ttd-1})
\big]
B\sb{\xxd}(\abs{\psi\sb{\xxd}\sp{\ttd+1}}^2,\abs{\psi\sb{\xxd}\sp{\ttd-1}}^2).
\end{eqnarray}
Taking into account \eqref{kkk}, \eqref{pmpm-xt}, and \eqref{vmvmv},
we compute:
\begin{eqnarray}
&&
\frac{E\sp{\ttd}-E\sp{\ttd-1}}{\varepsilon^n}
=
\sum\sb{\xxd\in\Z^n}
\Big[
\Big(
\frac{1}{\tau^2}-\frac{n}{\varepsilon^2}
\Big)
\frac{
\abs{\psi\sb{\xxd}\sp{\ttd+1}-\psi\sb{\xxd}\sp{\ttd}}^2
-\abs{\psi\sb{\xxd}\sp{\ttd}-\psi\sb{\xxd}\sp{\ttd-1}}^2
}{2}
\nonumber\\
&&
\qquad\qquad\qquad
+
\sum\sb{j=1}\sp{n}
\sum\limits\sb{\pm}
\frac{
\abs{\psi\sb{\xxd}\sp{\ttd+1}-\psi\sb{\xxd\pm\e\sb j}\sp{\ttd}}^2
-\abs{\psi\sb{\xxd\pm\e\sb j}\sp{\ttd}-\psi\sb{\xxd}\sp{\ttd-1}}^2
}{4\varepsilon^2}
\nonumber\\
&&
\qquad\qquad\qquad
+
\frac{V\sb{\xxd}(\abs{\psi\sb{\xxd}\sp{\ttd+1}}^2)-V\sb{\xxd}(\abs{\psi\sb{\xxd}\sp{\ttd-1}}^2)}{2}
\Big]
\nonumber\\
&&
=
\Re\sum\sb{\xxd\in\Z^n}
(\bar\psi\sb{\xxd}\sp{\ttd+1}-\bar\psi\sb{\xxd}\sp{\ttd-1})
\cdot\Big[
\Big(
\frac{1}{\tau^2}-\frac{n}{\varepsilon^2}
\Big)
\frac{\psi\sb{\xxd}\sp{\ttd+1}-2\psi\sb{\xxd}\sp{\ttd}+\psi\sb{\xxd}\sp{\ttd-1}}{2}
\nonumber\\
&&
\qquad\qquad
+
\frac{
n
\big(\psi\sb{\xxd}\sp{\ttd+1}-2\psi\sb{\xxd}\sp{\ttd}+\psi\sb{\xxd}\sp{\ttd-1}\big)
-
\sum\limits\sb{j=1}\sp{n}
\big(\psi\sb{\xxd+\e\sb j}\sp{\ttd}
-2\psi\sb{\xxd}\sp{\ttd}
+\psi\sb{\xxd-\e\sb j}\sp{\ttd}
\big)
}{2\varepsilon^2}
\nonumber\\
&&
\qquad\qquad
+
\frac{\psi\sb{\xxd}\sp{\ttd+1}+\psi\sb{\xxd}\sp{\ttd-1}}{2}
B\sb{\xxd}(\abs{\psi\sb{\xxd}\sp{\ttd+1}}^2,\abs{\psi\sb{\xxd}\sp{\ttd-1}}^2)
\Big]
.
\nonumber
\end{eqnarray}
The expression in the square brackets
adds up to zero due to \eqref{dkg-c-xt}.
We conclude that $E\sp{\ttd}=E\sp{\ttd-1}$
for all $\ttd\in\Z$.
\end{proof}

\subsection{A priori estimates}

\begin{theorem}[A priori estimates]
\label{theorem-a-priori}
Assume that $\varepsilon>0$ and $\tau>0$ satisfy
\[
\frac{\tau}{\varepsilon}\le\frac{1}{\sqrt{n}}.
\]
Assume that
\begin{equation}\label{w-nlkg}
V\sb{\xxd}(\lambda)=\frac{m^2}{2}\lambda+W\sb{\xxd}(\lambda),
\end{equation}
where $m>0$,
and for each $\xxd\in\Z^n$
the function
$W\sb{\xxd}\in C\sp{2}(\R)$
satisfies
$W\sb{\xxd}(\lambda)\ge 0$
for $\lambda\ge 0$.
Then any solution $\psi\sb{\xxd}\sp{\ttd}$ to the Cauchy problem
\eqref{dkg-c-xt}
with arbitrary initial data
$(\psi\sp{0},\psi\sp{1})\in l^2(\Z^n)\times l^2(\Z^n)$
satisfies the \emph{a priori} estimate
\begin{equation}\label{ape}
\varepsilon^n\norm{\psi\sp{\ttd}}\sb{l^2}^2
\le\frac{4E\sp{0}}{m^2},
\end{equation}
where $E\sp{0}$
is the energy \eqref{def-energy-t-xt}
of the solution $\psi\sb{\xxd}\sp{\ttd}$
at the moment $\ttd=0$.
\end{theorem}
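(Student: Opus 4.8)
The plan is to exploit the conserved energy \eqref{def-energy-t-xt}: once the decomposition \eqref{w-nlkg} is inserted, every term except the quadratic mass term turns out to be nonnegative, so that $E\sp 0$ directly controls $\varepsilon^n\norm{\psi\sp\ttd}\sb{l^2}^2$. Concretely, I would substitute $V\sb{\xxd}(\lambda)=\frac{m^2}{2}\lambda+W\sb{\xxd}(\lambda)$ into \eqref{def-energy-t-xt}, so that the potential term splits as
\[
\frac{V\sb{\xxd}(\abs{\psi\sb{\xxd}\sp{\ttd+1}}^2)+V\sb{\xxd}(\abs{\psi\sb{\xxd}\sp{\ttd}}^2)}{2}
=
\frac{m^2}{4}\big(\abs{\psi\sb{\xxd}\sp{\ttd+1}}^2+\abs{\psi\sb{\xxd}\sp{\ttd}}^2\big)
+
\frac{W\sb{\xxd}(\abs{\psi\sb{\xxd}\sp{\ttd+1}}^2)+W\sb{\xxd}(\abs{\psi\sb{\xxd}\sp{\ttd}}^2)}{2}.
\]
The remainder involving $W\sb{\xxd}$ is nonnegative, since $W\sb{\xxd}(\lambda)\ge 0$ for $\lambda\ge 0$ by hypothesis.

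The key structural observation is that the remaining difference terms in \eqref{def-energy-t-xt} are nonnegative as well. The grid-ratio assumption $\tau/\varepsilon\le 1/\sqrt n$ is equivalent, after squaring, to $\frac{1}{\tau^2}-\frac{n}{\varepsilon^2}\ge 0$, so the coefficient of the temporal-difference term $\frac12\abs{\psi\sb{\xxd}\sp{\ttd+1}-\psi\sb{\xxd}\sp{\ttd}}^2$ is nonnegative; the spatial-difference terms $\sum\sb{j=1}\sp n\sum\sb{\pm}\frac{1}{4\varepsilon^2}\abs{\psi\sb{\xxd}\sp{\ttd+1}-\psi\sb{\xxd\pm\e\sb j}\sp{\ttd}}^2$ are manifestly so. Discarding all these nonnegative contributions leaves only the mass term, giving
\[
E\sp{\ttd}
\ge
\sum\sb{\xxd\in\Z^n}\varepsilon^n\,\frac{m^2}{4}\big(\abs{\psi\sb{\xxd}\sp{\ttd+1}}^2+\abs{\psi\sb{\xxd}\sp{\ttd}}^2\big)
\ge
\frac{m^2}{4}\varepsilon^n\norm{\psi\sp{\ttd}}\sb{l^2}^2,
\qquad \ttd\in\Z.
\]

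Finally I would invoke energy conservation, Theorem~\ref{theorem-energy}, which is available since the initial data lie in $l^2(\Z^n)\times l^2(\Z^n)$ and hence $\psi\sp{\ttd}\in l^2(\Z^n)$ for all $\ttd$ by Theorem~\ref{theorem-e} (the condition $\inf\sb{\xxd\in\Z^n,\,\lambda\ge 0}\p\sb\lambda V\sb{\xxd}(\lambda)>-\infty$ needed there holds for the confining potentials under consideration). Thus $E\sp{\ttd}=E\sp 0$ for all $\ttd$, and the displayed inequality rearranges at once to the claimed bound \eqref{ape}.

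I do not expect a genuine obstacle here: the argument is bookkeeping on the conserved energy, resting entirely on the sign conditions $W\sb{\xxd}\ge 0$ and $\frac{1}{\tau^2}-\frac{n}{\varepsilon^2}\ge 0$. The only points requiring care are orienting the grid-ratio inequality in the correct direction (so that the temporal-difference coefficient is nonnegative rather than negative) and confirming that the solution stays in $l^2(\Z^n)$ so that Theorem~\ref{theorem-energy} may be applied; both are immediate from the hypotheses and the earlier well-posedness results.
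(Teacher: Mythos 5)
Your proof is correct and follows essentially the same route as the paper: substitute the decomposition \eqref{w-nlkg} into the conserved energy \eqref{def-energy-t-xt}, observe that the temporal-difference term (by the grid-ratio condition), the spatial-difference terms, and the $W_{\xxd}$ terms are all nonnegative, and then discard them so that conservation $E^{\ttd}=E^0$ yields \eqref{ape}. The paper states this as "immediately follows" and simply displays the rewritten energy; your write-up supplies the same bookkeeping, plus the (correct) observation that one needs $\psi^{\ttd}\in l^2(\Z^n)$ for all $\ttd$ to invoke Theorem~\ref{theorem-energy}.
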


\begin{proof}
This immediately follows from the conservation
of the energy
\eqref{def-energy-t-xt}
with $V\sb{\xxd}(\lambda)$
given by \eqref{w-nlkg},
\[
E\sp{\ttd}
=
\sum\sb{\xxd\in\Z^n}
\varepsilon^n
\Big[
\Big(
\frac{1}{\tau^2}-\frac{n}{\varepsilon^2}
\Big)
\frac{\abs{\psi\sb{\xxd}\sp{\ttd+1}-\psi\sb{\xxd}\sp{\ttd}}^2}{2}
+
\sum\sb{j=1}\sp{n}
\frac{\abs{\psi\sb{\xxd}\sp{\ttd+1}-\psi\sb{\xxd-\e\sb j}\sp{\ttd}}^2
+\abs{\psi\sb{\xxd}\sp{\ttd+1}-\psi\sb{\xxd+\e\sb j}\sp{\ttd}}^2
}{4\varepsilon^2}
\]
\[
+
\frac{m^2(\abs{\psi\sb{\xxd}\sp{\ttd+1}}^2+\abs{\psi\sb{\xxd}\sp{\ttd}}^2)}{4}
+\frac{W\sb{\xxd}(\abs{\psi\sb{\xxd}\sp{\ttd+1}}^2)+W\sb{\xxd}(\abs{\psi\sb{\xxd}\sp{\ttd}}^2)}{2}
\Big]
.
\]
\end{proof}

\begin{remark}
In the continuous limit
$\varepsilon\to 0$,
the relation
\eqref{ape}
is similar to the a priori estimate
\eqref{apec}
for the solutions
to the continuous nonlinear Klein-Gordon equation \eqref{nlkg}.
\end{remark}

\begin{remark}
In
\cite{MR0503140},
in the case $\psi\sb{\xxd}\sp{\ttd}\in\R$, $(\xxd,\ttd)\in\Z\times\Z$
(in the dimension $n=1$),
the following expression
for the discretized energy was introduced:
\begin{eqnarray}\label{def-energy-sv}
&
\displaystyle
E\sb{SV}\sp{\ttd}
=
\frac{1}{2}
\sum\sb{\xxd\in\Z^n}
\Big[
\frac{(\psi\sb{\xxd}\sp{\ttd+1}-\psi\sb{\xxd}\sp{\ttd})^2}
{\tau^2}
+
\frac{(\psi\sb{\xxd+1}\sp{\ttd+1}-\psi\sb{\xxd}\sp{\ttd+1})(\psi\sb{\xxd+1}\sp{\ttd}-\psi\sb{\xxd}\sp{\ttd})}
{\varepsilon^2}
\nonumber
\\
&
+V(\abs{\psi\sb{\xxd}\sp{\ttd+1}}^2)+V(\abs{\psi\sb{\xxd}\sp{\ttd}}^2)
\Big].
\end{eqnarray}
The presence of the second term which is
not positive-definite
deprives one of the a priori $l^2$ bound
on $\psi$,
such as the one
stated in Theorem~\ref{theorem-a-priori}.
In view of this,
the Strauss-Vazquez finite-difference scheme
for the nonlinear Klein-Gordon equation
is not \emph{unconditionally stable}.
Other schemes
(conditionally and unconditionally stable)
were proposed in \cite{MR1360462,MR1852556}.
Now,
due to the a priori bound \eqref{ape},
we deduce that, as the matter of fact,
the Strauss-Vazquez scheme
is stable
in $n$ dimensions
under the condition that the grid ratio is
$\tau/\varepsilon\le 1/\sqrt{n}$.
Note that in the case $\psi\in\R$,
the Strauss-Vazquez energy \eqref{def-energy-sv}
agrees with the energy defined in \eqref{def-energy-t-xt}.
\end{remark}

\subsection{The charge conservation}

Let us consider the charge conservation.
We will define the discrete charge
under the following assumption:

\begin{assumption}\label{ass-alpha-n-xt}
\[
\frac{\tau}{\varepsilon}
=\frac{1}{\sqrt{n}}.
\]
\end{assumption}
Under Assumption~\ref{ass-alpha-n-xt},
$\psi\sb{\xxd}\sp{\ttd}$ drops out of equation \eqref{dkg-c-xt};
the latter can be written as
\begin{equation}\label{dkg-fn}
\big(\psi\sb{\xxd}\sp{\ttd+1}+\psi\sb{\xxd}\sp{\ttd-1}\big)
\big(
1+
\tau^2
B\sb{\xxd}(\abs{\psi\sb{\xxd}\sp{\ttd+1}}^2,\abs{\psi\sb{\xxd}\sp{\ttd-1}}^2)
\big)
=
\frac{1}{n}
\sum\sb{j=1}\sp{n}
(\psi\sb{\xxd+\e\sb j}\sp{\ttd}+\psi\sb{\xxd-\e\sb j}\sp{\ttd}).
\end{equation}

\begin{theorem}[Charge conservation]
\label{theorem-charge}
Let Assumption~\ref{ass-alpha-n-xt} be satisfied.
Let $\psi$ be a solution to equation \eqref{dkg-fn}
such that
$\psi\sp{\ttd}\in l^2(\Z^n)$ for all $\ttd\in\Z$
(see Theorem~\ref{theorem-e}).
Then the discrete charge
\begin{equation}\label{def-charge-t}
Q\sp{\ttd}
=
\frac{i}{4\tau}
\sum\sb{\xxd\in\Z^n}
\varepsilon^n
\big[
\bar\psi\sb{\xxd+\e\sb j}\sp{\ttd}\cdot\psi\sb{\xxd}\sp{\ttd+1}
+\bar\psi\sb{\xxd-\e\sb j}\sp{\ttd}\cdot\psi\sb{\xxd}\sp{\ttd+1}
-\bar\psi\sb{\xxd}\sp{\ttd+1}\cdot\psi\sb{\xxd+\e\sb j}\sp{\ttd}
-\bar\psi\sb{\xxd}\sp{\ttd+1}\cdot\psi\sb{\xxd-\e\sb j}\sp{\ttd}
\big]
\end{equation}
is conserved.
\end{theorem}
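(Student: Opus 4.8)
The plan is to carry out the discrete analogue of the Noether computation that derives charge conservation from $\mathbf{U}(1)$-invariance. The essential point is that, under Assumption~\ref{ass-alpha-n-xt}, equation \eqref{dkg-fn} pairs $\psi_{\xxd}^{\ttd+1}+\psi_{\xxd}^{\ttd-1}$ against the \emph{real} factor $1+\tau^2 B_{\xxd}(\abs{\psi_{\xxd}^{\ttd+1}}^2,\abs{\psi_{\xxd}^{\ttd-1}}^2)$, so that this factor will be annihilated once we extract an imaginary part. Writing $S_{\xxd}^{\ttd}:=\sum_{j=1}^{n}(\psi_{\xxd+\e_j}^{\ttd}+\psi_{\xxd-\e_j}^{\ttd})$ for the right-hand side of \eqref{dkg-fn}, I would first observe that each bracketed group in \eqref{def-charge-t}, summed over $j$, is of the form $\bar w-w=-2i\,\Im w$, which collapses the definition to the compact expression
\[
Q^{\ttd}=\frac{\varepsilon^n}{2\tau}\,\Im\!\sum_{\xxd\in\Z^n}\bar\psi_{\xxd}^{\ttd+1}\,S_{\xxd}^{\ttd}.
\]
Here the series converges absolutely, since $\psi^{\ttd},\psi^{\ttd+1}\in l^2(\Z^n)$ (Theorem~\ref{theorem-e}) and $S^{\ttd}$ is a finite combination of lattice shifts of $\psi^{\ttd}$, hence also lies in $l^2(\Z^n)$.

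The main algebraic step is to multiply \eqref{dkg-fn} at the site $(\xxd,\ttd)$ by $\overline{\psi_{\xxd}^{\ttd+1}+\psi_{\xxd}^{\ttd-1}}$. The left-hand side then equals $\abs{\psi_{\xxd}^{\ttd+1}+\psi_{\xxd}^{\ttd-1}}^2\big(1+\tau^2 B_{\xxd}(\abs{\psi_{\xxd}^{\ttd+1}}^2,\abs{\psi_{\xxd}^{\ttd-1}}^2)\big)$, which is real because $V_{\xxd}$ is real-valued and hence so is the difference quotient $B_{\xxd}$ in \eqref{vmv}. Taking imaginary parts, summing over $\xxd\in\Z^n$, and clearing the overall factor $1/n$ on the right, I obtain the discrete integrated continuity relation
\[
\Im\!\sum_{\xxd\in\Z^n}\bar\psi_{\xxd}^{\ttd+1}S_{\xxd}^{\ttd}
+\Im\!\sum_{\xxd\in\Z^n}\bar\psi_{\xxd}^{\ttd-1}S_{\xxd}^{\ttd}=0.
\]

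It then remains to recognize the second sum as $-Q^{\ttd-1}$ up to the prefactor, which I would do by summation by parts in $\xxd$. Reindexing gives $\sum_{\xxd}\bar\psi_{\xxd}^{\ttd-1}\psi_{\xxd\pm\e_j}^{\ttd}=\sum_{\xxd}\bar\psi_{\xxd\mp\e_j}^{\ttd-1}\psi_{\xxd}^{\ttd}$, so that $\sum_{\xxd}\bar\psi_{\xxd}^{\ttd-1}S_{\xxd}^{\ttd}=\sum_{\xxd}\overline{S_{\xxd}^{\ttd-1}}\,\psi_{\xxd}^{\ttd}$, whence, using $\Im\bar z=-\Im z$,
\[
\Im\!\sum_{\xxd\in\Z^n}\bar\psi_{\xxd}^{\ttd-1}S_{\xxd}^{\ttd}
=-\Im\!\sum_{\xxd\in\Z^n}\bar\psi_{\xxd}^{\ttd}S_{\xxd}^{\ttd-1}.
\]
Inserting this into the previous identity yields $\Im\sum_{\xxd}\bar\psi_{\xxd}^{\ttd+1}S_{\xxd}^{\ttd}=\Im\sum_{\xxd}\bar\psi_{\xxd}^{\ttd}S_{\xxd}^{\ttd-1}$, that is, $Q^{\ttd}=Q^{\ttd-1}$ by the compact formula above; since $\ttd$ is arbitrary, $Q^{\ttd}$ is conserved.

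I do not anticipate a substantial obstacle: this is a routine discrete Noether argument. The two places that require genuine care are (i) the observation that the nonlinear multiplier is real --- this is where the $\mathbf{U}(1)$-invariance of the Strauss-Vazquez scheme and Assumption~\ref{ass-alpha-n-xt} (under which $\psi_{\xxd}^{\ttd}$ drops out and the equation takes the symmetric form \eqref{dkg-fn}) are used --- and (ii) the legitimacy of reindexing in the summation-by-parts step, which is licensed by the absolute $l^2$-convergence noted above. The only real scope for error is the bookkeeping of signs and imaginary parts and the correct matching of the time-shifted cross term with $Q^{\ttd-1}$.
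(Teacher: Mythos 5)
Your proof is correct and is essentially the paper's own argument, merely organized in the reverse order: the paper forms $Q^{\ttd}-Q^{\ttd-1}$, reindexes in $\xxd$, and then substitutes the conjugate of \eqref{dkg-fn} to obtain a real quantity minus its complex conjugate, which is exactly your step of multiplying \eqref{dkg-fn} by $\overline{\psi_{\xxd}^{\ttd+1}+\psi_{\xxd}^{\ttd-1}}$, taking imaginary parts, and identifying the two resulting sums with $Q^{\ttd}$ and $-Q^{\ttd-1}$ via the same summation by parts. Both proofs rest on the same two ingredients — the realness of the multiplier $1+\tau^2 B_{\xxd}$ and the reindexing in $\xxd$ — so there is no substantive difference.
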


\begin{remark}
The continuous limit
of the discrete charge $Q$
defined in
\eqref{def-charge-t}
coincides with the charge functional
\eqref{def-charge}
of the continuous nonlinear wave equation \eqref{nlw-xt}.
\end{remark}

\begin{proof}
Let us prove the charge conservation.
One has:
\[
\frac{4\,\tau}{i\varepsilon^n}Q\sp{\ttd}
=\sum\sb{\xxd\in\Z^n}
\sum\sb{j=1}\sp{n}
\sum\sb{\pm}
\Big[
\bar\psi\sb{\xxd\pm\e\sb j}\sp{\ttd}\cdot\psi\sb{\xxd}\sp{\ttd+1}
-\mathrm{c.\,c.\,}
\Big],
\]
\[
\frac{4\,\tau}{i\varepsilon^n}Q\sp{\ttd-1}
=
\sum\sb{\xxd\in\Z^n}
\sum\sb{j=1}\sp{n}
\sum\sb{\pm}
\Big[
\bar\psi\sb{\xxd\pm\e\sb j}\sp{\ttd-1}\cdot\psi\sb{\xxd}\sp{\ttd}
-\mathrm{c.\,c.\,}
\Big]
=
-
\sum\sb{\xxd\in\Z^n}
\sum\sb{j=1}\sp{n}
\sum\sb{\pm}
\Big[
\bar\psi\sb{\xxd\pm\e\sb j}\sp{\ttd}\cdot\psi\sb{\xxd}\sp{\ttd-1}
-\mathrm{c.\,c.\,}
\Big],
\]
where $\mathrm{c.\,c.\,}$
denotes the complex conjugation of the preceding expression.
Therefore,
\begin{eqnarray}
&&\frac{4\,\tau\big(Q\sp{\ttd}-Q\sp{\ttd-1}\big)}{i\varepsilon^n}
=
\sum\sb{\xxd\in\Z^n}
\sum\sb{j=1}\sp{n}
\sum\sb{\pm}
\bar\psi\sb{\xxd\pm\e\sb j}\sp{\ttd}
\cdot(\psi\sb{\xxd}\sp{\ttd+1}+\psi\sb{\xxd}\sp{\ttd-1})
-\mathrm{c.\,c.\,}
\nonumber
\\
&&
\qquad
=
n\sum\sb{\xxd\in\Z^n}
\big(1+\tau^2 B\sb{\xxd}(\abs{\psi\sb{\xxd}\sp{\ttd+1}}^2,\abs{\psi\sb{\xxd}\sp{\ttd-1}}^2)\big)
\abs{\psi\sb{\xxd}\sp{\ttd+1}+\psi\sb{\xxd}\sp{\ttd-1}}^2
-\mathrm{c.\,c.\,}
=0.
\nonumber
\end{eqnarray}
To get to the second line,
we used the complex conjugate of \eqref{dkg-fn}.
This finishes the proof of Theorem~\ref{theorem-charge}.
\end{proof}

\subsection{Proof of well-posedness}
\label{sect-eu}

First, we prove the existence.
\begin{proof}[Proof of Theorem~\ref{theorem-e}]
We rewrite equation \eqref{dkg-c-xt}
in the following form:
\begin{eqnarray}\label{dkg-f}
&&
\big(\psi\sb{\xxd}\sp{\ttd+1}+\psi\sb{\xxd}\sp{\ttd-1}\big)
\big(1
+\tau^2
B\sb{\xxd}(\abs{\psi\sb{\xxd}\sp{\ttd+1}}^2,\abs{\psi\sb{\xxd}\sp{\ttd-1}}^2)
\big)
\nonumber
\\
&&
\quad
=
\frac{\tau^2}{\varepsilon^2}
\sum\sb{j=1}\sp{n}
\big(
\psi\sb{\xxd+\e\sb j}\sp{\ttd}-2\psi\sb{\xxd}\sp{\ttd}+\psi\sb{\xxd-\e\sb j}\sp{\ttd}
\big)
+2\psi\sb{\xxd}\sp{\ttd},
\quad
\xxd\in\Z^n,
\quad
\ttd\in\Z.
\end{eqnarray}
By \eqref{def-k1}
and the choice of $\tau\sb{1}$ in Theorem~\ref{theorem-e},
for $\tau\in(0,\tau\sb{1})$
one has
\begin{equation}\label{w-pos}
\inf\sb{\xxd\in\Z^n,\lambda\ge 0}
\big(1+\tau^2\p\sb\lambda V\sb{\xxd}(\lambda)\big)>0.
\end{equation}
Since
\begin{equation}\label{inf-b-w}
\inf\sb{
\begin{array}{c}
\scriptstyle
\xxd\in\Z^n
\\
\scriptstyle
\lambda\ge 0,\,\mu\ge 0
\end{array}
}\!\!
B\sb{\xxd}(\lambda,\mu)
=
\inf\sb{
\begin{array}{c}
\scriptstyle\xxd\in\Z^n
\\
\scriptstyle\lambda\ge 0,\,\mu\ge 0,\,\lambda\ne\mu
\end{array}
}
\!\!\!\!\!\!
\frac{V\sb{\xxd}(\lambda)-V\sb{\xxd}(\mu)}{\lambda-\mu}
=
\inf\sb{\xxd\in\Z^n,\,\lambda\ge 0}\p\sb\lambda V\sb{\xxd}(\lambda),
\end{equation}
inequality \eqref{w-pos}
yields
\begin{equation}\label{1e2b}
c:=\inf\sb{\xxd\in\Z^n,\lambda\ge 0,\mu\ge 0}
\big(1+\tau^2 B\sb{\xxd}(\lambda,\mu)\big)
>0.
\end{equation}

Let us show that
equation \eqref{dkg-f} allows us to find $\psi\sb{\xxd}\sp{\ttd+1}$,
for any given $\xxd\in \Z^n$ and $\ttd\in\Z$,
once one knows $\psi\sp{\ttd}$ and $\psi\sp{\ttd-1}$.
Equation \eqref{dkg-f} implies that
\begin{equation}\label{eqn-on-xi}
\big(1+\tau^2 B\sb{\xxd}(\abs{\psi\sb{\xxd}\sp{\ttd+1}}^2,\abs{\psi\sb{\xxd}\sp{\ttd-1}}^2)\big)
(\psi\sb{\xxd}\sp{\ttd+1}+\psi\sb{\xxd}\sp{\ttd-1})
=\xi\sb{\xxd}\sp{\ttd},
\end{equation}
\begin{equation}\label{def-xi}
\xi\sb{\xxd}\sp{\ttd}:=
\frac{\tau^2}{\varepsilon^2}
\sum\sb{j=1}\sp{n}
\big(\psi\sb{\xxd+\e\sb j}\sp{\ttd}-2\psi\sb{\xxd}\sp{\ttd}+\psi\sb{\xxd-\e\sb j}\sp{\ttd}\big)
+2\psi\sb{\xxd}\sp{\ttd}\in\C.
\end{equation}
If $\xi\sb{\xxd}\sp{\ttd}=0$, then there is a solution to \eqref{eqn-on-xi}
given by
$\psi\sb{\xxd}\sp{\ttd+1}=-\psi\sb{\xxd}\sp{\ttd-1}$.
Due to \eqref{1e2b},
this solution is unique.
Now let us assume that $\xi\sb{\xxd}\sp{\ttd}\ne 0$.
We see from \eqref{eqn-on-xi}
that we are to have
\begin{equation}\label{xi-eta-zeta}
\psi\sb{\xxd}\sp{\ttd+1}+\psi\sb{\xxd}\sp{\ttd-1}=s \xi\sb{\xxd}\sp{\ttd},
\qquad
\mbox{ with some $s\in\R$}.
\end{equation}
Let us introduce  the function
\begin{equation}\label{def-fs}
f(s)
:=\big(1+\tau^2 B\sb{\xxd}(\abs{s \xi\sb{\xxd}\sp{\ttd}-\psi\sb{\xxd}\sp{\ttd-1}}^2,\abs{\psi\sb{\xxd}\sp{\ttd-1}}^2)
\big)s.
\end{equation}
We do not indicate dependence of
$f$ on $\psi\sb{\xxd}\sp{\ttd-1}$, $\xi\sb{\xxd}\sp{\ttd}$, and $\xxd$,
treating them as parameters.
For $\xi\sb{\xxd}\sp{\ttd}\ne 0$,
we can solve \eqref{eqn-on-xi}
if we can find $s\in\R$ such that
\begin{equation}\label{eqn-on-s}
f(s)=1.
\end{equation}
Since $f(0)=0$,
while
$
\lim\sb{s\to\infty}
f(s)=+\infty
$
by \eqref{1e2b},
one concludes that there
is at least one solution
$s>0$
to \eqref{eqn-on-s}.

Let us prove that
once
$(\psi\sp{0},\psi\sp{1})\in l^2(\Z^n)\times l^2(\Z^n)$,
then one also knows that
$\norm{\psi\sp{\ttd}}\sb{l^2(\Z^n)}$
remains finite
(but not necessarily uniformly bounded)
for all $\ttd\in\Z$.
As it follows from \eqref{1e2b} and \eqref{eqn-on-xi},
\begin{equation}\label{psi-recurrent}
\abs{\psi\sb{\xxd}\sp{\ttd+1}}
\le
\frac{1}{c}
\abs{\xi\sb{\xxd}\sp{\ttd}}
+\abs{\psi\sb{\xxd}\sp{\ttd-1}}.
\end{equation}
Since
$\norm{\xi\sp{\ttd}}\sb{l^2(\Z^n)}
\le
\left(\frac{4\tau^2}{\varepsilon^2}+2\right)
\norm{\psi\sp{\ttd}}\sb{l^2(\Z^n)}
$
by \eqref{def-xi},
the relation
\eqref{psi-recurrent}
implies the estimate
\begin{equation}
\norm{\psi\sp{\ttd+1}}\sb{l^2(\Z^n)}
\le
\frac{1}{c}
\left(\frac{4\tau^2}{\varepsilon^2}+2\right)
\norm{\psi\sp{\ttd}}\sb{l^2(\Z^n)}
+\norm{\psi\sp{\ttd-1}}\sb{l^2(\Z^n)},
\end{equation}
and, by recursion,
the finiteness of $\norm{\psi\sp{\ttd}}\sb{l^2(\Z^n)}$
for all $\ttd\ge 0$.
The case $\ttd\le 0$ is finished in the same way.
\end{proof}

Now we prove the uniqueness
of solutions
to the Cauchy problem for equation \eqref{dkg-c-xt}
and the continuous dependence on the initial data.

\begin{proof}[Proof of Theorem~\ref{theorem-u}]
First, note that,
by Remark~\ref{remark-k2-k3},
$\tau\sb{1}$ from Theorem~\ref{theorem-e}
and $\tau\sb{2}$ from Theorem~\ref{theorem-u}
are related by
$\tau\sb{2}\le\tau\sb{1}$.
Therefore, the existence of a solution
$\psi\sb{\xxd}\sp{\ttd}$ to
the Cauchy problem for equation \eqref{dkg-c-xt}
follows from Theorem~\ref{theorem-e}.

Let us prove that this solution
$\psi\sb{\xxd}\sp{\ttd}$ is unique.
When in \eqref{def-xi}
one has
\[
\xi\sb{\xxd}\sp{\ttd}:=\frac{\tau^2}{\varepsilon^2}\sum\sb{j=1}\sp{n}
(\psi\sb{\xxd+\e\sb{j}}\sp{\ttd}-2\psi\sb{\xxd}\sp{\ttd}+\psi\sb{\xxd-\e\sb{j}}\sp{\ttd})
+2\psi\sb{\xxd}\sp{\ttd}=0,
\]
then,
by \eqref{1e2b},
the only solution $\psi\sb{\xxd}\sp{\ttd+1}$
to \eqref{eqn-on-xi}
is given by $\psi\sb{\xxd}\sp{\ttd+1}=-\psi\sb{\xxd}\sp{\ttd-1}$.
We now consider the case $\xi\sb{\xxd}\sp{\ttd}\ne 0$.
By \eqref{eqn-on-xi}, \eqref{xi-eta-zeta},
and \eqref{def-fs},
it suffices to prove the uniqueness of the solution
to \eqref{eqn-on-s}.
This will follow if we show that
$f(s)$ satisfies
\begin{equation}\label{fp}
f'(s)>0,
\qquad
s\in\R.
\end{equation}
The explicit expression for $f'(s)$ is
\begin{eqnarray}\label{def-fp-0}
&&
f'(s)
=
1
+\tau^2 B\sb{\xxd}(\abs{s \xi\sb{\xxd}\sp{\ttd}-\psi\sb{\xxd}\sp{\ttd-1}}^2,\abs{\psi\sb{\xxd}\sp{\ttd-1}}^2)
\\
&&
\qquad
+\tau^2 \p\sb\lambda B\sb{\xxd}(\abs{s \xi\sb{\xxd}\sp{\ttd}-\psi\sb{\xxd}\sp{\ttd-1}}^2,\abs{\psi\sb{\xxd}\sp{\ttd-1}}^2)
(-2\Re(\bar\psi\sb{\xxd}\sp{\ttd-1}\cdot\xi\sb{\xxd}\sp{\ttd})+2\abs{\xi\sb{\xxd}\sp{\ttd}}^2 s)s.
\nonumber
\end{eqnarray}
Using the relation \eqref{xi-eta-zeta},
we derive the identity
\begin{eqnarray}
(-2\Re(\bar\psi\sb{\xxd}\sp{\ttd-1}\cdot\xi\sb{\xxd}\sp{\ttd})+2\abs{\xi\sb{\xxd}\sp{\ttd}}^2 s)s
=2\Abs{s \xi\sb{\xxd}\sp{\ttd}-\frac{\psi\sb{\xxd}\sp{\ttd-1}}{2}}^2-\frac{\abs{\psi\sb{\xxd}\sp{\ttd-1}}^2}{2}
\nonumber
\\
=2\Abs{\psi\sb{\xxd}\sp{\ttd+1}+\frac{\psi\sb{\xxd}\sp{\ttd-1}}{2}}^2-\frac{\abs{\psi\sb{\xxd}\sp{\ttd-1}}^2}{2}
\nonumber
\end{eqnarray}
and rewrite the expression \eqref{def-fp-0} for $f'(s)$ as
\begin{eqnarray}
&&
f'(s)
=
1+\tau^2
\Big[
B\sb{\xxd}(\abs{\psi\sb{\xxd}\sp{\ttd+1}}^2,\abs{\psi\sb{\xxd}\sp{\ttd-1}}^2)
\nonumber
\\
&&
\qquad
+2\p\sb\lambda B\sb{\xxd}(\abs{\psi\sb{\xxd}\sp{\ttd+1}}^2,\abs{\psi\sb{\xxd}\sp{\ttd-1}}^2)
\Big(
\abs{\psi\sb{\xxd}\sp{\ttd+1}+\frac{\psi\sb{\xxd}\sp{\ttd-1}}{2}}^2
-\frac{\abs{\psi\sb{\xxd}\sp{\ttd-1}}^2}{4}
\Big)
\Big].
\label{def-fp-1}
\end{eqnarray}
We denote $\lambda=\abs{\psi\sb{\xxd}\sp{\ttd+1}}^2$,
$\mu=\abs{\psi\sb{\xxd}\sp{\ttd-1}}^2$.
Since
\[
\lambda-\sqrt{\lambda\mu}+\frac{\mu}{4}
\le
\abs{\psi\sb{\xxd}\sp{\ttd+1}+\frac{\psi\sb{\xxd}\sp{\ttd-1}}{2}}^2
\le\lambda+\sqrt{\lambda\mu}+\frac{\mu}{4},
\]
we see that
\begin{equation}\label{def-fp-2}
f'(s)\ge 1+\tau^2\min\sb{\pm}
\inf\sb{\xxd\in\Z^n,\,\lambda\ge 0,\,\mu\ge 0}K\sb{\xxd}\sp\pm(\lambda,\mu),
\end{equation}
where
\[
K\sb{\xxd}\sp\pm(\lambda,\mu)
=
B\sb{\xxd}(\lambda,\mu)
+2\p\sb\lambda B\sb{\xxd}(\lambda,\mu)
(\lambda\pm\sqrt{\lambda\mu}).
\]

By
\eqref{def-k2}
and 
by our choice of $\tau\sb{2}$
in Theorem~\ref{theorem-u},
for any $\tau\in(0,\tau\sb{2})$
we have
\[
\kappa:=\inf\sb{\xxd\in\Z^n,\,\lambda\ge 0,\,\mu\ge 0}
\left\{1+\tau^2 K\sb{\xxd}\sp\pm(\lambda,\mu)\right\}>0;
\]
then,
by \eqref{def-fp-2},
$f'(s)\ge \kappa$,
where $\kappa>0$.
It follows that
for $\xi\sb{\xxd}\sp{\ttd}\ne 0$
there is a unique solution $s$
to \eqref{eqn-on-s};
moreover, this solution $s$ continuously depends
on $\psi\sp{\ttd-1}\sb\xxd$
and on
$\xi\sp\ttd\sb\xxd$.
The latter
in turn continuously depends on
$\psi\sp\ttd\sb{\xxd\pm\bm{e}\sb j}$, $1\le j\le n$
(Cf. \eqref{def-xi}).
Thus,
by \eqref{xi-eta-zeta},
the solution $\psi\sb{\xxd}\sp{\ttd+1}$
to equation \eqref{eqn-on-xi}
is uniquely defined
and continuously depends on
$\psi\sb{\xxd}\sp{\ttd-1}$
and
$\psi\sb{\xxd\pm\bm{e}\sb j}\sp{\ttd}$,
$1\le j\le n$.

This finishes the proof of Theorem~\ref{theorem-u}.
\end{proof}

\begin{proof}[Proof of Theorem~\ref{theorem-pol}]
Let us prove that the condition
\eqref{def-k2} in Theorem~\ref{theorem-u}
is satisfied by any polynomial potential
of the form \eqref{poly}.
The inequality \eqref{def-k2}
will be satisfied if the highest order
term from $V(\lambda)$
contributes a strictly positive expression.
More precisely, we need to prove the following result.

\begin{lemma}\label{lemma-31}
Let $V(\lambda)=\lambda^{p+1}$,
so that
$B(\lambda,\mu)=\frac{\lambda^{p+1}-\mu^{p+1}}{\lambda-\mu}$,
$p\ge 0$.
Then the following inequality takes place:
\begin{equation}\label{wn}
\inf\sb{\lambda\ge 0,\,\mu\ge 0,\,\lambda^2+\mu^2=1}
\Big[
B(\lambda,\mu)
+2\p\sb\lambda B(\lambda,\mu)(\lambda\pm\sqrt{\lambda\mu})
\Big]>0.
\end{equation}
\end{lemma}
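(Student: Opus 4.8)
The plan is to reduce the two-variable inequality \eqref{wn} to a one-variable statement by homogeneity, and then to collapse that statement via a derivative identity and a single elementary bound. First I would record the explicit expansions. Since $V(\lambda)=\lambda^{p+1}$, the little Bézout quotient and its derivative are
\[
B(\lambda,\mu)=\frac{\lambda^{p+1}-\mu^{p+1}}{\lambda-\mu}=\sum_{k=0}^p\lambda^k\mu^{p-k},
\qquad
\p_\lambda B(\lambda,\mu)=\sum_{k=1}^p k\,\lambda^{k-1}\mu^{p-k},
\]
both of which have nonnegative coefficients. This at once disposes of the $+$ sign: the quantity $B+2\p_\lambda B\,(\lambda+\sqrt{\lambda\mu})$ is a sum of nonnegative terms bounded below by $B(\lambda,\mu)$, and $B$ is continuous and strictly positive on the compact arc $\{\lambda,\mu\ge 0,\ \lambda^2+\mu^2=1\}$ (at its endpoints it equals $\lambda^p$ respectively $\mu^p$), so its infimum there is positive.

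The substance lies in the $-$ sign. Here I would use that $F_-(\lambda,\mu):=B+2\p_\lambda B\,(\lambda-\sqrt{\lambda\mu})$ is homogeneous of degree $p$, so it suffices to show $F_->0$ for every $(\lambda,\mu)\neq(0,0)$ with $\lambda,\mu\ge 0$. The boundary point $\mu=0$ is handled directly: there $F_-=(2p+1)\lambda^p>0$. For $\mu>0$ I would set $\mu=1$ and substitute $\lambda=t^2$, $t\ge 0$. Writing $P(t)=\sum_{k=0}^p t^{2k}$, one checks $B(t^2,1)=P(t)$ and $2t\,\p_\lambda B(t^2,1)=P'(t)$, whence
\[
F_-(t^2,1)=P(t)+\frac{P'(t)}{t}\,(t^2-t)=P(t)+(t-1)P'(t)=\frac{d}{dt}\big[(t-1)P(t)\big].
\]
Since $(t-1)P(t)=\sum_{k=0}^p\big(t^{2k+1}-t^{2k}\big)$, differentiating yields the one-variable polynomial
\[
G(t):=\sum_{k=0}^p(2k+1)t^{2k}-\sum_{k=1}^p 2k\,t^{2k-1},
\]
and the entire problem is reduced to proving $G(t)>0$ for $t\ge 0$.

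The key estimate is then the elementary bound $2t^{2k-1}\le t^{2k-2}+t^{2k}$, i.e. AM--GM applied to $t^{k-1}$ and $t^k$. Summing $2k\,t^{2k-1}\le k(t^{2k-2}+t^{2k})$ over $k=1,\dots,p$ and reindexing, the negative part of $G$ is dominated by $\sum_{m=0}^{p-1}(2m+1)t^{2m}+p\,t^{2p}$; subtracting this from $\sum_{m=0}^p(2m+1)t^{2m}$ leaves precisely $(p+1)t^{2p}$, so that $G(t)\ge(p+1)t^{2p}$. This is strictly positive for $t>0$, while $G(0)=1>0$, which finishes the proof.

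I expect the main obstacle to be exactly the $-$ sign, where the factor $\lambda-\sqrt{\lambda\mu}$ changes sign on the arc. The two moves that resolve it are recognizing, after the substitution $\lambda=t^2$, the simplifying identity $F_-(t^2,1)=\frac{d}{dt}\big[(t-1)P(t)\big]$, and then finding the AM--GM comparison that collapses the resulting polynomial $G$ onto its single surviving monomial $(p+1)t^{2p}$. Everything else (the $+$ sign, the homogeneity reduction, and the boundary case $\mu=0$) is routine.
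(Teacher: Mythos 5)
Your proof is correct. Its skeleton coincides with the paper's: both treat the $+$ sign as trivial (all terms nonnegative, $B>0$ on the compact arc), both exploit homogeneity of degree $p$ to scale out $\mu$, and both substitute $t=\sqrt{\lambda/\mu}$ (the paper's $z$), reducing \eqref{wn} to the positivity of a one-variable polynomial. Where you genuinely diverge is the finishing computation. The paper first observes that only the regime $\mu>\lambda$ (i.e.\ $0\le z<1$) is nontrivial, writes $B$ and $\p_\lambda B$ in closed form as ratios $\frac{1-z^{2p+2}}{1-z^2}$ and $\frac{1-(p+1)z^{2p}+pz^{2p+2}}{(1-z^2)^2}$, clears the denominator, cancels a factor of $(1-z)$, and rearranges what remains into the manifestly positive expression $(1-z)(1-z^{2p+2})+2z(p+1)(z^{2p}-z^{2p+2})$. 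You instead keep the expanded form $P(t)=\sum_{k=0}^p t^{2k}$, use the identity $F_-(t^2,1)=\frac{d}{dt}\big[(t-1)P(t)\big]$, and dominate the negative terms via the AM--GM inequality $2t^{2k-1}\le t^{2k-2}+t^{2k}$, arriving at the bound $G(t)\ge(p+1)t^{2p}$ together with $G(0)=1$. The two verifications are of comparable length; the paper's factorization is slightly more direct, while yours needs no case split on the sign of $\mu-\lambda$, works uniformly on all of $t\ge 0$, and produces a quantitative lower bound (equivalently $F_-(\lambda,\mu)\ge(p+1)\lambda^p$ by homogeneity) rather than bare positivity. Both arguments correctly pass from pointwise positivity to positivity of the infimum by continuity on the compact arc.
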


\begin{proof}
Since $B$ and $\p\sb\lambda B$
are strictly positive
for $\lambda^2+\mu^2>0$,
the inequality \eqref{wn}
is nontrivial
only for the negative sign in \eqref{wn}
and only when $\mu>\lambda$.
First we  note that
\[
B(\lambda,\mu)=\frac{\mu^{p+1}-\lambda^{p+1}}{\mu-\lambda},
\]
\[
\p\sb\lambda B(\lambda,\mu)
=
\frac{-(p+1)\lambda^p(\mu-\lambda)-\lambda^{p+1}+\mu^{p+1}}
{(\mu-\lambda)^2}
=
\frac{\mu^{p+1}-(p+1)\lambda^p\mu+p\lambda^{p+1}}
{(\mu-\lambda)^2}.
\]
Let $z\ge 0$ be such that
$z^2=\lambda/\mu$.
To prove the lemma, we need to check that 
\begin{equation}\label{nee}
\frac{1-z^{2p+2}}{1-z^2}
+2\frac{1-(p+1)z^{2p}+pz^{2p+2}}{(1-z^2)^2}
(z^2-z)>0,
\qquad
0\le z<1,
\end{equation}
or equivalently,
\[
(1+z)(1-z^{2p+2})-2z\big(1-(p+1)z^{2p}+pz^{2p+2}\big)>0.
\]
The left-hand side takes the form
\begin{eqnarray}
&(1+z)(1-z^{2p+2})-2z\big(1-z^{2p+2}-(p+1)(z^{2p}-z^{2p+2})\big)
\nonumber
\\
&=(1-z)(1-z^{2p+2})
+2z(p+1)(z^{2p}-z^{2p+2}),
\nonumber
\end{eqnarray}
which is clearly strictly positive
for all $0\le z<1$ and $p\ge 0$,
proving \eqref{nee}.
\end{proof}

This finishes the proof of
the first part of Theorem~\ref{theorem-pol};
now we turn to the second part.

\begin{lemma}[Uniqueness criterion]
\label{lemma-u-0}
Assume that for
a particular $\tau>0$
and for all $\lambda\ge 0$, $\mu\ge 0$, $\xxd\in\Z^n$,
the following inequalities hold:
\begin{equation}\label{cond-on-b}
1+
\tau^2
\inf\sb{\xxd\in\Z^n,\,\lambda\ge 0,\,\mu\ge 0}
\Big(B\sb{\xxd}(\lambda,\mu)-\p\sb\lambda B\sb{\xxd}(\lambda,\mu)\frac{\mu}{2}\Big)
>0;
\end{equation}
\begin{equation}\label{pb-positive}
\inf\sb{\xxd\in\Z^n,\,\lambda\ge 0,\,\mu\ge 0}
\p\sb\lambda B\sb{\xxd}(\lambda,\mu)\ge 0.
\end{equation}
Then the is a solution $\psi\sb{\xxd}\sp{\ttd}$
to the Cauchy problem for equation \eqref{dkg-c-xt}
with arbitrary initial data $(\psi\sp{0},\psi\sp{1})$, 
and this solution is unique.
\end{lemma}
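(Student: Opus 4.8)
The plan is to reproduce, with one simplification, the site-by-site reduction already used for Theorems~\ref{theorem-e} and~\ref{theorem-u}, and then to check that the two hypotheses \eqref{cond-on-b}--\eqref{pb-positive} are exactly what is needed to make the resulting scalar equation uniquely solvable. Since the Strauss-Vazquez scheme is explicit, once $\psi\sp{\ttd}$ and $\psi\sp{\ttd-1}$ are known the unknown $\psi\sb\xxd\sp{\ttd+1}$ is determined independently at each site $\xxd\in\Z^n$ through \eqref{eqn-on-xi}, with $\xi\sb\xxd\sp\ttd$ the explicitly known quantity \eqref{def-xi}. I would first observe that \eqref{cond-on-b} and \eqref{pb-positive} together force $1+\tau^2 B\sb\xxd(\lambda,\mu)>0$ for all $\lambda,\mu\ge 0$: since $\p\sb\lambda B\sb\xxd\ge 0$ and $\mu\ge 0$ one has $B\sb\xxd(\lambda,\mu)\ge B\sb\xxd(\lambda,\mu)-\p\sb\lambda B\sb\xxd(\lambda,\mu)\frac{\mu}{2}$, so that the strict positivity of the constant $c$ in \eqref{1e2b} follows directly from \eqref{cond-on-b}. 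This settles the degenerate case $\xi\sb\xxd\sp\ttd=0$, in which \eqref{eqn-on-xi} admits only the solution $\psi\sb\xxd\sp{\ttd+1}=-\psi\sb\xxd\sp{\ttd-1}$.

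For $\xi\sb\xxd\sp\ttd\ne 0$ I would write $\psi\sb\xxd\sp{\ttd+1}+\psi\sb\xxd\sp{\ttd-1}=s\,\xi\sb\xxd\sp\ttd$ as in \eqref{xi-eta-zeta}, reducing \eqref{eqn-on-xi} to the scalar equation \eqref{eqn-on-s}, $f(s)=1$, with $f$ as in \eqref{def-fs}. The whole point is to show that $f$ is strictly increasing. Here I would invoke the identity \eqref{def-fp-1} for $f'(s)$, already established in the proof of Theorem~\ref{theorem-u},
\[
f'(s)=1+\tau^2\Big[B\sb\xxd(\lambda,\mu)+2\p\sb\lambda B\sb\xxd(\lambda,\mu)\Big(\Abs{\psi\sb\xxd\sp{\ttd+1}+\frac{\psi\sb\xxd\sp{\ttd-1}}{2}}^2-\frac{\mu}{4}\Big)\Big],
\]
where $\lambda=\abs{\psi\sb\xxd\sp{\ttd+1}}^2$ and $\mu=\abs{\psi\sb\xxd\sp{\ttd-1}}^2$. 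Unlike in Theorem~\ref{theorem-u}, where both signs in $\lambda\pm\sqrt{\lambda\mu}$ had to be tracked, I would simply discard the manifestly nonnegative term $2\p\sb\lambda B\sb\xxd\,\abs{\psi\sb\xxd\sp{\ttd+1}+\frac{\psi\sb\xxd\sp{\ttd-1}}{2}}^2$ using \eqref{pb-positive}, which leaves
\[
f'(s)\ge 1+\tau^2\Big(B\sb\xxd(\lambda,\mu)-\p\sb\lambda B\sb\xxd(\lambda,\mu)\frac{\mu}{2}\Big)\ge 1+\tau^2\inf\sb{\xxd,\lambda,\mu}\Big(B\sb\xxd-\p\sb\lambda B\sb\xxd\frac{\mu}{2}\Big)=:\kappa>0
\]
by \eqref{cond-on-b}.

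Since $f(0)=0$ and $f'\ge\kappa>0$ uniformly, $f$ is a strictly increasing bijection of $\R$ onto $\R$, so $f(s)=1$ has exactly one solution, depending continuously on $\psi\sb\xxd\sp{\ttd-1}$ and on $\xi\sb\xxd\sp\ttd$; this gives existence and uniqueness of $\psi\sb\xxd\sp{\ttd+1}$, hence of the whole trajectory by induction on $\ttd$. That $\norm{\psi\sp\ttd}\sb{l^2(\Z^n)}$ stays finite for $(\psi\sp 0,\psi\sp 1)\in l^2\times l^2$ follows verbatim from the recursion \eqref{psi-recurrent} in the proof of Theorem~\ref{theorem-e}, using $c>0$ from \eqref{1e2b}. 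I do not anticipate any genuine obstacle: the lemma is essentially a bookkeeping refinement of Theorem~\ref{theorem-u}, and its content is precisely the observation that the nonnegative square term in $f'(s)$ may be dropped, collapsing the two-sided condition \eqref{def-k2} into the cleaner one-sided condition \eqref{cond-on-b}. The only step requiring a little care is verifying $1+\tau^2 B\sb\xxd>0$, which, as noted, is immediate from the monotonicity \eqref{pb-positive}.
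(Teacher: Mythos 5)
Your proposal is correct and takes essentially the same route as the paper's proof: the paper likewise deduces existence from the argument of Theorem~\ref{theorem-e} (via the positivity of $1+\tau^2 B\sb{\xxd}$ forced by \eqref{cond-on-b} and \eqref{pb-positive}) and uniqueness from the lower bound $f'(s)\ge c>0$ read off from \eqref{def-fp-1} after discarding the nonnegative square term. Your only cosmetic deviation is bounding $B\sb{\xxd}(\lambda,\mu)$ directly over all $\lambda,\mu\ge 0$ rather than passing through the diagonal $B\sb{\xxd}(\lambda,\lambda)$ and \eqref{inf-b-w}; you simply spell out details the paper leaves implicit.
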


\begin{proof}[Proof of Lemma~\ref{lemma-u-0}]
The inequalities
\eqref{cond-on-b} and \eqref{pb-positive}
lead to
\[
1+\tau^2\inf\sb{\xxd\in\Z^n,\,\lambda\ge 0}
B\sb{\xxd}(\lambda,\lambda)>0,
\]
hence,
by the same argument as in Theorem~\ref{theorem-e},
there is a solution $\psi\sb{\xxd}\sp{\ttd}$.
The relation
\eqref{def-fp-1}
shows that $f'(s)\ge c$ for some $c>0$.
The rest of the proof is the same as for Theorem~\ref{theorem-u}.
\end{proof}

In the second part of Theorem~\ref{theorem-pol},
we assume that
\begin{equation}
V\sb{\xxd}(\lambda)=\sum\sb{q=0}\sp{4}C\sb{\xxd,q}\lambda^{q+1},
\qquad
\xxd\in\Z^n,\quad\lambda\ge 0,
\end{equation}
where
$C\sb{\xxd,q}\ge 0$ for $\xxd\in\Z^n$ and $1\le q\le 4$,
and
\begin{equation}\label{def-k3-1}
k\sb 3=
\inf\sb{\xxd\in\Z^n}
C\sb{\xxd,0}>-\infty.
\end{equation}
Thus,
the term $C\sb{\xxd,0}\lambda$ in $V\sb{\xxd}(\lambda)$
contributes to $B\sb{\xxd}(\lambda,\mu)$
the expression
$b\sb{\xxd,0}(\lambda,\mu)=C\sb{\xxd,0}$,
while each term
in $V\sb{\xxd}(\lambda)$
of the form
$
C\sb{\xxd,q}\lambda^{q+1},
$
with
$1\le q\le 4$
and $C\sb{\xxd,q}\ge 0$,
contributes to $B\sb{\xxd}(\lambda,\mu)$
the expression
$C\sb{\xxd,q} b\sb{q}(\lambda,\mu)$,
with
$b\sb{q}(\lambda,\mu)=\sum\sb{k=0}\sp{q}\lambda^{q-k}\mu^k$.
For $\tau\in(0,\tau\sb{3})$,
with $\tau\sb{3}=\sqrt{-1/k\sb 3}$
for $k\sb 3<0$
and $\tau\sb{3}=+\infty$
for $k\sb 3\ge 0$,
one has
\begin{equation}\label{1ec}
1+\tau^2\inf\sb{\xxd\in\Z^n}C\sb{\xxd,0}>0.
\end{equation}

\begin{lemma}
\label{lemma-four}
For $1\le q\le 4$,
$b\sb{q}(\lambda,\mu)=\sum\sb{k=0}\sp{q}\lambda^{q-k}\mu^k$
satisfies the inequality
\[
b\sb{q}(\lambda,\mu)
\ge\p\sb\lambda b\sb{q}(\lambda,\mu)\frac{\mu}{2}
\qquad
\mbox{for all \ $\lambda,\,\mu\ge 0$.}
\]
\end{lemma}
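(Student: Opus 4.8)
The plan is to exploit homogeneity to turn the two-variable inequality into a one-variable polynomial positivity statement, and then to dispose of the four cases $q=1,2,3,4$ separately, the case $q=4$ being the critical one. Since $b\sb q(\lambda,\mu)=\sum\sb{k=0}\sp{q}\lambda^{q-k}\mu^k$ is homogeneous of degree $q$ in $(\lambda,\mu)$ and $\mu\,\p\sb\lambda b\sb q(\lambda,\mu)$ is likewise homogeneous of degree $q$, the difference $D(\lambda,\mu):=b\sb q(\lambda,\mu)-\tfrac{\mu}{2}\p\sb\lambda b\sb q(\lambda,\mu)$ is homogeneous of degree $q$. First I would dispose of the boundary $\lambda=0$, where a direct computation gives $D(0,\mu)=\tfrac12\mu^q\ge 0$; then, for $\lambda>0$, I would set $t=\mu/\lambda\ge 0$ and write $D(\lambda,\mu)=\lambda^q Q\sb q(t)$, reducing the claim to $Q\sb q(t)\ge 0$ for all $t\ge 0$.

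Next I would compute $Q\sb q$ explicitly. Using $\p\sb\lambda b\sb q=\sum\sb{k=0}\sp{q-1}(q-k)\lambda^{q-1-k}\mu^k$ and collecting powers of $\mu$, one finds $D=\lambda^q+\sum\sb{l=1}\sp{q}\tfrac{l-q+1}{2}\lambda^{q-l}\mu^l$, so that $Q\sb q(t)=1+\sum\sb{l=1}\sp{q}\tfrac{l-q+1}{2}t^l$. The only coefficients that can be negative are those with $l<q-1$; hence for $q=1$ we get $Q\sb 1(t)=1+\tfrac12 t$ and for $q=2$ we get $Q\sb 2(t)=1+\tfrac12 t^2$, both manifestly nonnegative for $t\ge 0$, and the first genuinely nontrivial case is $q=3$.

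The main obstacle, and the very reason the hypothesis stops at $q=4$, is the case $q=4$, where I expect the sharp threshold to occur at $t=1$, i.e.\ at $\lambda=\mu$: indeed $b\sb q(\lambda,\lambda)=(q+1)\lambda^q$ while $\tfrac{\mu}{2}\p\sb\lambda b\sb q(\lambda,\lambda)=\tfrac{q(q+1)}{4}\lambda^q$, so $D(\lambda,\lambda)=(q+1)\bigl(1-\tfrac{q}{4}\bigr)\lambda^q$, which is nonnegative precisely for $q\le 4$ and vanishes for $q=4$. The plan is therefore to show that $t=1$ is a double root of $Q\sb 4$ and to factor accordingly: since $Q\sb 4(t)=\tfrac12 t^4-\tfrac12 t^2-t+1$ satisfies $Q\sb 4(1)=0=Q\sb 4'(1)$, I would obtain $Q\sb 4(t)=\tfrac12(t-1)^2(t^2+2t+2)$ with $t^2+2t+2=(t+1)^2+1>0$, giving $Q\sb 4(t)\ge 0$ for all $t$. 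For $q=3$ the argument is softer: $Q\sb 3(t)=\tfrac12 t^3-\tfrac12 t+1$ has its unique critical point on $t\ge 0$ at $t=1/\sqrt3$, where $Q\sb 3=1-\tfrac{1}{3\sqrt3}>0$, so $Q\sb 3>0$ throughout. Assembling the four cases yields $D\ge 0$, which is the asserted inequality; I expect the only delicate point to be recognizing and justifying the equality case $\lambda=\mu$ for $q=4$, which is exactly what forces the bound $q\le 4$ in the statement.
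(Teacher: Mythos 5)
Your proof is correct, and there is in fact nothing in the paper to compare it against: Lemma~\ref{lemma-four} is stated without proof inside the proof of Theorem~\ref{theorem-pol}, evidently left as an elementary verification, so your argument supplies the missing details rather than duplicating the paper's. I checked the computations. The homogeneous reduction gives $D(\lambda,\mu)=\lambda^q+\sum_{l=1}^{q}\tfrac{l-q+1}{2}\lambda^{q-l}\mu^{l}$, hence $Q_1(t)=1+\tfrac12 t$, $Q_2(t)=1+\tfrac12 t^2$, $Q_3(t)=\tfrac12 t^3-\tfrac12 t+1$ with its unique critical point on $t\ge 0$ at $t=1/\sqrt3$ where $Q_3=1-\tfrac{1}{3\sqrt3}>0$, and $Q_4(t)=\tfrac12 t^4-\tfrac12 t^2-t+1=\tfrac12(t-1)^2\bigl((t+1)^2+1\bigr)\ge 0$; the boundary case gives $D(0,\mu)=\tfrac12\mu^q\ge 0$. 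All of these identities are right (I re-expanded the factorization of $Q_4$), so the inequality holds for $1\le q\le 4$. Your strategy is also in the same spirit as the paper's own proof of the neighboring Lemma~\ref{lemma-31}, which likewise exploits homogeneity -- there via the substitution $z^2=\lambda/\mu$ -- to reduce a two-variable inequality to one-variable polynomial positivity, so your approach fits the paper naturally. Finally, your side computation $D(\lambda,\lambda)=(q+1)\bigl(1-\tfrac q4\bigr)\lambda^q$ is a genuine bonus: it shows the inequality is sharp at $\lambda=\mu$ for $q=4$ and fails for $q\ge 5$, which explains why Theorem~\ref{theorem-pol}(ii) restricts the potential \eqref{w4-xt} to terms with $q\le 4$.
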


By \eqref{1ec} and Lemma~\ref{lemma-four},
condition
\eqref{cond-on-b}
is satisfied.
Since $C\sb{\xxd,q}\ge 0$ for $1\le q\le 4$,
each term
$C\sb{\xxd,q}b\sb q(\lambda,\mu)$
satisfies
condition
\eqref{pb-positive}.
Therefore, by Lemma~\ref{lemma-u-0},
there is a unique solution
$\psi\sb{\xxd}\sp{\ttd}$ to the Cauchy problem for equation \eqref{dkg-c-xt}.
This finishes the proof of Theorem~\ref{theorem-pol}.
\end{proof}


\section{Titchmarsh convolution theorem
for distributions on the circle}
\label{sect-titchmarsh-circle}

The Titchmarsh convolution theorem \cite{titchmarsh}
states that
for any two compactly supported distributions
$f,\,g\in\mathscr{E}'(\R)$,
\begin{equation}\label{titchmarsh-theorem}
\inf\supp f\ast g
=\inf\supp f+\inf\supp g,
\qquad
\sup\supp f\ast g
=\sup\supp f+\sup\supp g.
\end{equation}
The higher-dimensional reformulation
by Lions
\cite{MR0043254}
states
that
for $f,\,g\in\mathscr{E}'(\R^n)$,
the convex hull of the support
of $f\ast g$
is equal to the sum of convex hulls of
supports of $f$ and $g$.
Different proofs of the Titchmarsh convolution theorem
are contained in
\cite[Chapter VI]{MR617913} (Real Analysis style),
\cite[Theorem 4.3.3]{MR1065136} (Harmonic Analysis style),
and \cite[Lecture 16, Theorem 5]{MR1400006} (Complex Analysis style).
Here we give a version of the Titchmarsh Theorem
which is valid for distributions
supported in $n>1$ small intervals
of the circle $\torus=\R\mod 2\pi$.
For brevity,
we only give the result for distributions
supported in two small intervals,
which suffices for the applications
in this paper;
a more general version is proved in \cite{titchmarsh-circle}.

First,
we note that there are zero divisors with respect
to the convolution on the circle.
Indeed, for any two distributions $f$, $g\in\mathscr{E}'(\torus)$
one has
\begin{equation}\label{nt}
(f+S\sb{\pi}f)\ast(g-S\sb{\pi}g)
=f\ast g+S\sb{\pi}(f\ast g)-S\sb{\pi}(f\ast g)
-f\ast g=0.
\end{equation}
Above, $S\sb{y}$, $y\in\torus$,
is the shift operator, defined on $\mathscr{E}'(\torus)$ by
\begin{equation}\label{def-shift-1}
\big(S\sb{y}f\big)(\omega)=f(\omega-y),
\end{equation}
where the above relation
is understood
in the sense of distributions.
Yet, the cases when the Titchmarsh convolution theorem
``does not hold'' (in a certain na\"ive form)
could be specified.
This leads to a version of the Titchmarsh convolution theorem
for distributions on the circle
(Theorem~\ref{theorem-titchmarsh-circle-1} below).

\medskip

Let us start with the following problem
which illustrates our methods.

\medskip

\noindent
{\it Problem.}
Let $f$, $g\in\mathscr{E}'(\R)$ be such that
$\inf\supp f=0$, $\inf\supp g=0$.
If
$(f\ast f)\at{(-\infty,A)}=(g\ast g)\at{(-\infty,A)}$,
for some $A>0$,
then on $(-\infty,A)$ either $f=g$, or $f=-g$.

\medskip
\noindent
{\it Solution.}
Since
$
\inf\supp(f-g)\ast(f+g)
=\inf\supp(f\ast f-g\ast g)
\ge A,
$
the Titchmarsh convolution theorem \eqref{titchmarsh-theorem}
states that
\begin{equation}\label{fga}
\inf\supp(f-g)+\inf\supp(f+g)\ge A.
\end{equation}
One concludes that 
either
$\inf\supp(f-g)\ge A/2$, or $\inf\supp(f+g)\ge A/2$.
In the former case,
we have $f=g$ on $(-\infty,A/2)$,
hence $\inf\supp(f+g)=\inf\supp f=0$.
Therefore \eqref{fga} shows that $\inf\supp(f-g)\ge A$,
hence $f=g$ on $(-\infty,A)$.
Similarly, if $\inf\supp(f+g)\ge A/2$,
one concludes that
$\inf\supp(f+g)\ge A$,
thus $f=-g$ on $(-\infty,A)$.

\medskip

For $I\subset\torus$,
denote
\[
\mathscr{R}\sb 2(I)
=\bigcup\limits\sb{k\in\Z\sb 2}S\sb{\pi k}I,
\qquad
\mbox{where}
\quad
\Z\sb 2=\Z\mod 2.
\]

\begin{theorem}[Titchmarsh theorem for distributions on the circle]
\label{theorem-titchmarsh-circle-1}
Let $f,\,g\in\mathscr{E}'(\torus)$.
Let
$I,\,J\subset\torus$ be two closed intervals 
such that
$
\supp f\subset \mathscr{R}\sb 2(I),
$
$
\supp g\subset \mathscr{R}\sb 2(J),
$
and assume that
there is no closed interval
$I'\subsetneq I$
such that
$\supp f\subset \mathscr{R}\sb 2(I')$
and no closed interval $J'\subsetneq J$
such that
$\supp g\subset \mathscr{R}\sb 2(J')$.

Assume that
\begin{equation}\label{i-j-small}
\abs{I}+\abs{J}<\pi.
\end{equation}
Let $K\subset I+J\subset\torus$
be a closed interval such that
$
\supp f\ast g\subset \mathscr{R}\sb 2(K).
$
Then $\lambda:=\inf K-\inf I-\inf J>0$
if and only if there is
$\sigma\in\{\pm 1\}$
such that
\begin{equation}\label{asbs}
\big(
f+\sigma S\sb{\pi}f
\big)
\At{(\sup I-\pi,\inf I+\lambda)}=0,
\qquad
\big(
g-\sigma S\sb{\pi}g
\big)
\At{(\sup J-\pi,\inf J+\lambda)}=0.
\end{equation}
Similarly,
one has $\rho:=\sup I+\sup J-\sup K>0$
if and only if there is
$\sigma\in\{\pm 1\}$
such that
\begin{equation}\label{asbss}
\big(f+\sigma S\sb{\pi}f\big)
\At{(\sup I-\rho,\inf I+\pi)}=0,
\qquad
\big(g-\sigma S\sb{\pi}g\big)
\At{(\sup J-\rho,\inf J+\pi)}=0.
\end{equation}
\end{theorem}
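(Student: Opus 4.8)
The plan is to deduce this supremum statement from the infimum statement (the first half of the theorem, which I take as already established) by means of the reflection $\omega\mapsto-\omega$ on $\torus$. Reflection interchanges the suprema and infima of supports, so the $\rho$-gap for the pair $(f,g)$ becomes the $\lambda$-gap for the reflected pair, and the desired equivalence can then be read off directly from the one already proved.

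Concretely, I would introduce the reflection operator $R$ on $\mathscr{E}'(\torus)$ defined by $(Rf)(\omega)=f(-\omega)$, and record its three relevant properties. First, $\supp(Rf)=-\supp f$; since $-\pi\equiv\pi$ on $\torus$, the set operation $\mathscr{R}\sb 2$ commutes with $\omega\mapsto-\omega$ (indeed $-\mathscr{R}\sb 2(I)=\mathscr{R}\sb 2(-I)$), so $\supp f\subset\mathscr{R}\sb 2(I)$ gives $\supp(Rf)\subset\mathscr{R}\sb 2(-I)$. Second, $R(f\ast g)=(Rf)\ast(Rg)$, by the substitution $s\mapsto-s$ in the convolution. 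Third, $R(S\sb\pi f)=S\sb\pi(Rf)$; this is the only step in which the circle rather than the line is used, and it holds precisely because $+\pi$ and $-\pi$ agree modulo $2\pi$. I will also use the elementary fact that a distribution $h$ vanishes on $(-b,-a)$ if and only if $Rh$ vanishes on $(a,b)$.

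Next I would transfer the hypotheses to the reflected data. Set $\tilde I=-I$, $\tilde J=-J$, $\tilde K=-K$, so that $\inf\tilde I=-\sup I$, $\sup\tilde I=-\inf I$, and likewise for $J$ and $K$. Then $|\tilde I|=|I|$ and $|\tilde J|=|J|$, so the smallness hypothesis $|\tilde I|+|\tilde J|<\pi$ is preserved; $\tilde K\subset\tilde I+\tilde J$ follows from $K\subset I+J$; the support conditions $\supp(Rf)\subset\mathscr{R}\sb 2(\tilde I)$, $\supp(Rg)\subset\mathscr{R}\sb 2(\tilde J)$, and $\supp\big((Rf)\ast(Rg)\big)\subset\mathscr{R}\sb 2(\tilde K)$ all hold by the first two properties above; and the minimality of $I$ and $J$ reflects to minimality of $\tilde I$ and $\tilde J$ (a forbidden proper subinterval of $\tilde I$ would reflect to a forbidden proper subinterval of $I$). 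Computing the infimum gap of the reflected data gives $\tilde\lambda:=\inf\tilde K-\inf\tilde I-\inf\tilde J=-\sup K+\sup I+\sup J=\rho$.

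Finally I would apply the infimum statement to the pair $(Rf,Rg)$. It yields $\rho=\tilde\lambda>0$ if and only if there is $\sigma\in\{\pm 1\}$ such that $Rf+\sigma S\sb\pi Rf$ vanishes on $(\sup\tilde I-\pi,\inf\tilde I+\tilde\lambda)=(-\inf I-\pi,-\sup I+\rho)$ and $Rg-\sigma S\sb\pi Rg$ vanishes on $(-\inf J-\pi,-\sup J+\rho)$. By linearity and the third property, $Rf+\sigma S\sb\pi Rf=R(f+\sigma S\sb\pi f)$ and $Rg-\sigma S\sb\pi Rg=R(g-\sigma S\sb\pi g)$, so the vanishing-under-reflection fact converts these into the vanishing of $f+\sigma S\sb\pi f$ on $(\sup I-\rho,\inf I+\pi)$ and of $g-\sigma S\sb\pi g$ on $(\sup J-\rho,\inf J+\pi)$, which is exactly \eqref{asbss}. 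All the substantive content sits in the infimum statement; the main, and essentially only, point requiring care here is the bookkeeping of the interval endpoints under reflection together with the commutation $R(S\sb\pi f)=S\sb\pi(Rf)$, which is what makes the half-period shift structure survive the reduction, so I expect no genuine obstacle beyond this.
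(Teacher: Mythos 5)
Your reflection argument is correct, and it is exactly the route the paper itself takes for the second equivalence: the paper disposes of \eqref{asbss} with the single sentence that it ``follows by applying the reflection to $\torus$ and using the first part of the theorem.'' Your bookkeeping — $R(S_\pi f)=S_\pi(Rf)$ because $\pi\equiv-\pi\pmod{2\pi}$, $-\mathscr{R}_2(I)=\mathscr{R}_2(-I)$, preservation of the minimality and smallness hypotheses, and the computation $\tilde\lambda=\rho$ — is accurate, and spelling it out is a legitimate (if minor) improvement on the paper's one-line dismissal.

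The genuine gap is that you have proved only this reduction, not the theorem. The statement you were asked to prove contains \emph{both} equivalences, and the $\lambda$-equivalence \eqref{asbs} is not a previously established result you may cite — it is the core of this very theorem, and all the substance lives there. The paper proves it as follows: the ``if'' direction is a direct computation in the spirit of the zero-divisor identity \eqref{nt}; for the ``only if'' direction one sets $f_j=(S_{\pi j}f)\vert_I$, $g_j=(S_{\pi j}g)\vert_J$, $h_j=\bigl(S_{\pi j}(f\ast g)\bigr)\vert_K$ for $j\in\Z_2$, observes that $h_j=\sum_{k+l=j\bmod 2}f_k\ast g_l$ and hence $(f_0+\sigma f_1)\ast(g_0+\sigma g_1)=h_0+\sigma h_1$, applies the classical Titchmarsh convolution theorem \eqref{titchmarsh-theorem} on the line to this identity, and chooses $\sigma$ so that $\inf\supp(g_0+\sigma g_1)=\inf J$; the inequality $\inf\supp(f_0+\sigma f_1)\ge\inf I+\lambda$ then forces the first relation in \eqref{asbs}, with the second following by symmetry. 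None of this — the passage from the circle, where convolution has zero divisors, back to the line, where Titchmarsh applies — appears in your proposal, so as it stands the proposal establishes only a conditional statement and leaves the theorem unproved.
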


\begin{remark}
While $\mathscr{E}'(\torus)=\mathscr{D}'(\torus)$,
we use the notation $\mathscr{E}'(\torus)$
for the consistency with the
requirements of the standard Titchmarsh convolution
theorem \eqref{titchmarsh-theorem}.
\end{remark}

Informally, we could say that
the intervals $I$, $J$, and $K$
play the role similar to
``convex hulls''
of supports
of $f,\,g,\,f\ast g\in\mathscr{E}'(\torus)$.
If $K\subsetneq I+J$
(certain ``na\"ive form of the Titchmarsh convolution theorem''
is not satisfied),
then both $f$ and $g$ satisfy certain
symmetry properties
on $\mathscr{R}\sb 2(U)$ and on $\mathscr{R}\sb 2(V)$,
where open non-intersecting
intervals $U$ and $V$
can be chosen so that
$U\cup K\cup V\supset I+J$.

\begin{proof}[Proof of Theorem~\ref{theorem-titchmarsh-circle-1}]
We will only prove \eqref{asbs};
the relations \eqref{asbss}
follow by applying the reflection to $\torus$
and using the first part of the theorem.

The ``if'' part
of the theorem
is checked by direct computation.
Let
$f\in\mathscr{E}'(I\cup S\sb{\pi}I)$,
where $I\subset\torus$, $\abs{I}<\pi/2$,
$g\in\mathscr{E}'(J\cup S\sb{\pi}J)$,
where $J\subset\torus$, $\abs{J}<\pi/2$,
and assume that
$f=\pm S\sb\pi f$ on $(\sup I-\pi,\inf I+\lambda)$,
$g=\mp S\sb\pi g$ on $(\sup J-\pi,\inf J+\lambda)$.
Then, as in \eqref{nt},
\begin{eqnarray}
&&
(f\ast g)\at{(\sup I+\sup J-2\pi,\inf I+\inf J+\lambda)}
\nonumber
\\
&&
=f\at{(\sup I-\pi,\inf I+\lambda)}
\ast g\at{(\sup J-\pi,\inf J+\lambda)}
+(S\sb\pi f)\at{(\sup I-\pi,\inf I+\lambda)}
\ast(S\sb\pi g)\at{(\sup J-\pi,\inf J+\lambda)}
\nonumber
\\
&&
=f\at{(\sup I-\pi,\inf I+\lambda)}
\ast g\at{(\sup J-\pi,\inf J+\lambda)}
-f\at{(\sup I-\pi,\inf I+\lambda)}
\ast g\at{(\sup J-\pi,\inf J+\lambda)}=0.
\nonumber
\end{eqnarray}

Let us now prove the ``only if'' part.
One has
$\supp f\subset \mathscr{R}\sb 2(I)$,
$\supp g\subset \mathscr{R}\sb 2(J)$,
$\supp f\ast g\subset\mathscr{R}\sb 2(K)\subset\mathscr{R}\sb 2(I+J)$.
Due to the restriction
\eqref{i-j-small},
the sets
$\mathscr{R}\sb 2(I)$, $\mathscr{R}\sb 2(J)$,
and $\mathscr{R}\sb 2(I+J)$
each consist of $n$ non-intersecting intervals.
For $j\in\Z\sb 2$,
let us set
$f\sb j=(S\sb{\pi j}f)\at{I}\in\mathscr{E}'(I)$,
$g\sb j=(S\sb{\pi j}g)\at{J}\in\mathscr{E}'(J)$,
$h\sb j=\big(S\sb{\pi j}(f\ast g)\big)\at{K}\in\mathscr{E}'(I+J)$;
then,
for $j\in\Z\sb 2$,
\begin{eqnarray}\label{hfg}
h\sb j
=
\big(S\sb{\pi j}(f\ast g)\big)\At{I+J}
=
\!\!
\sum\sb{\stackrel{\scriptstyle k+l=j\!\!\!\mod 2}{\scriptstyle k,\,l\in\Z\sb 2}}
(S\sb{\pi k}f)\at{I}
\ast(S\sb{\pi l}g)\at{J}
=
\!\!
\sum\sb{\stackrel{\scriptstyle k+l=j\!\!\!\mod 2}{\scriptstyle k,\,l\in\Z\sb 2}}
f\sb k\ast g\sb l.
\end{eqnarray}
Using the relation \eqref{hfg},
for $\sigma=\pm 1$
we have:
\begin{equation}\label{wh}
(f\sb 0+\sigma f\sb 1)
\ast
(g\sb 0+\sigma g\sb 1)
=
(f\sb 0\ast g\sb 0+f\sb 1\ast g\sb 1)
+
\sigma
(f\sb 0\ast g\sb 1+f\sb 1\ast g\sb 0)
=
h\sb 0+\sigma h\sb 1.
\end{equation}
Applying the Titchmarsh convolution theorem \eqref{titchmarsh-theorem}
to this relation,
we obtain:
\begin{equation}\label{isf-isg}
\inf\supp
(f\sb 0+\sigma f\sb 1)
+
\inf\supp
(g\sb 0+\sigma g\sb 1)
=
\inf\supp
(h\sb 0+\sigma h\sb 1)
\ge\inf K,
\end{equation}
where we took into account that
$\min\sb{j\in\Z\sb 2}\inf\supp h\sb j\ge\inf K$.
Let us pick $\sigma\in\{\pm 1\}$
such that
\begin{equation}\label{sasa}
\inf\supp
(g\sb 0+\sigma g\sb 1)
=
\min\limits\sb{j\in\Z\sb 2}
\inf\supp g\sb j
=\inf J.
\end{equation}
For this value of $\sigma$,
\eqref{isf-isg} yields:
\[
\inf\supp
(f\sb 0+\sigma f\sb 1)
\ge\inf K-\inf J=\inf I+\lambda,
\]
proving
the first relation in \eqref{asbs}.
The second relation in \eqref{asbs} follows
due to the symmetric roles of $f$ and $g$.
The opposite sign (negative sign at $\sigma$)
follows from
\eqref{sasa}.
\end{proof}

Here is the
convolution theorem for powers of a distribution.

\begin{theorem}[Titchmarsh theorem for powers
of a distribution on the circle]
\label{theorem-titchmarsh-powers}
Let $f\in\mathscr{E}'(\torus)$.
Let $I\subset\torus$ be a closed interval
such that $\supp f\subset \mathscr{R}\sb 2(I)$,
and assume that there is no $I'\subsetneq I$
such that
$\supp f\subset \mathscr{R}\sb 2(I')$.

Assume that $\abs{I}<\frac{\pi}{p}$, for some $p\in\N$.
Then the smallest closed interval
$K\subset p I$ such that
$\supp f\sp{\ast p}\subset \mathscr{R}\sb 2(K)$
is $K=p I$.
\end{theorem}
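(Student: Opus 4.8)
The plan is to avoid iterating the two-interval statement (Theorem~\ref{theorem-titchmarsh-circle-1}) and instead exploit the eigenspace decomposition of $\mathscr{E}'(\torus)$ under the half-rotation $S\sb\pi$. First I would split $f=f\sb{+}+f\sb{-}$ into its $S\sb\pi$-symmetric and antisymmetric parts, $f\sb\pm=\tfrac12(f\pm S\sb\pi f)$, so that $S\sb\pi f\sb\pm=\pm f\sb\pm$. The zero-divisor identity \eqref{nt} gives $f\sb{+}\ast f\sb{-}=0$; since convolution is commutative, every mixed term in the expansion of $f^{\ast p}=(f\sb{+}+f\sb{-})^{\ast p}$ contains a factor $f\sb{+}\ast f\sb{-}$ and hence vanishes, leaving
$$f^{\ast p}=f\sb{+}^{\ast p}+f\sb{-}^{\ast p},$$
where $f\sb{+}^{\ast p}$ is $S\sb\pi$-symmetric and $f\sb{-}^{\ast p}$ is $S\sb\pi$-antisymmetric (the eigenvalue is visible on even/odd Fourier modes, which convolution preserves). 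Because $\mathscr{R}\sb 2(K)$ is $S\sb\pi$-invariant, one recovers $f\sb\pm^{\ast p}=\tfrac12(f^{\ast p}\pm S\sb\pi f^{\ast p})$ from $f^{\ast p}$, so $\supp f^{\ast p}\subset\mathscr{R}\sb 2(K)$ holds iff both $\supp f\sb\pm^{\ast p}\subset\mathscr{R}\sb 2(K)$. Applying the same recovery argument to $f$ itself shows that the minimal interval $I$ of $f$ equals $\ch(I\sb{+}\cup I\sb{-})$, where $I\sb\pm:=\ch\supppi f\sb\pm$.

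The heart of the argument is then a lemma: for an $S\sb\pi$-eigendistribution $h$ (eigenvalue $\epsilon=\pm1$) with minimal interval $J$, $\abs{J}<\pi/p$, one has $\ch\supppi h^{\ast p}=pJ$. I would prove this by unfolding. Such an $h$ is determined by its restriction $h\sb 0$ to the arc over $J$, a compactly supported distribution on $\R$ with $\ch\supp h\sb 0=J$, through $h(\omega+\pi)=\epsilon h(\omega)$. A direct computation of the circle convolution of two such eigendistributions of the same $\epsilon$ shows that its arc-restriction equals $h\sb 0\ast\sb\R h\sb 0'$ up to a fixed nonzero constant (the contributions of the two arcs add, e.g. $(h\ast h)\sb 0=2\,h\sb 0\ast\sb\R h\sb 0$), the point being that no wraparound occurs since the relevant folded supports have total length $<\pi$. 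Iterating, $(h^{\ast p})\sb 0$ equals $h\sb 0^{\ast p}$ up to a nonzero constant, and since $p\abs{J}<\pi$ the folding stays injective at every stage; the classical Titchmarsh theorem \eqref{titchmarsh-theorem} on $\R$ then gives $\ch\supp h\sb 0^{\ast p}=p\,\ch\supp h\sb 0=pJ$, whence $\ch\supppi h^{\ast p}=pJ$.

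Finally I would assemble the pieces. Applying the eigendistribution lemma to $h=f\sb{+}$ and $h=f\sb{-}$ (to whichever are nonzero; at least one is, as $f\not\equiv0$) yields $\ch\supppi f\sb\pm^{\ast p}=p\,I\sb\pm$. By the invariance argument of the first paragraph, the minimal $K$ for $f^{\ast p}$ is the convex hull of the union of the folded supports of $f\sb{+}^{\ast p}$ and $f\sb{-}^{\ast p}$, so $K=\ch(pI\sb{+}\cup pI\sb{-})=p\,\ch(I\sb{+}\cup I\sb{-})=pI$. Since $\supp f^{\ast p}\subset\mathscr{R}\sb 2(pI)$ is automatic, this identifies the smallest admissible interval as exactly $pI$.

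I expect the main obstacle to be the unfolding step: carefully justifying that the circle convolution of $S\sb\pi$-eigendistributions restricts, up to a harmless constant, to the genuine $\R$-convolution of their arc-parts, and that the hypothesis $\abs{I}<\pi/p$ is precisely what keeps the two supporting arcs (and their iterated sums) disjoint, so that one truly lands in the scope of the classical Titchmarsh theorem. Working with supports rather than the distributions themselves renders the multiplicative constants irrelevant, which is what keeps the bookkeeping manageable.
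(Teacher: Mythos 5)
Your proposal is correct, and it proves the theorem by the same underlying mechanism as the paper---diagonalize the half-rotation $S_\pi$, unfold to the real line, and invoke the classical Titchmarsh theorem \eqref{titchmarsh-theorem}---but the organization is genuinely different, and the comparison is instructive. The paper never forms the global eigencomponents $f_\pm$: it restricts to a fundamental arc, setting $f_j=(S_{\pi j}f)\vert_I$ and $h_j=(S_{\pi j}(f^{\ast p}))\vert_{pI}$, proves the multinomial identity \eqref{hff}, deduces $(f_0+\sigma f_1)^{\ast p}=h_0+\sigma h_1$ for $\sigma=\pm1$, and then pins down each endpoint of $K$ by a sign-selection trick (choose $\sigma$ so that $\inf\supp(f_0+\sigma f_1)=\min_j\inf\supp f_j$, apply Titchmarsh, and combine with $\inf\supp h_j\ge p\min_k\inf\supp f_k$). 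Your decomposition is the diagonalized form of the same algebra: $f_0+\sigma f_1$ is precisely the arc restriction of $2f_\sigma$, and your identity $f^{\ast p}=f_+^{\ast p}+f_-^{\ast p}$, obtained from the zero-divisor relation \eqref{nt}, is equivalent to the paper's \eqref{ffhj} taken for both signs at once. What the paper's version buys is that the wraparound issue you rightly single out as the main obstacle is absorbed into the definitions: the $f_j$ live on an arc from the start, and \eqref{hff} is stated directly for convolutions of distributions supported on arcs whose iterated sums have length $<\pi$, so no separate unfolding lemma is needed. What your version buys is a sharper structural statement---the two symmetry components of $f$ do not interact under convolution, each power is computed exactly as $h^{\ast p}=2^{p-1}\bigl(h_0^{\ast p}+\epsilon' S_\pi h_0^{\ast p}\bigr)$ with a nonzero constant, and the endgame becomes the transparent hull computation $K=\ch(pI_+\cup pI_-)=p\,\ch(I_+\cup I_-)=pI$ rather than endpoint bookkeeping. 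One point in your argument deserves emphasis because a careless version would fail: $f_-^{\ast p}$ is $S_\pi$-antisymmetric for \emph{every} $p$, not just odd $p$, since $S_\pi$ acts through a single factor of a convolution ($S_\pi(u\ast v)=(S_\pi u)\ast v$); your justification via odd Fourier modes, which convolution preserves, is correct and is exactly what makes the recovery formula $f_\pm^{\ast p}=\tfrac12(f^{\ast p}\pm S_\pi f^{\ast p})$ legitimate.
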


Above, we used the notations
$p I=\underbrace{I+\dots+I}\sb{p}$
and
$f\sp{\ast p}=\underbrace{f\ast\dots\ast f}\sb{p}$.

Let us notice that the proof
of Theorem~\ref{theorem-titchmarsh-powers}
for the case $p=2$
immediately follows from Theorem~\ref{theorem-titchmarsh-circle-1}.
(For example, the relations \eqref{asbs} with $f=g$
are mutually contradictory unless $\lambda=0$.)
By induction,
this also gives the proof for $p=2^N$, with any $N\in\N$,
and then one can deduce the statement of
Theorem~\ref{theorem-titchmarsh-powers}
for any $p\le 2^N$, but under the condition
$\abs{I}<\frac{\pi}{2^N}$,
which is stronger than
$\abs{I}<\frac{\pi}{p}$.

\begin{proof}[Proof of Theorem~\ref{theorem-titchmarsh-powers}]
One has
$\supp f\sp{\ast p}\subset\mathscr{R}\sb 2(p I)$.
Due to the smallness of $I$,
both $\mathscr{R}\sb 2(I)$ and $\mathscr{R}\sb 2(p I)$
are collections of $n$ non-intersecting
intervals.
Define
\[
f\sb j:=(S\sb{\pi j}f)\at{I}\in\mathscr{E}'(I),
\qquad
h\sb j:=
\big(S\sb{\pi j}(f\sp{\ast p})\big)
\at{I}\in\mathscr{E}'(I).
\]
Then
\begin{eqnarray}\label{hff}
&h\sb j
=\big(S\sb{\pi j}(f\sp{\ast p})\big)\At{p I}
=
\sum\limits\sb{
\stackrel
{\scriptstyle j\sb 1+\dots+j\sb p=j\!\!\mod 2}
{\scriptstyle j\sb 1,\,\dots\,j\sb p\in\Z\sb 2}}
(S\sb{\pi j\sb 1}f)\at{I}\ast
\dots\ast(S\sb{\pi j\sb p}f)\at{I}
\nonumber
\\
&
=
\sum\limits\sb{
\stackrel
{\scriptstyle j\sb 1+\dots+j\sb p=j\!\!\mod 2}
{\scriptstyle j\sb 1,\,\dots,\,j\sb p\in\Z\sb 2}}
f\sb{j\sb 1}\ast\dots\ast f\sb{j\sb p},
\qquad
j\in\Z\sb 2.
\end{eqnarray}
Taking into account \eqref{hff},
for $\sigma=\pm 1$
one has:
\begin{equation}\label{ffhj}
(f\sb 0+\sigma f\sb 1)\sp{\ast p}
=
\sum\sb{j\in\Z\sb 2}
\sigma^j
\Big[
\sum\sb{j\sb 1+\dots+j\sb p=j\!\!\mod 2}
f\sb{j\sb 1}\ast\dots\ast f\sb{j\sb n}
\Big]
=h\sb 0+\sigma h\sb 1.
\end{equation}
Now we apply the Titchmarsh convolution theorem
to \eqref{ffhj}, getting
\[
p\inf\supp
(f\sb 0+\sigma f\sb 1)
=\inf\supp
(h\sb 0+\sigma h\sb 1).
\]
There is $\sigma=\pm 1$ such that
$
\inf\supp(f\sb 0+\sigma f\sb 1)
=\min\limits\sb{j\in\Z\sb 2}
\inf\supp f\sb j$;
for this value of $\sigma$,
\[
p\min\limits\sb{j\in\Z\sb 2}\inf\supp f\sb j
=\inf\supp
(h\sb 0+\sigma h\sb 1)
\ge\min\limits\sb{j\in\Z\sb 2}\inf\supp h\sb j.
\]
On the other hand,
\eqref{hff} yields the inequalities
$\inf\supp h\sb j\ge p\min\limits\sb{k\in\Z\sb 2}\inf\supp f\sb k$,
for any $j\in\Z\sb 2$.
It follows that
\[
\min\limits\sb{j\in\Z\sb 2}\inf\supp h\sb j=p\min\limits\sb{j\in\Z\sb 2}\inf\supp f\sb j
\]
and similarly
\[
\max\limits\sb{j\in\Z\sb 2}\sup\supp h\sb j=p\max\limits\sb{j\in\Z\sb 2}\sup\supp f\sb j.
\]
\end{proof}

\medskip

Denote
$f\sp\sharp(\omega)=\overline{f(-\omega)}$.
Let $f\in\mathscr{E}'(\torus)$
and let
$I\subset\torus$
be a closed interval such that
$\supp f\subset \mathscr{R}_2(I)$.
Assume that
there is no
closed interval $I'\subsetneq I$ such that
$\supp f\subset \mathscr{R}_2(I')$.

\begin{theorem}
\label{theorem-titchmarsh-weird}
If
$I\subset(-\pi/2,\pi/2)$
and
$\abs{I}<\pi/2$,
then
the inclusion
$
\supp f\ast f\sp\sharp\subset\{0;\pi\}
$
implies that
$
\supp f\subset\{\inf I;\sup I;\pi+\inf I;\pi+\sup I\}.
$
Moreover, there are
distributions $\mu$, $\nu\in\mathscr{E}'(\torus)$,
each supported at a point,
such that
\begin{equation}\label{fab1}
f=\mu+S\sb{\pi}\mu+\nu-S\sb{\pi}\nu.
\end{equation}
\end{theorem}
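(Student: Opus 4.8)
The plan is to apply Theorem~\ref{theorem-titchmarsh-circle-1} with $g=f^\sharp$ and to convert the two vanishing relations \eqref{asbs} and \eqref{asbss} into support statements that pin $f$ down to the endpoints of $I$. Write $I=[a,b]$ with $a=\inf I$, $b=\sup I$. The degenerate case $a=b$ is immediate, since then $\supp f\subset\{a;a+\pi\}$ and one can solve for $\mu,\nu$ supported at $a$ directly; so I would assume $a<b$ throughout. Since $f^\sharp(\omega)=\overline{f(-\omega)}$, one has $\supp f^\sharp\subset\mathscr{R}_2(-I)$, and $J:=-I=[-b,-a]$ is minimal for $f^\sharp$ by reflecting the minimality hypothesis on $I$.

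Next I would check the hypotheses of Theorem~\ref{theorem-titchmarsh-circle-1}: condition \eqref{i-j-small} holds because $\abs{I}+\abs{J}=2\abs{I}<\pi$, and the assumption $\supp f\ast f^\sharp\subset\{0;\pi\}=\mathscr{R}_2(\{0\})$ lets me take $K=\{0\}$. Then $\lambda=\inf K-\inf I-\inf J=b-a>0$ and $\rho=\sup I+\sup J-\sup K=b-a>0$, so the theorem produces signs $\sigma,\sigma'\in\{\pm1\}$ with $(f+\sigma S_\pi f)\at{(b-\pi,b)}=0$ (from \eqref{asbs}) and $(f+\sigma'S_\pi f)\at{(a,a+\pi)}=0$ (from \eqref{asbss}). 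Setting $g_1:=f+\sigma S_\pi f$ and $g_2:=f+\sigma'S_\pi f$, one has $S_\pi g_1=\sigma g_1$ and $S_\pi g_2=\sigma'g_2$ (using $S_{2\pi}=\mathrm{id}$), so each vanishing propagates under $S_\pi$ to the $\pi$-translate; hence $\supp g_1\subset\{b;b+\pi\}$ and $\supp g_2\subset\{a;a+\pi\}$.

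The crux is to rule out $\sigma'=\sigma$, and this is where I expect the main difficulty. If $\sigma=\sigma'$, then $g_1=g_2$ is supported on $\{b;b+\pi\}\cap\{a;a+\pi\}$, which is empty because $a,b\in(-\pi/2,\pi/2)$ with $0<b-a<\pi/2$; thus $f+\sigma S_\pi f\equiv0$, i.e.\ $S_\pi f=-\sigma f$. Splitting $f=h+k$ with $h:=f\at{[a,b]}$ and $k:=f\at{[a+\pi,b+\pi]}$, this symmetry forces $k=-\sigma S_\pi h$, so $f=h-\sigma S_\pi h$ and $f^\sharp=h^\sharp-\sigma S_\pi h^\sharp$. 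A direct expansion, using $S_\pi h\ast S_\pi h^\sharp=h\ast h^\sharp$, gives $f\ast f^\sharp=2\big(H-\sigma S_\pi H\big)$ with $H:=h\ast h^\sharp$. Because $\supp h,\supp h^\sharp\subset(-\pi/2,\pi/2)$ and $\abs{I}<\pi/2$, the convolution does not wrap around the circle, so the classical Titchmarsh theorem \eqref{titchmarsh-theorem} applies and yields $\inf\supp H=\inf\supp h-\sup\supp h$. But $\supp f\ast f^\sharp\subset\{0;\pi\}$ forces $\supp H\subset\{0\}$ (the near-$0$ and near-$\pi$ parts being disjoint), hence $\inf\supp h=\sup\supp h$, so $h$ is supported at a single point; then $\supp f\subset\{p;p+\pi\}$ contradicts the minimality of $I$ together with $a<b$. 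This step — making the na\"ive Titchmarsh theorem bite against the $\mathscr{R}_2$ wrap-around structure — is the real obstacle, and it is exactly where $\abs{I}<\pi/2$ is needed.

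With $\sigma'=-\sigma$ established, I would finish by combining $g_1$ and $g_2$. From $\supp g_1\subset\{b;b+\pi\}$ and $S_\pi g_1=\sigma g_1$ one writes $g_1=\mu_1+\sigma S_\pi\mu_1$ with $\mu_1$ supported at $b$, and similarly $g_2=\mu_2-\sigma S_\pi\mu_2$ with $\mu_2$ supported at $a$ (derivatives of $\delta$ are allowed at this stage). Since $g_1+g_2=2f$, this gives $\supp f\subset\{a;b;a+\pi;b+\pi\}$, proving the first assertion; and taking $(\mu,\nu)=\tfrac12(\mu_1,\mu_2)$ when $\sigma=1$, or $(\mu,\nu)=\tfrac12(\mu_2,\mu_1)$ when $\sigma=-1$, yields the representation $f=\mu+S_\pi\mu+\nu-S_\pi\nu$ with $\mu,\nu$ each supported at a single point, as required by \eqref{fab1}.
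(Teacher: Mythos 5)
Your proof is correct, and its overall skeleton coincides with the paper's: dispose of the case $\abs{I}=0$ directly, then apply Theorem~\ref{theorem-titchmarsh-circle-1} with $g=f^\sharp$, $J=-I$, $K=\{0\}$ (so that $\lambda=\rho=\abs{I}>0$), extract the two sign relations, prove the signs must be opposite, and assemble $\mu,\nu$ from $f\pm\sigma S_\pi f$. Where you genuinely diverge is at the crux you yourself flagged: excluding $\sigma'=\sigma$. The paper does this without ever leaving Theorem~\ref{theorem-titchmarsh-circle-1}: it also exploits the $f^\sharp$-halves of the conclusions \eqref{asbs}, which, combined with the minimality of $I$, yield the \emph{non-vanishing} statement \eqref{asdf1a}, $\sup\supp(f+\sigma S_\pi f)\at{(-\pi/2,\pi/2)}=\sup I$; this is flatly incompatible with \eqref{asdf3a} if $\sigma'=\sigma$. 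You instead use only the $f$-halves of \eqref{asbs} and \eqref{asbss}, observe that $\sigma'=\sigma$ would force $f+\sigma S_\pi f\equiv 0$ (its support would lie in $\{b;b+\pi\}\cap\{a;a+\pi\}=\emptyset$), and refute this antisymmetry by descending to the line: writing $f=h-\sigma S_\pi h$ gives $f\ast f^\sharp=2(H-\sigma S_\pi H)$ with $H=h\ast h^\sharp$, and the classical Titchmarsh theorem \eqref{titchmarsh-theorem} (applicable because $2\abs{I}<\pi$ prevents wrap-around) forces $h$ to be supported at a single point, contradicting the minimality of $I$. Your variant thus treats Theorem~\ref{theorem-titchmarsh-circle-1} purely as a black box — only its vanishing statements are used — at the price of one extra appeal to the line Titchmarsh theorem; the paper's variant is shorter but leans on the sharper endpoint-of-support information hidden in the $f^\sharp$-relations. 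One point you should make explicit: in the contradiction step you need $h\not\equiv 0$, so that $H\ne 0$ and the equality $\inf\supp H=\sup\supp H=0$ is meaningful; this is supplied by the hypotheses, since $h\equiv 0$ would give $f\equiv 0$, which is itself incompatible with the minimality of $I$ when $\abs{I}>0$.
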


\begin{proof}
If $I$ consists of one point,
$I=\{p\}\subset(-\pi/2,\pi/2)$,
then
$\supp f=\mathscr{R}\sb 2(p)=\{p;\pi+p\}$,
and \eqref{fab1} holds with
\[
\mu=\frac{f+S\sb\pi f}{2}\At{I},
\quad
\nu=\frac{f-S\sb\pi f}{2}\At{I}.
\]
Now we assume that
$\abs{I}>0$.
Define $J=-I$ and $K=\{0\}\subset I+J$.
Then
$\supp f\sp\sharp\subset\mathscr{R}\sb 2(J)$
and there is no $J'\subsetneq J$
such that
$\supp f\sp\sharp\subset\mathscr{R}\sb 2(J')$.
According to the conditions of the theorem,
$\supp f\ast f\sp\sharp\subset\mathscr{R}\sb 2(K)$;
hence, one has:
\begin{equation}
\lambda:=\inf K-\inf I-\inf J=\sup I-\inf I=\abs{I}>0.
\label{ff1}
\end{equation}
Applying Theorem~\ref{theorem-titchmarsh-circle-1}
to \eqref{ff1}, we conclude that
there is $\sigma\in\{\pm 1\}$
such that
\begin{equation}\label{asdf1}
(f+\sigma S\sb{\pi}f)\at{(\sup I-\pi,\sup I)}=0
\end{equation}
and also
$\inf\supp(f\sp\sharp+\sigma S\sb{\pi}f\sp\sharp)\at{(-\frac{\pi}{2},\frac{\pi}{2})}
=
-\sup I$;
this last relation implies that
\begin{equation}\label{asdf1a}
\sup\supp(f+\sigma S\sb{\pi}f)\at{(-\frac{\pi}{2},\frac{\pi}{2})}
=
\sup I.
\end{equation}
Also, by Theorem~\ref{theorem-titchmarsh-circle-1},
there is $\sigma'\in\{\pm 1\}$
such that
$
(f\sp\sharp+\sigma' S\sb{\pi}f\sp\sharp)\at{(-\inf I-\pi,-\inf I)}=0,
$
hence
\begin{equation}\label{asdf3a}
(f+\sigma' S\sb{\pi}f)\at{(\inf I,\inf I+\pi)}=0.
\end{equation}
Comparing \eqref{asdf1a} with \eqref{asdf3a},
we conclude that $\sigma'=-\sigma$;
then \eqref{asdf1} and \eqref{asdf3a}
allow us to conclude that both $f$ and $S\sb\pi f$
vanish on $(\inf I,\sup I)$,
hence
\[
\supp f\subset\{\inf I;\sup I;\pi+\inf I;\pi+\sup I\}.
\]
By \eqref{asdf1} and \eqref{asdf3a},
if $\sigma=1$,
the relation \eqref{fab1} holds
with $\mu=f\at{(\inf I,\pi/2)}$
and $\nu=f\at{(-\pi/2,\sup I)}$.
If instead $\sigma=-1$,
the relation \eqref{fab1} holds
with $\mu=f\at{(-\pi/2,\sup I)}$
and $\nu=f\at{(\inf I,\pi/2)}$.
\end{proof}



\def\cprime{$'$} \def\cprime{$'$} \def\cprime{$'$} \def\cprime{$'$}
  \def\cprime{$'$} \def\cprime{$'$} \def\cprime{$'$} \def\cprime{$'$}
  \def\cprime{$'$} \def\cprime{$'$} \def\cprime{$'$}

\end{document}